\title[Quantitative nonembeddability of groups of polynomial growth into uniformly convex spaces]{Quantitative nonembeddability of groups of polynomial growth into \\uniformly convex spaces}
\author[Seung-Yeon Ryoo]{Seung-Yeon Ryoo}
\address[Seung-Yeon Ryoo]{\newline
Department of Mathematics, California Institute of Technology
\newline
1200 East California Boulevard
\newline
Pasadena California 91125 United States
} \email{sryoo@caltech.edu}
\newtheorem{theorem}{Theorem}
\newtheorem{lemma}[theorem]{Lemma}
\newtheorem{corollary}[theorem]{Corollary}
\newtheorem{proposition}[theorem]{Proposition}
\newtheorem{example}[theorem]{Example}
\newtheorem{remark}[theorem]{Remark}
\newtheorem{conjecture}[theorem]{Conjecture}
\newtheorem{question}[theorem]{Question}
\newtheorem{definition}[theorem]{Definition}
\newcommand{\Z}{\mathbb{Z}}
\newcommand{\1}{\mathbf 1}
\newcommand{\R}{\mathbb{R}}
\newcommand{\Norm}[2]{\|#1\|_{#2}}
\newcommand{\bbr}{\mathbb R}
\newcommand{\bbh}{\mathbb H}
\begin{document}
\allowdisplaybreaks

\date{\today}

\keywords{
Nilpotent Lie group, finitely generated group of polynomial growth, Carnot group, vertical versus horizontal inequality, uniform convexity, Dorronsoro's theorem, vector-valued Littlewood--Paley--Stein theory
}

\thanks{
Acknowledgments: Part of this work appeared in my doctoral dissertation under the supervision of Professor Assaf Naor at Princeton University. I thank him, Professors Tuomas Orponen and Robert Young, and Ian Fleschler and Petr Kosenko for helpful discussions and suggestions. I am indebted to the anonymous referee for an extremely thorough reading of the manuscript, many useful corrections, and suggested improvements. This work was partially supported by the Korea Foundation for Advanced Studies and an AMS--Simons Travel Grant.
}

\begin{abstract}
Nonabelian simply connected nilpotent Lie groups and not virtually abelian finitely generated groups of polynomial growth do not quasi-isometrically embed into uniformly convex Banach spaces. We quantify this fact by showing that a ball of radius $r\ge 2$ in the aforementioned groups must incur bilipschitz distortion at least a constant multiple of $(\log r)^{1/q}$ into a $q(\ge 2)$-uniformly convex Banach space. This bound is sharp for the $L^p$ ($1<p<\infty$) spaces. We prove this by establishing ``vertical versus horizontal inequalities'' for functions from the aforementioned groups into uniformly convex spaces, using the vector-valued Littlewood--Paley--Stein theory approach of Lafforgue and Naor (2012). These inequalities are quantitative nonembeddability statements that any Lipschitz mapping from the aforementioned groups into a uniformly convex space quantitatively collapses along certain central subgroups.

In the special case of mappings of Carnot groups into the $L^p$ ($1<p<\infty$) spaces, we prove that the quantitative collapse occurs on the commutator subgroup; this is in line with the qualitative Pansu--Semmes nonembeddability argument given by Cheeger and Kleiner (2006) and Lee and Naor (2006). We prove this by establishing a version of the classical Dorronsoro theorem on Carnot groups. Previously, in the setting of Heisenberg groups, F\"assler and Orponen (2019) established a one-sided Dorronsoro theorem with a restriction $0<\alpha<2$ on the range of exponents $\alpha$ of the Laplacian; this restriction does not appear in the commutative setting and is caused by their use of horizontal polynomials as approximants. We identify the correct class of approximant polynomials and prove the two-sided Dorronsoro theorem with the full range $0<\alpha<\infty$ of exponents in the general setting of Carnot groups, thus strengthening and extending the work of F\"assler and Orponen.
\end{abstract}
\maketitle \centerline{\date}

\tableofcontents

\section{Introduction}

Many groups of polynomial growth, such as nonabelian simply connected nilpotent Lie groups and not virtually abelian finitely generated groups of polynomial growth, fail to embed bilipschitzly (or even quasi-isometrically) into uniformly convex Banach spaces.\footnote{The previous version of this paper incorrectly claimed that not virtually abelian finitely generated groups of polynomial growth fail to bilipschitzly embed into Banach spaces with the Radon--Nikod\'ym property. The anonymous referee pointed out that this statement is false since uniformly discrete spaces, in particular finitely generated groups, isometrically embed into some Banach spaces with the Radon--Nikod\'ym property \cite[Proposition 4.4]{kalton2004spaces}.} This is because a not virtually abelian finitely generated group of polynomial growth is quasi-isometric to some nonabelian simply connected nilpotent Lie group \cite{gromov1981groups,malcev1949class}, a nonabelian simply connected nilpotent Lie group has a nonabelian Carnot group as asymptotic cone, and a nonabelian Carnot group does not bilipschitzly embed into a uniformly convex Banach space \cite{cheeger2006differentiability,lee2006lp}. The aim of this paper is to provide quantitative counterparts to these qualitative nonembeddability statements.

When considering embeddability of groups of polynomial growth into some other space, it is often enough to consider simply connected nilpotent Lie groups and finitely generated groups of polynomial growth. Indeed, given a locally compact group $\mathcal{G}$ with left Haar measure $\mu$ that is generated by a compact symmetric neighborhood $U$ of the identity, suppose $\mathcal{G}$ is of polynomial growth, i.e., suppose that $\mu\left(U^n\right)$ grows at most polynomially in $n$. Then $\mathcal{G}$, with the left-invariant word distance induced by $U$, is quasi-isometric to a connected and simply connected solvable Lie group $\mathcal{S}$ of polynomial growth, called the Lie shadow of $\mathcal{G}$
 \cite[Theorem 1.2, Proposition 1.3]{breuillard2014geometry}.\footnote{I thank Petr Kosenko for pointing out this reference.} In turn, a connected solvable Lie group of polynomial growth may be made isometric to a connected nilpotent Lie group, namely its nilshadow \cite[Corollary 98]{cowling2024homogeneous}. An alternative point of view is that such a $\mathcal{G}$ has a normal series $C\trianglelefteq R\trianglelefteq N\trianglelefteq \mathcal{G}$, such that $C$ and $\mathcal{G}/N$ are compact, $R/C$ is a connected solvable Lie group of polynomial growth, and $N/R$ is a discrete nilpotent group \cite[Corollary]{losert1987structure}.

We now begin by defining the concepts appearing in the first sentence of the first paragraph.
Let $G$ denote a nonabelian connected simply connected nilpotent Lie group with Lie algebra $\mathfrak{g}$. Endow $G$ with a left-invariant Carnot--Carath\'eodory distance. This means the following. There are linearly independent left-invariant vector fields $X_1,\cdots,X_k$ that satisfy the H\"ormander condition, i.e., $X_1,\cdots,X_k$ and their Lie brackets span the Lie algebra $\mathfrak{g}$. The pointwise span of $X_1,\cdots,X_k$ forms a left-invariant vector bundle $B$ over $G$, which is a subbundle of the tangent bundle of $G$, and on each fibre of the vector  bundle $B$ we define a left-invariant Euclidean norm\footnote{We could also take any general left-invariant norm on $B$, in which case the distance is Finsler or sub-Finsler, according to whether $\operatorname{span}\left\{X_1,\cdots,X_k\right\}=\mathfrak{g}$ or $\subsetneq \mathfrak{g}$, respectively. Compared to the Riemannian or sub-Riemannian case, the resulting distance $d_G$ is then distorted by a factor of at most $\sqrt{k}$ by the John ellipsoid theorem \cite{john1948extremum}, and the results of this paper follow up to multiplicative factors of  $\sqrt{k}$.} $\left|\cdot \right|$ that has $X_1,\cdots,X_k$ as an orthonormal basis. We define the \emph{Carnot--Carath\'eodory distance associated to $B$ and $\left|\cdot \right|$,} i.e.,
\[
d_G\left(p,q\right)\coloneqq \inf\left\{\int_0^1 \left|\dot{\gamma}\left(t\right) \right|dt : \gamma\in C_{\mathrm{pw}}^\infty \left(\left[0,1\right];G\right),\gamma\left(0\right)=p,\gamma\left(1\right)=q,\dot{\gamma}\in B \right\},\quad p,q\in G,
\]
where $C_{\mathrm{pw}}^\infty \left(\left[0,1\right];G\right)$ consists of the piecewise smooth functions from $\left[0,1\right]$ to $G$. The distance is called a \emph{Riemannian} distance if $\operatorname{span}\left\{X_1,\cdots,X_k\right\}=\mathfrak{g}$, and a \emph{sub-Riemannian} distance if $\operatorname{span}\left\{X_1,\cdots,X_k\right\}\subsetneq \mathfrak{g}$. By the Chow--Rashevskii theorem \cite{chow1940systeme,rashevskii1938connecting}, $d_G\left(x,y\right)$ is finite for every $x,y\in G$. We write
\[
B_r\left(x\right)\coloneqq \left\{y\in G:d_G\left(x,y\right)<r\right\},~B_r\coloneqq B_r\left(e_G\right),\quad x\in G,~r>0,
\]
where $e_G$ is the identity element of $G$. Note that $B_r\left(x\right)=xB_r$ by left-invariance.

Likewise, given a connected and simply connected solvable Lie group $\mathcal{S}$ of polynomial growth, one may endow it with a left-invariant Carnot--Carath\'eodory distance.

Denote by $\exp:\mathfrak{g}\to G$ the Lie group exponential map. Let $\mu$ denote the bi-invariant Haar measure of $G$, which coincides with the push-forward of the Lebesgue measure on $\mathfrak{g}$ by the exponential map.

Let $\Gamma$ denote a finitely generated group of polynomial growth. This means that $\Gamma$ has a finite generating set $S$ that is symmetric ($S=S^{-1}$), and that if we denote by $d_W\left(\cdot,\cdot\right)$ the left-invariant word metric on $\Gamma$ induced by $S$, i.e.,
\[
d_W\left(p,q\right)=\inf\left\{n\ge 0:p=qa_1\cdots a_n,~a_1,\cdots,a_n\in S\right\},
\]
and denote by $B_n^\Gamma=\left\{x\in \Gamma: d_W\left(x,e_\Gamma\right)\le n\right\}$ the corresponding closed ball of radius $n\in \mathbb{N}$, where $e_\Gamma$ is the identity element of $\Gamma$, then the cardinality $\left|B^\Gamma_n\right|$ grows at most polynomially in $n$. We assume in addition that $\Gamma$ is not virtually abelian, i.e., that it has no finite index subgroup isomorphic to $\mathbb{Z}^n$; by \cite[Corollary 1.5]{de2007isometric} $\Gamma$ has no finite index subgroups such that some quotient by a finite normal subgroup is abelian.

Let the Banach space $\left(X,\left\|\cdot\right\|_X\right)$ be a uniformly convex Banach space, that is, for every $\varepsilon\in \left(0,1\right)$ there exists $\delta\in \left(0,1\right)$ such that every $x,y\in X$ with $\left\|x\right\|_X=\left\|y\right\|_X=1$ and $\left\|x-y\right\|_X\ge \varepsilon$ satisfy $\left\|x+y\right\|_X\le 2\left(1-\delta\right)$. By \cite{ball1994sharp,figiel1976moduli,pisier1975martingales}, we can renorm $X$ with an equivalent norm, so that there is some exponent $q\in \left[2,\infty\right)$ for which the Banach space $\left(X,\left\|\cdot\right\|_X\right)$ is $q$-uniformly convex, which means that the $q$-uniform convexity constant of $X$ defined by
\begin{equation}\label{eq:unif-cvx-def}
K_q\left(X\right)\coloneqq \inf\left\{K>0:\forall x,y\in X~\left\|x\right\|_X^q+\frac{1}{K^q}\left\|y\right\|_X^q\le \frac{\left\|x+y\right\|_X^q+\left\|x-y\right\|_X^q}{2}\right\}
\end{equation}
is finite. Henceforth, unless noted otherwise, $X$ will denote a $q$-uniformly convex Banach space for some $q\ge 2$, because the bilipschitz distortion caused by renorming by an equivalent norm does not qualitatively affect the conclusion of our theorems. By \cite{hanner1956uniform} and \cite{ball1994sharp}, the $L^p$ spaces for $1<p<\infty$ are $\max\left\{p,2\right\}$-uniformly convex with
\begin{equation}\label{eq:unifcvxlp}
K_2\left(L^p\right)\le \frac{1}{\sqrt{p-1}},~1<p\le 2,\quad K_p\left(L^p\right)= 1,~p\ge 2.
\end{equation}

A separable metric space $\left({\mathcal{M}},d_{\mathcal{M}}\right)$ is said to embed \emph{($D$-)bilipschitzly} into a normed space $\left(X,\left\|\cdot\right\|_X\right)$ if there is a finite $D\in \left[1,\infty\right)$ and a mapping $f:{\mathcal{M}}\to X$ such that
\[
d_{\mathcal{M}}\left(x,y\right)\le \left\|f\left(x\right)-f\left(y\right)\right\|_X\le D d_{\mathcal{M}}\left(x,y\right) \quad \forall x,y\in {\mathcal{M}}.
\]
The bilipschitz distortion $c_X\left({\mathcal{M}},d_{\mathcal{M}}\right)$ is defined to be the infimum over those $D$ for which such a mapping exists. We write $c_p\left({\mathcal{M}},d_{\mathcal{M}}\right)\coloneqq c_{L^p\left[0,1\right]}\left({\mathcal{M}},d_{\mathcal{M}}\right)$. This satisfies $c_p\left({\mathcal{M}},d_{\mathcal{M}}\right)\ge c_{L^p\left(N,\sigma\right)}\left({\mathcal{M}},d_{\mathcal{M}}\right)$ for any measure space $\left(N,\sigma\right)$, since any separable subspace of $L^p\left(N,\sigma\right)$ is isometric to a subspace of $L^p\left[0,1\right]$ \cite[Fact 1.20]{ostrovskii2013metric}. We call $c_2\left({\mathcal{M}},d_{\mathcal{M}}\right)$ the Euclidean distortion of $\left({\mathcal{M}},d_{\mathcal{M}}\right)$. The space $\left({\mathcal{M}},d_{\mathcal{M}}\right)$ is said to \emph{quasi-isometrically embed} into $\left(X,\left\|\cdot\right\|_X\right)$ if there is a mapping $f:{\mathcal{M}}\to X$, $D\in \left[1,\infty\right)$ and $C>0$ such that
\[
d_{\mathcal{M}}\left(x,y\right)-C \le \left\|f\left(x\right)-f\left(y\right)\right\|_X\le D d_{\mathcal{M}}\left(x,y\right)+C\quad \forall x,y\in {\mathcal{M}}.
\]

Now we have defined all the concepts used in the first sentence of the first paragraph, which, in other words, can now be written concisely as
\[
c_X\left(G\right)=c_X\left(\Gamma\right)=\infty,
\]
for $G$, $\Gamma$, and $X$ given as above. In this paper, we quantify this fact by providing growth rates of the bilipschitz distortion of balls in such groups. First, we have the following theorem for nilpotent Lie groups:

\begin{theorem}\label{mainthm:nilpotentdistortion}
Let $G$ be a nonabelian simply connected nilpotent Lie group endowed with a left-invariant Riemannian distance, and let $X$ be a $q$-uniformly convex Banach space with $q\ge 2$. Then\footnote{We will use the following (standard) asymptotic notation. For $P, Q>0$, the notations $P\lesssim Q$, $Q\gtrsim P$, $P=O\left(Q\right)$, and $Q=\Omega\left(P\right)$ mean that $P\le KQ$ for a universal constant $K\in \left(0,\infty\right)$, and the notation $P\asymp Q$ means $P\lesssim Q$ and $Q\lesssim P$. If we need to allow for dependence on parameters, we indicate this by subscripts. For example, in the presence of auxiliary parameters $\psi$ and $\xi$, the notations $P\lesssim_{\psi,\xi}Q$, $Q\gtrsim_{\psi,\xi}P$, $P=O_{\psi,\xi}\left(Q\right)$, $Q=\Omega_{\psi,\xi}\left(P\right)$ mean that $P\le K\left(\psi,\xi\right)Q$, where $K\left(\psi,\xi\right)\in \left(0,\infty\right)$ may depend only on $\psi$ and $\xi$. Similarly, $P\asymp_{\psi,\xi}Q$ means that $P\lesssim_{\psi,\xi}Q$ and $Q\lesssim_{\psi,\xi}P$.}
\[
c_X\left(B_r\right)\gtrsim_G \frac{\left(\log r\right)^{1/q}}{K_q\left(X\right)},\quad r\ge 2.
\]
\end{theorem}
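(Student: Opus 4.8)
The plan is to derive Theorem~\ref{mainthm:nilpotentdistortion} from a \emph{vertical versus horizontal inequality} for $X$-valued functions on $G$, which is the technical heart of the paper and which I will prove by the vector-valued Littlewood--Paley--Stein method of Lafforgue and Naor; once it is available, the distortion bound is a short deduction. First I fix the vertical direction. Let $\mathfrak{g}=\mathfrak{g}_1\supsetneq\mathfrak{g}_2=[\mathfrak{g},\mathfrak{g}]\supsetneq\dots\supsetneq\mathfrak{g}_s\supsetneq\mathfrak{g}_{s+1}=\{0\}$ be the lower central series of $\mathrm{Lie}(G)$. Since $G$ is nonabelian, $s\ge 2$, and since $[\mathfrak{g},\mathfrak{g}_s]=\{0\}$ the last term $\mathfrak{g}_s$ is central; I choose a unit $Z\in\mathfrak{g}_s$ realized as a nonzero $s$-fold bracket $Z=[W_1,[W_2,[\dots,W_s]]]$, so that the left-invariant field $\partial_Z$ is an $s$-fold commutator of left-invariant fields, and I use the standard ``polynomial distortion'' estimate $d_G(\exp(tZ),e_G)\asymp_G |t|^{1/s}$ for $|t|\ge 1$. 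Writing $\mu$ for Haar measure and $\nabla$ for the left-invariant Riemannian gradient, the inequality to be established is: there is $C=C(G)\ge 1$ and, for each $r\ge 2$, an integer $N_r\asymp_G\log r$ with $x\exp(4^kZ)\in B_{Cr}$ whenever $x\in B_r$ and $1\le k\le N_r$, such that every Lipschitz $f\colon B_{Cr}\to X$ satisfies
\[
\sum_{k=1}^{N_r}\frac{1}{\mu(B_r)}\int_{B_r}\left(\frac{\bigl\|f\bigl(x\exp(4^kZ)\bigr)-f(x)\bigr\|_X}{d_G\bigl(\exp(4^kZ),e_G\bigr)}\right)^{q}d\mu(x)\;\lesssim_G\;\frac{K_q(X)^q}{\mu(B_{Cr})}\int_{B_{Cr}}\bigl\|\nabla f(x)\bigr\|_X^q\,d\mu(x).
\]
What makes this substantial, despite $\nabla$ being the \emph{full} gradient, is that the left side is an $\ell_q$-sum over $\asymp_G\log r$ lacunary scales of \emph{normalized} vertical difference quotients, while the right side contains only a single copy of the gradient, with constant \emph{linear} in $K_q(X)^q$.

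Granting this inequality, the theorem follows at once. Given $r\ge 2C$, put $\rho\coloneqq r/C\ge 2$ and let $f\colon B_r=B_{C\rho}\to X$ be a bilipschitz embedding of distortion arbitrarily close to $c_X(B_r)$, normalized so $d_G(x,y)\le\|f(x)-f(y)\|_X\le c_X(B_r)\,d_G(x,y)$. By left-invariance $\|f(x\exp(4^kZ))-f(x)\|_X\ge d_G(\exp(4^kZ),e_G)$ for all $x,k$, so every summand on the left is $\ge 1$ pointwise and the left side is $\ge N_\rho\gtrsim_G\log r$; and since $f$ is $c_X(B_r)$-Lipschitz, $\|\nabla f\|_X\le c_X(B_r)$ a.e., so the right side is $\lesssim_G K_q(X)^q c_X(B_r)^q$. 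Hence $\log r\lesssim_G K_q(X)^q c_X(B_r)^q$, i.e.\ $c_X(B_r)\gtrsim_G (\log r)^{1/q}/K_q(X)$. For $2\le r<2C$ this is trivial, since $c_X(B_r)\ge 1$ while $(\log r)^{1/q}/K_q(X)\lesssim_G 1$ (the definition of $K_q(X)$ applied with $x=y$ gives $K_q(X)>\tfrac12$), so the bound holds for all $r\ge 2$.

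It remains to prove the vertical versus horizontal inequality. Let $(P_t)_{t>0}=(e^{t\Delta_G})_{t>0}$ be the heat semigroup of the left-invariant Riemannian metric, a symmetric, conservative, submarkovian diffusion semigroup whose kernel obeys Gaussian bounds and whose derivatives satisfy $\|\partial^\alpha P_t g\|_{L^q}\lesssim_{G,\alpha} t^{-|\alpha|/2}\|g\|_{L^q}$. For each $k$ set $\ell_k\coloneqq d_G(\exp(4^kZ),e_G)\asymp_G 4^{k/s}$ and the scale-matched time $\tau_k\coloneqq\ell_k^2$, and split the vertical difference as $[(I-P_{\tau_k})f(x\exp(4^kZ))-(I-P_{\tau_k})f(x)]+[P_{\tau_k}f(x\exp(4^kZ))-P_{\tau_k}f(x)]$. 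Using that $\partial_Z$ is an $s$-fold commutator of horizontal fields — so that, modulo lower-order terms treated the same way, one derivative falls directly on $f$ while the remaining order $s-1$ is absorbed into $P_{\tau_k}$ as a factor $\asymp\tau_k^{-(s-1)/2}$ — together with the relations $4^k=\ell_k^{\,s}$, $\tau_k=\ell_k^2$ and the heat maximal inequality, the smoothed term divided by $\ell_k$ is pointwise $\lesssim_G$ a heat average of $\|\nabla f\|_X$ at scale $\ell_k$; the error term is handled via $I-P_{\tau_k}=\int_0^{\tau_k}(-\Delta_G)P_\sigma\,d\sigma$ and a parallel scale-matched estimate. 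Since the times $\tau_k$ are lacunary, raising to the $q$-th power and summing over $k$ turns both contributions into a discrete Littlewood--Paley--Stein $G$-function of $\nabla f$, which is $\lesssim_G K_q(X)^q\|\nabla f\|_{L^q(B_{Cr};X)}^q$ by the vector-valued Littlewood--Paley--Stein inequality for $(P_t)$ — whose constant is controlled by $K_q(X)$ precisely because $X$ is $q$-uniformly convex (via the semigroup square-function estimates of Lafforgue and Naor, after Mart\'inez, Torrea and Xu, and Pisier). This last inequality is the main obstacle: the scale-matching, commutator, and error bookkeeping must be carried out on a nilpotent Lie group whose Riemannian metric is \emph{not} self-similar, so genuine heat-kernel estimates must be used and errors propagated across the $\asymp_G\log r$ localized scales; and it is the $q$-uniform convexity of $X$ entering at this step that produces the exponent $1/q$ in $(\log r)^{1/q}$. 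By contrast, the reduction carried out above is elementary.
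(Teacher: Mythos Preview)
Your deduction of the distortion bound from the vertical versus horizontal inequality is correct and is essentially the same cutoff-and-plug-in argument the paper gives (Section~\ref{sec:net}); the paper works with the continuous form \eqref{eq:nilpotentVvsHsimple} rather than a lacunary sum, but this is cosmetic.

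The gap is in your sketch of the vertical versus horizontal inequality itself, specifically in the ``smoothed'' term. You claim that $\bigl\|P_{\tau_k}f(x\exp(4^kZ))-P_{\tau_k}f(x)\bigr\|_X/\ell_k$ is pointwise dominated by a heat average of $\|\nabla f\|_X$ at scale $\ell_k$, and that summing these over the lacunary $k$ produces a Littlewood--Paley $G$-function of $\nabla f$. But a heat average $P_{\tau_k}\|\nabla f\|$ has no decay or cancellation in $k$: summing $N_r\asymp_G\log r$ such terms to the $q$-th power simply gives $\log r$ times the maximal function raised to $q$, i.e.\ the trivial $q=\infty$ estimate, not $K_q(X)^q\|\nabla f\|_{L^q}^q$. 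The vector-valued Littlewood--Paley--Stein bound applies to quantities of the form $t\,\partial_tP_tg$, not $P_tg$; your commutator calculation $4^k\cdot\tau_k^{-(s-1)/2}=\ell_k$ is the correct \emph{single-scale} scaling, but it does not by itself produce the inter-scale cancellation needed for $\ell_q$-summability.

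The paper avoids this difficulty by working not with the full heat semigroup on $G$ but with the one-dimensional heat kernel $h_t$ on $\mathbb{R}$, convolved along the central line $\{v^t\}$. This is the main structural difference from your sketch. In Lemma~\ref{lem:first step} the vertical difference is split into three pieces; the analogue of your ``smoothed'' term is $h_{t^2}\!*\!f(gv^t)-h_{t^2}\!*\!f(g)$, and the paper does \emph{not} bound it by an average of $\nabla f$ --- instead it writes $\partial_xh_{t^2}=-\int_{t^2}^\infty\partial_xh_{\tau/2}\!*\!\dot h_{\tau/2}\,d\tau$, forcing a time derivative $\dot h$ to appear, and then applies Hardy's inequality. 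This reduces everything to the single quantity $\int_0^\infty t^{p(2\rho-1)/(2\rho)-1}\|\dot h_t\!*\!f\|^p\,dt$, which is then traded for $\int_0^\infty t^{p-1}\|\dot h_t\!*\!\nabla f\|^p\,dt$ via a separate telescoping argument (Lemmas~\ref{lem:increment} and~\ref{lem:integrated increment}); only at this final stage is the Mart\'inez--Torrea--Xu / Hyt\"onen--Naor $G$-function bound invoked. The centrality of $v$ is used precisely so that the one-dimensional convolution commutes with $\nabla$ (see \eqref{eq:nablaConvolutionCommute}), which is what makes the passage from $f$ to $\nabla f$ possible. Your full-$G$-heat-kernel route is not a priori hopeless, but it would require an analogous mechanism for introducing $\dot P_t$ into the smoothed term, and your sketch does not supply one.
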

Likewise, we obtain the following corollary for solvable Lie groups of polynomial growth, since such a group is isometric to its nilshadow.
\begin{corollary}
Let $\mathcal{S}$ be a simply connected solvable Lie group of polynomial growth endowed with a Riemannian distance and whose nilshadow is nonabelian. Then
\[
c_X\left(B^{\mathcal{S}}_r\right)\gtrsim_\mathcal{S} \frac{\left(\log r\right)^{1/q}}{K_q\left(X\right)},\quad r\ge 2,
\]
where $B^{\mathcal{S}}_r$ denotes the ball of radius $r$ centered at the identity in $\mathcal{S}$.
\end{corollary}
The precise dependence on $G$ of the constant in Theorem \ref{mainthm:nilpotentdistortion} is given in Theorem \ref{thm:precise-dist-nilp} in subsubsection \ref{subsubsec:quant} below. Also, the reason we are only considering Riemannian distances in Theorem \ref{mainthm:nilpotentdistortion} is that if $G$ were given a sub-Riemannian distance, then $c_X\left(B_r\right)=\infty$ for $r>0$ \cite[Theorem 1.4]{huang2020non}. For the sub-Riemannian case, non-coarse-embeddability results are given later in subsection \ref{subsec:coarse-nonembed} such as in Theorem \ref{thm:continuous-snowflake}, and refined quantitative nonembeddability statements are given in subsubsection \ref{subsubsec:net} in terms of nets, see for example Theorem \ref{thm:net} there.

Next, we have the following theorem for finitely generated groups of polynomial growth:
\begin{theorem}\label{mainthm:polygrowthdistortion}
Let $\Gamma$ be a not virtually abelian finitely generated group of polynomial growth endowed with a left-invariant word distance, and let $X$ be a $q$-uniformly convex Banach space with $q\ge 2$. Then
\[
c_X\left(B^\Gamma_n\right)\gtrsim_\Gamma \frac{\left(\log n\right)^{1/q}}{K_q\left(X\right)},\quad n\ge 2.
\]
\end{theorem}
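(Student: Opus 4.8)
The plan is to deduce Theorem \ref{mainthm:polygrowthdistortion} from Theorem \ref{mainthm:nilpotentdistortion} using the structure theory of groups of polynomial growth together with the fact that the distortion lower bound in Theorem \ref{mainthm:nilpotentdistortion} is, up to $\Gamma$-dependent constants, a quasi-isometry invariant. First I would reduce to a torsion-free nilpotent model. By Gromov's theorem \cite{gromov1981groups}, $\Gamma$ has a finite-index nilpotent subgroup $\Lambda$; quotienting by the (finite, normal) torsion subgroup of $\Lambda$ gives a torsion-free finitely generated nilpotent group $N$, and since passing to a finite-index subgroup and quotienting by a finite normal subgroup are both quasi-isometries, $c_X(B^\Gamma_n)\gtrsim_\Gamma c_X(B^N_{cn})$ for some $c=c(\Gamma)\in(0,1)$ and all $n\ge 2$, where $B^N_m$ is a ball in a word metric of $N$ (all word metrics of $N$ being bi-Lipschitz with $\Gamma$-dependent constants). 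By the hypothesis on $\Gamma$ recalled in the introduction via \cite[Corollary 1.5]{de2007isometric}, $N$ is nonabelian, so its Malcev completion \cite{malcev1949class} is a nonabelian simply connected nilpotent Lie group $G$ in which $N$ sits as a cocompact lattice; by the Milnor--\v{S}varc lemma the inclusion $N\hookrightarrow G$ is a quasi-isometry, i.e. there are $L\ge 1$ and $C\ge 0$ depending only on $\Gamma$ with $\tfrac1L d_W(x,y)-C\le d_G(x,y)\le L\,d_W(x,y)+C$ for all $x,y\in N$, and $N$ is $C$-dense in $G$.

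The remaining step is to transfer the lower bound of Theorem \ref{mainthm:nilpotentdistortion} from $G$ to $N$: I want $c_X(B^N_m)\gtrsim_\Gamma c_X(B^G_{m/L'})$ for a constant $L'=L'(\Gamma)$ and all large $m$. Rather than manufacturing a bi-Lipschitz embedding of a ball of $G$ out of one of $N$ — which runs into the familiar difficulty of controlling the small-scale behavior of the target $X$ — I would transfer the metric inequality that actually produces the bound. The proof of Theorem \ref{mainthm:nilpotentdistortion} derives the distortion estimate from the vertical-versus-horizontal inequality established in this paper, which is a family of inequalities each involving only finitely many points of $B^G_r$ together with their pairwise distances, and in which those distances enter only through one-sided bounds; in particular such inequalities are insensitive to the additive error $C$ in the quasi-isometry once all the scales involved exceed a suitable multiple of $C$. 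Restricting any such inequality for $B^G_r$ to the lattice $N$, translating $d_G$ into $d_W$ via the quasi-isometry (and matching the term of the lower central series of $G$ appearing in the inequality with the corresponding term of $N$), yields the analogous inequality on $B^N_m$ with $m\asymp_\Gamma r$ and with constants worsened by a factor depending only on $\Gamma$. Running the argument of Theorem \ref{mainthm:nilpotentdistortion} backwards from this discrete inequality gives $c_X(B^N_m)\gtrsim_\Gamma (\log m)^{1/q}/K_q(X)$. Chaining this with the first paragraph gives $c_X(B^\Gamma_n)\gtrsim_\Gamma (\log n)^{1/q}/K_q(X)$ for $n\ge n_0(\Gamma)$, and for $2\le n<n_0(\Gamma)$ the right-hand side is $\asymp_\Gamma 1\le c_X(B^\Gamma_n)$, which finishes the proof.

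The main obstacle is the transfer step of the second paragraph: one has to make precise the passage from the continuous inequality on $G$ to its discrete counterpart on $N$ — choosing scales so that the additive quasi-isometry constant is negligible, verifying that only boundedly many lattice points are involved near each scale (using that $N$ is a net in the doubling space $G$), and correctly identifying the distinguished subgroup of $N$ with the one in the Lie-group inequality. This bookkeeping is, however, soft compared with the analytic heart of the paper, the vector-valued Littlewood--Paley--Stein estimates behind Theorem \ref{mainthm:nilpotentdistortion}, and it only introduces constants depending on $\Gamma$, which is all that Theorem \ref{mainthm:polygrowthdistortion} permits.
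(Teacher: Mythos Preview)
Your overall strategy matches the paper's: reduce $\Gamma$ to a torsion-free nilpotent $N$, pass to the Mal'cev completion $G$, discretize the vertical-versus-horizontal inequality from $G$ to $N$, and read off the distortion bound. The paper packages the discretized inequality as Theorem~\ref{thm:discrete} and derives Theorem~\ref{mainthm:polygrowthdistortion} from it in a few lines (Section~\ref{sec:net}); the reduction to torsion-free nilpotent and the Mal'cev step are carried out exactly as you describe, at the level of the inequality rather than the distortion, in Section~\ref{sec:proof main}.

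Where your proposal goes wrong is the mechanism of the transfer step. You characterize the vertical-versus-horizontal inequality as ``a family of inequalities each involving only finitely many points of $B^G_r$ together with their pairwise distances'' and propose to obtain the discrete version by ``restricting \ldots\ to the lattice $N$.'' This is not accurate: the inequality of Theorem~\ref{thm:nilpotentVvsH} is an integral inequality over $G$ whose right-hand side is $\|\nabla f\|_{L^p(G;\ell_2^k(X))}$, a gradient norm requiring $f$ to be smooth on the Lie group. There is nothing here to restrict to a discrete set, and the quasi-isometry heuristic of ``all scales exceed a multiple of $C$'' does not apply to a differential quantity. The paper's discretization runs in the opposite direction: given $f:\Gamma\to X$, one \emph{extends} it to a smooth compactly supported $F:G\to X$ via a partition of unity subordinate to the $\Gamma$-translates of a compact set, applies Theorem~\ref{thm:nilpotentVvsH} to $F$, and then compares both sides back to discrete sums over $\Gamma$ using the local Poincar\'e inequalities of Lemmas~\ref{lem:metric bruce} and~\ref{lem:global poincare-simple}. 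The control of $\|\nabla F\|_{L^p}$ by discrete differences of $f$, and of $\|F(hv_\Gamma^t)-F(h)\|$ by $\|f(xv_\Gamma^k)-f(x)\|$ plus local error terms, is where the work lies. You are right that this step is softer than the Littlewood--Paley--Stein estimates, but the specific mechanism you sketch would not yield a proof.
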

Since a vertex transitive graph of polynomial growth is quasi-isometric to a Cayley graph of a nilpotent group \cite{godsil1989note,trofimov1985graphs}, we obtain the following.
\begin{corollary}
	Let $H$ be a vertex transitive graph of polynomial growth not quasi-isometric to $\mathbb{Z}^n$ for any $n\in \mathbb{N}$, and let $X$ be a $q$-uniformly convex Banach space with $q\ge 2$. Then
	\[
	c_X\left(B^H_n\right)\gtrsim_H \frac{\left(\log n\right)^{1/q}}{K_q\left(X\right)},\quad n\ge 2.
	\]
\end{corollary}

As a special case, we obtain sharp bounds on the $L^p$-distortions, the upper bounds following from a version of the Assouad embedding theorem \cite{assouad1983plongements} given as Theorem \ref{thm:lp-assouad} in Appendix \ref{app:assouad}.

\begin{corollary}\label{cor:nilpotentlp}
Let $G$ be a nonabelian simply connected nilpotent Lie group endowed with a left-invariant Riemannian distance. Then for $r\ge 2$,
\[
\begin{cases}
\sqrt{p-1}\sqrt{\log r}\lesssim_G c_p\left(B_r\right)\lesssim_{G} \sqrt{\log r},& 1<p<2,\\
 \phantom{\sqrt{p-1}\sqrt{\log r}\lesssim_G~ } c_p\left(B_r\right)\asymp_{G} \left(\log r\right)^{1/p},& 2\le p<\infty.
\end{cases}
\]
\end{corollary}
Again, we remark that if $G$ were given a sub-Riemannian distance, then $c_p\left(B_r\right)=\infty$ for $r>0$.

The same result is true for simply connected solvable Lie groups of polynomial growth equipped with a Riemannian distance and whose nilshadow is nonabelian.

\begin{corollary}\label{cor:polygrowthlp}
Let $\Gamma$ be a not virtually abelian finitely generated group of polynomial growth endowed with a left-invariant word distance. Then for $n\ge 2$,
\[
\begin{cases}
\sqrt{p-1}\sqrt{\log n}\lesssim_\Gamma c_p\left(B^\Gamma_n\right)\lesssim_{\Gamma} \sqrt{\log n},& 1<p<2,\\
 \phantom{\sqrt{p-1}\sqrt{\log n}\lesssim_G~ } c_p\left(B^\Gamma_n\right)\asymp_{\Gamma} \left(\log n\right)^{1/p},& 2\le p<\infty.
\end{cases}
\]
\end{corollary}
Likewise, we obtain the same result for vertex transitive graphs of polynomial growth not quasi-isometric to any $\mathbb{Z}^n$.

The bounds of Corollaries \ref{cor:nilpotentlp} and \ref{cor:polygrowthlp} give us the precise asymptotic behavior, namely $(\log r)^{1/\max\{p,2\}}$ and $(\log n)^{1/\max\{p,2\}}$, with respect to $r$ and $n$, up to constant factors for $X=L^p$. Therefore, the lower bounds of Theorems \ref{mainthm:nilpotentdistortion} and \ref{mainthm:polygrowthdistortion} are the best asymptotic result we can attain in the class of $q$-uniformly convex spaces (with possible room for improvement in the constant factors). We remark that the conclusions of Corollaries \ref{cor:nilpotentlp} and \ref{cor:polygrowthlp} hold with $L^p[0,1]$ replaced by the Schatten class $S_p$, whose modulus of uniform convexity was computed in \cite{tomczak1974moduli}.

We will prove Theorems \ref{mainthm:nilpotentdistortion} and \ref{mainthm:polygrowthdistortion} and Corollaries \ref{cor:nilpotentlp} and \ref{cor:polygrowthlp} in Section \ref{sec:net}, after developing the necessary machinery.

We remark that the special case of Theorem \ref{mainthm:polygrowthdistortion} when $\Gamma=\mathbb{H}_\mathbb{Z}^3$, the 3-dimensional discrete Heisenberg group, was proven by Lafforgue and Naor \cite{lafforgue2014vertical}. Here, the discrete Heisenberg groups $\mathbb{H}^{2k+1}_\mathbb{Z}$, $k\in \mathbb{Z}_{>0}$, are defined as the groups with word relations as follows:
\begin{align*}
\mathbb{H}^{2k+1}_\mathbb{Z}= \Big\langle a_1,\cdots,a_k,b_1,\cdots,b_k,c\,\Big|\,& \forall i\left(\left[a_i,b_i\right]=c,~\left[a_i,c\right]=\left[b_i,c\right]=e_{\mathbb{H}_\mathbb{Z}^{2k+1}}\right) \\
& \wedge \forall i,j\left(i\neq j\rightarrow \left[a_i,a_j\right]=\left[b_i,b_j\right]=\left[a_i,b_j\right]=e_{\mathbb{H}_\mathbb{Z}^{2k+1}}\right)\Big\rangle.
\end{align*}

Previously, a weaker version of Theorem \ref{mainthm:polygrowthdistortion} was given by Li \cite[Theorem 1.4]{li2014coarse}, who proved that if $\Gamma$ is a finitely generated nonabelian torsion-free nilpotent group, and $X$ is a uniformly convex Banach space, then there exists $c>0$ depending on $\Gamma$ and $X$ such that
\[
c_X\left(B^\Gamma_n\right)\gtrsim_{\Gamma,X}\left(\log n\right)^{c}, \quad n \ge 2.
\]
A stronger bound for Euclidean targets was given for cocompact lattices $\Gamma$ of Carnot groups by Gartland \cite[Corollary 1.6]{gartland2020estimates}: if $G$ is a Carnot group of step $s$ (to be defined in subsubsection \ref{subsubsec:carnot-qual}) and $\Gamma$ is a cocompact lattice of $G$, then
\[
c_{\mathbb{R}^d}\left(B_n^\Gamma\right)\gtrsim_{\Gamma,d} \frac{\left(\log n\right)^{\frac 12-\frac 1{2s}}}{\left(\log\log n\right)^{\frac 12+\frac 1{2s}}},\quad n\ge 3,
\]
under the additional restriction that $G$ contains a copy of the model filiform group $J^{s-1}\left(\mathbb{R}\right)$ of step $s$ (this requirement is automatically fulfilled if $s\le 3$; see right after Definition \ref{def:carnot} in subsubsection \ref{subsubsec:carnot-qual} for the definition of $J^{s-1}\left(\mathbb{R}\right)$). Compared to this result, Corollary \ref{cor:polygrowthlp} removes the exponent gap of $\frac 1{2s}$, has no lower order factor, and does not assume an embedding of $\Gamma$ into a Carnot group.

\bigskip
\noindent{\bf Roadmap.} The rest of the introduction is organized as follows. In subsection \ref{subsec:VvsH}, we describe the ``vertical versus horizontal inequalities'' which lead to the quantitative nonembeddability statements of Theorems \ref{mainthm:nilpotentdistortion} and \ref{mainthm:polygrowthdistortion}. In subsection \ref{subsec:coarse-nonembed}, we give nonembeddability results with respect to other coarse embeddings, by considering snowflake embeddings and compression rates of Lipschitz functions. In subsection \ref{subsec:net}, we provide refined distortion estimates by elucidating the dependence of the constants on $G$, and by discussing nonembeddability of nets.

In subsection \ref{subsec:carnot}, we present our results in the setting of Carnot groups. In subsection \ref{subsec:dorronsoro}, we discuss in detail Dorronsoro's theorem on Carnot groups (Theorem \ref{lpgenthm}) and describe the resulting more refined fractional vertical versus horizontal inequalities (Theorem \ref{VvsH}). In subsection \ref{subsec:l1}, we provide conjectural quantitative nonembeddability statements of nonabelian simply connected nilpotent Lie groups into $L^1$ (Conjecture \ref{conj:finite}) to suggest a candidate behavior for the $L^1$-distortion of balls in Riemannian nilpotent Lie groups and finitely generated groups of polynomial growth (Question \ref{ques:polyL1}).

After the introduction, the rest of this paper is organized as follows. We begin by proving the ``continuous vertical versus horizontal inequalities'' of Theorems \ref{thm:nilpotentVvsH} and \ref{thm:carnotVvsH} in Section \ref{sec:cvx}, and we derive the ``discrete vertical versus horizontal inequality'' of Theorem \ref{thm:discrete} in Section \ref{sec:proof main}. We then prove the distortion bounds of Theorems \ref{mainthm:nilpotentdistortion}, \ref{mainthm:polygrowthdistortion}, \ref{thm:continuous-snowflake}, \ref{thm:discrete-snowflake}, \ref{thm:precise-dist-nilp}, \ref{thm:precise-holder-nilp}, \ref{thm:net}, \ref{thm:net-snowflake}, \ref{thm:netdistortion}, and \ref{thm:netdistortion-snowflake} and Corollaries \ref{cor:nilpotentlp}, \ref{cor:polygrowthlp}, and \ref{cor:growth-function-char} in Section \ref{sec:net}. In Section \ref{sec:amenable}, we prove Theorem \ref{thm:amenablesublinear} by analyzing cocycles. In Section \ref{sec:dorronsoro}, we then prove Theorem \ref{lpgenthm}, Dorronsoro's theorem on Carnot groups. Finally, in Section \ref{sec:VvsH}, we derive the vertical versus horizontal inequality of Theorem \ref{thm:lp} and its fractional variant Theorem \ref{VvsH} from Theorem \ref{lpgenthm}.


\subsection{Vertical versus horizontal inequalities}\label{subsec:VvsH}

We obtain Theorems \ref{mainthm:nilpotentdistortion} and \ref{mainthm:polygrowthdistortion} by proving ``vertical versus horizontal inequalities'' on the groups $G$ and $\Gamma$, namely Theorems \ref{thm:discrete} and \ref{thm:nilpotentVvsH}. We begin by stating the vertical versus horizontal inequality on finitely generated groups of polynomial growth $\Gamma$, namely Theorem \ref{thm:discrete}, since it is easier to state than Theorem \ref{thm:nilpotentVvsH} as it requires less terminology.

\begin{theorem}\label{thm:discrete}
Let $\Gamma$ be a not virtually abelian finitely generated group of polynomial growth. There exist $v_\Gamma\in \Gamma$, $\rho\in \mathbb{N}$ with $\rho\ge 2$, and $c=c\left(\Gamma\right)\in \mathbb{N}$ such that the following is true. First, $d_W\left(v_\Gamma^n,e_\Gamma\right)\asymp_\Gamma n^{1/\rho}$ for $n\in \mathbb{N}$. Second, let $q\in \left[2,\infty\right)$ and $p\in (1, q]$. Suppose that $\left(X,\left\|\cdot\right\|_X\right)$ is a Banach space satisfying $K_q\left(X\right)<\infty$.  Then for every $n\in \mathbb{N}$ and every $f:\Gamma\to X$ we have
\begin{align}\label{eq:desired local pq}
\begin{aligned}
&\left(\sum_{k=1}^{n^\rho}\frac{1}{k^{1+q/\rho}}\left(\sum_{x\in B^\Gamma_n} \left\|f\left(xv_\Gamma^k\right)-f\left(x\right)\right\|_X^p\right)^{q/p}\right)^{1/q}\\
&\lesssim_{\Gamma,v_\Gamma} \max\left\{\left(p-1\right)^{(1/q)-1},K_q\left(X\right)\right\} 
\left(\sum_{x\in B^\Gamma_{cn}} \sum_{a\in S}\left\|f\left(xa\right)-f\left(x\right)\right\|^p_X\right)^{1/p}.
\end{aligned}
\end{align}
In particular, when $p=q$,
\begin{equation}\label{eq:main}
\left(\sum_{k=1}^{n^\rho}\sum_{x\in B^\Gamma_n}\frac{ \left\|f\left(xv_\Gamma^k\right)-f\left(x\right)\right\|_X^q}{k^{1+q/\rho}}\right)^{1/q}\lesssim_{\Gamma,v_\Gamma}
K_q\left(X\right)\left( \sum_{x\in B^\Gamma_{cn}} \sum_{a\in S}\left\|f\left(xa\right)-f\left(x\right)\right\|^q_X\right)^{1/q}.
\end{equation}
\end{theorem}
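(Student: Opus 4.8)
The plan is to derive the discrete inequality \eqref{eq:desired local pq} from its continuous counterpart \eqref{eq:nilpotentVvsH} by a transference (discretization) argument, exploiting that $\Gamma$ is, up to finite index, a cocompact lattice in a nonabelian simply connected nilpotent Lie group. First I would set up the reduction. By Gromov's polynomial growth theorem and Malcev's theorem, $\Gamma$ has a finite-index subgroup $\Gamma_0$ that is finitely generated torsion-free nilpotent and sits as a cocompact lattice in a simply connected nilpotent Lie group $G$; since $\Gamma$ is not virtually abelian, $\Gamma_0$, and hence $G$, is nonabelian. Equip $G$ with a left-invariant Riemannian metric $d_G$ making a chosen basis $X_1,\dots,X_k$ of $\mathfrak{g}$ orthonormal. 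By the Milnor--\v{S}varc lemma $d_W|_{\Gamma_0}$ is quasi-isometric to $d_G|_{\Gamma_0}$, so the balls $B^\Gamma_n$, the $\Gamma_0$-balls and the $G$-balls $B_{\Theta(n)}$ are comparable on large scales. Decomposing $\Gamma$ into the finitely many cosets of $\Gamma_0$, writing each generator of a chosen generating set of $\Gamma_0$ as a bounded word in $S$, and telescoping, one reduces \eqref{eq:desired local pq} for $(\Gamma,S)$ to the same inequality for $\Gamma_0$ at the cost of changing only the implied constants and $c$; so assume $\Gamma=\Gamma_0$ is a lattice in $G$. Let $s\ge2$ be the nilpotency step of $G$; pick $v_\Gamma\in\Gamma\setminus\{e_\Gamma\}$ lying in the last nonzero term of the lower central series of $\Gamma$ (so $v_\Gamma=\exp(w_0)$ with $w_0$ in the last nonzero term of the lower central series of $\mathfrak{g}$, which is central), whence $d_G(v_\Gamma^t,e_G)\asymp t^{1/s}$ and $d_W(v_\Gamma^n,e_\Gamma)\asymp_\Gamma n^{1/s}$; after a harmless linear reparametrization we may use $v:=v_\Gamma^{1/C_0^s}\in Z(G)$, which satisfies $d_G(v^t,e_G)\le t^{1/s}$, so Theorem \ref{thm:nilpotentVvsH} applies with $\rho=s$.

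Next comes the smoothing and cutoff. Fix a smooth partition of unity $\{\psi_\gamma\}_{\gamma\in\Gamma}$ with $\psi_\gamma$ supported in the unit ball $B_1(\gamma)$, $\sum_\gamma\psi_\gamma\equiv1$, $\|\nabla\psi_\gamma\|_\infty\lesssim1$, and uniformly bounded overlap. Given $f:\Gamma\to X$ and $n\in\mathbb{N}$, set $g:=\sum_{\gamma\in\Gamma}\psi_\gamma f(\gamma)$ (locally finite, hence smooth), choose a smooth $\eta$ with $\eta\equiv1$ on $B_{Kn}$, $\mathrm{supp}\,\eta\subseteq B_{cn}$, $\|\nabla\eta\|_\infty\lesssim1/n$ for a large $K=K(G)$ and $c=c(G)>K$, let $g_A$ be the average of $g$ over the annulus $A:=B_{cn}\setminus B_{Kn}$, and put $\tilde f:=\eta\cdot(g-g_A)$, which is smooth and compactly supported, so \eqref{eq:nilpotentVvsH} applies to it. For the gradient side, $\nabla\tilde f=\eta\,\nabla g+(\nabla\eta)(g-g_A)$; since $\sum_\gamma\nabla\psi_\gamma\equiv0$ we get at each $h$ that $\|\nabla g(h)\|_{\ell_2^k(X)}\lesssim\max\{\|f(\gamma)-f(\gamma')\|_X:\gamma,\gamma'\in\Gamma\cap B_{O(1)}(h)\}$, which telescopes along a bounded-length Cayley path into $\sum_{\text{edges near }h}\|f(\cdot a)-f(\cdot)\|_X$; integrating with bounded overlap controls $\int_G\eta^p\|\nabla g\|_{\ell_2^k(X)}^p\,d\mu$ by $\sum_{x\in B^\Gamma_{cn}}\sum_{a\in S}\|f(xa)-f(x)\|_X^p$. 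For the boundary term, the (vector-valued, $L^p$) local Poincar\'e inequality on the doubling Riemannian manifold $G$ --- with $h\mapsto\sum_{\text{edges near }h}\|f(\cdot a)-f(\cdot)\|_X$ serving as an upper gradient of $g$ --- gives $\int_A\|g-g_A\|_X^p\,d\mu\lesssim n^p\sum_{x\in B^\Gamma_{cn}}\sum_{a\in S}\|f(xa)-f(x)\|_X^p$, which together with $\|\nabla\eta\|_\infty\lesssim1/n$ yields the same bound. Hence $\big(\int_G\|\nabla\tilde f\|_{\ell_2^k(X)}^p\,d\mu\big)^{1/p}\lesssim\big(\sum_{x\in B^\Gamma_{cn}}\sum_{a\in S}\|f(xa)-f(x)\|_X^p\big)^{1/p}$.

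Finally I would handle the square-function side and conclude. Bin $t\in(0,\infty)$ into intervals of bounded length so that the $k$-th interval $I_k$ has $d_G(v^t,v_\Gamma^k)=O(1)$ for $t\in I_k$; for $k\le n^s$ and $h\in B_{2n}$, both $h$ and $hv^t$ lie in $B_{Kn}$, where $\tilde f=g-g_A$, so for $h$ in a unit ball around a lattice point $\gamma$, using that $g$ is near any point a convex combination of values of $f$ at bounded distance, $\|f(\gamma v_\Gamma^k)-f(\gamma)\|_X\lesssim\|\tilde f(hv^t)-\tilde f(h)\|_X+\mathrm{err}_\gamma$ with $\mathrm{err}_\gamma\lesssim\sum_{\text{edges near }\gamma}\|f(\cdot a)-f(\cdot)\|_X+\sum_{\text{edges near }\gamma v_\Gamma^k}\|f(\cdot a)-f(\cdot)\|_X$. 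Raising to the $p$-th power, averaging over $h$ in a separated unit ball around $\gamma$ (pairwise disjoint since $\Gamma$ is uniformly discrete in $G$), summing over $\gamma\in B^\Gamma_n$, applying the exterior exponent $\max\{p,q\}/p\ge1$ together with $(a+b)^\theta\lesssim_\theta a^\theta+b^\theta$, integrating $\frac{dt}{t}$ over each $I_k$ (where $t^{-\max\{p,q\}/s}\asymp k^{-\max\{p,q\}/s}$ and $\int_{I_k}\frac{dt}{t}\asymp\frac1k$), and summing over $k\le n^s$, one gets that the left-hand side of \eqref{eq:desired local pq} is $\lesssim$ the left-hand side of \eqref{eq:nilpotentVvsH} evaluated at $\tilde f$ plus an error term; the error, bounded using $\sum_k k^{-1-\max\{p,q\}/s}<\infty$ and bounded overlap, is $\lesssim\big(\sum_{x\in B^\Gamma_{cn}}\sum_{a\in S}\|f(xa)-f(x)\|_X^p\big)^{1/p}$, and the truncation at $k=n^s$ is forced because for larger $k$ the point $hv^t$ leaves $\mathrm{supp}\,\eta$. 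Feeding in \eqref{eq:nilpotentVvsH} and the gradient bound, and absorbing the error (legitimate since $\max\{(p-1)^{1/q-1},K_q(X)\}\ge K_q(X)\ge1$), gives \eqref{eq:desired local pq}; \eqref{eq:main} is the case $p=q$, where $(q-1)^{1/q-1}\le1\le K_q(X)$.

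The step I expect to be the main obstacle is the boundary term in the cutoff estimate: a cutoff is unavoidable because Theorem \ref{thm:nilpotentVvsH} requires compact support, and the factor $\|\nabla\eta\|_\infty\lesssim1/n$ multiplies $g$ itself rather than its increments, so one must convert the $L^p$ oscillation of $g$ over a scale-$n$ annulus into the discrete Dirichlet energy; this is exactly where the vector-valued local Poincar\'e inequality on $G$ enters and is the reason the right-hand side of \eqref{eq:desired local pq} must run over the enlarged ball $B^\Gamma_{cn}$ rather than $B^\Gamma_n$. A secondary, purely bookkeeping difficulty is the passage from $\Gamma$ to the lattice subgroup $\Gamma_0$ and the matching of the two generating sets and of the one-parameter subgroup $\{v^t\}_{t>0}$ with the cyclic group $\{v_\Gamma^k\}_{k\in\mathbb{N}}$, which must be arranged so that all constants depend only on $G$, $\Gamma$ and $v_\Gamma$.
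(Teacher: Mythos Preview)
Your proposal is correct and follows the same transference strategy as the paper: pass to a torsion-free nilpotent lattice in a simply connected nilpotent Lie group, smooth $f$ via a $\Gamma$-adapted partition of unity, apply the continuous inequality \eqref{eq:nilpotentVvsH}, and localize with a cutoff whose boundary term is absorbed by a Poincar\'e inequality. The organization differs in two respects worth noting. First, the paper separates the argument into two stages: it first proves a \emph{global} discrete inequality for finitely supported $f:\Gamma\to X$ (Theorem \ref{thm:pq-global}) by applying \eqref{eq:nilpotentVvsH} to the partition-of-unity extension, and only then localizes by multiplying $f$ itself by a \emph{discrete} $1/n$-Lipschitz cutoff $\xi:\Gamma\to[0,1]$; you instead localize in one shot at the continuous level by cutting off the smoothed function. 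Second, and correspondingly, the paper controls the boundary term with the elementary discrete Poincar\'e inequalities of Lemmas \ref{lem:metric bruce} and \ref{lem:global poincare-simple} (valid in any finitely generated group), whereas you invoke the continuous local Poincar\'e inequality on $G$. Both routes work; the paper's is slightly more modular (the global statement has independent interest) and avoids the continuous Poincar\'e input. On the reduction step, you pass directly to a torsion-free finite-index subgroup, which is legitimate for finitely generated nilpotent groups; the paper instead quotients a nilpotent finite-index subgroup by its finite torsion subgroup, which requires an additional averaging argument but leads to the same place.
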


\begin{remark}
Our choice of $\rho$ and $v_\Gamma$ in the proof of Theorem \ref{thm:discrete} is as follows. By \cite{gromov1981groups}, $\Gamma$ admits a subgroup $\Gamma'$ of finite index that is nilpotent. Let $T$ be the torsion subgroup of $\Gamma'$ and consider the quotient subgroup $\Gamma''=\Gamma'/T$. We may take any $v''\in Z\left(\Gamma''\right)\setminus\left\{e_{\Gamma''}\right\}$ with the property that $d_W\left(\left(v''\right)^n,e_{\Gamma''}\right)\asymp_{\Gamma''}n^{1/\rho}$, $n\ge 2$, for some integer $\rho\ge 2$: for example, we may take $\rho=s$ to be the nilpotency step of $\Gamma''$ and $v''\in \underbrace{\left[\Gamma'',\left[\Gamma'',\cdots,\Gamma''\right]\right]}_{s\mathrm{~times}}\setminus\left\{e_{\Gamma''}\right\}$. Take $v_\Gamma$ to be any representative of $v''$ in $\Gamma$. For a detailed account of these choices, see the proof of Theorem \ref{thm:discrete} in Section \ref{sec:proof main}.
\end{remark}

\begin{remark}
    Changing to another finite symmetric generating set $S$ of $\Gamma$ or changing the vertical element $v_\Gamma$ affects only $\rho,c\in \mathbb{N}$ and the implied constant in the inequalities \eqref{eq:desired local pq} and \eqref{eq:main}.
\end{remark}

Theorem \ref{thm:discrete}, the vertical versus horizontal inequality on finitely generated groups of polynomial growth, will follow from Theorem \ref{thm:nilpotentVvsH}, the vertical versus horizontal inequality on nilpotent Lie groups, via a discretization argument.

Before we state Theorem \ref{thm:nilpotentVvsH}, we first establish some terminology. Recalling the linearly independent left-invariant vector fields $X_1,\cdots, X_k$ that, along with their brackets, span the Lie algebra $\mathfrak{g}$ of $G$ at $e_G$, we write the horizontal gradient of a Fr\'echet differentiable function $f:G\to X$ as the function $\nabla f:G\to X^k$ given by
\[
\nabla f\coloneqq \left(X_1f,\cdots,X_kf\right).
\]
Let $C_c^\infty\left(G;X\right)$ denote the space of compactly supported, infinitely Fr\'echet differentiable functions $f:G\to X$. For $p\ge 1$, we define the Sobolev norm $\|\cdot\|_{W_0^{1,p}\left(G;X\right)}$ on $C_c^\infty\left(G;X\right)$ as
\[
\left\|f\right\|_{W_0^{1,p}\left(G;X\right)}\coloneqq \left\|f\right\|_{L^{p}\left(G;X\right)}+\left\|\nabla f\right\|_{L^{p}\left(G;\ell_2^k\left(X\right)\right)},
\]
where $L^p\left(G;X\right)$ denotes the Lebesgue--Bochner space $L^p\left(G,\mu;X\right)$, and we define the Sobolev space
\[
W_0^{1,p}\left(G;X\right)\coloneqq \mathrm{the~completion~of~}\left(C_c^\infty\left(G;X\right),\|\cdot\|_{W_0^{1,p}\left(G;X\right)}\right).
\]
By definition, each $g\in W_0^{1,p}\left(G;X\right)$ signifies a sequence $\left\{f_n\right\}\subset C_c^\infty\left(G;X\right)$ that is Cauchy in the norm $\|\cdot\|_{W_0^{1,p}\left(G;X\right)}$, or equivalently Cauchy in the seminorms $\|\cdot\|_{L^{p}\left(G;X\right)}$ and $\|\nabla\left( \cdot\right)\|_{L^{p}\left(G;\ell_2^k\left(X\right)\right)}$. By completeness of $L^p$, $f_n$ converges in $L^{p}\left(G;X\right)$ to a limit, which we also denote by $g\in L^{p}\left(G;X\right)$, and $\nabla f_n$ converges in $L^{p}\left(G;\ell_2^k\left(X\right)\right)$ to a limit, which we denote by $\nabla g\in L^{p}\left(G;\ell_2^k\left(X\right)\right)$.\footnote{The more usual approach would be to define, for $f\in L^1_{\mathrm{loc}}\left(G;X\right)$, the distributional derivatives $X_if$ as linear functionals from $C_c^\infty\left(G;\mathbb{R}\right)$ to $X$, and to define the Sobolev space $W^{1,p}\left(G;X\right)$ as the space of functions $f\in L^{p}\left(G;X\right)$ for which each $X_if$ arises as a function in $L^{p}\left(G;X\right)$. However, we suspect that a Meyers--Serrin type theorem holds, namely $W_0^{1,p}\left(G;X\right)=W^{1,p}\left(G;X\right)$, since it is true in specific instances: for an open set $\Omega\subset \mathbb{R}^n$ equipped with a sub-Riemannian structure, we have $W^{1,p}_0\left(\Omega;\mathbb{R}\right)=W^{1,p}\left(\Omega;\mathbb{R}\right)$  \cite[Theorem 11.9]{hajlasz2000sobolev}, and for an open set $\Omega\subset \mathbb{R}^n$ equipped with the usual Euclidean structure and $X$ a Banach space, we have $W_0^{1,p}\left(\Omega;X\right)=W^{1,p}\left(\Omega;X\right)$ \cite[Theorem 4.11]{kreuter2015sobolev}. It seems likely that combining these two approaches (and of course using a partition of unity argument to deal with the manifold structure) would prove a Meyers--Serrin-type theorem stating that $W_0^{1,p}\left(M;X\right)=W^{1,p}\left(M;X\right)$ for $M$ a sub-Riemannian manifold and $X$ a Banach space. We leave this investigation to future work.

We remark that there are further generalizations of the notion of a Sobolev space, for example, the Sobolev--Reshetnyak space $R^{1,p}\left(G;X\right)$ \cite{reshetnyak1997sobolev} defined using post-compositions with Lipschitz functions $X\to \mathbb{R}$, and the Newton--Sobolev space $N^{1,p}\left(G;X\right)$ \cite[Chapter 7.1]{heinonen2015sobolev} defined using upper gradients. By \cite[Theorem 7.1.20]{heinonen2015sobolev}, since $X$ is a Banach space, we have $R^{1,p}\left(G;X\right)=N^{1,p}\left(G;X\right)$. By \cite[Theorem 4.6]{caamano2021sobolev}, for $\Omega\subset \mathbb{R}^n$ open with Euclidean structure, $W^{1,p}\left(\Omega;X\right)=R^{1,p}\left(\Omega;X\right)$ if and only if $X$ has the Radon--Nikod\'ym property. It seems likely that extending their proof method will yield that for $M$ a sub-Riemannian manifold, $W^{1,p}\left(M;X\right)=R^{1,p}\left(M;X\right)$ if and only if $X$ has the Radon--Nikod\'ym property. We leave this investigation to future work.\label{foot:meyers-serrin}}

For $f\in L^{p}\left(G;X\right)$ define its \emph{Cheeger $p$-energy} as
\[
\left\|f\right\|_{\dot{Ch}_0^{1,p}\left(G;X\right)}\coloneqq \inf_{\left\{f_n\right\}_{n=1}^\infty} \liminf_{n\to\infty}\left\|\nabla f_n\right\|_{L^p\left(G;\ell_2^k\left(X\right)\right)},
\]
where the infimum is taken over all sequences $\left\{f_n\right\}_{n=1}^\infty\subset C_c^\infty\left(G;X\right)$ such that $f_n\to f$ in $L^p\left(G;X\right)$. Define the Cheeger $\left(1,p\right)$-Sobolev space by\footnote{By definition of $W_0^{1,p}\left(G;X\right)$, it is equivalent to define $\left\|\cdot\right\|_{\dot{Ch}_0^{1,p}\left(G;X\right)}$ by taking the sequence $\left\{f_n\right\}_{n=1}^\infty$ in $W_0^{1,p}\left(G;X\right)$. In contrast, the original definition in \cite{cheeger1999differentiability} took a sequence $\left\{\left(f_n,g_n\right)\right\}_{n=1}^\infty\subset N^{1,p}\left(G;X\right)\times L^p\left(G\right)$ of pairs, where $f_n$ is a sequence in the Newton--Sobolev space $N^{1,p}\left(G;X\right)$ converging to $f$ in $L^p\left(G;X\right)$ and $g_n\in L^p\left(G\right)$ is an upper gradient of $f_n$, and defined
\[
\left\| f\right\|_{\dot{Ch}^{1,p}\left(G;X\right)}\coloneqq \inf_{\left\{\left(f_n,g_n\right)\right\}_{n=1}^\infty} \liminf_{n\to\infty}\left\|g_n\right\|_{L^p\left(G\right)}.
\]
Since $X$ is a Banach space, $Ch^{1,p}\left(G;X\right)=N^{1,p}\left(G;X\right)$ \cite[Theorem 10.1.1]{heinonen2015sobolev}. Given a sub-Riemannian manifold $M$ and a Banach space $X$ with the Radon--Nikod\'ym property, if it were the case that $W_0^{1,p}\left(M;X\right)=W^{1,p}\left(M;X\right)=N^{1,p}\left(M;X\right)$ as suggested in footnote \ref{foot:meyers-serrin}, then it would follow from the above equivalences that $W_0^{1,p}\left(M;X\right)=W^{1,p}\left(M;X\right)=R^{1,p}\left(M;X\right)=N^{1,p}\left(M;X\right)=Ch_0^{1,p}\left(M;X\right)=Ch^{1,p}\left(M;X\right)$.
\label{foot:Cheeger}}
\[
Ch_0^{1,p}\left(G;X\right)\coloneqq \left\{f\in L^p\left(G;X\right):\left\|f\right\|_{\dot{Ch}_0^{1,p}\left(G;X\right)}<\infty\right\}.
\]

Compactly supported Lipschitz functions $f:G\to X$, to which we will apply Theorem \ref{thm:nilpotentVvsH}, belong to $Ch_0^{1,p}\left(G;X\right)$. Denote the local Lipschitz constant of $f$ at $x\in G$ by
\[
\operatorname{lip}_x\left(f\right)\coloneqq \limsup_{r\to 0} \sup_{y,z\in B_r\left(x\right),y\neq z}\frac{\left\|f(y)-f(z)\right\|_X}{d_G\left(y,z\right)},
\]
and define the function $\operatorname{lip}\left(f\right):G\to [0,\infty]$ as
\[
\operatorname{lip}\left(f\right)\left(x\right)\coloneqq\operatorname{lip}_x\left(f\right),\quad x\in G.
\]
Since $G$ is given a Carnot--Carath\'eodory distance, the Lipschitz constant of $f$ is given by
\[
\operatorname{Lip}\left(f\right)\coloneqq \sup_{x,y\in G,x\neq y}\frac{\left\|f(x)-f(y)\right\|_X}{d_G\left(x,y\right)}=\sup_{x\in G}\operatorname{lip}_x\left(f\right).
\]
If $f:G\to X$ is continuously differentiable, then
\begin{equation}\label{ineq:lip-nabla}
    \frac{1}{\sqrt{k}}\left\|\nabla f(x)\right\|_{\ell_2^k\left(X\right)}\le \operatorname{lip}_x\left(f\right)\le \left\|\nabla f(x)\right\|_{\ell_2^k\left(X\right)},\quad x\in G.
\end{equation}
This can be used to prove the claim made in the first sentence of this paragraph: more precisely, if $f:G\to X$ is compactly supported Lipschitz, then $f\in Ch^{1,p}_0\left(G;X\right)$ with
\begin{equation}\label{eq:cheeger-lipschitz}
    \left\|f\right\|_{\dot{Ch}_0^{1,p}\left(G;X\right)}\le \sqrt{k}\operatorname{Lip}\left(f\right)\mu\left(\operatorname{supp}f\right)^{1/p}<\infty,
\end{equation}
where $\operatorname{supp}f\coloneqq \overline{\left\{x\in G:f\left(x\right)\neq 0\right\}}$ is the support of $f$. See Appendix \ref{app:lipnabla} for a proof of these statements.

Denote by $s$ the nilpotency step of $G$, i.e., the largest integer $s$ such that $\underbrace{\left[\mathfrak{g},\left[\mathfrak{g},\cdots,\mathfrak{g}\right]\right]}_{s\mathrm{~times}}\neq 0$. Given $0\neq v\in \mathfrak{g}$, let $s'\in \left[1,s\right]\cap \mathbb{N}$ be the integer such that
\begin{equation}\label{eq:global-degree}
v\in \underbrace{\left[\mathfrak{g},\left[\mathfrak{g},\cdots,\mathfrak{g}\right]\right]}_{s'\mathrm{~times}}\setminus \underbrace{\left[\mathfrak{g},\left[\mathfrak{g},\cdots,\mathfrak{g}\right]\right]}_{s'+1\mathrm{~times}},
\end{equation}
and let $r'$ be the smallest positive integer such that
\begin{equation}\label{eq:local-degree}
v\in V+\cdots+\underbrace{\left[V,\left[V,\cdots,V\right]\right]}_{r'\mathrm{~times}},
\end{equation}
where $V=\operatorname{span}\left\{X_1,\cdots,X_k\right\}$. By \cite{pansu1983croissance}, \cite[Proposition 2.13]{breuillard2012nilpotent} and \cite[Theorem 2.1]{jean2014control}, we have
\begin{equation}\label{distAsymp}
d_G\left(\exp\left(tv\right),e_G\right)\asymp_G \begin{cases}
t^{1/s'},&t\ge 1,\\
t^{1/r'},&0\le t<1.
\end{cases}
\end{equation}
(See Appendix \ref{app:doubling} for a detailed account of this formula.) In particular, by possibly replacing $v$ by a scalar multiple of $v$, we may assume in addition
\begin{equation}\label{eq:sharpDistIneq}
d_G\left(\exp\left(tv\right),e_G\right)\le \begin{cases}
t^{1/s'},&t\ge 1,\\
t^{1/r'},&0\le t<1.
\end{cases}
\end{equation}

We may now state our vertical versus horizontal inequality on nilpotent Lie groups as follows.

\begin{theorem}\label{thm:nilpotentVvsH}
Let $G$ be a nonabelian simply connected nilpotent Lie group. Let $v\in Z\left(G\right)\cap\left[\mathfrak{g},\mathfrak{g}\right]\setminus \left\{0\right\}$ such that if we let $s'\in \left[1,s\right]\cap \mathbb{N}$ be the integer such that \eqref{eq:global-degree} holds and let $r'$ be the smallest positive integer such that \eqref{eq:local-degree} holds, then $r'\le s'$. Assume $v$ is normalized so that \eqref{eq:sharpDistIneq} holds.

Suppose that $q\in \left[2,\infty\right)$ and $p\in (1,q]$. Let $\left(X,\left\|\cdot\right\|_X\right)$ be a Banach space with $K_q\left(X\right)<\infty$, and let $f\in Ch_0^{1,p}\left(G;X\right)$. If $r'=1$, then
\begin{align}\label{eq:nilpotentVvsH-r'=1}
\begin{aligned}
&\left(\int_0^\infty \left(\int_{G}\left(\frac{\left\|f\left(h\exp\left(tv\right)\right)-f\left(h\right)\right\|_X}{t^{1/s'}}\right)^pd\mu\left(h\right)\right)^{q/p}\frac{dt}{t}\right)^{1/q}\\
&\lesssim s'\max\left\{\left(p-1\right)^{(1/q)-1},K_q\left(X\right)\right\} \left\|f\right\|_{\dot{Ch}_0^{1,p}\left(G;X\right)}.
\end{aligned}
\end{align}
On the other hand, if $r'\ge 2$, then
\begin{align}\label{eq:nilpotentVvsH-r'>1}
\begin{aligned}
&\left(\int_0^\infty \left(\int_{G}\left(\frac{\left\|f\left(h\exp\left(tv\right)\right)-f\left(h\right)\right\|_X}{\min\left\{t^{1/r'},t^{1/s'}\right\}}\right)^pd\mu\left(h\right)\right)^{q/p}\frac{dt}{t}\right)^{1/q}\\
&\lesssim s'\max\left\{\left(p-1\right)^{(1/q)-1},K_q\left(X\right)\right\} \left\|f\right\|_{\dot{Ch}_0^{1,p}\left(G;X\right)}.
\end{aligned}
\end{align}
\end{theorem}

\begin{remark}
Note that the $v$ described in this theorem exists, for example, if we choose $v\in \underbrace{\left[\mathfrak{g},\left[\mathfrak{g},\cdots,\mathfrak{g}\right]\right]}_{s\mathrm{~times}}\setminus\left\{0\right\}$, then automatically $r'\le s=s'$ and $v\in Z\left(\mathfrak{g}\right)$. We remark that $\exp\left(\underbrace{\left[\mathfrak{g},\left[\mathfrak{g},\cdots,\mathfrak{g}\right]\right]}_{j\mathrm{~times}}\right)=\underbrace{\left[G,\left[G,\cdots,G\right]\right]}_{j\mathrm{~times}}$ for $j=1,\cdots,s$, where $\left[\mathfrak{g},\mathfrak{g}\right]$ denotes the Lie algebra bracket and $\left[G,G\right]$ denotes the commutator subgroup.
\end{remark}

Theorems \ref{thm:discrete} and \ref{thm:nilpotentVvsH} are extensions from the real Heisenberg group $\bbh^{3}$ to general simply connected nilpotent Lie groups, and from the discrete Heisenberg group $\mathbb{H}^{3}_\mathbb{Z}$ to general finitely generated groups of polynomial growth, respectively, of the ``vertical versus horizontal inequalities'' established by Austin, Naor, and Tessera \cite{austin2013sharp} and Lafforgue and Naor \cite{lafforgue2014vertical}. See also Naor and Young \cite{naor2018vertical,naor2022foliated} for the endpoint case $p=1$ corresponding to the Heisenberg groups $\mathbb{H}^{2k+1}$. Here, the real Heisenberg groups $\bbh^{2k+1}$, $k\in \mathbb{Z}_{>0}$, are defined to be the simply connected nilpotent Lie groups whose Lie algebras $\mathfrak{h}^{2k+1}$ are spanned by the $2k+1$ elements $x_1,\cdots,x_k,y_1,\cdots,y_k,$ and $z$, the only nontrivial bracket relations among which are
\[
\left[x_i,y_i\right]=z,\quad i=1,\cdots,k.
\]
The discrete Heisenberg groups $\mathbb{H}^{2k+1}_\mathbb{Z}$, $k\ge 1$, embed naturally into the real Heisenberg groups $\mathbb{H}^{2k+1}$.

The inequalities \eqref{eq:nilpotentVvsH-r'=1} and \eqref{eq:nilpotentVvsH-r'>1} of Theorem \ref{thm:nilpotentVvsH} are proven following the argument of Lafforgue and Naor \cite{lafforgue2014vertical} by comparing the left-hand sides of \eqref{eq:nilpotentVvsH-r'=1} and \eqref{eq:nilpotentVvsH-r'>1} against convolutions of $\nabla f$ with derivatives of the heat kernel along the span of $v$, and then upper bounding this quantity using the vector-valued Littlewood--Paley--Stein theory of \cite{hytonen2019heat,martinez2006vector,xu2020vector}. The inequality \eqref{eq:desired local pq} of Theorem \ref{thm:discrete} follows from inequalities \eqref{eq:nilpotentVvsH-r'=1} and \eqref{eq:nilpotentVvsH-r'>1} of Theorem \ref{thm:nilpotentVvsH} by a discretization argument.

The utility of Theorems \ref{thm:discrete} and \ref{thm:nilpotentVvsH} is that they prove quantitatively that not virtually abelian finitely generated groups of polynomial growth and nonabelian simply connected nilpotent Lie groups, respectively, do not bilipschitzly embed into uniformly convex spaces. More precisely, they give Theorems \ref{mainthm:nilpotentdistortion} and \ref{mainthm:polygrowthdistortion} as Corollaries; see Section \ref{sec:net} for the deductions.

We will prove Theorem \ref{thm:nilpotentVvsH} in Section \ref{sec:cvx}. There, Theorem \ref{thm:nilpotentVvsH} will be a special case of the more general Theorem \ref{thm:cvx}, which will be stated in Section \ref{sec:cvx}. Theorem \ref{thm:discrete} will be derived from Theorem \ref{thm:nilpotentVvsH} via a discretization argument in Section \ref{sec:proof main}. In Section \ref{sec:net}, Theorem \ref{mainthm:nilpotentdistortion} and Corollary \ref{cor:nilpotentlp} will be special cases of the more general Theorem \ref{thm:gendistortion} derived from Theorem \ref{thm:cvx}, while Theorem \ref{mainthm:polygrowthdistortion} and Corollary \ref{cor:polygrowthlp} will be derived from Theorem \ref{thm:discrete}.

\subsection{Coarse nonembeddability}\label{subsec:coarse-nonembed}
We may also derive similar constraints for other coarse embeddings (see \cite[Definition 1.36]{ostrovskii2013metric} for the definition of a coarse embedding). For example, we have the following theorem, where for a metric space $\left(\mathcal{M},d_\mathcal{M}\right)$ and $0<\alpha<1$, $\left(\mathcal{M},d_\mathcal{M}^\alpha\right)$ is a metric space, called the \emph{$\alpha$-snowflake} of $\left(\mathcal{M},d_\mathcal{M}\right)$.

\begin{theorem}\label{thm:continuous-snowflake}
Let $G$ be a nonabelian simply connected nilpotent Lie group, let $X$ be a $q(\ge 2)$-uniformly convex Banach space, and let $\varepsilon \in \left(0,1\right)$. Then
\[
c_X\left(G, d_G^{1-\varepsilon}\right)\gtrsim_{G} \frac{1}{K_q\left(X\right)\varepsilon^{1/q}}.
\]
In particular,
\[
c_p\left(G, d_G^{1-\varepsilon}\right)\asymp_{p,G} \varepsilon^{-1/\max\left\{p,2\right\}},\quad 1<p<\infty.
\]
\end{theorem}
Theorem \ref{thm:continuous-snowflake} follows from Theorem \ref{thm:precise-holder-nilp}. See Theorem \ref{thm:precise-holder-nilp} for the explicit dependence on $p$ and $G$ of the constants.

\begin{theorem}\label{thm:discrete-snowflake}
Let $\Gamma$ be a not virtually abelian finitely generated group of polynomial growth, let $X$ be a $q(\ge 2)$-uniformly convex Banach space, and let $\varepsilon \in \left(0,1\right)$. Then
\[
c_X\left(\Gamma, d_W^{1-\varepsilon}\right)\gtrsim_{\Gamma} \frac{1}{K_q\left(X\right)\varepsilon^{1/q}}.
\]
In particular,
\[
c_p\left(\Gamma, d_W^{1-\varepsilon}\right)\asymp_{p,\Gamma} \varepsilon^{-1/\max\left\{p,2\right\}},\quad 1<p<\infty.
\]
\end{theorem}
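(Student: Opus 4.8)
The plan is to deduce the lower bound directly from the discrete vertical-versus-horizontal inequality \eqref{eq:main} of Theorem \ref{thm:discrete}, and to obtain the matching $L_p$-upper bound from the Assouad embedding theorem, exactly as for the upper bounds in Corollary \ref{cor:polygrowthlp}. So I would fix $\varepsilon\in(0,1)$ and observe first that, since $c_X(\cdot)\ge 1$ and $K_q(X)\ge 1$ always, the claimed inequality is trivial once $\varepsilon$ is bounded below; hence I may assume $\varepsilon$ is as small as convenient. Let $f\colon(\Gamma,d_W^{1-\varepsilon})\to X$ be any map satisfying $d_W(x,y)^{1-\varepsilon}\le\|f(x)-f(y)\|_X\le D\,d_W(x,y)^{1-\varepsilon}$ for all $x,y\in\Gamma$; the goal is to prove $D\gtrsim_\Gamma\varepsilon^{-1/q}/K_q(X)$, which yields the theorem after taking the infimum over admissible $D$. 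Let $v_\Gamma$, $s$, $c$ and $S$ be as in Theorem \ref{thm:discrete}, so in particular $d_W(xv_\Gamma^k,x)=d_W(v_\Gamma^k,e_\Gamma)\asymp_\Gamma k^{1/s}$ and $d_W(xa,x)=1$ for $a\in S$.

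I would then apply \eqref{eq:main} to this $f$ with a parameter $n\in\mathbb{N}$ to be chosen. On the left-hand side, the lower Lipschitz bound gives $\|f(xv_\Gamma^k)-f(x)\|_X\ge d_W(v_\Gamma^k,e_\Gamma)^{1-\varepsilon}\gtrsim_\Gamma k^{(1-\varepsilon)/s}$ uniformly in $\varepsilon\in(0,1)$ (using $a^{1-\varepsilon}\ge\min\{a,1\}$ for the normalizing constant $a$ of $v_\Gamma$), so the left-hand side of \eqref{eq:main} is at least a $\Gamma$-dependent constant times
\[
\left(\sum_{k=1}^{n^s}\sum_{x\in B_n^\Gamma}\frac{k^{(1-\varepsilon)q/s}}{k^{1+q/s}}\right)^{1/q}=|B_n^\Gamma|^{1/q}\left(\sum_{k=1}^{n^s}\frac{1}{k^{1+q\varepsilon/s}}\right)^{1/q}.
\]
Bounding the inner sum below by $\int_1^{n^s}t^{-1-q\varepsilon/s}\,dt=\frac{s}{q\varepsilon}\bigl(1-n^{-q\varepsilon}\bigr)$ and choosing $n=\lceil 2^{1/(q\varepsilon)}\rceil\ge 2$ makes it $\ge\frac{s}{2q\varepsilon}\gtrsim_\Gamma\varepsilon^{-1}$ — here I use the elementary facts that $q^{-1/q}\in[1/2,1]$ for $q\ge 2$ and $1\le s^{1/q}\le\sqrt{s}$ — so the left-hand side of \eqref{eq:main} is $\gtrsim_\Gamma|B_n^\Gamma|^{1/q}\varepsilon^{-1/q}$. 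On the right-hand side, the upper Lipschitz bound gives $\|f(xa)-f(x)\|_X\le D$ for $a\in S$, so the right-hand side of \eqref{eq:main} is at most $K_q(X)\,D\,(|S|\,|B_{cn}^\Gamma|)^{1/q}\lesssim_\Gamma K_q(X)\,D\,|B_n^\Gamma|^{1/q}$, where I used that $|S|$ and $c$ depend only on $\Gamma$ and that a group of polynomial growth is doubling, so $|B_{cn}^\Gamma|\asymp_\Gamma|B_n^\Gamma|$. Since $1\le|B_n^\Gamma|<\infty$, dividing \eqref{eq:main} through by $|B_n^\Gamma|^{1/q}$ leaves $\varepsilon^{-1/q}\lesssim_\Gamma K_q(X)\,D$, as wanted.

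For the displayed $L_p$-consequence, the lower bound $c_p(\Gamma,d_W^{1-\varepsilon})\gtrsim_{p,\Gamma}\varepsilon^{-1/\max\{p,2\}}$ is the first part applied with $q=\max\{p,2\}$, together with $K_{\max\{p,2\}}(L_p)\lesssim_p 1$; and the reverse inequality $c_p(\Gamma,d_W^{1-\varepsilon})\lesssim_{p,\Gamma}\varepsilon^{-1/\max\{p,2\}}$ is the Assouad embedding theorem applied to the $(1-\varepsilon)$-snowflake of the doubling metric space $(\Gamma,d_W)$, in the sharp form with distortion $O_\Gamma(\varepsilon^{-1/2})$ into a Hilbert space and $O_\Gamma(\varepsilon^{-1/p})$ into $L_p$ for $p\ge 2$ — the same input used for the upper bounds in Corollary \ref{cor:polygrowthlp}.

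The one genuinely delicate point I anticipate is bookkeeping: checking that the implied constant in the lower bound depends on $\Gamma$ alone, not on $\varepsilon$ or $q$. This is precisely why $n$ must grow like $2^{1/(q\varepsilon)}$ rather than as a fixed power of $1/\varepsilon$; a cruder choice would make the sum $\sum_k k^{-1-q\varepsilon/s}$ contribute only a factor of order $\log(1/\varepsilon)$ instead of its true order $\varepsilon^{-1}$, which is not enough. Once the uniform inequalities $a^{1-\varepsilon}\ge\min\{a,1\}$, $q^{-1/q}\in[1/2,1]$, and the growth-doubling of $\Gamma$ have been recorded, every implied constant in the argument depends on $\Gamma$ only — through $v_\Gamma$, $s$, $c$, $|S|$ and the growth degree — which is what the statement requires.
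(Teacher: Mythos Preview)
Your proposal is correct and follows essentially the same approach as the paper: both plug a $D$-bilipschitz embedding of the snowflake into \eqref{eq:main}, bound the left side below using $\|f(xv_\Gamma^k)-f(x)\|_X\gtrsim_\Gamma k^{(1-\varepsilon)/s}$ and the right side above by $K_q(X)D|S|^{1/q}|B_{cn}^\Gamma|^{1/q}$, cancel the ball volumes via polynomial growth, and invoke Assouad for the $L_p$ upper bound. The only cosmetic difference is that the paper states the resulting inequality for all $n$ and implicitly sends $n\to\infty$ so that $\sum_{k\ge 1}k^{-1-q\varepsilon/s}\asymp s/(q\varepsilon)$, whereas you pick a specific $n\asymp 2^{1/(q\varepsilon)}$; your remark that a choice like $n\asymp\varepsilon^{-a}$ would only yield $\log(1/\varepsilon)$ is correct, but simply letting $n\to\infty$ sidesteps this altogether.
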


Using Theorem \ref{thm:discrete}, we may also characterize the compression rate of not virtually abelian finitely generated groups of polynomial growth into $L^p$ spaces. Here, the \emph{compression rate} of a Lipschitz function $f:\left(\mathcal{M},d_\mathcal{M}\right)\to \left(X,\left\|\cdot\right\|_X\right)$ from a metric space $\left(\mathcal{M},d_\mathcal{M}\right)$ into a Banach space $\left(X,\left\|\cdot\right\|_X\right)$ is the largest nondecreasing function $\omega_f:\left(0,\infty\right)\to \left[0,\infty\right)$ such that for all $x,y\in \mathcal{M}$ we have $\left\|f\left(x\right)-f\left(y\right)\right\|_X\ge \omega_f\left(d_\mathcal{M}\left(x,y\right)\right)$.

\begin{corollary}\label{cor:growth-function-char}
Let $\Gamma$ be a not virtually abelian finitely generated group of polynomial growth. There exist constants $c_{\Gamma,p}$, $c_{\Gamma,p}'$ such that the following holds.
\begin{enumerate}
\item
Let $n\in \mathbb{N}$, $n\ge 2$, let $p>1$, and let $\theta:\left(0,\infty\right)\to \left[0,\infty\right)$ be a nondecreasing function. If $\theta$ satisfies $\theta\left(t\right)\lesssim \omega_f\left(t\right)$ for some $1$-Lipschitz function $f:B^\Gamma_n\to L^p$, then
\[
\int_1^{2n} \left(\frac{\theta\left(t\right)}{t}\right)^{\max\left\{p,2\right\}}\frac{dt}{t}\le c_{\Gamma,p}.
\]
Conversely, if
\[
\int_1^{2n} \left(\frac{\theta\left(t\right)}{t}\right)^{\max\left\{p,2\right\}}\frac{dt}{t}\le c_{\Gamma,p}',
\]
then $\theta$ satisfies $\theta\left(t\right)\lesssim \omega_f\left(t\right)$ for some $1$-Lipschitz function $f:B^\Gamma_n\to L^p$.

\item
Let $p>1$, and let $\theta:\left(0,\infty\right)\to \left[0,\infty\right)$ be a nondecreasing function. Then $\theta$ satisfies $\theta\left(t\right)\lesssim \omega_f\left(t\right)$ for some $1$-Lipschitz function $f:\Gamma\to L^p$, if
\[
\int_1^{\infty} \left(\frac{\theta\left(t\right)}{t}\right)^{\max\left\{p,2\right\}}\frac{dt}{t}\le c_{\Gamma,p}.
\]
Conversely, if
\[
\int_1^{\infty} \left(\frac{\theta\left(t\right)}{t}\right)^{\max\left\{p,2\right\}}\frac{dt}{t}\le c_{\Gamma,p}',
\]
then $\theta\left(t\right)\lesssim \omega_f\left(t\right)$ for some $1$-Lipschitz function $f:\Gamma\to L^p$.
\end{enumerate}
\end{corollary}

Under a slightly more abstract setting, one can obtain the following statement about the compression rate.
\begin{theorem}\label{thm:amenablesublinear}
Let $\Gamma$ be an amenable group with finite symmetric generating set $S$, with $v\in Z\left(\Gamma\right)$ and $\rho>1$ such that $d_W\left(v^k,e_\Gamma\right)\asymp_\Gamma k^{1/\rho}$, $k\in \mathbb{N}$. Let $\left(X,\left\|\cdot\right\|_X\right)$ be a $q$-uniformly convex space, and let $f:\Gamma\to X$ be a $1$-Lipschitz function. Then for every $\varepsilon>0$ there is an integer $N_\varepsilon$ such that for every $t\ge N_\varepsilon$ there exists an integer $t\le n\le t^{1+\varepsilon}$ such that
\[
\frac{\omega_f\left(n\right)}{n}\lesssim_\Gamma K_q\left(X\right)\left(\frac{\log \log n}{\varepsilon\log n}\right)^{1/q}.
\]
(Here we consider $\rho$ to be dependent on $\Gamma$, so that $\lesssim_\Gamma$ includes dependence on $\rho$.)
\end{theorem}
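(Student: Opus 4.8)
The plan is to run the Littlewood--Paley--Stein scheme that underlies Theorems \ref{thm:nilpotentVvsH} and \ref{thm:discrete}, but with Følner sets of $\Gamma$ playing the role that balls play in the polynomial growth case, and then to feed the resulting vertical-versus-horizontal inequality into a pigeonhole over scales. Note first that $d_W(v^k,e_\Gamma)\asymp_\Gamma k^{1/\rho}\to\infty$ forces $v$ to have infinite order, so $\langle v\rangle\cong\mathbb{Z}$, and by centrality $\langle v\rangle$ is normal, giving a central extension $1\to\mathbb{Z}\to\Gamma\to\Gamma/\langle v\rangle\to1$ over the amenable quotient $\Gamma/\langle v\rangle$. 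The inequality I would establish is the following: there is $C=C(\Gamma)\in\mathbb{N}$ such that for every $n\in\mathbb{N}$, every finite $F\subseteq\Gamma$ with $|B^\Gamma_{Cn}F|\le2|F|$, every $q$-uniformly convex $X$, and every $f:\Gamma\to X$,
\[
\sum_{k=1}^{n^\rho}\frac{1}{k^{1+q/\rho}}\sum_{x\in F}\|f(xv^k)-f(x)\|_X^q\;\lesssim_\Gamma\;K_q(X)^q\,(\log\log n)\sum_{x\in B^\Gamma_{Cn}F}\sum_{a\in S}\|f(xa)-f(x)\|_X^q .
\]
To prove it I would foliate $\Gamma$ into cosets of $\langle v\rangle$, transport the vertical differences $\|f(xv^k)-f(x)\|_X$ onto $\mathbb{Z}$, and bound them by convolutions of the horizontal gradient $x\mapsto(\|f(xa)-f(x)\|_X)_{a\in S}$ against time derivatives of the heat semigroup of the $S$-random walk on $\Gamma$, the latter being controlled in $L^q(X)$ by the vector-valued Littlewood--Paley--Stein inequality for symmetric Markov semigroups \cite{hytonen2019heat,martinez2006vector,xu2020vector} with constant $\lesssim K_q(X)$. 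Amenability enters precisely to absorb the boundary contributions through the Følner bound $|B^\Gamma_{Cn}F|\le2|F|$, and such an $F$ exists for each $n$ since $\Gamma$ is amenable.

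The subtle point, and the source of the $\log\log n$ factor, is that without a stratified dilation structure there is no intertwining of the shift $x\mapsto xv^k$ with the heat semigroup at a single time; indeed the single-coset analogue of the displayed inequality is false (for $g:\mathbb{Z}\to\mathbb{R}$, $g(m)=m$, the left side of its one-coset version diverges while the one-step differences stay bounded), so the argument must genuinely exploit that $v^k$ lies at word distance $\asymp k^{1/\rho}\ll k$ from $e_\Gamma$, not merely the differences along a single coset. The hard part will be to carry this out: comparing the $v^k$-shift to the random walk forces a decomposition across the $\asymp\log n$ dyadic time scales $t\in\{1,2,\dots,\lceil n^\rho\rceil\}$, and I expect that recombining these contributions --- using $q$-uniform convexity to optimize the triangle-inequality sum over those $\asymp\log n$ terms --- costs only a factor $\log\log n$, not a power of $\log n$. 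Pinning down this loss, and verifying that the boundary terms generated along the way are genuinely swallowed by the Følner property at scale $Cn$ rather than merely made small, is the main obstacle; the remaining steps are routine.

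Granting the displayed inequality, the theorem follows quickly. Fix $t\ge3$, set $N=t^2$, pick by amenability a finite $F\subseteq\Gamma$ with $|B^\Gamma_{CN}F|\le2|F|$, and apply the displayed inequality with $n=N$. Since $f$ is $1$-Lipschitz, $\|f(xa)-f(x)\|_X\le d_W(xa,x)=1$, so its right side is $\le 2K_q(X)^q(\log\log N)|S|\,|F|$. Since $\omega_f$ is nondecreasing and $d_W(xv^k,x)=d_W(v^k,e_\Gamma)\gtrsim_\Gamma k^{1/\rho}$, we have $\|f(xv^k)-f(x)\|_X\ge\omega_f(c_\Gamma k^{1/\rho})$ for a suitable $c_\Gamma>0$ and all $x$, so its left side is at least $|F|\sum_{k=1}^{N^\rho}\omega_f(c_\Gamma k^{1/\rho})^q/k^{1+q/\rho}$. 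Dividing by $|F|$, substituting $r=c_\Gamma k^{1/\rho}$, and comparing with a dyadic sum via the monotonicity of $\omega_f$ gives
\[
\sum_{j\,:\,2^j\le c_\Gamma N}\left(\frac{\omega_f(2^j)}{2^j}\right)^q\;\lesssim_\Gamma\;K_q(X)^q\log\log N .
\]
Restricting the sum to the $\asymp\log t$ indices $j$ with $2^j\in[t,t^2]$, the pigeonhole principle produces one such $j$ with $\bigl(\omega_f(2^j)/2^j\bigr)^q\lesssim_\Gamma K_q(X)^q(\log\log t)/\log t$. Taking $n:=2^j\in[t,t^2]$ and using $\log n\asymp\log t$ and $\log\log n\asymp\log\log t$ for $t\ge3$, this reads $\omega_f(n)/n\lesssim_\Gamma K_q(X)\bigl((\log\log n)/\log n\bigr)^{1/q}$, which is the assertion.
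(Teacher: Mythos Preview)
Your proposal has a genuine gap: the displayed F{\o}lner inequality
\[
\sum_{k=1}^{n^\rho}\frac{1}{k^{1+q/\rho}}\sum_{x\in F}\|f(xv^k)-f(x)\|_X^q\;\lesssim_\Gamma\;K_q(X)^q\,(\log\log n)\sum_{x\in B^\Gamma_{Cn}F}\sum_{a\in S}\|f(xa)-f(x)\|_X^q
\]
is not proved, and you say so explicitly (``I expect that recombining these contributions\ldots costs only a factor $\log\log n$'', ``Pinning down this loss\ldots is the main obstacle''). Everything else in your write-up is conditional on this inequality. The sketch you give for it is too vague to evaluate: you propose to bound vertical differences by convolutions of the horizontal gradient against time derivatives of the $S$-random walk semigroup, but the mechanism of the paper's Littlewood--Paley argument (Section~\ref{sec:cvx}) relies on convolving along the one-dimensional central line and on the commutation \eqref{eq:nablaConvolutionCommute}, not on the full group semigroup, and it is not clear what replaces this in your picture. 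You correctly observe that the single-coset version is false, but you do not supply the structural input that would convert the gain $d_W(v^k,e_\Gamma)\asymp k^{1/\rho}\ll k$ into the claimed $\log\log n$ loss.

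The paper's proof takes a completely different route and avoids any F{\o}lner-localized vertical-versus-horizontal inequality. Amenability is used once, at the outset, via \cite[Theorem~9.1]{naor2011lp}, to replace the $1$-Lipschitz $f$ by a $1$-Lipschitz \emph{$1$-cocycle} $f\in Z^1(\pi)$ for an isometric linear action $\pi$ with the same compression function. One then works entirely with the ergodic averages $P_m=2^{-m}\sum_{j=0}^{2^m-1}\pi(v)^j$. Three elementary lemmas control the pieces of $f(v^{\lfloor n^\rho\rfloor})$: a $q$-uniform convexity estimate (Lemma~\ref{lem:unifcvxseq}) forces, by pigeonhole over $m$ scales, a pair $(i,j)$ with $\|\pi(v^{-j2^{il}})P_{(i+1)l}f(v^{\lfloor n^\rho\rfloor})-P_{il}f(v^{\lfloor n^\rho\rfloor})\|_X\lesssim K_q(X)n/m^{1/q}$; a direct cocycle computation bounds $\|P_mf(v^{\lfloor n^\rho\rfloor})\|_X$ from above in terms of $n$ and $2^m$; and an approximate-coboundary argument bounds $\|(I-P_m)f(v^{\lfloor n^\rho\rfloor})\|_X$. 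Choosing the parameters $m,k,\ell,n$ to balance these three contributions yields $\omega_f(n)/n\lesssim K_q(X)m^{-1/q}$ with $m\asymp\log n/\log\log n$, and one checks $t\le d_W(v^{\lfloor n^\rho\rfloor},e_\Gamma)\le t^2$. No semigroup theory, no F{\o}lner sets, and no analogue of your displayed inequality appear; the cocycle identity does all the algebraic work that the Littlewood--Paley machine does in Sections~\ref{sec:cvx}--\ref{sec:proof main}.
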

\begin{remark}\label{rem:qualitativesublinear}\,
\begin{enumerate}
    \item A qualitative version of Theorem \ref{thm:amenablesublinear}, i.e., that, for some positive integer $a>0$,
    \[
    \frac{\omega_f\left(d_W\left(v^{ak},e_\Gamma\right)\right)}{d_W\left(v^{ak},e_\Gamma\right)}\to 0\quad \mathrm{as}~k\to\infty,
    \]
    is implied by \cite[Proposition 3.5]{das2016integrable} together with \cite[Theorem 9.1]{naor2011lp}.\footnote{I thank the anonymous referee for pointing out this fact and providing the reference \cite{das2016integrable}.} We provide a proof of this argument at the beginning of Section \ref{sec:amenable}.
    \item The range $t\le n\le t^{1+\varepsilon}$ appearing in Theorem \ref{thm:amenablesublinear} might not be optimal. When $\Gamma$ is a not virtually abelian finitely generated group of polynomial growth, for every sufficiently large integer $t\ge 3$ there exists a $1$-Lipschitz function $f:\Gamma\to L^q$, $q\ge 2$, such that
    \[
\frac{\omega_f\left(n\right)}{n}\gtrsim \left(\frac{\log \log n}{\log n}\right)^{1/q},\quad t\le n\le t\exp\left(c''_\Gamma \frac{\log t}{\log\log t}\right)
\]
for some constant $c''_\Gamma\in (0,1)$ to be determined by $\Gamma$. Indeed, defining the nondecreasing function $\theta:(0,\infty)\to [0,\infty)$ as
\[
\theta\left(x\right)=
\begin{cases}
    0,&x<t,\\
    x\left(\frac{\log \log x}{\log x}\right)^{1/q},&t\le x\le t\exp\left(c''_\Gamma \frac{\log t}{\log\log t}\right),\\
    t\exp\left(c''_\Gamma \frac{\log t}{\log\log t}\right)\left(\frac{\log \left(\log t+c''_\Gamma \frac{\log t}{\log\log t}\right)}{\log t+c''_\Gamma \frac{\log t}{\log\log t}}\right)^{1/q},&x> t\exp\left(c''_\Gamma \frac{\log t}{\log\log t}\right),\\
\end{cases}
\]
we have
\begin{align*}
\int_1^{\infty} \left(\frac{\theta\left(x\right)}{x}\right)^{q}\frac{dx}{x}&\le\int_t^{t\exp\left(c''_\Gamma \frac{\log t}{\log\log t}\right)}\frac{\log\log t}{\log t}\frac{dx}{x}+\frac{\log\log t}{\log t}\int_{t\exp\left(c''_\Gamma \frac{\log t}{\log\log t}\right)}^\infty\left(\frac{t\exp\left(c''_\Gamma \frac{\log t}{\log\log t}\right)}{x}\right)^q\frac{dx}{x}\\
&=c''_\Gamma+\frac{\log\log t}{q\log t}.
\end{align*}
Thus, as long as $c''_\Gamma$ is small enough and $t$ is large enough so that $c''_\Gamma+\frac{\log\log t}{q\log t}\le c'_{\Gamma,p}$, the constant from Corollary \ref{cor:growth-function-char}, there exists a function $f:\Gamma\to L^q$ such that $\omega_f(n)\gtrsim \theta(n)$. Perhaps the range $t\le n\le t^{1+\varepsilon}$ of Theorem \ref{thm:amenablesublinear} might be improved to a range of the form $t\le n\le t\exp\left(c'\frac{\log t}{\log\log t}\right)$.

A similar estimate gives that as long as $c''_\Gamma$ is small enough and $t$ is large enough so that $c_\Gamma+\frac{1}{q\log t}\le c'_{\Gamma,p}$, again the constant from \ref{cor:growth-function-char}, there exists a $1$-Lipschitz function $f:\Gamma\to L^q$, $q\ge 2$, such that
    \[
\frac{\omega_f\left(n\right)}{n}\gtrsim \left(\frac{1}{\log n}\right)^{1/q},\quad t\le n\le t^{1+c''_\Gamma}.
\]
\end{enumerate}
\end{remark}
We prove Theorem \ref{thm:amenablesublinear} in Section \ref{sec:amenable} following the argument of \cite{austin2013sharp}. We remark that under the setting of groups of polynomial growth, one may obtain, following the argument of \cite[Section 6]{austin2013sharp}, the distortion lower bound $c_X\left(B^\Gamma_n\right)\gtrsim_\Gamma \frac{1}{K_q\left(X\right)} \left(\frac{\log n}{\log\log n}\right)^{1/q}$. As this is weaker than that of Theorem \ref{mainthm:polygrowthdistortion}, we will not outline the deduction here.

\subsection{Refined quantitative nonembeddability estimates of nilpotent Lie groups}\label{subsec:net}
In this subsection, we give refined quantitative nonembeddability estimates of a nilpotent Lie group $G$ into a uniformly convex Banach space $X$.
\subsubsection{Quantitative dependence of the nonembeddability constants on the nilpotent group}\label{subsubsec:quant}
What is the precise dependence on $G$ of the constants in Theorem \ref{mainthm:nilpotentdistortion}, Corollary \ref{cor:nilpotentlp}, and Theorem \ref{thm:continuous-snowflake}? Given the control on the constants in Theorem \ref{thm:nilpotentVvsH}, we are able to derive a more precise version of Theorem \ref{mainthm:nilpotentdistortion} and Corollary \ref{cor:nilpotentlp} as follows.

\begin{theorem}\label{thm:precise-dist-nilp}
Let $G$ be a nonabelian simply connected nilpotent Lie group of nilpotency step $s$ endowed with a left-invariant Riemannian distance, and let $X$ be a $q(\ge 2)$-uniformly convex Banach space. Let $K\ge 1$ denote the measure-doubling constant of $G$:
\[
\mu\left(B_{2r}\right)\le K\mu\left(B_r\right),\quad r>0.
\]
Let $c_1\in (0,1]$ be a constant such that there exists $v\in \underbrace{\left[\mathfrak{g},\left[\mathfrak{g},\cdots,\mathfrak{g}\right]\right]}_{s~\mathrm{times}}$ such that
\[
c_1t^{1/s}\le d_G\left(\exp\left(tv\right),e_G\right)\le  t^{1/s}, \quad \forall t\ge 1.
\]
\begin{enumerate}
    \item Then
\[
c_X\left(B_r\right)\gtrsim \frac{c_1}{\sqrt{\dim \mathfrak{g}}s^{1-1/q}K^{2/q}} \cdot\frac{\left(\log r\right)^{1/q}}{K_q\left(X\right)}\quad r\ge 2.
\]
\item We have, for $r\ge 2$ and $1<p<\infty$,
\[
\frac{c_1\sqrt{\min\left\{p,2\right\}-1}}{\sqrt{\dim \mathfrak{g}}s^{1-1/\max\left\{p,2\right\}}K^{2/\max\left\{p,2\right\}}} \cdot \left(\log r\right)^{1/\max\left\{p,2\right\}}\lesssim c_p\left(B_r\right)\lesssim \max\left\{\frac{\log K}{p},1\right\}\left(\log r\right)^{1/\max\left\{p,2\right\}}+ K^{3/p}c_p\left(B_1\right).
\]
\end{enumerate}

\end{theorem}
Given the control on the constants in Theorem \ref{thm:nilpotentVvsH}, we are able to derive a more precise version of Theorem \ref{thm:continuous-snowflake} as follows.

\begin{theorem}\label{thm:precise-holder-nilp}
Let $G$ be a nonabelian simply connected nilpotent Lie group of nilpotency step $s$ endowed with a left-invariant Carnot--Carath\'eodory distance, and let $X$ be a $q(\ge 2)$-uniformly convex Banach space. Let $K\ge 1$ denote the measure-doubling constant of $G$:
\[
\mu\left(B_{2r}\right)\le K\mu\left(B_r\right),\quad r>0.
\]
Let $c_1\in (0,1]$ be a constant such that there exists $v\in \underbrace{\left[\mathfrak{g},\left[\mathfrak{g},\cdots,\mathfrak{g}\right]\right]}_{s~\mathrm{times}}$ such that
\[
c_1t^{1/s}\le d_G\left(\exp\left(tv\right),e_G\right)\le  t^{1/s}, \quad \forall t\ge 1.
\]
\begin{enumerate}
    \item Then for $\varepsilon\in (0,1)$,
\[
c_X\left(G,d_G^{1-\varepsilon}\right)\gtrsim \frac{c_1^{1-\varepsilon}}{k^{(1-\varepsilon)/2}s^{(1-\varepsilon)(1-1/q)}K^{2(1-\varepsilon)/q}\log K} \cdot\frac{1}{K_q\left(X\right)^{1-\varepsilon}\varepsilon^{1/q}}.
\]
\item We have, for $1<p<\infty$ and $\varepsilon\in (0,1)$,
\begin{align*}
&\frac{c_1^{1-\varepsilon}}{k^{(1-\varepsilon)/2}s^{(1-\varepsilon)(1-1/\max\{p,2\})}K^{2(1-\varepsilon)/\max\{p,2\}}\log K} \cdot \frac{\left(\min\{p,2\}-1\right)^{(1-\varepsilon)/2}}{\varepsilon^{1/\max\{p,2\}}}\\
&\lesssim c_p\left(G,d_G^{1-\varepsilon}\right)\lesssim \max\left\{\frac{\log K}{p},1\right\}\frac{(1-\varepsilon)^{1/\min\{p,2\}}}{\varepsilon^{1/\max\{p,2\}}}+1.
\end{align*}
\end{enumerate}

\end{theorem}
The proof of Theorems \ref{thm:precise-dist-nilp} and \ref{thm:precise-holder-nilp} are presented in Section \ref{sec:net}. They are corollaries of Theorems \ref{thm:gendistortion} and \ref{thm:gen-net-snowflake}, respectively.

However, we are not able to obtain precise dependence of the constants in Theorem \ref{mainthm:polygrowthdistortion}, Corollary \ref{cor:polygrowthlp}, and Theorem \ref{thm:discrete-snowflake}. This is because we do not have control on the dependence on $\Gamma$ of the constants in Theorem \ref{thm:discrete}, due to the many factors arising from the discretization proof in Section \ref{sec:proof main} when deriving Theorem \ref{thm:discrete} from Theorem \ref{thm:nilpotentVvsH}.

\subsubsection{Nonembeddability of nets}\label{subsubsec:net}

On a different note, we can investigate the distortion of nets in nonabelian simply connected nilpotent Lie groups. In the following, with $r_1,r_2>0$, an \emph{$r_1$-covering} of a metric space $\left(\mathcal{M},d_\mathcal{M}\right)$ is a subset $N\subset \mathcal{M}$ such that for any $x\in \mathcal{M}$ there exists $n_x\in N$ such that $d_\mathcal{M}\left(x,n_x\right)<r_1$, an \emph{$r_2$-packing} of $\left(\mathcal{M},d_\mathcal{M}\right)$ is a subset $N\subset \mathcal{M}$ such that for all distinct $n_1,n_2\in N$ we have $d_\mathcal{M}\left(n_1,n_2\right)\ge r_2$, and an \emph{$\left(r_1,r_2\right)$-net} of $\left(\mathcal{M},d_\mathcal{M}\right)$ is a subset which is both an $r_1$-covering and an $r_2$-packing.

We remark that the group $G$ with its bi-invariant Haar measure is measure doubling, i.e., there is a constant $K\ge 1$ such that $\mu\left(B_{2r}\right)\le K \mu\left(B_r\right)$ for all $r\ge 0$; see Appendix \ref{app:doubling}. 
It follows from a volume-measuring argument that $G$ is metrically doubling, i.e., there is a constant $K'\in \left[2,K^4\right]$ such that for all $r>0$ there exist $y_1,\cdots,y_{K'}\in G$ such that $B_{2r}\subset \cup_{i=1}^{K'} B_r\left(y_i\right)$.

\begin{theorem}\label{thm:net}
Let $G$ be a nonabelian simply connected nilpotent Lie group endowed with a left-invariant Carnot--Carath\'eodory distance.
Let $X$ be a $q(\ge 2)$-uniformly convex Banach space.
Let $k$ be the number of vectors $X_1,\cdots,X_k$ which satisfy the H\"ormander condition and with which we measure distances on $G$, $s$ is the nilpotency step of $G$, let $K$ be the doubling constant of the measure $\mu$, and let $c_1>0$ denote a constant such that $c_1t^{1/s}\le d\left(\exp\left(tv\right),e_G\right)\le t^{1/s}$ for $t\ge r_2^s$ for some $v\in \underbrace{\left[\mathfrak{g},\left[\mathfrak{g},\cdots,\mathfrak{g}\right]\right]}_{s\mathrm{~times}}$.
\begin{enumerate}
    \item If $N_{r_1,r_2}$ is an $r_2$-covering of $B_{r_1}$, then
\[
c_X\left(N_{r_1,r_2}\right)\gtrsim \left(\frac{c_1}{\sqrt{k}s^{1-1/q}K^{2/q}\log K}\right)\frac{\left(\log \left(r_1/r_2\right)\right)^{1/q}}{K_q\left(X\right)},
\]
where $k$ is the number of vectors $X_1,\cdots,X_k$ which satisfy the H\"ormander condition and with which we measure distances on $G$.
\item For an auxiliary parameter $r_3\in \left(0,\frac{r_1}{2}\right]$, if  $N_{r_1,r_2,r_3}$ is an $\left(r_2,r_3\right)$-net of $B_{r_1}$, then for $1<p<\infty$,
\[
\frac{c_1\sqrt{\min\left\{p,2\right\}-1}}{\sqrt{k}s^{1-1/\max\left\{p,2\right\}}K^{2/\max\left\{p,2\right\}}\log K} \cdot \left(\log \left(r_1/r_2\right)\right)^{1/\max\left\{p,2\right\}}\lesssim c_p\left(N_{r_1,r_2,r_3}\right)\lesssim \max\left\{\frac{\log K}{p},1\right\}\left(\log \left(r_1/r_3\right)\right)^{1/\max\left\{p,2\right\}}.
\]
\end{enumerate}
\end{theorem}


We may also derive the following variant of Theorem \ref{thm:net} for snowflakes.
\begin{theorem}\label{thm:net-snowflake}
Let $N'_{r_1,r_2}$ be a $\left(r_1,r_2\right)$-net of a nonabelian simply connected nilpotent Lie group $G$ endowed with a left-invariant Carnot--Carath\'eodory distance, where $r_1,r_2>0$ with $r_1\ge r_2/2$. When
\[
Z\left(\mathfrak{g}\right)\subset \operatorname{span}\left\{X_1,\cdots,X_k\right\},
\]
we require in addition $r_1,r_2\ge 1$. Let $s$, $K$, $c_1$, and $k$ be as in Theorem \ref{thm:net}.
\begin{enumerate}
    \item  If $X$ is a $q(\ge 2)$-uniformly convex Banach space, then
\[
c_X\left(N'_{r_1,r_2},d_G^{1-\varepsilon}\right)\gtrsim \left(\frac{c_1^{1-\varepsilon}\left(r_2/r_1\right)^{\varepsilon}}{k^{\left(1-\varepsilon\right)/2}s^{(1-\varepsilon)(1-1/q)} K^{2\left(1-\varepsilon\right)/q}\log K}\right)\frac{1}{K_q\left(X\right)^{1-\varepsilon}\varepsilon^{1/q}},\quad 0<\varepsilon<1,
\]
\item For $1<p<\infty$,
\begin{align*}
&\left(\frac{c_1^{1-\varepsilon}\left(r_2/r_1\right)^{\varepsilon}}{k^{\left(1-\varepsilon\right)/2}s^{(1-\varepsilon)(1-1/\max\left\{p,2\right\})} K^{2\left(1-\varepsilon\right)/\max\left\{p,2\right\}}\log K}\right)\frac{\left(\min\left\{p,2\right\}-1\right)^{(1-\varepsilon)/2}}{\varepsilon^{1/\max\left\{p,2\right\}}}\\
& \lesssim c_p\left(N'_{r_1,r_2},d_G^{1-\varepsilon}\right)\\
&\lesssim 1+\frac{(1-\varepsilon)^{1/\min\left\{p,2\right\}}}{\varepsilon^{1/\max\left\{p,2\right\}}}\max\left\{\frac{\log K}{p},1\right\}.
\end{align*}
\end{enumerate}
\end{theorem}

The proof of Theorems \ref{thm:net} and \ref{thm:net-snowflake} are presented in Section \ref{sec:net}. They are corollaries of Theorems \ref{thm:gen-net} and \ref{thm:gen-net-snowflake}, respectively.

\subsection{Nonembeddability of Carnot groups}\label{subsec:carnot}
\subsubsection{Qualitative nonembeddability}\label{subsubsec:carnot-qual}
The nonembeddability of nonabelian simply connected nilpotent Lie groups and not virtually abelian finitely generated groups of polynomial growth into uniformly convex Banach spaces is known from the fact that the asymptotic cone of these groups are nonabelian Carnot groups, and that nonabelian Carnot groups do not admit bilipschitz embeddings into uniformly convex Banach spaces. The definition of Carnot groups is as follows.

\begin{definition}[{\cite[Section 3.3]{le2017primer}}]\label{def:carnot}
A \emph{Carnot group} is a 5-tuple $\left(G,\delta_\lambda,B,\left|\cdot \right|,d_G\right)$, where:
\begin{itemize}
    \item The simply connected nilpotent Lie group $G$ is such that its Lie algebra $\mathfrak{g}$ admits an \emph{$s(\in \mathbb{Z}_{>0})$-step stratification}, i.e., a direct sum decomposition
    \[
    \mathfrak{g}=V_1\oplus V_2\oplus\cdots\oplus V_s,
    \]
    where $V_s\neq 0$, $V_{s+1}=0$, and $V_{r+1}=\left[V_1,V_r\right]$ for $r=1,\cdots,s$. 
    \item For each $\lambda\in \bbr^+$, the linear map $\delta_\lambda:\mathfrak{g}\to \mathfrak{g}$ is defined by
    \[
    \left.\delta_\lambda\right|_{V_r}=\lambda^r \mathrm{id}_{V_r},\quad r=1,\cdots,s.
    \]
    This map induces a group automorphism of $G$ whose derivative at $e_G$ is $\delta_\lambda:\mathfrak{g}\to\mathfrak{g}$, which, by abuse of notation, we also denote by $\delta_\lambda:G\to G$.
    \item The bundle $B$ over $G$ is the extension of $V_1$ to a left-invariant subbundle:
    \[
    B_p\coloneqq \left(dL_p\right)_e V_1,\quad p\in G.
    \]
    \item The inner product norm\footnote{Again, for Finsler or sub-Finsler Carnot groups we allow $\left|\cdot \right|$ to be more generally a norm. Then, compared to the Riemannian or sub-Riemannian case, the resulting distance $d_G$ is then distorted by a factor of at most $\sqrt{\operatorname{dim}V_1}$ by the John ellipsoid theorem \cite{john1948extremum}, and the results of this paper follow up to multiplicative factors of  $\sqrt{\operatorname{dim}V_1}$.} $\left|\cdot \right|$ is initially defined on $V_1$, and is then extended to $B$ as a left-invariant norm:
    \[
    \left|\left(dL_p\right)_e\left(v\right)\right|\coloneqq \left|v\right|,\quad p\in G,~v\in V_1.
    \]
    \item The metric $d_G$ on $G$ is the (Riemannian or sub-Riemannian) \textit{Carnot--Carath\'eodory distance} associated to $B$ and $\left|\cdot \right|$, i.e.,
    \[
    d_G\left(p,q\right)\coloneqq \inf\left\{\int_0^1 \left|\dot{\gamma}\left(t\right)\right|dt : \gamma\in C_{\mathrm{pw}}^\infty \left(\left[0,1\right];G\right),\gamma\left(0\right)=p,\gamma\left(1\right)=q,\dot{\gamma}\in B \right\},\quad p,q\in G,
    \]
    where $C_{\mathrm{pw}}^\infty \left(\left[0,1\right];G\right)$ consists of the piecewise smooth functions from $\left[0,1\right]$ to $G$.
\end{itemize}
For simplicity, we will call $G$ the Carnot group. When we wish to emphasize the step size, we will call $G$ an $s$-step Carnot group or a Carnot group of step $s$.
\end{definition}

It is easy to see that abelian Carnot groups are precisely those of step $1$ and are the Euclidean spaces $\mathbb{R}^d$. The nonabelian ones are those of step $s\ge 2$. Examples of these are the Heisenberg groups $\mathbb{H}^{2k+1}$ defined above, which are Carnot groups of step $2$ whose Lie algebras $\mathfrak{h}^{2k+1}$ admit the $2$-step stratification
\[
V_1\left(\mathfrak{h}^{2k+1}\right)=\operatorname{span}\left\{x_1,\cdots,x_k,y_1,\cdots,y_k\right\}, \quad V_2\left(\mathfrak{h}^{2k+1}\right)=\operatorname{span}\left\{z\right\}.
\]
More examples of Carnot groups are the aforementioned model filiform groups $J^{s-1}\left(\mathbb{R}\right)$, $s\ge 1$,  which is the Lie group whose Lie algebra $\mathfrak{j}^{s-1}$ is spanned by elements $x,y_0,\cdots,y_{s-1}$ with the only nontrivial bracket relations being
\[
\left[x,y_i\right]=y_{i+1},\quad i=0,\cdots,s-2.
\]
For example, $J^0\left(\mathbb{R}\right)=\mathbb{R}^2$, $J^1\left(\mathbb{R}\right)= \mathbb{H}^3$ is the real Heisenberg group of dimension $3$, and $J^2\left(\mathbb{R}\right)$ is called the Engel group. The Lie group $J^{s-1}\left(\mathbb{R}\right)$ is a Carnot group of step $s$ whose Lie algebra $\mathfrak{j}^{s-1}$ admits the $s$-step stratification
\[
V_1\left(\mathfrak{j}^{s-1}\right)=\operatorname{span}\left\{x,y_0\right\},~ V_2\left(\mathfrak{j}^{s-1}\right)=\operatorname{span}\left\{y_1\right\}, ~\cdots,~ V_s\left(\mathfrak{j}^{s-1}\right)=\operatorname{span}\left\{y_{s-1}\right\}.
\]

We remark that the commutator subgroup of a Carnot group is given by $\left[G,G\right]=\exp\left(V_2\oplus\cdots\oplus V_s\right)$, and more generally the lower central series is given by  $\underbrace{\left[G,\left[G,\cdots,G\right]\right]}_{r~\mathrm{times}}=\exp\left(V_r\oplus\cdots\oplus V_s\right)$, for $r=1,\cdots,s$, ending in $\underbrace{\left[G,\left[G,\cdots,G\right]\right]}_{s~\mathrm{times}}=\exp\left(V_s\right)\subseteq Z\left(G\right)$ contained in the center. 

Let our choice of left-invariant vector fields $X_1,\cdots,X_k$, $k\coloneqq \operatorname{dim}V_1$, be such that $\left(X_1\right)_{e_G},\cdots, \left(X_k\right)_{e_G}$ form an orthonormal basis of $\left(V_1,\left|\cdot \right|\right)$. More generally, choose a basis $X_{r,1},\cdots, X_{r,k_r}$, where $k_r=\dim V_r$, of each stratum $V_r$; we set $X_i=X_{1,i}$ when $r=1$. As $G$ is nilpotent and simply connected, the exponential map $\exp:\mathfrak{g}\to G$ is a diffeomorphism. Thus, each point $p\in G$ can be expressed in the coordinates $p=\exp\left(\sum_{r=1}^s\sum_{i=1}^{k_r}x_{r,i}X_{r,i}\right)$, $x_{r,i}\in \mathbb{R}$; let this be the single coordinate chart on $G$.

The maps $\delta_\lambda$ act as scalings in the Carnot--Carath\'eodory distance, in the sense that
\[
d_G\left(\delta_\lambda\left(p\right),\delta_\lambda\left(q\right)\right)=\lambda d_G\left(p,q\right), \quad \lambda>0,~p,q\in G.
\]
Thus $B_r=\delta_r\left(B_1\right)$, $r>0$, and
\begin{equation}\label{eq:carnot-volume}
\mu\left(B_r\right)=r^{n_h}\mu\left(B_1\right), \quad r>0,
\end{equation}
where $\mu$ is the bi-invariant Haar measure on $G$, also given as the push-forward of the Lebesgue measure on $\mathfrak{g}$ by $\exp:\mathfrak{g}\to G$, and $n_h\coloneqq \sum_{r=1}^s rk_r=\sum_{r=1}^s r\operatorname{dim}V_r$ is the Hausdorff dimension of $G$. Also, by compactness, we have
\begin{equation}\label{eq:carnot-distance}
    d_G\left(y,e_G\right)\asymp_G \sum_{r=1}^s\sum_{i=1}^{k_r}\left|y_{r,i}\right|^{1/r}.
\end{equation}
A more general distance formula on nilpotent Lie groups is given in Appendix \ref{app:doubling}.

That a nonabelian Carnot group $G$ fails to embed bilipschitzly into $\mathbb{R}^n$ was first proven by Semmes \cite{semmes1996nonexistence} using the differentiation theorem of Pansu \cite{pansu1989metriques}. Pansu's theorem states that any Lipschitz mapping $f:G\to \mathbb{R}^n$ is differentiable a.e.~with the derivative being a Lie group homomorphism that commutes with the dilations $\delta_\lambda$ of $G$ and $\mathbb{R}^n$. In other words, one can `zoom-in' on $f$ almost everywhere and the resulting `zoomed-in' function (namely, the derivative of $f$) is a Carnot group homomorphism. Semmes' observation was that if $f$ were a \emph{bilipschitz} mapping, then the derivative of $f$ at a point of differentiability would also be bilipschitz, resulting in a Carnot group \emph{monomorphism} $G\to \mathbb{R}^n$. This is impossible because a group homomorphism $G\to \mathbb{R}^n$ must send the nontrivial commutator subgroup $\left[G,G\right]$ to $0$. Later this argument was generalized, independently by Cheeger and Kleiner \cite{cheeger2006differentiability} and Lee and Naor \cite{lee2006lp}, to Banach space targets with the Radon--Nikod\'ym property, which includes uniformly convex spaces.

We remark that the nonembeddability statements, both qualitative and quantitative, for nonabelian Carnot groups into uniformly convex spaces, do not formally follow from those of the Heisenberg group $\mathbb{H}^3$, because it is not true that any nonabelian Carnot group contains a bilipschitz copy of $\mathbb{H}^3$.\footnote{I thank Professors Assaf Naor and Robert Young for this comment.} For example, the aforementioned model filiform spaces $J^{s-1}\left(\mathbb{R}\right)$ with $s\ge 3$ do not contain a bilipschitz copy of $\mathbb{H}^3$. Recalling the definition of the model filiform group, $J^{s-1}\left(\mathbb{R}\right)$ admits the $s$-step grading $V_1\left(\mathfrak{j}^{s-1}\right)=\operatorname{span}\left\{x,y_0\right\}$, $V_2\left(\mathfrak{j}^{s-1}\right)=\operatorname{span}\left\{y_1\right\}$, $\cdots$, $V_s\left(\mathfrak{j}^{s-1}\right)=\operatorname{span}\left\{y_{s-1}\right\}$. If $J^{s-1}\left(\mathbb{R}\right)$ were to contain a bilipschitz copy of $\mathbb{H}^3$, then by the Pansu--Semmes differentiation theorem\footnote{This applies also when the codomain itself is a Carnot group.} there would be a Carnot group monomorphism $\mathbb{H}^3\to J^{s-1}\left(\mathbb{R}\right)$, namely a Lie group monomorphism that commutes with the dilations. This would induce a Lie algebra monomorphism $\phi:\mathfrak{h}^3\to \mathfrak{j}^{s-1}$ that sends $\left.\phi\right|_{V_i\left(\mathfrak{h}^3\right)}:V_i\left(\mathfrak{h}^3\right)\to V_i\left(\mathfrak{j}^{s-1}\right)$ for all $i$; by matching dimensions, we have vector space isomorphisms $\left.\phi\right|_{V_1\left(\mathfrak{h}^3\right)}:V_1\left(\mathfrak{h}^3\right)\stackrel{\approx}{\to} V_1\left(\mathfrak{j}^{s-1}\right)$ and $\left.\phi\right|_{V_2\left(\mathfrak{h}^3\right)}:V_2\left(\mathfrak{h}^3\right)\stackrel{\approx}{\to} V_2\left(\mathfrak{j}^{s-1}\right)$, but now we have a contradiction since this, along with the fact that $\phi$ is a Lie algebra homomorphism, implies we have a surjection
\[
\left.\phi\right|_{V_3\left(\mathfrak{h}^3\right)}:0=V_3\left(\mathfrak{h}^3\right)=\left[V_1\left(\mathfrak{h}^3\right),V_2\left(\mathfrak{h}^3\right)\right]\to \left[V_1\left(\mathfrak{j}^{s-1}\right),V_2\left(\mathfrak{j}^{s-1}\right)\right]=V_3\left(\mathfrak{j}^{s-1}\right)\neq 0.
\]

\subsubsection{Quantitative nonembeddability of Carnot groups along central directions}
Nonabelian Carnot groups are instances of nonabelian simply connected nilpotent Lie groups endowed with a sub-Riemannian distance. Thus, we may state Theorem \ref{thm:nilpotentVvsH} for nonabelian Carnot groups $G$ as follows.

\begin{theorem}\label{thm:carnotVvsH}
Let $G$ be a nonabelian Carnot group of step $s\ge 2$. Let $r\in \left[2,s\right]\cap \mathbb{N}$ be such that $V_r\cap Z\left(\mathfrak{g}\right)\neq \left\{0\right\}$. Let $v\in V_r\cap Z(\mathfrak{g})\setminus \left\{0\right\}$ be normalized so that $d_G\left(\exp\left(v\right),e_G\right)=1$. Suppose that $q\in \left[2,\infty\right)$ and $p\in (1,q]$. Let $\left(X,\left\|\cdot\right\|_X\right)$ be a Banach space with $K_q\left(X\right)<\infty$, and let $f\in Ch_0^{1,p}\left(G;X\right)$. 
Then
\begin{align}\label{eq:carnotVvsH}
\begin{aligned}
&\left(\int_0^\infty \left(\int_{G}\left(\frac{\left\|f\left(h\exp\left(tv\right)\right)-f\left(h\right)\right\|_X}{t^{1/r}}\right)^pd\mu\left(h\right)\right)^{q/p}\frac{dt}{t}\right)^{1/q}\\
&\lesssim r\max\left\{\left(p-1\right)^{(1/q)-1},K_q\left(X\right)\right\} \left\|f\right\|_{\dot{Ch}^{1,p}_0\left(G;X\right)}.
\end{aligned}
\end{align}
\end{theorem}

For nilpotent Lie groups, when we choose $v\in \underbrace{\left[\mathfrak{g},\left[\mathfrak{g},\cdots,\mathfrak{g}\right]\right]}_{s\mathrm{~times}}$, we have $d_G\left(\exp\left(tv\right),e_G\right)\asymp_G t^{1/s}$ for $t\ge 1$ but $d_G\left(\exp\left(tv\right),e_G\right)\asymp_G t^{1/r'}$ for $0<t<1$, where $r'$ is the minimum number of Lie brackets of the H\"ormander vector fields $X_i$'s needed to span $v$,
while for Carnot groups we have $d_G\left(\exp\left(tv\right),e_G\right)= t^{1/s}$ for all $t>0$. Thus, when $r'=1$ (which holds e.g.~when the distance is Riemannian), Theorem \ref{thm:nilpotentVvsH} only gives large-scale nonembeddability of nonabelian simply connected nilpotent Lie groups into uniformly convex spaces, i.e., that $c_X\left(B_r\right)\gtrsim_{G,X} \left(\log r\right)^{1/q}$, $r\ge 2$, while Theorem \ref{thm:carnotVvsH} gives local nonembeddability of Carnot groups into uniformly convex spaces, i.e., we know also that any nonempty open subset of the group fails to embed into $X$, in line with the Pansu--Semmes nonembeddability result.

Theorem \ref{thm:discrete} applies to cocompact lattices $\Gamma$ of a nonabelian Carnot group $G$.

\begin{theorem}\label{thm:discrete-Carnot}
Let $G$ be an $s\left(\ge 2\right)$-step Carnot group that has a cocompact lattice $\Gamma$. Choose $v_\Gamma\in \exp\left(V_s\right)\cap \Gamma\setminus\left\{e_G\right\}$. Then there exists $c=c\left(\Gamma,v_\Gamma\right)\in \mathbb{N}$ such that the following is true. Let $q\in \left[2,\infty\right)$ and $p\in (1,q]$. Suppose that $\left(X,\left\|\cdot\right\|_X\right)$ is a Banach space satisfying $K_q\left(X\right)<\infty$.  Then for every $n\in \mathbb{N}$ and every $f:\Gamma\to X$ we have
\begin{align*}
\begin{aligned}
&\left(\sum_{k=1}^{n^s}\frac{1}{k^{1+q/s}}\left(\sum_{x\in B^\Gamma_n} \left\|f\left(xv_\Gamma^k\right)-f\left(x\right)\right\|_X^p\right)^{q/p}\right)^{1/q}\\
&\lesssim_{\Gamma,v_\Gamma} \max\left\{\left(p-1\right)^{(1/q)-1},K_q\left(X\right)\right\} 
\left(\sum_{x\in B^\Gamma_{cn}} \sum_{a\in S}\left\|f\left(xa\right)-f\left(x\right)\right\|^p_X\right)^{1/p}.
\end{aligned}
\end{align*}
\end{theorem}

We may state simplified versions of Theorems \ref{thm:net} and \ref{thm:net-snowflake} in the setting of Carnot groups, thanks to the scale-invariance. In the following, $n_h\coloneqq \sum_{r=1}^s r\dim V_r$ is the Hausdorff dimension of $G$.
\begin{theorem}\label{thm:netdistortion}
Let $G$ be a nonabelian Carnot group endowed with a left-invariant Carnot--Carath\'eodory distance.
Let $X$ be a $q(\ge 2)$-uniformly convex Banach space.
Let $k$ be the number of vectors $X_1,\cdots,X_k$ which satisfy the H\"ormander condition and with which we measure distances on $G$, $s$ is the nilpotency step of $G$.
Let $r_1,r_2>0$ with $r_1\ge 2r_2$.
\begin{enumerate}
    \item If $N_{r_1,r_2}$ is an $r_2$-covering of $B_{r_1}$, then
\[
c_X\left(N_{r_1,r_2}\right)\gtrsim \left(\frac{1}{\sqrt{k}s^{1-1/q}4^{n_h/q}n_h}\right)\frac{\left(\log \left(r_1/r_2\right)\right)^{1/q}}{K_q\left(X\right)},
\]
where $k$ is the number of vectors $X_1,\cdots,X_k$ which satisfy the H\"ormander condition and with which we measure distances on $G$.
\item For an auxiliary parameter $r_3\in \left(0,\frac{r_1}{2}\right]$, if  $N_{r_1,r_2,r_3}$ is an $\left(r_2,r_3\right)$-net of $B_{r_1}$, then for $1<p<\infty$,
\[
\frac{c_1\sqrt{\min\left\{p,2\right\}-1}}{\sqrt{k}s^{1-1/\max\left\{p,2\right\}}K^{2/\max\left\{p,2\right\}}\log K} \cdot \left(\log \left(r_1/r_2\right)\right)^{1/\max\left\{p,2\right\}}\lesssim c_p\left(N_{r_1,r_2,r_3}\right)\lesssim \max\left\{\frac{\log K}{p},1\right\}\left(\log \left(r_1/r_3\right)\right)^{1/\max\left\{p,2\right\}}.
\]
\end{enumerate}
\end{theorem}
\begin{theorem}\label{thm:netdistortion-snowflake}
Let $N'_{r_1,r_2}$ be a $\left(r_1,r_2\right)$-net of a nonabelian Carnot group $G$, where $r_1,r_2>0$ with $r_1\ge r_2/2$. Let $X$ be a $q(\ge 2)$-uniformly convex Banach space. Then
\[
c_X\left(N'_{r_1,r_2},d_G^{1-\varepsilon}\right)\gtrsim \left(\frac{\left(r_2/r_1\right)^{\varepsilon}}{k^{\left(1-\varepsilon\right)/2}s^{\left(1-\varepsilon\right)(1-1/q)} 4^{n_h\left(1-\varepsilon\right)/q}n_h}\right)\frac{1}{K_q\left(X\right)^{1-\varepsilon}\varepsilon^{1/q}},\quad 0<\varepsilon<1.
\]
In particular,
\[
c_p\left(N'_{r_1,r_2},d_G^{1-\varepsilon}\right)\asymp_{G,p,r_1,r_2} \varepsilon^{-1/\max\left\{p,2\right\}},\quad 1<p<\infty.
\]
\begin{align*}
	&\left(\frac{\left(r_2/r_1\right)^{\varepsilon}}{k^{\left(1-\varepsilon\right)/2}s^{(1-\varepsilon)(1-1/\max\left\{p,2\right\})} 4^{n_h\left(1-\varepsilon\right)/\max\left\{p,2\right\}}n_h}\right)\frac{\left(\min\left\{p,2\right\}-1\right)^{(1-\varepsilon)/2}}{\varepsilon^{1/\max\left\{p,2\right\}}}\\
	& \lesssim c_p\left(N'_{r_1,r_2},d_G^{1-\varepsilon}\right)\\
	&\lesssim 1+\frac{(1-\varepsilon)^{1/\min\left\{p,2\right\}}}{\varepsilon^{1/\max\left\{p,2\right\}}}\max\left\{\frac{n_h}{p},1\right\},\quad 1<p<\infty.
\end{align*}
\end{theorem}

Focusing on Euclidean distortion for concreteness, Theorem \ref{thm:netdistortion} tells us that
\[
c_2\left(N_R\right)\asymp_{n_h} \sqrt{\log R},
\]
for any nonabelian Carnot group $G$ and $\left(1,\Omega\left(1\right)\right)$-net $N_R$ of $B_R$. Note that the nonabelian Carnot groups of lowest nonabelian step $s=2$, such as the Heisenberg group $\mathbb{H}^3$, already give the dependence $\sqrt{\log R}$. One may have expected that Carnot groups with higher steps may have a ``hierarchy of algebraic nonembeddability,'' with distortions being propagated and amplified as one passes from $\left[G,G\right]$, $\left[G,\left[G,G\right]\right]$, all the way down to $\underbrace{\left[G,\left[G,\cdots,G\right]\right]}_{s\mathrm{~times}}$, with the center ending up ``super-collapsed''; certainly if $G$ is high-dimensional and $V_1$ is low dimensional, the subgroup $\left[G,G\right]$ of small codimension is highly compressed, and one may have expected that there is enough room to have many more interesting phenomena happen. This might have led one to suspect that the dependency on $s$ of the asymptotics of $c_2\left(N_R\right)$ would appear in the exponent of $\log R$, but in reality, the asymptotics is exactly that of $\sqrt{\log R}$. Thus, it seems that the collapsing happens ``only once'' and happens ``uniformly across all of $\left[G,G\right]$''. The algebraic structure of the nonabelian Carnot group seems to only affect the constants involved while having no effect on the exponent. We then ask what is the correct dependence on the Lie group structure:\footnote{I thank Professor Assaf Naor for discussion on this matter and for asking a rudimentary version of this question.}
\begin{question}
In the case of Euclidean distortion, what is the precise asymptotics? More precisely, we ask the following.
\begin{enumerate}
\item
Let $G$ be a nonabelian simply connected nilpotent Lie group. 
\begin{enumerate}
	\item If $G$ has a Riemannian metric, does a constant $c_G$ exist such that we have the following?\footnote{Note that Corollary \ref{cor:nilpotentlp} and Theorem \ref{thm:precise-dist-nilp} only state that $c_2\left(B_r\right)\asymp_G \sqrt{\log r}$, i.e., $c_{1,G}\sqrt{\log r}\le c_2\left(B_r\right)\le c_{2,G}\sqrt{\log r}$ for constants $c_{1,G}$ and $c_{2,G}$ with $c_{2,G}\not\lesssim c_{1,G}$.}
	\[
	c_2\left(B_r\right)\asymp c_G\sqrt{\log r},\quad r\ge 2.
	\]
	Or better yet, do we have the following?
	\[
	\lim_{r\to\infty}\frac{c_2\left(B_r\right)}{\sqrt{\log r}}=c_G.
	\]
	\item If $G$ has a sub-Riemannian metric, for $r_1,r_2,r_3>0$ with $r_1\ge 2r_2$ and $r_1\ge 2r_3$, let $N_{r_1,r_2,r_3}$ be an $\left(r_2,r_3\right)$-net of $B_{r_1}$. Does there exist a constant $c'_G$ such that we have the following?
	\[
	c'_G \sqrt{\log \left(r_1/r_2\right)} \lesssim c_2\left(N_{r_1,r_2,r_3}\right)\lesssim c''_G \sqrt{\log \left(r_1/r_3\right)}\quad \mathrm{as}~r_1\to\infty,r_2,r_3\to 0.
	\]
	If so, how are the values of $c'_G$ and $c''_G$ determined by the algebraic structure of $G$ and the choice of $X_1,\cdots,X_k$?
\end{enumerate}

\item
Let $\Gamma$ be a not virtually abelian finitely generated group of polynomial growth. Does there exist a constant $c_\Gamma$ such that we have the following?
\[
c_2\left(B_n^\Gamma\right)\asymp c_\Gamma \sqrt{\log n}\quad \mathrm{as}~n\to\infty.
\]
Do we even have the following?
\[
\lim_{n\to\infty}\frac{c_2\left(B_n^\Gamma\right)}{\sqrt{\log n}}=c_\Gamma.
\]
If so, how is the value of $c_\Gamma$ determined by the algebraic structure of $\Gamma$ and the choice of generating set?
\end{enumerate}
\end{question}

We know that, by Theorem \ref{thm:netdistortion}(2), for nonabelian Carnot groups $G$, $c'_G$ and $c''_G$ are bounded above and below by functions of $n_h$, since $\log K\asymp n_h$ and $k,s\le n_h$.

\subsubsection{Quantitative $L^p$-nonembeddability along commutator directions}

From the standpoint of the Pansu--Semmes argument which simply outputs the qualitative nonexistence of a bilipschitz mapping $G\to \mathbb{R}^n$, the significance and advantage of Theorems \ref{thm:discrete}, \ref{thm:nilpotentVvsH}, \ref{thm:carnotVvsH}, and \ref{thm:discrete-Carnot} is that they give quantitative nonembeddability statements, such as  Theorems \ref{mainthm:nilpotentdistortion}, \ref{mainthm:polygrowthdistortion}, \ref{thm:continuous-snowflake}, \ref{thm:discrete-snowflake}, \ref{thm:precise-dist-nilp}, \ref{thm:precise-holder-nilp}, \ref{thm:net}, \ref{thm:net-snowflake}, \ref{thm:netdistortion}, \ref{thm:netdistortion-snowflake} and Corollaries \ref{cor:nilpotentlp}, \ref{cor:polygrowthlp}, \ref{cor:growth-function-char}. However, the Pansu--Semmes argument suggests that any Lipschitz function $f:G\to X$ should `collapse' along directions of the commutator group $\left[G,G\right]$, and from this point of view, Theorems \ref{thm:nilpotentVvsH} and \ref{thm:carnotVvsH} possess a curious limitation, namely that it requires us to measure the collapse along \emph{central} commutator directions.

This limitation is caused by a technical requirement in the proof of Theorems \ref{thm:nilpotentVvsH} and \ref{thm:carnotVvsH}, which is for the horizontal derivative and the convolution along the direction of $v$ to commute (more precisely, equation \eqref{eq:nablaConvolutionCommute} below). Since there is no such limitation in the original Pansu--Semmes proof, we pose the following question.

\begin{question}\label{ques:full commutator}
Let $G$ be a nonabelian Carnot group, and let $q\in \left[2,\infty\right)$ and $p\in (1,q]$. Let $\left(X,\left\|\cdot\right\|_X\right)$ be a Banach space with $K_q\left(X\right)<\infty$, and let $f\in Ch_0^{1,p}\left(G;X\right)$. Is a variant of Theorem \ref{thm:carnotVvsH} true for all $v\in \left[G,G\right]$ with $d_G\left(v,e_G\right)=1$? More specifically, do we have the following inequality?\footnote{Note that, compared to Theorem \ref{thm:carnotVvsH}, we are measuring the vertical variations along the displacements of $\delta_t(v)$ and not $\exp\left(tv\right)$. This is because the Pansu--Semmes argument uses Carnot group differentiation, and that directly involves the quantity $\frac{f\left(h\delta_t\left(v\right)\right)-f\left(h\right)}{t}$. There is the added benefit that we may capitalize on the simple distance relation $d_G\left(\delta_t(v\right),e_G)=t$.}
\begin{align*}
&\left(\int_0^\infty \left(\int_{G}\left(\frac{\left\|f\left(h\delta_t\left(v\right)\right)-f\left(h\right)\right\|_X}{t}\right)^pd\mu\left(h\right)\right)^{q/p}\frac{dt}{t}\right)^{1/q}\lesssim_{G,p,q,X}  \left\|f\right\|_{\dot{Ch}^{1,p}_0\left(G;X\right)}.
\end{align*}

More generally, let $G$ be a nonabelian simply connected nilpotent Lie group endowed with a left-invariant Carnot--Carath\'eodory distance. Is a variant of Theorem \ref{thm:nilpotentVvsH} true for all $v\in \left[G,G\right]$ with $d_G\left(v,e_G\right)=1$? More precisely, if $1<r'\le s'$ are such that $d_G\left(v^t,e_G\right)\lesssim \min\left\{t^{1/s'},t^{1/r'}\right\}$ for all $t>0$, then does the following hold?
\begin{align*}
\begin{aligned}
&\left(\int_0^\infty \left(\int_{G}\left(\frac{\left\|f\left(h\exp\left(tv\right)\right)-f\left(h\right)\right\|_X}{\min\left\{t^{1/s'},t^{1/r'}\right\}}\right)^pd\mu\left(h\right)\right)^{q/p}\frac{dt}{t}\right)^{1/q}\lesssim_{G,p,q,X} \left\|f\right\|_{\dot{Ch}^{1,p}_0\left(G;X\right)}.
\end{aligned}
\end{align*}
\end{question}

We answer this question in the affirmative for nonabelian Carnot groups and $L^p$ $\left(1<p\le 2\right)$ targets, albeit with some possible loss of control on the constants for the inequality. Here, for $f\in L^1_{\mathrm{loc}}\left(G\right)$, if the distributional derivatives $X_if$ belong to $L^1_{\mathrm{loc}}\left(G\right)$, then denote $\nabla f\coloneqq\left(X_1f,\cdots,X_kf\right)$.

\begin{theorem}\label{thm:lp}
Let $G$ be a nonabelian Carnot group.
For $p\in (1,2]$ and $q\in [2,\infty)$, $f\in L^p\left(G\right)$ with distributional derivatives $X_i f \in L^p\left(G\right)$ for $i=1,\cdots,k$, and $v\in \left[G,G\right]$ with $d_G\left(v,e_G\right)= 1$, we have
\begin{equation}\label{eq:lp main}
\left(\int_0^\infty \left[ \int_G \left(\frac{\left|f\left(h\right) - f\left(h\delta_r\left(v\right)\right)\right|}{r} \right)^p d\mu\left(h\right)\right]^{q/p}  \frac{dr}{r} \right)^{1/q} \lesssim_{G,p} \left\|\nabla f\right\|_{L^p\left(G;\ell_2^k\right)}.
\end{equation}
\end{theorem}

Although Theorem \ref{thm:lp} is stated for $\mathbb{R}$-targets, it is straightforward to tensorize inequality \eqref{eq:lp main} using the triangle inequality when $f\in L^p\left(G,L^p\left(\sigma\right)\right)$ with $\nabla f\in L^p\left(G,L^p\left(\sigma,\ell_2^k\right)\right)$.

\begin{remark}\label{qge2}
The case $q=\infty$ of Theorem \ref{thm:lp} formally holds. More specifically, under the assumptions of Theorem \ref{thm:lp},
\begin{equation}\label{eq:qinfty-carnot}
\left[ \int_G \left(\frac{\left|f\left(h\right) - f\left(h\delta_r\left(v\right)\right)\right|}{r} \right)^p d\mu\left(h\right)\right]^{1/p}\le \left\|\nabla f\right\|_{L^p\left(G;\ell_2^k\right)},\quad r>0.
\end{equation}
See equation \eqref{eq:deterministic-path} on page \pageref{proofrmkqge2} of Section \ref{sec:cvx}, in conjunction with \eqref{ineq:lip-nabla}, for a proof of this simple fact.
Thus, by H\"older's inequality, the inequality \eqref{eq:lp main} for $q=2$ implies the inequalities \eqref{eq:lp main} for $q\ge 2$, and this is why we don't see dependence on $q$ of the implied constant of inequality \eqref{eq:lp main}.  We cannot make this simplification in Theorem \ref{thm:carnotVvsH} or in Theorem \ref{thm:nilpotentVvsH} due to the stated dependence of the constants on $q$.
\end{remark}

However, we have not managed to prove a strengthening of Theorem \ref{thm:discrete} to directions in the commutator subgroup for lattices in Carnot groups, formulated as follows.
\begin{question}\label{ques:discrete-Carnot}
Let $\Gamma$ be a cocompact lattice of a nonabelian Carnot group $G$. For any $v_\Gamma\in \left[\Gamma,\Gamma\right]\setminus\left\{e_G\right\}$, does there exist $c\in \mathbb{N}$ such that the following is true? If we let $s'\ge 2$ be the largest integer such that $v_\Gamma\in \underbrace{\left[G,\left[G,\cdots,G\right]\right]}_{s'\mathrm{~times}}$, let $q\in \left[2,\infty\right)$ and $p\in (1,q]$, and let $\left(X,\left\|\cdot\right\|_X\right)$ be a Banach space satisfying $K_q\left(X\right)<\infty$, then for every finitely supported $f:\Gamma\to X$ we have
\begin{align*}
\begin{aligned}
&\left(\sum_{k=1}^{n}\frac{1}{k}\left(\sum_{x\in B^\Gamma_n} \left(\frac{\left\|f\left(x\delta_k\left(v_\Gamma\right)\right)-f\left(x\right)\right\|_X}{k}\right)^p\right)^{q/p}\right)^{1/q}\lesssim_{\Gamma,p,q,X}
\left(\sum_{x\in B^\Gamma_{cn}} \sum_{a\in S}\left\|f\left(xa\right)-f\left(x\right)\right\|^p_X\right)^{1/p}.
\end{aligned}
\end{align*}
\end{question}

We will deduce Theorem \ref{thm:lp} from a generalized version of Dorronsoro's theorem \cite{dorronsoro1985characterization} for Carnot groups, Theorem \ref{lpgenthm}, following the argument of F\"assler and Orponen \cite{fassler2020dorronsoro}.

\begin{remark}
So far, we have mentioned two proof methods of the vertical versus horizontal inequalities, namely the Littlewood--Paley--Stein theory for Theorems \ref{thm:nilpotentVvsH} and \ref{thm:carnotVvsH} and the Dorronsoro theorem (Theorem \ref{lpgenthm}) for Theorem \ref{thm:lp}. There is a third possible proof method for $L^2$-targets which follows the representation theoretic proof of \cite{austin2013sharp} for the case $G=\mathbb{H}^3$, $X=L^2$. Specifically, one can reduce proving the vertical versus horizontal inequality to proving a certain inequality regarding $1$-cocycles $G\to L^2$ of irreducible unitary representations of $G$. The irreducible unitary representations of $\mathbb{H}^3$ are given by the Stone--von Neumann theorem, while the irreducible unitary representations of simply connected nilpotent Lie groups have been described by Dixmier \cite{dixmier1959representations,dixmier1957representations} and Kirillov \cite{kirillov1962unitary}. We will not pursue this proof method since the anticipated results do not seem to provide an improvement or advantage over neither Theorem \ref{thm:nilpotentVvsH} nor Theorem \ref{thm:lp}.
\end{remark}


\subsection{Dorronsoro's theorem on Carnot groups}\label{subsec:dorronsoro}

As mentioned earlier, a byproduct of this paper is a formulation and proof of a Dorronsoro theorem for Carnot groups (Theorem \ref{lpgenthm}), which is used to prove Theorem \ref{thm:lp}. In this subsection, we state Theorem \ref{lpgenthm} and obtain more refined vertical versus horizontal inequalities.

Let $G$ be a (possibly abelian) Carnot group in this subsection. Following Folland \cite{folland1975subelliptic} and denoting by $\Delta=\sum_{i=1}^k X_i^2$ the sub-Laplacian and $H_t$ the corresponding heat kernel, we define the operator $\left(-\Delta_p\right)^\alpha$ for $1<p<\infty$ and $\operatorname{Re}\alpha>0$ by
\[
\left(-\Delta_p\right)^\alpha f=\lim_{\varepsilon\to 0}\frac{1}{\Gamma\left(\lfloor\operatorname{Re}\alpha\rfloor+1-\alpha\right)}\int_\varepsilon^\infty t^{\lfloor \operatorname{Re}\alpha\rfloor-\alpha}\left(-\Delta\right)^{\lfloor\operatorname{Re}\alpha\rfloor+1}H_tfdt
\]
for all $f\in L^p\left(G\right)$ such that the limit exists in the $L^p$ norm. Then, for $1<p<\infty$ and $\alpha>0$, the Sobolev space $S^p_\alpha$ is the Banach space $\operatorname{Dom}\left(\left(-\Delta_p\right)^{\alpha/2}\right)$ with norm
\[
\left\|\cdot\right\|_{p,\alpha}\coloneqq \left\|\cdot\right\|_{L^p\left(G\right)}+\left\|\left(-\Delta_p\right)^{\alpha/2}\left(\cdot\right)\right\|_{L^p\left(G\right)}.
\]

When $G=\mathbb{R}^n$, Dorronsoro's Theorem \cite{dorronsoro1985characterization} states that the $L^p\left(\mathbb{R}^n\right)$ norm of the $L^p$-fractional Laplacian $\left\|\left(-\Delta_p\right)^\alpha f\right\|_{L^p\left(\mathbb{R}^n\right)}$ ($1<p<\infty$, $\alpha>0$) of a function $f\in L^p\left(\mathbb{R}^n\right)$ is equivalent up to constant factors to a singular integral that, roughly speaking, measures, at all points of $\mathbb{R}^n$ and at all scales, the deviation of $f$ from being a polynomial of weighted degree $\le \lfloor\alpha\rfloor$.

We first state Dorronsoro's original theorem \cite[Theorem 2, Theorem 6]{dorronsoro1985characterization}. Considering the abelian case $G=\mathbb{R}^n$ and denoting coordinates by $x=\left(x_1,\cdots,x_n\right)\in \mathbb{R}^n$, we have $\Delta=\sum_{i=1}^n \frac{\partial^2}{\partial x_i^2}$, and for $1<p<\infty$, the fractional Laplacian $\left(-\Delta_p\right)^{\alpha/2}$ is defined as above. For polynomials of $x_1,\cdots,x_n$, we may define their (total) degree in the usual multiplicative way by assigning degree $1$ to each $x_i$, and we may consider, for $d\in \mathbb{Z}_{\ge 0}$, the family $\mathcal{A}_d$ of polynomials in $x_1,\cdots,x_n$ of degree at most $d$. For a locally integrable function $f:\mathbb{R}^n\to\mathbb{R}$, $x\in \mathbb{R}^n$ and $r>0$, let $A^d_{x,r}f$ denote the unique element of $\mathcal{A}_d$ such that
\[
\int_{B_r\left(x\right)}\left(f\left(y\right)-A^d_{x,r}f\left(y\right)\right)A\left(y\right)d\mu\left(y\right)=0,\quad \forall A\in \mathcal{A}_d.
\]
For example, $A^0_{x,r}f=\fint_{B_r\left(x\right)} f\left(z\right)d\mu\left(z\right)$, where $\fint$ denotes the average integral, and
\begin{equation*}
A^1_{x,r}f\left(x+y\right) = \fint_{B_r\left(x\right)}f\left(z\right)d\mu\left(z\right) + \frac{r^2}{n+2}\sum_{i=1}^{n}y_i\fint_{B_r\left(x\right)} f\left(z\right)\left(z_{i}-x_{i}\right)d\mu\left(z\right), \quad y \in \mathbb{R}^n.
\end{equation*}
We measure how well $A^d_{x,r}f$ approximates $f$ in the ball $B_r\left(x\right)$ in the $L^q$ norm and denote the average $L^q$-normed difference by the following quantity:
\[
\beta_{f,d,q}\left(B_r\left(x\right)\right)=
\begin{cases}
\left(\fint_{B_r\left(x\right)}\left|f\left(y\right)-A^d_{x,r}f\left(y\right)\right|^q d\mu\left(y\right)\right)^{1/q},& 1\le q<\infty,\\
\left\|f-A^d_{x,r}f\right\|_{L^\infty\left(B_r\left(x\right)\right)},& q=\infty,
\end{cases}
\]
whenever the norm exists; otherwise define $\beta_{f,d,q}\left(B_r\left(x\right)\right)=\infty$. 

Now we are ready to state Dorronsoro's theorem.
\begin{theorem}[{\cite[Theorem 2, Theorem 6]{dorronsoro1985characterization}}]\label{Euclidean-lpgenthm}
Let $n\in \mathbb{Z}_{>0}$, $1<p<\infty$, $\alpha>0$, and $1\le q\le\infty$ satisfy
\begin{equation*}
\frac{\alpha}{n}+\frac 1q>\frac{1}{\min\{p,2\}}.\end{equation*}
Then for all $f\in L^p\left(\mathbb{R}^n\right)$,
\begin{equation}\label{eq:rn-dorronsoro-singint}
\left(\int_{\mathbb{R}^n} \left(\int_0^\infty \left[\frac 1{r^\alpha} \beta_{f,\lfloor \alpha \rfloor,q}\left(B_r\left(x\right)\right) \right]^2 \frac{dr}{r} \right)^{p/2} dx \right)^{1/p} \asymp_{n,\alpha,p,q} \left\|\left(-\Delta_p\right)^{\alpha/2} f\right\|_{L^p\left(\mathbb{R}^n\right)},
\end{equation}
in the sense that $f\in S^p_\alpha\left(\mathbb{R}^n\right)$ if and only if the left-hand side of \eqref{eq:rn-dorronsoro-singint} is finite, in which case the above relation \eqref{eq:rn-dorronsoro-singint} holds.
\end{theorem}

We will generalize Dorronsoro's theorem to Carnot groups. Following the above formulation of the case $G=\mathbb{R}^n$, we need to suitably define the (weighted) degree of polynomials on Carnot groups.

In the case of the Heisenberg groups $\mathbb{H}^{2k+1}$ one side of the Dorronsoro statement (namely that the aforementioned singular integral is bounded above by $\left\|\left(-\Delta_p\right)^\alpha f\right\|_{L^p\left(\mathbb{H}^{2k+1}\right)}$) was proven for the restricted exponent range $0<\alpha<2$ in \cite{fassler2020dorronsoro}. The reason for this restriction $0<\alpha<2$ in \cite{fassler2020dorronsoro} is that they considered deviations from the smaller class of ``horizontal polynomials", which is inadequate for $\alpha\ge 2$ (see Remark \ref{remark:noHori} for a proof of the inadequacy of horizontal polynomials as approximants). To remove this restriction, we need to reconsider our class of weighted polynomials.

Let $G$ be a nonabelian Carnot group. Recall that $n_h= \sum_{r=1}^s rk_r$ is the Hausdorff dimension of $G$. We must have $n_h\ge 4$, since $s\ge 2$, $k_1\ge 2$ and $k_2\ge 1$.

Fix a basis $X_{r,1},\cdots, X_{r,k_r}$, where $k_r=\dim V_r$, of each stratum $V_r$. As $G$ is nilpotent and simply connected, the exponential map $\exp:\mathfrak{g}\to G$ is a diffeomorphism. Thus, each point $p\in G$ can be expressed in the coordinates $p=\exp\left(\sum_{r=1}^s\sum_{i=1}^{k_r}x_{r,i}X_{r,i}\right)$, $x_{r,i}\in \mathbb{R}$; let this be the single coordinate chart on $G$. For polynomials of the coordinates $x_{r,i}$, we assign weight $r$ to the variable $x_{r,i}$, $r=1,\cdots,s$, $i=1,\cdots,k_r$, i.e., for any linear combination $\sum_{\lambda=1}^\Lambda a_\lambda \prod_{\sigma=1}^{S^\lambda} x_{r^\lambda_\sigma,i^\lambda_\sigma}$, $\Lambda\in \mathbb{Z}_{>0}$, $a_\lambda\in \mathbb{R}\setminus\{0\}$, $S^\lambda\in \mathbb{Z}_{\ge 0}$, $r^\lambda_\sigma\in \left\{1,\cdots,s\right\}$, $i^\lambda_\sigma\in \left\{1,\cdots,k_{r^\lambda_\sigma}\right\}$, of distinct monomials with nonzero coefficients, we define
\[
\deg\left(\sum_{\lambda=1}^\Lambda a_\lambda \prod_{\sigma=1}^{S^\lambda} x_{r^\lambda_\sigma,i^\lambda_\sigma}\right)\coloneqq\max_{\lambda=1,\cdots,\Lambda}\sum_{\sigma=1}^{S^\lambda}r^\lambda_\sigma
\]
and we define $\deg 0\coloneqq -\infty$. Let $d\in \mathbb{Z}_{\ge 0}$ and let $\mathcal{A}_d$ denote the family of polynomials $G\to\mathbb{R}$ of weighted degree at most $d$.
\begin{remark}\,
\begin{enumerate}
\item Note that the family $\mathcal{A}_d$ is left-invariant. Indeed, one can express the group law in this coordinate system using the Baker--Campbell--Hausdorff formula
\[
\exp\left(g\right)\exp\left(h\right) =\exp\left(
\sum_{m = 1}^s\frac {\left(-1\right)^{m-1}}{m}
\sum_{\begin{smallmatrix} r_1 + s_1 > 0  \\\cdots \\ r_m + s_m > 0 \end{smallmatrix}}
\frac{\left[ g^{r_1} h^{s_1} g^{r_2} h^{s_2} \dotsm g^{r_m} h^{s_m} \right]}{\left(\sum_{j = 1}^m \left(r_j + s_j\right)\right) \cdot \prod_{j = 1}^m r_j! s_j!}\right),\quad g,h\in \mathfrak{g},
\]
where the sum is finite since $G$ is nilpotent of step $s$, and we have used the following notation:
\[
\left[ g^{r_1} h^{s_1} \dotsm g^{r_m} h^{s_m} \right] \coloneqq [ \underbrace{g,[g,\dotsm[g}_{r_1} ,[ \underbrace{h,[h,\dotsm[h}_{s_1} ,\dotsm [ \underbrace{g,[g,\dotsm[g}_{r_m} ,[ \underbrace{h,[h,\dotsm h}_{s_m} ]]\dotsm]].
\]
Thus, we can see that
\begin{equation}\label{eq:grouplaw1}
\exp\left(\sum_{r=1}^s\sum_{i=1}^{k_r}x^0_{r,i} X_{r,i}\right)\exp\left(\sum_{r=1}^s\sum_{i=1}^{k_r}x^1_{r,i} X_{r,i}\right)=\exp\left(\sum_{r=1}^s\sum_{i=1}^{k_r}x^2_{r,i} X_{r,i}\right)
\end{equation}
where
\begin{equation}\label{eq:grouplaw2}
x^2_{r,i}=x^0_{r,i}+x^1_{r,i}+\left(\mathrm{homogeneous~ polynomial ~of~ }\left\{x^0_{r',i'}\right\}_{r'<r},\left\{x^1_{r',i'}\right\}_{r'<r}\mathrm{~of~ weighted~ degree~ }r\right).
\end{equation}
Therefore, a polynomial of weighted degree $d$ precomposed with a left translation is still of weighted degree $d$.

\item This definition of $\mathcal{A}_d$ is in contrast from \cite{fassler2020dorronsoro}, where they only considered horizontal polynomials, i.e., polynomials that depend only on the `horizontal coordinates' $x_{1,1},\cdots,x_{1,k_1}$; here we are allowing for non-horizontal coordinates, provided they satisfy the weighted degree condition. Our definition of $\mathcal{A}_d$ in the case $G=\mathbb{H}^3$ agrees with that of \cite{fassler2020dorronsoro} only for $d=0$ and $d=1$.
\end{enumerate}
\end{remark}

As in the $G=\mathbb{R}^n$ case, for a locally integrable function $f:G\to\mathbb{R}$, $x\in G$ and $r>0$, let $A^d_{x,r}f$ denote the unique element of $\mathcal{A}_d$ such that
\[
\int_{B_r\left(x\right)}\left(f\left(y\right)-A^d_{x,r}f\left(y\right)\right)A\left(y\right)d\mu\left(y\right)=0,\quad \forall A\in \mathcal{A}_d.
\]
For example, $A^0_{x,r}f=\fint_{B_r\left(x\right)} f\left(z\right)d\mu\left(z\right)$, the average of $f$ on $B_r\left(x\right)$, and a formula for $A^1_{x,r}f$ is given later in \eqref{A1}. We measure how well $A^d_{x,r}f$ approximates $f$ in the ball $B_r\left(x\right)$ in the $L^q$ norm and denote the average $L^q$-normed difference by the following quantity:
\begin{equation}\label{eq:beta-defn}
\beta_{f,d,q}\left(B_r\left(x\right)\right)=
\begin{cases}
\left(\fint_{B_r\left(x\right)}\left|f\left(y\right)-A^d_{x,r}f\left(y\right)\right|^q d\mu\left(y\right)\right)^{1/q},& 1\le q<\infty,\\
\left\|f-A^d_{x,r}f\right\|_{L^\infty\left(B_r\left(x\right)\right)},& q=\infty,
\end{cases}
\end{equation}
whenever the norm exists; otherwise define $\beta_{f,d,q}\left(B_r\left(x\right)\right)=\infty$. 

Now we are ready to state our version of Dorronsoro's theorem for Carnot groups.
\begin{theorem}[Dorronsoro's theorem for Carnot groups]\label{lpgenthm}
Let $G$ be a Carnot group with Hausdorff dimension $n_h$. Let $1<p<\infty$, $\alpha>0$, and $1\le q\le\infty$ satisfy
\begin{equation}\label{eq:dorronsoro-condition}
    \frac{\alpha}{n_h}+\frac 1q>\frac{1}{\min\{p,2\}}.
\end{equation}
Then for all $f\in L^p\left(G\right)$, with the definition \eqref{eq:beta-defn},
\begin{equation}\label{form13}
\left(\int_G \left(\int_0^\infty \left[\frac 1{r^\alpha} \beta_{f,\lfloor \alpha \rfloor,q}\left(B_r\left(x\right)\right) \right]^2 \frac{dr}{r} \right)^{p/2} d\mu\left(x\right) \right)^{1/p} \asymp_{G,\alpha,p,q} \left\|\left(-\Delta_p\right)^{\alpha/2} f\right\|_{L^p\left(G\right)},
\end{equation}
in the sense that $f\in S^p_\alpha\left(G\right)$ if and only if the left-hand side of \eqref{form13} is finite, in which case the above relation holds.
\end{theorem}

Our proof of Theorem \ref{lpgenthm} is based on Dorronsoro's original proof \cite{dorronsoro1985characterization} and adds on ingredients from F\"assler and Orponen's proof \cite{fassler2020dorronsoro}. Actually, in the case of $\alpha=1$ and $G=\mathbb{H}^{2k+1}$ there is a simpler proof of Dorronsoro's theorem involving the Fourier transform \cite[Subsection 7.3]{azzam2016bi}.\footnote{I thank Ian Fleschler for pointing out this reference.} It seems likely that there would be a similar simpler proof of Theorem \ref{lpgenthm} for $\alpha=1$, but we have not pursued this direction since we can obtain the full range $\alpha>0$ with our proof.

The novelty of Theorem \ref{lpgenthm} is threefold. First, we recognize the correct class of polynomials to approximate by, namely polynomials of weighted degree that depend on the ``full set of coordinates'' as opposed to just the ``horizontal coordinates.'' Second, we recover the full Dorronsoro statement, i.e., we prove both directions of the equivalence. Third, we verify that generalizing to higher step Carnot groups introduces no serious problems.

\begin{remark}\label{remark:noHori}
F\"assler and Orponen's version \cite{fassler2020dorronsoro} of the Dorronsoro theorem \cite{dorronsoro1985characterization} for Heisenberg groups states the $\lesssim$ portion of the above inequality \eqref{form13} for $f\in S^p_\alpha\left(G\right)$ in the case $G=\mathbb{H}^{2k+1}$ and $0<\alpha<2$. (They did not prove the $\gtrsim$ portion because it is not needed when proving the vertical versus horizontal inequalities.) The restriction $0<\alpha<2$ was necessary because they were approximating by horizontal polynomials, i.e., polynomials that depend only on the horizontal coordinates; in the range $0<\alpha<2$ our formulation of Theorem \ref{lpgenthm} agrees with that of \cite{fassler2020dorronsoro} because then $\mathcal{A}_1$ is precisely the family of linear horizontal polynomials. To see why the restriction $0<\alpha<2$ is necessary when we consider only horizontal polynomials, let $\alpha\ge 2$, and suppose $\mathcal{A}_d$ were defined as the set of horizontal polynomials of degree at most $d$. In the case of the Heisenberg group $\mathbb{H}^{2k+1}$ the group structure is given by
\begin{align*}
&\exp\left(\sum_{i=1}^k\left(x^0_i\partial_{x_i}+y^0_i\partial_{y_i}\right)+z^0\partial_z\right)\cdot \exp\left(\sum_{i=1}^k\left(x^1_i\partial_{x_i}+y^1_i\partial_{y_i}\right)+z^1\partial_z\right) \\
&= \exp\left(\sum_{i=1}^k\left(\left(x^0_i+x^1_i\right)\partial_{x_i}+\left(y^0_i+y^1_i\right)\partial_{y_i}\right)+\left(z^0+z^1+\frac 12\sum_{i=1}^k\left(x_i^0y_i^1-x_i^1y_i^0\right)\right)\partial_z\right)
\end{align*}
and thus the left-invariant horizontal vector fields are given by
\[
X_i=\frac{\partial}{\partial x_i}+\frac{y_i}{2} \frac{\partial}{\partial z},\quad Y_i=\frac{\partial}{\partial y_i}-\frac{x_i}{2} \frac{\partial}{\partial z},\qquad i=1,\cdots,k,
\]
so that the Laplacian is given by
\[
\Delta = \sum_{i=1}^k\left(X_i^2+Y_i^2\right)= \sum_{i=1}^k \left(\frac{\partial^2}{\partial x_i^2}+\frac{\partial^2}{\partial y_i^2}+y_i\frac{\partial^2}{\partial x_i\partial z}-x_i\frac{\partial^2}{\partial y_i\partial z}\right)+\frac 14\sum_{i=1}^k\left(x_i^2+y_i^2\right) \frac{\partial^2}{\partial z^2}.
\]
Let $f:\mathbb{H}^{2k+1}\to \mathbb{R}$ be a function which is smooth, supported on $B_2$ and agrees with the function $z$ on $B_1$. Clearly $\left\|\left(-\Delta\right)^{\alpha/2} f\right\|_{L^p\left(G\right)}$ is finite. However, for $r\in \left(0,\frac 12\right)$, $A^d_{0,r}f=A^d_{0,r}z=0$ because $B_r$ is symmetric about reflection with respect to the plane $z=0$; thus
\[
\beta_{f,d,q}\left(B_r\right)=\left(\fint_{B_r}\left|z\right|^qd\mu\left(z\right)\right)^{1/q}\asymp_{G,q} r^2.
\]
Since $\mathcal{A}_d$ is invariant under left-translation and left-translation is measure-preserving, at
\[
p=\exp\left(\sum_{i=1}^k\left(x^0_i\partial_{x_i}+y^0_i\partial_{y_i}\right)+z^0\partial_z\right)\in B_{1/2}
\]
we have
\[
A^d_{p,r}f=z^0+\frac 12 \sum_{i=1}^k\left(x_i^0y_i-x_iy_i^0\right),\quad \beta_{f,d,q}\left(B_r\left(x\right)\right)=\beta_{f,d,q}\left(B_r\right)\asymp_{G,q}r^2.
\]
Therefore, in this hypothetical case, i.e., when we defined $\mathcal{A}_d$ as the set of horizontal polynomials of degree at most $d$, the left-hand side of \eqref{form13} would be bounded below by
\[
\left(\int_{B_{1/2}} \left(\int_0^{1/2} \left[\frac 1{r^\alpha} \beta_{f,\lfloor \alpha \rfloor,q}\left(B_r\left(x\right)\right) \right]^2 \frac{dr}{r} \right)^{p/2} d\mu\left(x\right) \right)^{1/p}\asymp_{G,q} \mu\left(B_{1/2}\right)^{1/p}\left(\int_0^{1/2}  \frac{dr}{r^{1+2\left(\alpha-2\right)}} \right)^{1/2}=\infty.
\]
Thus \eqref{form13} fails to hold if $\alpha\ge 2$ when $\mathcal{A}_d$ is restricted to horizontal polynomials.
\end{remark}

We will use the Dorronsoro Theorem for Carnot groups, Theorem \ref{lpgenthm}, to prove the vertical versus horizontal inequality of Theorem \ref{thm:lp}. The proof of Theorem \ref{thm:lp} from Theorem \ref{lpgenthm} is given in Section \ref{sec:VvsH}, and uses the special case of $\alpha=1$ of Theorem \ref{lpgenthm} along with the fact that $\left\|\left(-\Delta_p\right)^{1/2}f\right\|_{L^p\left(G\right)}\asymp \left\|\nabla f\right\|_{L^p\left(G;\ell_2^k\right)}$ \cite[(52)]{coulhon2001sobolev} to see that it is enough to upper bound the left-hand side of \eqref{eq:lp main} by the left-hand side of \eqref{form13}.

However, one may ask what vertical versus horizontal inequalities emerge when we don't specialize to $\alpha=1$. In this case, we obtain fractional order generalizations of Theorem \ref{thm:nilpotentVvsH}.


\begin{theorem}\label{VvsH}
Let $G$ be a nonabelian Carnot group. Let $1<p\le 2$, $\alpha>0$, $n\in \mathbb{N}$, and let $f\in S^p_\alpha\left(G\right)$. Let $v\in\exp\left( V_{\lfloor\alpha/n\rfloor+1}\oplus \cdots \oplus V_s\right)$ with $d_G\left(v,e_G\right)=1$.
Then
\begin{equation}\label{eq:VvsH}
\left(\int_0^\infty \left[ \int_G \left(\frac{1}{r^\alpha}\left|\Delta_{\delta_{r}\left(v\right)}^n f\left(h\right)\right| \right)^p d\mu\left(h\right) \right]^{2/p}  \frac{dr}{r} \right)^{1/2} \lesssim_{G,p,\alpha,n} \left\|\left(-\Delta_p\right)^{\alpha/2} f\right\|_{L^p\left(G\right)},
\end{equation}
where for $g\in G$ and $F:G\to\mathbb{R}$, $\Delta_gF\left(x\right)\coloneqq F\left(xg\right)-F\left(x\right)$ denotes the finite difference.
\end{theorem}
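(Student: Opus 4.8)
The goal is to bound, for $v\in\exp(V_{\lfloor\alpha/n\rfloor+1}\oplus\cdots\oplus V_s)$ with $d_G(v,e_G)=1$, the iterated finite difference $\Delta_{\delta_r(v)}^n f$ at scale $r$ by the approximation defect $\beta_{f,\lfloor\alpha\rfloor,q}(B_{Cr}(\cdot))$ at comparable scale, and then invoke Dorronsoro's Theorem~\ref{lpgenthm} for the exponent $\alpha$. The key algebraic observation is a \emph{polynomial annihilation} fact: if $P\in\mathcal{A}_{\lfloor\alpha\rfloor}$ is a polynomial of weighted degree at most $\lfloor\alpha\rfloor$, and $w\in\exp(V_m\oplus\cdots\oplus V_s)$ has $\delta_r(w)$ of Carnot--Carath\'eodory size $\asymp r$ (so the coordinates of $\delta_r(w)$ in stratum $V_j$ scale like $r^{j/m}$), then the $n$-th iterated difference $\Delta_{\delta_r(w)}^n P$ is a polynomial whose weighted degree in the relevant sense drops by $nm$; in particular once $nm>\lfloor\alpha\rfloor$, i.e. once $m\ge\lfloor\alpha/n\rfloor+1$, we get $\Delta_{\delta_r(v)}^n P\equiv 0$. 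This is the Carnot-group analogue of the classical fact that an $n$-th difference of a polynomial of degree $<n$ vanishes, and it is exactly why the hypothesis $v\in\exp(V_{\lfloor\alpha/n\rfloor+1}\oplus\cdots\oplus V_s)$ appears. I would prove this by expanding $P(h\,\delta_r(v))$ via the Baker--Campbell--Hausdorff formula recorded in the excerpt: translating by $\delta_r(v)$ changes the coordinate $x_{j,i}$ by $O(r^{j/m})$ (times lower-stratum coordinates of $h$), so each application of $\Delta_{\delta_r(v)}$ to a weighted-degree-$d$ monomial produces terms of weighted degree $\le d-m$, and hence $n$ applications kill everything of weighted degree $\le nm-1\ge\lfloor\alpha\rfloor$ when $nm>\lfloor\alpha\rfloor$.

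\textbf{Main steps.} First, fix $h\in G$ and $r>0$, and let $P=A^{\lfloor\alpha\rfloor}_{h,Cr}f$ be the best polynomial approximant of $f$ on $B_{Cr}(h)$, where $C\ge 1$ is a constant (depending on $G,n$ and on $d_G(v,e_G)=1$) large enough that $h\,\delta_r(v)^{j}\in B_{Cr}(h)$ for all $j=0,1,\dots,n$; this is possible since $d_G(\delta_r(v)^j,e_G)=r\,d_G(v^j,e_G)\le n r$ using the dilation scaling and the triangle inequality. By the annihilation fact above, $\Delta_{\delta_r(v)}^n P\equiv 0$, so $\Delta_{\delta_r(v)}^n f(h)=\Delta_{\delta_r(v)}^n (f-P)(h)$, which expands as $\sum_{j=0}^n\binom{n}{j}(-1)^{n-j}(f-P)(h\,\delta_r(v)^{j})$ — all the evaluation points lie in $B_{Cr}(h)$. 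Second, I would bound this pointwise quantity in $L^p$. The inner $L^p(d\mu(h))$ norm of $|\Delta_{\delta_r(v)}^n f(h)|$ is at most $\sum_{j=0}^n\binom{n}{j}\big(\int_G |(f-P_{h})(h\,\delta_r(v)^{j})|^p d\mu(h)\big)^{1/p}$ where $P_h=A^{\lfloor\alpha\rfloor}_{h,Cr}f$ depends on $h$; the standard way (used in Dorronsoro-type arguments) to pass from this pointwise difference to the $\beta$ quantity is to average over a small ball: since $|(f-P_h)(h')|$ for $h'\in B_{Cr}(h)$ controls the local oscillation, one writes $\int_G|(f-P_h)(h\,\delta_r(v)^j)|^p d\mu(h)\lesssim \int_G \fint_{B_{Cr}(h)}|f(y)-A^{\lfloor\alpha\rfloor}_{h,Cr}f(y)|^p\,dy\,d\mu(h)$ by comparing $h\,\delta_r(v)^j$ with a uniformly distributed point of $B_{Cr}(h)$ (using left-invariance of $\mu$ and that the push-forward of $h\mapsto h\,\delta_r(v)^j$ is absolutely continuous with bounded density relative to the averaging), and the right-hand side is $\asymp \int_G \beta_{f,\lfloor\alpha\rfloor,p}(B_{Cr}(h))^p\,d\mu(h)$ up to constants. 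Hence $\big(\int_G|\Delta_{\delta_r(v)}^n f(h)|^p d\mu(h)\big)^{1/p}\lesssim_{G,n}\big(\int_G\beta_{f,\lfloor\alpha\rfloor,p}(B_{Cr}(h))^p d\mu(h)\big)^{1/p}$.

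\textbf{Finishing and the obstacle.} Dividing by $r^\alpha$, taking the $L^2(dr/r)$ norm, using the triangle inequality in $L^2(dr/r; L^p(d\mu))$ (here the assumption $p\le 2$ is used so that Minkowski's integral inequality lets us bring the $L^p$ norm inside), and changing variables $r\mapsto r/C$ in the scale integral, we get
\[
\left(\int_0^\infty\!\left[\int_G\!\left(\frac{|\Delta_{\delta_r(v)}^n f(h)|}{r^\alpha}\right)^{\!p}\! d\mu(h)\right]^{2/p}\!\frac{dr}{r}\right)^{1/2}\lesssim_{G,p,\alpha,n}\left(\int_G\!\left(\int_0^\infty\!\left[\frac{\beta_{f,\lfloor\alpha\rfloor,p}(B_r(h))}{r^\alpha}\right]^{2}\!\frac{dr}{r}\right)^{\!p/2}\!d\mu(h)\right)^{1/p},
\]
and the right-hand side is $\lesssim_{G,\alpha,p}\|(-\Delta_p)^{\alpha/2}f\|_{L^p(G)}$ by the $\lesssim$ direction of Theorem~\ref{lpgenthm}, provided $q=p$ is admissible there, i.e. $p<\frac{\min\{p,2\}n_h}{n_h-\min\{p,2\}}=\frac{p\,n_h}{n_h-p}$, which holds since $n_h\ge 4>p$. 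I expect the main obstacle to be the precise bookkeeping in the polynomial annihilation step: one must track exactly how the weighted degree of $P$ interacts with iterated translations by $\delta_r(v)$, verifying that the contribution of stratum-$V_j$ coordinates of $\delta_r(v)$ (which scale like $r^{j/(\lfloor\alpha/n\rfloor+1)}$ rather than like $r$) combined with the BCH cross terms still only ever \emph{lowers} the weighted degree by at least $n(\lfloor\alpha/n\rfloor+1)>\lfloor\alpha\rfloor$ after $n$ differences, so that $\Delta^n_{\delta_r(v)}P$ genuinely vanishes and no leftover positive-degree terms survive to spoil the scale integral near $r=0$. The second, more routine, technical point is justifying the "averaging" comparison $\int_G|(f-P_h)(h\,\delta_r(v)^j)|^p\,d\mu(h)\lesssim \int_G\beta_{f,\lfloor\alpha\rfloor,p}(B_{Cr}(h))^p\,d\mu(h)$ uniformly in $r$ and $j$; this is a Fubini argument together with the observation that, after the substitution $h\mapsto y$, the map pushing forward is a bi-Lipschitz-with-bounded-Jacobian perturbation of the identity at scale $r$, so the left-invariant Haar measure is distorted by a bounded factor.
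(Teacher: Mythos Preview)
Your overall architecture matches the paper's exactly: polynomial annihilation, reduction to $\beta$-numbers, Minkowski (using $p\le 2$), then Dorronsoro with $q=p$. Your annihilation argument is correct---indeed each application of $\Delta_{\delta_r(v)}$ lowers the weighted degree by at least $m=\lfloor\alpha/n\rfloor+1$---though the paper phrases it more cleanly: since the $V_k$-coordinate of $x(\delta_r(v))^j$ is a polynomial in $j$ of degree $\le\lfloor k/m\rfloor$, any $P\in\mathcal{A}_{\lfloor\alpha\rfloor}$ evaluated at $x(\delta_r(v))^j$ is a polynomial in $j$ of degree $\le\lfloor\lfloor\alpha\rfloor/m\rfloor\le n-1$, so its $n$-th finite difference in $j$ vanishes.

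There is, however, a real gap in your ``averaging'' step. You choose $P_h=A^{\lfloor\alpha\rfloor}_{h,Cr}f$ depending on the integration variable $h$, and then assert
\[
\int_G|(f-P_h)(h\,\delta_r(v)^j)|^p\,d\mu(h)\;\lesssim\;\int_G\beta_{f,\lfloor\alpha\rfloor,p}(B_{Cr}(h))^p\,d\mu(h)
\]
via a ``push-forward/Jacobian'' argument. But the map $h\mapsto h\,\delta_r(v)^j$ is right translation, hence \emph{exactly} measure-preserving; there is no Jacobian issue, and no Fubini argument produces this inequality, because after the substitution $y=h\,\delta_r(v)^j$ the polynomial becomes $A^{\lfloor\alpha\rfloor}_{y\delta_r(v)^{-j},Cr}f$, which still depends on $y$. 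You are then comparing a pointwise value $|(f-P)(y)|$ to an $L^p$-average over a ball, with the center of the ball coupled to $y$; there is no reason this should hold. The paper avoids this entirely by \emph{decoupling the polynomial from the integration variable}: it takes a Vitali-type cover $\mathcal{B}_r$ by balls $B$ whose dilates $\hat B=B_{(n+1)r}$ have bounded overlap, and on each $B$ uses the \emph{fixed} polynomial $A^{\lfloor\alpha\rfloor}_{\hat B}f$. Then $\int_B|(f-A^{\lfloor\alpha\rfloor}_{\hat B}f)(xg^j)|^p\,dx=\int_{Bg^j}|(f-A^{\lfloor\alpha\rfloor}_{\hat B}f)|^p\le\int_{\hat B}|f-A^{\lfloor\alpha\rfloor}_{\hat B}f|^p=|\hat B|\,\beta_{f,\lfloor\alpha\rfloor,p}(\hat B)^p$ by a genuine change of variables, and summing over $B$ (bounded overlap) plus Corollary~\ref{monotoneBeta} gives the desired bound. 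Equivalently, one can write $\int_G|\Delta_g^n f|^p\,d\mu=\frac{1}{|B_r|}\int_G\int_{B_r(z)}|\Delta_g^n(f-A^{\lfloor\alpha\rfloor}_{z,(n+1)r}f)|^p\,d\mu\,d\mu(z)$ and run the same substitution with the polynomial centered at the \emph{dummy} variable $z$. Either way, the fix is to stop letting the approximant depend on the point where you evaluate the difference.
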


The simple example $n=1$ is given as follows.

\begin{example}\label{easyVvsH}
Let $G$ be a nonabelian Carnot group. Let $1<p\le 2$, $\alpha>0$, and let $f\in S^p_\alpha\left(G\right)$. Let $v\in \exp\left(V_{\lfloor\alpha\rfloor+1}\oplus \cdots \oplus V_s\right)$ with $d_G\left(v,e_G\right)=1$.
Then
\[
\left(\int_0^\infty \left[ \int_G \left(\frac{\left|f\left(h\right) - f\left(h\delta_r\left(v\right)\right)\right|}{r^\alpha} \right)^p d\mu\left(h\right) \right]^{2/p}  \frac{dr}{r} \right)^{1/2} \lesssim_{G,p,\alpha} \left\|\left(-\Delta_p\right)^{\alpha/2} f\right\|_{L^p\left(G\right)}.
\]
\end{example}

The case $G=\mathbb{H}^{2k+1}$, $n=1$, $0<\alpha<2$ of Example \ref{easyVvsH} was obtained in \cite{fassler2020vertical}.

In the above, Theorem \ref{lpgenthm} was used for proving nonembeddability, but there are also other applications. For example, the one-sided, $0<\alpha<2$ case of Theorem \ref{lpgenthm} for the Heisenberg groups $\mathbb{H}^{2k+1}$, $k\ge 1$, due to \cite{fassler2020dorronsoro}, is used in the work \cite{chousionis2022strong};\footnote{I thank Professor Tuomas Orponen for pointing out this fact.} it stands to reason that our more general result will have similar applications, but we defer this to future investigations.


\subsection{$L^1$-distortion: vertical perimeter versus horizontal perimeter}\label{subsec:l1}
Given the discussion so far, one may ask whether the results of this paper follow for $L^1$-targets. Since $\ell_1$ is not uniformly convex, our results in this paper do not apply; in fact, that general nonabelian simply connected nilpotent Lie groups $G$ do not embed bilipschitzly into $L^1$ spaces was proven only recently by Eriksson-Bique, Gartland, Le Donne, Naples, and Nicolussi-Golo \cite{eriksson2023nilpotent}. For the Heisenberg group, this was proven previously by Cheeger and Kleiner \cite{cheeger2010differentiating}.

The results of this paper give quantitative nonembeddability of nonabelian simply connected nilpotent Lie groups into uniformly convex spaces. In this subsection, which was inspired by Section 4 of \cite{lafforgue2014vertical}, we will present hypothetical analogs of the results of this paper for quantitative nonembeddability into $L^1$.

Theorem \ref{thm:nilpotentVvsH} states in the case $X=\mathbb{R}$ and $p\in \left(1,q\right]$ that for every smooth and compactly supported $f:G\to \mathbb{R}$, we have, recalling that $s$ is the nilpotency step of $G$ and $v\in \underbrace{\left[\mathfrak{g},\left[\mathfrak{g},\cdots,\mathfrak{g}\right]\right]}_{s~\mathrm{times}}$ is normalized so that $d_G\left(\exp\left(v\right),e_G\right)=1$,
\begin{align}\label{eq:toR}
\begin{aligned}
&\left(\int_0^\infty \left(\int_{G}\left|f\left(h\exp\left(tv\right)\right)-f\left(h\right)\right|^pd\mu\left(h\right)\right)^{q/p}\frac{dt}{t^{1+q/s}}\right)^{1/q}\\
&\lesssim_G \max\left\{1,\left(p-1\right)^{(1/q)-1}\right\} \left(\int_{G} \left\|\nabla f\left(h\right)\right\|_{\ell_2^k}^p d\mu\left(h\right)\right)^{1/p}.
\end{aligned}
\end{align}
The constant $\left(p-1\right)^{(1/q)-1}$ is unbounded as $p\to 1$; nevertheless, we ask whether the endpoint case $p=1$ of \eqref{eq:toR} does hold true.\footnote{We ask Question \ref{Q:strongest} for smooth compactly supported functions since the generalization to the class $W_0^{1,1}\left(G\right)$ is straightforward; see the approximation argument of subsubsection \ref{subsec:step0}. We remark that since the target is $\mathbb{R}$, a Meyers--Serrin theorem stating that $W_0^{1,p}\left(G\right)=W^{1,p}\left(G\right)$ holds by \cite[Theorem 11.9]{hajlasz2000sobolev} (see footnote \ref{foot:meyers-serrin}).}
\begin{question}\label{Q:strongest}
Let $G$ be a nonabelian simply connected nilpotent Lie group of nilpotency step $s$, and let $v\in \underbrace{\left[G,\left[G,\cdots,G\right]\right]}_{s~\mathrm{times}}$ be normalized so that $d_G\left(v,e_G\right)=1$.
For which exponents $q\ge 1$ does every smooth and compactly supported $f:G\to \mathbb{R}$  satisfy the following?
\begin{equation}\label{eq:p=1}
\left(\int_0^\infty \left(\int_{G}\left|f\left(h\exp\left(tv\right)\right)-f\left(h\right)\right|d\mu\left(h\right)\right)^{q}\frac{dt}{t^{1+q/s}}\right)^{1/q}\lesssim_{G,q} \int_{G} \left\|\nabla f\left(h\right)\right\|_{\ell_2^k} d\mu\left(h\right).
\end{equation}
\end{question}
This tensorizes easily into an inequality for functions $f:G\to L^1$. Similarly, we may ask the following question.
\begin{question}\label{ques:poly}
Let $\Gamma$ be a not virtually abelian finitely generated group of polynomial growth. Choose $v_\Gamma$ and $s>0$ as in Theorem \ref{thm:discrete}. For which exponents $q\ge 1$ do there exist $c=c\left(\Gamma\right)\in \mathbb{N}$ such that for any $n\in \mathbb{N}$ and $f:\Gamma\to L^1$ we have the following?
\[
\left(\sum_{k=1}^{n^s}\frac{1}{k^{1+q/s}}\sum_{x\in B^\Gamma_n} \left\|f\left(xv_\Gamma^k\right)-f\left(x\right)\right\|_{L^1}^q\right)^{1/q}\lesssim_{\Gamma,q} \sum_{x\in B^\Gamma_{cn}} \sum_{a\in S}\left\|f\left(xa\right)-f\left(x\right)\right\|_{L^1}.
\]
\end{question}

By the discretization argument of Section \ref{sec:proof main}, an exponent $q$ that answers Question \ref{Q:strongest} positively also answers Question \ref{ques:poly} positively, when $G$ is chosen as the Malcev completion of the torsion subgroup quotient of a finite index nilpotent subgroup of $\Gamma$.

Because the case $q=\infty$ formally holds for Question \ref{Q:strongest} (see Remark \ref{qge2}), by H\"older's inequality the set of $q$ that satisfies Question \ref{Q:strongest} is either of the form $\left(q_{G,v},\infty\right)$ or $\left[q_{G,v},\infty\right)$ for some $q_{G,v}\in \left[1,\infty\right]$ depending on $G$. We make the following formal definition.
\begin{definition}
    Given a nonabelian nilpotent Lie group $G$, let $v\in \underbrace{\left[\mathfrak{g},\left[\mathfrak{g},\cdots,\mathfrak{g}\right]\right]}_{s~\mathrm{times}}$with $d_G\left(\exp\left(v\right),e_G\right)=1$. Let $q_{G,v}$ denote the infimum of those $q\ge 1$ such that \eqref{eq:p=1} holds for every smooth and compactly supported function $f:G\to \mathbb{R}$. Let $q_G$ be the infimum of $q_{G,v}$ over all choices of $v$ as above.
\end{definition}
For $G=\mathbb{H}^3$, there is only one choice of $v$ up to a sign, and we have $q_{\mathbb{H}^3}=4$ with the exponent range satisfying \eqref{eq:p=1} being $\left[4,\infty\right)$ \cite{naor2022foliated}. For  $G=\mathbb{H}^{2k+1}$ with $k\ge 2$, there is again only one choice of $v$ up to a sign, and $q_{\mathbb{H}^{2k+1}}=2$  with the exponent range satisfying \eqref{eq:p=1} being $\left[2,\infty\right)$ \cite{naor2018vertical}.

For a general nilpotent Lie group $G$, we wish to know whether there are finite exponents $q$ satisfying \eqref{eq:p=1}, since then not only would we reprove \cite{eriksson2023nilpotent}, but we would also have analogous quantitative nonembeddability statements: whenever $q$ satisfies \eqref{eq:p=1}, then, similarly to Theorem \ref{mainthm:nilpotentdistortion}, if $G$ is Riemannian, we would have
\[
c_1\left(B_r\right)\gtrsim_{G,q} (\log r)^{1/q},\quad r\ge 2,
\]
and similarly to Theorem \ref{thm:net}, if $G$ is Riemannian or sub-Riemannian and if $N_{r_1,r_2}$ is an $r_2$-covering of $B_{r_1}$, where $r_1,r_2>0$ with $r_1\ge 2r_2$, and with the extra condition that $r_1,r_2\ge 1$ if $v\in \operatorname{span}\left\{X_1,\cdots,X_k\right\}$, then
\[
c_1\left(N_{r_1,r_2}\right)\gtrsim_{G,q} \left(\log \left(r_1/r_2\right)\right)^{1/q}.
\]
Also, if $\Gamma$ is a not virtually abelian finitely generated group of polynomial growth that is quasi-isometric to $G$, we would have
\[
c_1\left(B_n^\Gamma\right)\gtrsim_{\Gamma,q} \left(\log n\right)^{1/q},\quad n\ge 2.
\]

By the co-area formula it suffices to prove \eqref{eq:p=1} when $f$ is an indicator of a measurable set $A\subseteq G$, in which case the right-hand side of \eqref{eq:p=1} is interpreted as the horizontal perimeter $\mathrm{PER}\left(A\right)$ of $A$ (see \cite{ambrosio2001some} and \cite[Section 2]{cheeger2010differentiating} for a precise definition).

\begin{definition}[Vertical perimeter at scale $t$ \cite{lafforgue2014vertical,naor2018vertical}]\label{def vert}
Let $v\in \underbrace{\left[\mathfrak{g},\left[\mathfrak{g},\cdots,\mathfrak{g}\right]\right]}_{s~\mathrm{times}}$ be as in Theorem \ref{thm:nilpotentVvsH}. Let $A\subseteq G$ be measurable and $t\in \left(0,\infty\right)$. The vertical perimeter $v_t\left(A\right)$ of $A$ at scale $t$ is defined as the quantity
\begin{equation}\label{eq:def Vt}
v_t\left(A\right)\coloneqq \mu\left(\left\{h\in A:\ h\exp\left(tv\right)\notin A\quad\mathrm{or}\quad h\exp\left(-tv\right)\notin A\right\}\right).
\end{equation}
\end{definition}
By this definition, we may reformulate Question \ref{Q:strongest} as an isoperimetric inequality.
\begin{question}\label{Q:isoperimetric}
For which exponents $q$ is it true that for every measurable $A\subseteq G$ one has
\begin{equation}\label{eq:con isoperimetric}
\left(\int_0^\infty \frac{v_t\left(A\right)^q}{t^{1+q/s}}dt\right)^{1/q}\lesssim_{G,q} \mathrm{PER}\left(A\right).
\end{equation}
\end{question}

Of course, the set of $q$ that satisfies Question \ref{Q:isoperimetric} is the same as that of Question \ref{Q:strongest}. In light of the nonembeddability result \cite{eriksson2023nilpotent} and the computations of $q_{\mathbb{H}^{2k+1}}$ \cite{naor2018vertical,naor2022foliated}, we make the following conjecture.

\begin{conjecture}\label{conj:finite}
There exist finite exponents $q$ that answer Questions \ref{Q:strongest} and \ref{Q:isoperimetric} positively, and the infimum $q_G$ among such $q$ is attained.
\end{conjecture}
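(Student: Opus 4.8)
Since Conjecture~\ref{conj:finite} is open, what follows is a strategy rather than a proof: I would try to establish the existence of a finite admissible exponent by a \emph{foliated corona decomposition} in the spirit of Naor and Young's treatment of the Heisenberg group, and then deduce attainment of the infimum by a limiting argument.

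First, by the coarea formula (as noted above) and the equivalence of Questions~\ref{Q:strongest} and~\ref{Q:isoperimetric}, it suffices to bound $\left(\int_0^\infty v_t(A)^q\, t^{-1-q/s}\,dt\right)^{1/q}$ by a multiple of $\mathrm{PER}(A)$ for a set $A\subseteq G$ of finite horizontal perimeter. One cannot simply let $p\downarrow 1$ in \eqref{eq:toR}, since the constant $(p-1)^{1/q-1}$ blows up; instead I would fix a scale $t$ and run a stopping-time argument on a dyadic-type net of balls $\{B_{2^{-j}}(x)\}$ in $G$: subdivide a ball as long as $A$ restricted to it is close, in measure, to a set that is \emph{vertically flat at scale $t$} --- that is, essentially invariant under right multiplication by $v^\tau$ for $|\tau|\lesssim t$, equivalently the preimage of a set under the quotient $\pi\colon G\to G/\langle v^{\mathbb{R}}\rangle$. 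A vertically flat set contributes nothing to $v_t$, so the only contributions to $v_t(A)$ come from a ``structured'' part plus the $\mu$-mass of the union of the balls at which flatness first fails (the ``stopping balls''). The crux is then a \textbf{Carleson packing estimate}: the total mass of stopping balls, integrated over $t$ against $t^{-1-q/s}\,dt$, is controlled by $\mathrm{PER}(A)$ for some finite $q$, the value of $q$ reflecting how efficiently stopping balls can accumulate against the horizontal perimeter measure. Combining this with a square-function/$\ell^q$ summation over $t$ would yield \eqref{eq:con isoperimetric} for that $q$, hence $q_G<\infty$.

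For attainment of the infimum, note that admissibility of $q$ is inherited by all larger exponents (since $q=\infty$ formally holds), so the admissible set is $[q_G,\infty)$ or $(q_G,\infty)$. To exclude the latter I would track the $q$-dependence of the implied constant $C(G,q)$ through the corona argument; the only genuinely $q$-dependent step is the Carleson packing estimate, which should degrade at most polynomially, so that $\limsup_{q\downarrow q_G}C(G,q)<\infty$. Then, since for each fixed $t$ the integrand $v_t(A)^q\,t^{-1-q/s}$ is continuous in $q$, Fatou's lemma applied as $q\downarrow q_G$ gives $\int_0^\infty v_t(A)^{q_G}\,t^{-1-q_G/s}\,dt\lesssim_G \mathrm{PER}(A)^{q_G}$, i.e.\ \eqref{eq:con isoperimetric} holds at $q=q_G$ itself.

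The hard part is the corona decomposition and its packing estimate in the general nilpotent setting. In the Heisenberg case this rests on two inputs unavailable off the shelf here: the De Giorgi--Franchi--Serapioni--Serra Cassano structure theory for finite-perimeter sets (rectifiability of the reduced boundary by intrinsic Lipschitz graphs), which is incomplete in Carnot groups of step $\ge 3$; and the fact that $G/Z(G)$ is Euclidean, which fails in general since $G/Z(G)$ is again a nonabelian nilpotent group. A plausible route around the second point is induction on the nilpotency step, peeling off one layer of the lower central series at a time so that the ``flat model'' at the bottom of the corona is a lower-step group handled inductively; around the first, one would hope to replace the full rectifiability statement by a weaker quantitative ``most balls are vertically flat'' estimate sufficient to drive the packing bound. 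Making either of these precise is essentially the content of the conjecture, which is why it is stated as a conjecture and not a theorem.
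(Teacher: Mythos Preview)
The paper does not prove Conjecture~\ref{conj:finite}; it is stated as an open conjecture, motivated by the qualitative nonembeddability result of \cite{eriksson2021nilpotent} and the known Heisenberg cases $q_{\mathbb{H}^3}=4$ and $q_{\mathbb{H}^{2k+1}}=2$ for $k\ge 2$. So there is no ``paper's own proof'' to compare against. You correctly recognize this and offer a research strategy rather than a proof.

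Your proposed strategy is the natural one: adapt the foliated corona decomposition of Naor--Young \cite{naor2018vertical,naor2020foliated} to general simply connected nilpotent Lie groups. The two obstacles you identify---the lack of a De Giorgi--type rectifiability theory for finite-perimeter sets in step $\ge 3$ Carnot groups, and the fact that $G/\exp(\mathbb{R}v)$ is itself nonabelian---are exactly the serious ones, and your suggestion to induct on step by peeling off one central direction at a time is a reasonable line of attack, though it is far from clear how to close such an induction. One point to flag: your attainment argument via Fatou's lemma is fine \emph{provided} $\limsup_{q\downarrow q_G} C(G,q)<\infty$, but the assertion that the Carleson packing constant ``should degrade at most polynomially'' is not obviously true and would itself require proof; in the Heisenberg case the constants do remain bounded at the endpoint, but this came out of the specific structure of the argument rather than a soft continuity principle. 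In short, your outline is a faithful sketch of what a proof would likely have to look like, with the hard analytic content correctly located.
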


Conditioned on a positive resolution of Conjecture \ref{conj:finite}, we would have:
\begin{itemize}
\item $c_1\left(B_r\right)\gtrsim_G \left(\log r\right)^{1/q_G}$ for $r\ge 2$ when $G$ is Riemannian,
\item $c_1\left(N_{r_1,r_2}\right)\gtrsim_G \left(\log \left(r_1/r_2\right)\right)^{1/q_G}$ for $r_1,r_2>0$ with $r_1\ge 2r_2$ and an $r_2$-covering $N_{r_1,r_2}$ of $B_{r_1}$, with $G$ Riemannian or sub-Riemannian, such that $r_1,r_2\ge 1$ if $v\in \operatorname{span}\left\{X_1,\cdots,X_k\right\}$,
\item $c_1\left(B_n^\Gamma\right)\gtrsim_\Gamma \left(\log n\right)^{1/q_G}$, $n\ge 2$, for a not virtually abelian finitely generated group $\Gamma$ of polynomial growth, where $G$ is quasi-isometric to $\Gamma$ (an example of $G$ being the Malcev completion of the torsion subgroup quotient of a finite index nilpotent subgroup of $\Gamma$).
\end{itemize}
However, for the $5$ or higher dimensional Heisenberg groups $\mathbb{H}^{2k+1}$, $k\ge 2$, with $q_{\mathbb{H}^{2k+1}}=2$ we have the matching upper bounds $c_1\left(N_{r_1,r_2}\right)\lesssim \sqrt{\log \left(r_1/r_2\right)}$ and $c_1\left(B_n^{\mathbb{H}^{2k+1}_\mathbb{Z}}\right)\lesssim \sqrt{\log n}$  by the Assouad embedding theorem (Theorem \ref{thm:lp-assouad}(1)), while for the $3$-dimensional Heisenberg group $\mathbb{H}^3$ we have the matching upper bound $c_1\left(N_{r_1,r_2}\right)\lesssim \left(\log \left(r_1/r_2\right)\right)^{1/4}$ and $c_1\left(B_n^{\mathbb{H}^{3}_\mathbb{Z}}\right)\lesssim \left(\log n\right)^{1/4}$ by \cite[Theorem 3.1]{naor2022foliated}. We thus pose the following conjecture on the exact asymptotics of the $L^1$ distortion.

\begin{conjecture}\label{ques:polyL1}
Let $G$ be a nonabelian simply connected nilpotent Lie group.
\begin{itemize}
\item When $G$ is given a Riemannian metric, $c_1\left(B_r\right)\asymp_G\left(\log r\right)^{1/q_G}$ for $r\ge 2$.
\item When $G$ is given a Riemannian or sub-Riemannian metric, for an $\left(r_2,\Omega\left(r_2\right)\right)$-net $N_{r_1,r_2}$ of $B_{r_1}$ where $r_1,r_2>1$ with $r_1\ge 2r_2$, we have $c_1\left(N_{r_1,r_2}\right)\asymp_G \left(\log \left(r_1/r_2\right)\right)^{1/q_G}$.
\item Given $\Gamma$, a not virtually abelian finitely generated group of polynomial growth such that $G$ is the Malcev completion of the torsion subgroup quotient of a finite index nilpotent subgroup of $\Gamma$, we have $c_1\left(B^\Gamma_n\right)\asymp_G \left(\log n\right)^{1/q_G}$, $n\ge 2$.
\end{itemize}
\end{conjecture}
We remark that if this conjecture had a positive answer, then the exponent $q_G$ will be a quasi-isometric invariant of nonabelian simply connected nilpotent Lie groups. It would then be of interest to see if $q_G$ is a function of the known invariants of \cite{shalom2004harmonic,sauer2006homological,cornulier2012quasi}, or if it is a genuinely new invariant, contributing to the open problem \cite[Conjecture 19.114]{cornulier2012quasi} asking whether two simply connected nilpotent Lie groups are quasi-isometrically equivalent only if they are isomorphic as Lie groups.


\section{Proof of Theorem \ref{thm:nilpotentVvsH}: heat flow along the center, pencil of curves, and martingales in Banach spaces}\label{sec:cvx}
We will prove a theorem more general than Theorems \ref{thm:nilpotentVvsH} and \ref{thm:carnotVvsH}, namely Theorem \ref{thm:cvx} below. We first explain the terminology behind the statements.

A Lie group $G$ is said to be unimodular if its left-invariant Haar measure $\mu$ is also right-invariant. Given left-invariant vector fields $X_1,\cdots,X_k$, the pointwise span of $X_1,\cdots,X_k$ forms a left-invariant vector bundle $B$ over $G$, a subbundle of the tangent bundle of $G$, and on each fibre of the vector  bundle $B$ we may define a left-invariant Euclidean norm\footnote{Again, taking a general left-invariant norm on $B$, in which case the distance is Finsler or sub-Finsler, would only distort distances by a factor of $\sqrt{k}$.} $\left|\cdot \right|$ that has $X_1,\cdots,X_k$ as an orthonormal basis. We may define the Riemannian or sub-Riemannian Carnot--Carath\'eodory distance associated to $B$ and $\left|\cdot \right|$, i.e.,
\[
d_G\left(p,q\right)\coloneqq \inf\left\{\int_0^1 \left|\dot{\gamma}\left(t\right) \right|dt : \gamma\in C_{\mathrm{pw}}^\infty \left(\left[0,1\right];G\right),\gamma\left(0\right)=p,\gamma\left(1\right)=q,\dot{\gamma}\in B \right\},\quad p,q\in G.
\]
If $X_1,\cdots,X_k$ and their nested brackets generate $\mathfrak{g}$, and if $G$ is connected, then $d_G\left(\cdot,\cdot\right)$ is finite everywhere by the Chow--Rashevskii theorem \cite{chow1940systeme,rashevskii1938connecting}. The center $Z\left(\mathfrak{g}\right)$ of the Lie algebra $\left(\mathfrak{g},\left[,\right]\right)$ of $G$ consists of elements $v\in \mathfrak{g}$ such that $\left[v,h\right]=0$ for all $h\in \mathfrak{g}$, and has the property that if $v\in Z\left(\mathfrak{g}\right)$ then $\exp\left(v\right)\in Z\left(G\right)$, the group theoretic center of $G$.

This generalizes our earlier choice of $G$ as a simply connected nilpotent Lie group, and our choice of $X_1,\cdots,X_k$. As such, the following theorem encompasses Theorems \ref{thm:nilpotentVvsH} and \ref{thm:carnotVvsH}.

\begin{theorem}\label{thm:cvx}
Let $G$ be a unimodular Lie group with Haar measure $\mu$, with left-invariant vector fields $X_1,\cdots,X_k$, which span $\mathfrak{g}$ by themselves and their nested brackets, and with the associated Riemannian or sub-Riemannian distance $d_G\left(\cdot,\cdot\right)$. Suppose there is an element $v\in Z\left(\mathfrak{g}\right)$ such that
\begin{equation}\label{eq:rho-ineq}
d_G\left(\exp\left(tv\right),e_G\right)\le t^{1/\rho}, \quad \forall t>0,
\end{equation}
for some real number $\rho>1$, and such that there is a subset $S$ of $G$ with measure $\nu$ such that the push-forward of the product measure of $\nu$ and the Lebesgue measure of $\mathbb{R}$ under the map $S\times \mathbb{R}\to G$, $\left(s,t\right)\mapsto s\exp\left(tv\right)$ is the Haar measure $\mu$ of $G$.

Suppose that $q\in \left[2,\infty\right)$ and $p\in (1,q]$. Let $\left(X,\left\|\cdot\right\|_X\right)$ be a Banach space with $K_q\left(X\right)<\infty$. Let $f\in Ch_0^{1,p}\left(G;X\right)$. Then
\begin{align}\label{eq:continuous main p<q}
\begin{aligned}
&\left(\int_0^\infty \left(\int_{G}\left(\frac{\left\|f\left(h\exp\left(tv\right)\right)-f\left(h\right)\right\|_X}{t^{1/\rho}}\right)^pd\mu\left(h\right)\right)^{q/p}\frac{dt}{t}\right)^{1/q}\\
&\lesssim \left(\rho+\frac{1}{\rho-1}\right)\max\left\{\left(p-1\right)^{(1/q)-1},K_q\left(X\right)\right\} \left\|f\right\|_{\dot{Ch}_0^{1,p}\left(G;X\right)}.
\end{aligned}
\end{align}
In particular, if $p= q$, then
\begin{equation}\label{eq:continuous main}
\left(\int_0^\infty \int_{G}\left(\frac{\left\|f\left(h\exp\left(tv\right)\right)-f\left(h\right)\right\|_X}{t^{1/\rho}}\right)^pd\mu\left(h\right)\frac{dt}{t}\right)^{1/p}
\lesssim \left(\rho+\frac{1}{\rho-1}\right)K_p\left(X\right) \left\|f\right\|_{\dot{Ch}_0^{1,p}\left(G;X\right)}.
\end{equation}
\end{theorem}

We check that Theorem \ref{thm:cvx} implies Theorem \ref{thm:nilpotentVvsH}, which in turn implies Theorem \ref{thm:carnotVvsH}.

\begin{proof}[proof of Theorem \ref{thm:nilpotentVvsH} assuming Theorem \ref{thm:cvx}]
    We are to prove \eqref{eq:nilpotentVvsH-r'=1} and \eqref{eq:nilpotentVvsH-r'>1}.

    The hypothesis for Theorem \ref{thm:cvx} is satisfied when $G$ is a simply connected nilpotent Lie group. The Haar measure, being the Lebesgue measure on $\mathfrak{g}$, is unimodular. By the assumption \eqref{eq:sharpDistIneq} and $r'\le s'$, we have $\eqref{eq:rho-ineq}$ for $\rho=r'$ and $\rho=s'$; because $v\in \left[\mathfrak{g},\mathfrak{g}\right]$, we know for sure that $s'\ge 2$, but $r'$ may or may not be $1$. The product decomposition of the Haar measure easily follows from that of the Lebesgue measure on $\mathbb{R}^n$.

    If $r'=1$, then \eqref{eq:nilpotentVvsH-r'=1} follows from \eqref{eq:continuous main p<q} with $\rho=s'$. If $r'>1$, then from \eqref{eq:continuous main p<q} with $\rho=s'$, we have
\[
\left(\int_1^\infty \left(\int_{G}\left(\frac{\left\|f\left(h\exp\left(tv\right)\right)-f\left(h\right)\right\|_X}{t^{1/s'}}\right)^pd\mu\left(h\right)\right)^{q/p}\frac{dt}{t}\right)^{1/q}\lesssim s'\max\left\{\left(p-1\right)^{(1/q)-1},K_q\left(X\right)\right\} \left\|f\right\|_{\dot{Ch}_0^{1,p}\left(G;X\right)},
\]
while from \eqref{eq:continuous main p<q} with $\rho=r'$, we have
    \[
\left(\int_0^1 \left(\int_{G}\left(\frac{\left\|f\left(h\exp\left(tv\right)\right)-f\left(h\right)\right\|_X}{t^{1/r'}}\right)^pd\mu\left(h\right)\right)^{q/p}\frac{dt}{t}\right)^{1/q}\lesssim r'\max\left\{\left(p-1\right)^{(1/q)-1},K_q\left(X\right)\right\} \left\|f\right\|_{\dot{Ch}_0^{1,p}\left(G;X\right)},
\]
whence we obtain \eqref{eq:nilpotentVvsH-r'>1}.
\end{proof}

\begin{proof}[proof of Theorem \ref{thm:carnotVvsH} assuming Theorem \ref{thm:nilpotentVvsH}]
We are to prove \eqref{eq:carnotVvsH}.

    A Carnot group $G$ is a simply connected nilpotent Lie group. We have \eqref{eq:sharpDistIneq} for $\rho=r$, since
    \[
    d_G\left(\exp\left(tv\right),e_G\right)=d_G\left(\delta_{t^{1/r}}\left(\exp\left(v\right)\right),\delta_{t^{1/r}}\left(e_G\right)\right)=t^{1/r}d_G\left(\exp\left(v\right),e_G\right)=t^{1/r}.
    \]
    Now \eqref{eq:carnotVvsH} follows from \eqref{eq:nilpotentVvsH-r'>1} with $\rho=r$.
\end{proof}

\begin{remark}
    In the $r'=1$ case of Theorem \ref{thm:nilpotentVvsH}, the `short-range estimate' of \eqref{eq:nilpotentVvsH-r'=1}, which is obtained from restricting the $t$-integral of the left-hand side of the inequality to the interval $\left[0,1\right]$, namely
    \[
\left(\int_0^1 \left(\int_{G}\left(\frac{\left\|f\left(h\exp\left(tv\right)\right)-f\left(h\right)\right\|_X}{t^{1/s'}}\right)^pd\mu\left(h\right)\right)^{q/p}\frac{dt}{t}\right)^{1/q}\lesssim s'\max\left\{\left(p-1\right)^{(1/q)-1},K_q\left(X\right)\right\} \left\|f\right\|_{\dot{Ch}_0^{1,p}\left(G;X\right)},
\]
is not a very meaningful or useful estimate: it does not tell us local nonembeddability, following the arguments of Section \ref{sec:net}. One may imagine strengthening the `short-range estimate' of the $r'=1$ case of Theorem \ref{thm:nilpotentVvsH}, in the following manner: in the proof of Theorem \ref{thm:nilpotentVvsH}, instead of specializing \eqref{eq:continuous main} and \eqref{eq:continuous main p<q} to $\rho=s'$, specialize them to $\rho=1+\epsilon$, where $\epsilon\in \left(0,1\right)$ is very small. This will result in the estimate
\[
\left(\int_0^1\left(\int_{G}\left(\frac{\left\|f\left(h\exp\left(tv\right)\right)-f\left(h\right)\right\|_X}{t^{1/\left(1+\epsilon\right)}}\right)^pd\mu\left(h\right)\right)^{q/p}\frac{dt}{t}\right)^{1/q}\lesssim \epsilon^{-1}\max\left\{\left(p-1\right)^{(1/q)-1},K_q\left(X\right)\right\} \left\|f\right\|_{\dot{Ch}_0^{1,p}\left(G;X\right)}.
\]

However, by taking $\rho=1$ and $\varepsilon=\frac{\epsilon}{1+\epsilon}$ in part (1) of Proposition \ref{prop:trivial-vvsh} below, which is a `trivial vertical versus horizontal inequality', the following better estimate follows: for any $p,q\ge 1$,
\[
\left(\int_0^1\left(\int_{G}\left(\frac{\left\|f\left(h\exp\left(tv\right)\right)-f\left(h\right)\right\|_X}{t^{1/\left(1+\epsilon\right)}}\right)^pd\mu\left(h\right)\right)^{q/p}\frac{dt}{t}\right)^{1/q}\le\frac{2}{\epsilon^{1/q}}\left\|f\right\|_{\dot{Ch}_0^{1,p}\left(G;X\right)},
\]
without any assumption on $G$ and $X$, besides the fact that they are a connected Lie group and Banach space, respectively, and only using the distance condition
\[
d_G\left(\exp\left(tv\right),e_G\right)\le t, \quad \forall t\in \left[0,1\right].
\]
Thus, it is futile to make improvements to the $t$-integral on the interval $\left[0,1\right]$ in the $r'=1$ case of Theorem \ref{thm:nilpotentVvsH}, using the result of Theorem \ref{thm:cvx} in the above manner.
\end{remark}

Before we begin the proof of Theorem \ref{thm:cvx}, for motivation we begin by proving Remark \ref{qge2} which states that the case $q=\infty$ of Theorem \ref{thm:lp} formally holds. We start with the following.
\begin{lemma}\label{lem:deterministic-path}
    Let $G$ be a Lie group with left-invariant H\"ormander vector fields $X_1,\cdots,X_k$, a corresponding left-invariant Carnot--Carath\'eodory distance $d_G\left(\cdot,\cdot\right)$ and right-invariant Haar measure $\mu$, and let $g\in G$. Let $\left(X,\|\cdot\|_X\right)$ be a Banach space and let $p\ge 1$. Then for any continuously differentiable function $f:G\to X$,
    \begin{equation}\label{eq:deterministic-path}
        \left(\int_{G}\left\|f\left(hg\right)-f\left(h\right)\right\|_X^pd\mu\left(h\right)\right)^{1/p}
\le d_G\left(g,e_G\right)  \left\|\nabla f\right\|_{L^p\left(G;\ell2_2^k(X)\right)}.
    \end{equation}
\end{lemma}
\begin{proof}\label{proofrmkqge2}
For $\varepsilon> 0$, let $\gamma:\left[0,1\right]\to G$ be a piecewise smooth horizontal curve connecting $e_G$ to $g$, with
\[
\int_0^1\left|\dot\gamma\left(\tau\right)\right|d\tau<d_G\left(g,e_G\right)+\varepsilon.
\]
Writing
\[
\dot{\gamma}\left(t\right)=\sum_{i=1}^k \dot{\gamma}_i(t)\left(X_i\right)_{\gamma\left(t\right)},\quad \left|\dot\gamma\left(t\right)\right|=\sqrt{\sum_{i=1}^k \left(\dot \gamma_i\left(t\right)\right)^2},
\]
we have
\begin{align*}
	\left\|f\left(g_1\right)-f\left(g_0\right)\right\|_X&\le \int_0^1 \left\|\frac{df\left(\gamma\left(t\right)\right)}{dt}\right\|_Xdt=\int_0^1 \left\|\sum_{i=1}^k \dot\gamma_i\left(t\right)\left(X_if\left(\gamma\left(t\right)\right)\right)\right\|_Xdt\\
	&\le \int_0^1 \left\|\nabla f\left(\gamma\left(t\right)\right)\right\|_{B\left(\mathbb{R}^k; X\right)}\left|\dot\gamma\left(t\right)\right|dt\stackrel{\eqref{eq:elltwo-operator}}{\le} \int_0^1 \left\|\nabla f\left(\gamma\left(t\right)\right)\right\|_{\ell_2^k(X)}\left|\dot\gamma\left(t\right)\right|dt
\end{align*}
Thus, by H\"older's inequality and right-invariance of the Haar measure $\mu$,
\begin{align*}
\left(\int_{G}\left\|f\left(hg\right)-f\left(h\right)\right\|_X^pd\mu\left(h\right)\right)^{1/p}&\le \left( \left(\int_0^1\left|\dot\gamma\left(t\right)\right|dt\right)^{p-1}\int_G  \int_0^{1}\left\|\nabla f\left(\gamma\left(t\right)\right)\right\|_{\ell_2^k(X)}^p\left|\dot\gamma\left(s\right)\right|dsd\mu\left(h\right)\right)^{1/p}\\
&\le\left(\int_0^1\left|\dot\gamma\left(t\right)\right|dt\right)\left\|\nabla f\right\|_{L^p\left(G;\ell_2^k(X)\right)}\\
&\le\left(d_G\left(g,e_G\right)+\varepsilon\right)\left\|\nabla f\right\|_{L^p\left(G;\ell_2^k(X)\right)},
\end{align*}
and since $\varepsilon> 0$ was arbitrary, we are done.
\end{proof}

\begin{remark}
    Using \eqref{ineq:lip-nabla} and by a routine approximation argument (given in subsection \ref{subsec:step0} below), we may replace $\left\|\nabla f\right\|_{L^p\left(G;\ell_2^k(X)\right)}$ by $\left\| f\right\|_{\dot{Ch}_0^{1,p}\left(G;X\right)}$ in Lemma \ref{lem:deterministic-path} and Proposition \ref{prop:trivial-vvsh} below.
\end{remark}
Using Lemma \ref{lem:deterministic-path}, we obtain a weak version of the $q=\infty$ case of Theorem \ref{thm:cvx}, that places almost no assumptions on the geometry of the Lie group $G$ and Banach space $X$ involved.
\begin{proposition}\label{prop:trivial-vvsh}
    Let $G$ be a Lie group with left-invariant H\"ormander vector fields $X_1,\cdots,X_k$, a corresponding left-invariant Carnot--Carath\'eodory distance $d_G\left(\cdot,\cdot\right)$ and right-invariant Haar measure $\mu$, and let $v\in \mathfrak{g}$. Let $\left(X,\|\cdot\|_X\right)$ be a Banach space, let $p,q\ge 1$, and let $f:G\to X$ be any continuously differentiable function.
    \begin{enumerate}
        \item Suppose that there are real numbers $\rho\ge 1$ and $t_0>0$ such that
        \begin{equation}\label{eq:rho-ineq-short}
        d_G\left(\exp\left(tv\right),e_G\right)\le t^{1/\rho}, \quad \forall t\in \left[0,t_0\right].
        \end{equation}
        Then, for any $\varepsilon>0$,
        \begin{equation}\label{eq:trivial-vvsh-short}
        \left(\int_0^{t_0} \left(\int_{G}\left(\frac{\left\|f\left(h\exp\left(tv\right)\right)-f\left(h\right)\right\|_X}{t^{1/\rho-\varepsilon}}\right)^pd\mu\left(h\right)\right)^{q/p}\frac{dt}{t}\right)^{1/q}\le t_0^\varepsilon\varepsilon^{-1/q}\left\|\nabla f\right\|_{L^p\left(G;\ell_2^k(X)\right)}.
        \end{equation}
        \item Suppose that there are real numbers $\rho\ge 1$ and $t_0>0$ such that
        \begin{equation}\label{eq:rho-ineq-long}
        d_G\left(\exp\left(tv\right),e_G\right)\le t^{1/\rho}, \quad \forall t\ge t_0.
        \end{equation}
        Then, for any $\varepsilon>0$,
        \begin{equation}\label{eq:trivial-vvsh-long}
        \left(\int_{t_0}^\infty \left(\int_{G}\left(\frac{\left\|f\left(h\exp\left(tv\right)\right)-f\left(h\right)\right\|_X}{t^{1/\rho+\varepsilon}}\right)^pd\mu\left(h\right)\right)^{q/p}\frac{dt}{t}\right)^{1/q}\le t_0^{-\varepsilon}\varepsilon^{-1/q}\left\|\nabla f\right\|_{L^p\left(G;\ell_2^k(X)\right)}.
        \end{equation}
    \end{enumerate}
\end{proposition}
\begin{proof}
We simply use \eqref{eq:deterministic-path} of Lemma  \ref{lem:deterministic-path}.
\begin{enumerate}
    \item Under \eqref{eq:rho-ineq-short}, we have by \eqref{eq:deterministic-path},
    \begin{align*}
&\left(\int_0^{t_0} \left(\int_{G}\left(\frac{\left\|f\left(h\exp\left(tv\right)\right)-f\left(h\right)\right\|_X}{t^{1/\rho-\varepsilon}}\right)^pd\mu\left(h\right)\right)^{q/p}\frac{dt}{t}\right)^{1/q}\\
&\stackrel{\mathclap{\eqref{eq:deterministic-path}}}{\le} ~\left(\int_0^{t_0} t^{-q\left(1/\rho-\varepsilon\right)}\left(d_G\left(\exp\left(tv\right),e_G\right)  \left\|\nabla f\right\|_{L^p\left(G;\ell_2^k(X)\right)}\right)^{q}\frac{dt}{t}\right)^{1/q}\\
&\stackrel{\mathclap{\eqref{eq:rho-ineq-short}}}{\le} ~\left(\int_0^{t_0} t^{q\varepsilon-1}dt\right)^{1/q}\left\|\nabla f\right\|_{L^p\left(G;\ell_2^k(X)\right)}\le t_0^\varepsilon\varepsilon^{-1/q}\left\|\nabla f\right\|_{L^p\left(G;\ell_2^k(X)\right)},
\end{align*}
where in the last inequality we used $q^{1/q}\ge 1$.
    \item Under \eqref{eq:rho-ineq-long}, we have by \eqref{eq:deterministic-path},
    \begin{align*}
&\left(\int_{t_0}^\infty \left(\int_{G}\left(\frac{\left\|f\left(h\exp\left(tv\right)\right)-f\left(h\right)\right\|_X}{t^{1/\rho+\varepsilon}}\right)^pd\mu\left(h\right)\right)^{q/p}\frac{dt}{t}\right)^{1/q}\\
&\stackrel{\mathclap{\eqref{eq:deterministic-path}}}{\le} ~\left(\int_{t_0}^\infty t^{-q\left(1/\rho+\varepsilon\right)}\left(d_G\left(\exp\left(tv\right),e_G\right)  \left\|\nabla f\right\|_{L^p\left(G;\ell_2^k(X)\right)}\right)^{q}\frac{dt}{t}\right)^{1/q}\\
&\stackrel{\mathclap{\eqref{eq:rho-ineq-long}}}{\le} ~\left(\int_{t_0}^\infty t^{-q\varepsilon-1}dt\right)^{1/q}\left\|\nabla f\right\|_{L^p\left(G;\ell_2^k(X)\right)}\le t_0^{-\varepsilon}\varepsilon^{-1/q}\left\|\nabla f\right\|_{L^p\left(G;\ell_2^k(X)\right)}.
\end{align*}
\end{enumerate}
\end{proof}

What has transpired is that in Lemma \ref{lem:deterministic-path}, we took a single deterministic path from $h$ to $hg=h\exp\left(tv\right)$ and differentiated $f$ along that path. From this, we obtain weak versions of the vertical versus horizontal inequalities, namely \eqref{eq:trivial-vvsh-short} and \eqref{eq:trivial-vvsh-long}, in Proposition \ref{prop:trivial-vvsh}. However, the inequalities \eqref{eq:trivial-vvsh-short} and \eqref{eq:trivial-vvsh-long} are weak in the following sense: in the ``short range,'' namely under assumption \eqref{eq:rho-ineq-short}, we are not able to directly compare $\left\|f\left(h\exp\left(tv\right)\right)-f\left(h\right)\right\|_X$ with the quantity $t^{1/\rho}$ in the ``short range vertical versus horizontal inequality'' \eqref{eq:trivial-vvsh-short}; instead we compare it with the larger quantity $t^{1/\rho-\varepsilon}$. Note that we are not able to conclude the nonexistence of bilipschitz embeddings $B_{t_0}\to X$ as in the deductions of Section \ref{sec:net}, as the integral $\int_0^{t_0}t^{q\varepsilon-1}dt$ does not diverge. Likewise, in the ``long range,'' namely under assumption \eqref{eq:rho-ineq-long}, we are left to compare $\left\|f\left(h\exp\left(tv\right)\right)-f\left(h\right)\right\|_X$ with the larger quantity $t^{1/\rho+\varepsilon}$ in the ``long range vertical versus horizontal inequality'' \eqref{eq:trivial-vvsh-short}, and similarly we are not able to conclude the nonexistence of mappings $G\to X$ that fail to be bilipschitz at large scales.

This weakness is necessary: we have placed no restrictions on the geometry of the Lie group $G$ nor the Banach space $X$, and there are plenty of examples of bilipschitz embeddings $G\to X$, such as the identity map $\mathbb{R}^n\to \mathbb{R}^n$. The existence of a proper vertical versus horizontal inequality would have precluded such bilipschitz embeddings from existing in the first place!

Thus, in our proof of Theorem \ref{thm:cvx}, we will need to capitalize on the given geometry of $G$ and $X$. To avoid ending up with the weak inequalities of Lemma \ref{lem:deterministic-path} and Proposition \ref{prop:trivial-vvsh}, taking a deterministic path from $h$ to $h\exp\left(tv\right)$ will not be enough; in fact, we will choose over a pencil of curves, the randomness arising from a heat flow on the line spanned by $v$ (see Lemma \ref{lem:first step} of subsection \ref{subsec:step1}); this is the part where we use the geometry of $G$, namely that the distance function on $\exp\left(\operatorname{span}v\right)$ is sublinear. Then, along each random path, we will apply the differentiation technique of Lemma \ref{lem:deterministic-path} (in Lemma \ref{lem:increment} of subsection \ref{subsec:step2}). (Before this, we will have used an approximation argument in subsection \ref{subsec:step0} to assume $f$ is continuously differentiable, which will guarantee the existence of the vector-valued function $\nabla f:G\to X^k$.) At this stage, we will have bounded the left-hand side of the vertical versus horizontal inequality \eqref{eq:continuous main} in terms of a convolution of $\nabla f$ with a derivative of the heat kernel (in Lemma \ref{lem:integrated increment} of subsection \ref{subsec:step2}). We will bound this using Littlewood--Paley--Stein $g$-function estimates (in subsection \ref{subsec:step3}); this is the part where we use the uniform convexity of $X$. This proof method is inspired by \cite{lafforgue2014vertical,hytonen2019heat}.

It is technically challenging to combine all the ingredients of \cite{lafforgue2014vertical,hytonen2019heat} into our proof of Theorem \ref{thm:cvx}. In particular, we have to replicate several technical claims from \cite{lafforgue2014vertical,hytonen2019heat} in our general context.

We set some notation and terminology.

Recall there is $v\in Z\left(\mathfrak{g}\right)$ and a subset $S$ of $G$ with measure $\nu$ such that the push-forward of the product measure of $\nu$ and the Lebesgue measure of $\mathbb{R}$ under the map $S\times \mathbb{R}\to G$, $\left(s,t\right)\mapsto s\exp\left(tv\right)$ is the Haar measure $\mu$. For $p\in \left[1,\infty\right)$, the Lebesgue--Bochner spaces
\[
L^p\left(\mathbb{R},\mathrm{Lebesgue};X\right)=L^p\left(\mathbb{R};X\right), \quad L^p\left(S,\nu;X\right)=L^p\left(S;X\right), \quad L^p\left(G,\mu;X\right)=L^p\left(G;X\right)
\]
are defined. For $f\in L^p\left(G;X\right)$ we may define $f^v\in L^p\left(\mathbb{R}, L^p\left(S;X\right)\right)$ by\footnote{More precisely, by \cite[Theorem 3(1)]{bogdanowicz1965fubini} and Fubini's theorem, the function $f$ is an equivalence class of functions up to measure-zero differences. Given a representative $f$, we have $f^v\left(z\right)\in L^p\left(S;X\right)$ for Lebesgue-a.e.~$z\in \mathbb{R}$, and given a different representative $\tilde{f}$, we have $\tilde{f}^v=f^v$ $\nu$-a.e.~Lebesgue-a.e.}
\begin{equation}\label{eq:fvdef}
    f^v\left(z\right)\left(x\right)=f\left(x\exp\left(zv\right)\right),\quad x\in S,~z\in \mathbb{R},
\end{equation}
which satisfies
\begin{equation}\label{eq:Lebesgue-Bochner}
\left\|f\right\|_{L^p\left(G;X\right)}=\left\|f^v\right\|_{L^p\left(\mathbb{R}, L^p\left(S;X\right)\right)}.
\end{equation}
Given $\psi\in L^1\left(\mathbb{R}\right)$, define the convolution $\psi*f\coloneqq \psi*f^v\in L^p\left(G;X\right)$,
\begin{equation*}
    \psi*f\left(x \exp\left(zv\right)\right)\coloneqq \int_\mathbb{R} \psi\left(u\right) f\left(x\exp\left(\left(z-u\right)v\right)\right)du\in X.
\end{equation*}
So $\psi*f$ is the usual group convolution of $f$ with the measure supported on $\exp\left(\operatorname{span}\left(v\right)\right)$ whose density is $\psi$.

We will take $\psi$ to be the heat kernel or its derivatives. The heat kernel on $\mathbb{R}$ is defined for $t>0$, $x\in \mathbb{R}$ as
\[
h_t\left(x\right)\coloneqq \frac{1}{2\sqrt{\pi t}}e^{-\frac{x^2}{4t}},\quad x\in \mathbb{R},
\]
and has derivatives
\[
\dot{h}_t\left(x\right)=\partial_t h_t\left(x\right)=\frac{x^2-2t}{8\sqrt{\pi}t^{5/2}}e^{-\frac{x^2}{4t}},\quad \partial_x h_t\left(x\right)=-\frac{x}{4\sqrt{\pi }t^{3/2}}e^{-\frac{x^2}{4t}}.
\]
Of course, the heat kernel has unit mass:
\begin{equation}\label{eq:heat-unitmass}
\int_{-\infty}^\infty h_t\left(x\right)dx=1,
\end{equation}
and for a universal constant $C$,
\begin{equation}\label{hintegrals}
\int_0^\infty x^{1/\rho}h_t\left(x\right)dx=\Gamma\left(\frac{1}{2}+\frac{1}{2\rho}\right)\frac{\left(4t\right)^{1/{2\rho}}}{2\sqrt{\pi}},\quad \left\|\dot{h}_t\right\|_{L^1\left(\mathbb{R}\right)}=\frac{C}{t},\quad \left\|\partial_x h_t\right\|_{L^1\left(\mathbb{R}\right)}=\frac{1}{\sqrt{\pi t}}\qquad t>0.
\end{equation}

We now begin the proof of Theorem \ref{thm:cvx}.

\subsection{Step 0: Reduction to compactly supported continuously differentiable functions}\label{subsec:step0}
We begin by a reduction. It is enough to prove the following variant of \eqref{eq:continuous main p<q}: if $f$ is compactly supported and continuously differentiable, then
\begin{align}\label{eq:continuous main-C1}
\begin{aligned}
&\left(\int_0^\infty \left(\int_{G}\left(\frac{\left\|f\left(h\exp\left(tv\right)\right)-f\left(h\right)\right\|_X}{t^{1/\rho}}\right)^pd\mu\left(h\right)\right)^{q/p}\frac{dt}{t}\right)^{1/q}\\
&\lesssim \left(\rho+\frac{1}{\rho-1}\right)\max\left\{\left(p-1\right)^{(1/q)-1},K_q\left(X\right)\right\} \left\|\nabla f\right\|_{L^p\left(G;\ell_2^k\left(X\right)\right)}.
\end{aligned}
\end{align}
Indeed, assume \eqref{eq:continuous main-C1} holds for compactly supported continuously differentiable functions $f:G\to X$ with $f\in L^p\left(G;X\right)$. We prove \eqref{eq:continuous main p<q} for $f\in Ch_0^{1,p}\left(G;X\right)$. By definition, we may find a sequence $\left\{f_n\right\}_{n=1}^\infty\subset C_c^\infty\left(G;X\right)$ such that $f_n\to f$ in $L^p\left(G;X\right)$ and
\[
\left\|f\right\|_{\dot{Ch}^{1,p}_0\left(G;X\right)}=\lim_{n\to \infty}\left\|\nabla f_n\right\|_{L^p\left(G;\ell_2^k\left(X\right)\right)}.
\]
By assumption, \eqref{eq:continuous main-C1} is valid for $f_n$:
\begin{align*}
&\left(\int_0^\infty \left(\int_{G}\left(\frac{\left\|f_n\left(h\exp\left(tv\right)\right)-f_n\left(h\right)\right\|_X}{t^{1/\rho}}\right)^pd\mu\left(h\right)\right)^{q/p}\frac{dt}{t}\right)^{1/q}\\
&\lesssim \left(\rho+\frac{1}{\rho-1}\right)\max\left\{\left(p-1\right)^{(1/q)-1},K_q\left(X\right)\right\} \left\|\nabla f_n\right\|_{L^p\left(G;\ell_2^k\left(X\right)\right)}.
\end{align*}
On the other hand, by the right-invariance of the Haar measure $\mu$ and the triangle inequality, we have for each $0<T_1<T_2$
\begin{align*}
&\left(\int_{T_1}^{T_2} \left(\int_{G}\left(\frac{\left\|f\left(h\exp\left(tv\right)\right)-f\left(h\right)\right\|_X}{t^{1/\rho}}\right)^pd\mu\left(h\right)\right)^{q/p}\frac{dt}{t}\right)^{1/q}\\
&\le \left(\int_{T_1}^{T_2} \left(\int_{G}\left(\frac{\left\|f_n\left(h\exp\left(tv\right)\right)-f_n\left(h\right)\right\|_X}{t^{1/\rho}}\right)^pd\mu\left(h\right)\right)^{q/p}\frac{dt}{t}\right)^{1/q}+2\left(\frac \rho q\right)^{1/q}\left(T_1^{-q/\rho}-T_2^{-q/\rho}\right)^{1/q}\left\|f_n-f\right\|_{L^p\left(G\right)}
\end{align*}
and thus, since $f_n\to f$ in $L^p\left(G\right)$ as $n\to \infty$,
\begin{align*}
&\left(\int_{T_1}^{T_2} \left(\int_{G}\left(\frac{\left\|f\left(h\exp\left(tv\right)\right)-f\left(h\right)\right\|_X}{t^{1/\rho}}\right)^pd\mu\left(h\right)\right)^{q/p}\frac{dt}{t}\right)^{1/q}\\
&\le
\liminf_{n\to\infty}\left(\int_{T_1}^{T_2} \left(\int_{G}\left(\frac{\left\|f_n\left(h\exp\left(tv\right)\right)-f_n\left(h\right)\right\|_X}{t^{1/\rho}}\right)^pd\mu\left(h\right)\right)^{q/p}\frac{dt}{t}\right)^{1/q}\\
&\stackrel{\mathclap{\eqref{eq:continuous main-C1}}}{\lesssim}  \left(\rho+\frac{1}{\rho-1}\right)\max\left\{\left(p-1\right)^{(1/q)-1},K_q\left(X\right)\right\} \liminf_{n\to\infty}\left\|\nabla f_n\right\|_{L^p\left(G;X\right)}\\
&=\left(\rho+\frac{1}{\rho-1}\right)\max\left\{\left(p-1\right)^{(1/q)-1},K_q\left(X\right)\right\} \left\|f\right\|_{\dot{Ch}_0^{1,p}\left(G;X\right)}.
\end{align*}
Taking $T_1\to 0$ and $T_2\to\infty$, we have \eqref{eq:continuous main p<q} as desired.

So, it remains to prove \eqref{eq:continuous main-C1} when $f\in L^p\left(G;X\right)$ is compactly supported and continuously differentiable.

\subsection{Step 1: Heat flow along the vertical direction}\label{subsec:step1}
We now set out to prove \eqref{eq:continuous main-C1}. The first step is to bound the vertical variations $f\left(g\exp\left(tv\right)\right)-f\left(g\right)$ by the quantities $\dot{h}_t*f$. This generalizes Lemma 2.6 of \cite{lafforgue2014vertical} to nilpotent Lie groups.

\begin{lemma}\label{lem:first step}
Fix $p,q\in \left[1,\infty\right)$ with $p\le q$ and let $\left(X,\left\|\cdot\right\|_X\right)$ be a Banach space. Every $f\in L^p\left(G;X\right)$ satisfies
\begin{equation}\label{eq:g function besov}
\left(\int_0^\infty \left(\int_{G}\left\|f\left(g\exp\left(tv\right)\right)-f\left(g\right)\right\|_X^pd\mu\left(g\right)\right)^{q/p}\frac{dt}{t^{1+q/\rho}}\right)^{1/q}\lesssim \left(\rho+\frac{1}{\rho-1}\right)
\left(\int_0^\infty t^{q\frac{2\rho-1}{2\rho}-1}\left\|\dot{h}_t*f\right\|_{L^p\left(G;X\right)}^qdt\right)^{1/q}.
\end{equation}
\end{lemma}

This is the part of the argument that uses that $\rho>1$. In the nilpotent setting, we put $\rho=s$, so this means that our group $G$ has nilpotency step $s>1$ and hence is nonabelian.

\begin{proof}[Proof of Lemma \ref{lem:first step}]
For every $g\in G$ and $t>0$ we have
\begin{align*}
f\left(g\exp\left(tv\right)\right)-f\left(g\right)=&\left[f\left(g\exp\left(tv\right)\right)-h_{t^2}*f\left(g\exp\left(tv\right)\right)\right]\\
&+\left[h_{t^2}*f\left(g\exp\left(tv\right)\right)-h_{t^2}*f\left(g\right)\right]+\left[h_{t^2}*f\left(g\right)-f\left(g\right)\right],
\end{align*}
so by the triangle inequality,
\begin{align}\label{eq:three parts}
\begin{aligned}
&\left(\int_0^\infty \left(\int_{G}\left\|f\left(g\exp\left(tv\right)\right)-f\left(g\right)\right\|_X^pd\mu\left(g\right)\right)^{q/p}\frac{dt}{t^{1+q/\rho}}\right)^{1/q}\\
&\le  \left(\int_0^\infty \left(\int_{G}\left\|f\left(g\exp\left(tv\right)\right)-h_{t^2}*f\left(g\exp\left(tv\right)\right)\right\|_X^pd\mu\left(g\right)\right)^{q/p}\frac{dt}{t^{1+q/\rho}}\right)^{1/q}\\
&\quad+\left(\int_0^\infty \left(\int_{G}\left\|h_{t^2}*f\left(g\exp\left(tv\right)\right)-h_{t^2}*f\left(g\right)\right\|_X^pd\mu\left(g\right)\right)^{q/p}\frac{dt}{t^{1+q/\rho}}\right)^{1/q}\\
&\quad +\left(\int_0^\infty \left(\int_{G}\left\|h_{t^2}*f\left(g\right)-f\left(g\right)\right\|_X^pd\mu\left(g\right)\right)^{q/p}\frac{dt}{t^{1+q/\rho}}\right)^{1/q}.
\end{aligned}
\end{align}
We bound each term in the right-hand side of \eqref{eq:three parts} by the right-hand side of \eqref{eq:g function besov}. For the first and third terms of the right-hand side of \eqref{eq:three parts}, we have
\begin{align*}
&\left(\int_0^\infty \left(\int_{G}\left\|f\left(g\exp\left(tv\right)\right)-h_{t^2}*f\left(g\exp\left(tv\right)\right)\right\|_X^pd\mu\left(g\right)\right)^{q/p}\frac{dt}{t^{1+q/\rho}}\right)^{1/q}\\
&=\left(\int_0^\infty \left( \int_{G}\left\|f\left(g\right)-h_{t^2}*f\left(g\right)\right\|_X^pd\mu\left(g\right)\right)^{q/p}\frac{dt}{t^{1+q/\rho}}\right)^{1/q}\\
&= \left(\int_0^\infty \left(\int_{G}\left\|\int_0^{t^2} \dot{h}_\tau *f\left(g\right)d\tau\right\|_{X}^pd\mu\left(g\right)\right)^{q/p} \frac{dt}{t^{1+q/\rho}}\right)^{1/q}\\
&\le \left(\int_0^\infty \left(\int_0^{t^2} \left\|\dot{h}_\tau*f\right\|_{L^p\left(G;X\right)} d\tau\right)^q\frac{dt}{t^{1+q/\rho}}\right)^{1/q}
\\
&\le 2^{1-1/q}\rho\left(\int_0^\infty t^{q\frac{2\rho-1}{2\rho}-1}\left\|\dot{h}_t*f\right\|_{L^p\left(G;X\right)}^qdt\right)^{1/q},
\end{align*}
where in the first equality we used the right-invariance of the Haar measure on $G$, in the second equality we used the definition of $\dot{h}_\tau$, in the first inequality we used the triangle inequality in $L^p\left(G;X\right)$, and in the second inequality we used Hardy's inequality \cite[Theorem 330]{hardy1952inequalities}
\begin{equation}\label{Hardy}
\left(\int_0^\infty \left[x^{-\nu}\int_0^x h\left(t\right)dt\right]^q dx\right)^{1/q}\le \frac{1}{\nu-1/q}\left(\int_0^\infty x^{\left(1-\nu\right)q}h\left(x\right)^q dx\right)^{1/q},\quad \nu> \frac 1q,~h:\left[0,\infty\right)\to \left[0,\infty\right),
\end{equation}
with $\nu=\frac 1q+\frac{1}{2\rho}$.

We now bound the second term of the right-hand side of \eqref{eq:three parts}. By the semigroup property we have $h_t=h_{t/2}*h_{t/2}$ for $t>0$, hence $\dot{h}_t=h_{t/2}*\dot{h}_{t/2}$ and so
\[
\partial_t\partial_xh_t=\partial_x\dot{h}_t=\partial_xh_{t/2}*\dot{h}_{t/2}.
\]
Since by Young's inequality
\begin{equation}\label{Rtf0}
\left\|\partial_xh_t*f\right\|_{L^p\left(G;X\right)}\le \left\|\partial_xh_t\right\|_{L^1\left(\mathbb{R}\right)}\left\|f\right\|_{L^p\left(G;X\right)}\quad\stackrel{\mathclap{\eqref{hintegrals}\mathrm{-(iii)}}}{=}\quad\frac{1}{\sqrt{\pi t}}\left\|f\right\|_{L^p\left(G;X\right)},
\end{equation}
we have $\lim_{t\to \infty} \partial_xh_t*f=0$ in $L^p\left(G;X\right)$, and thus
\[
\partial_x h_{t^2}*f=-\int_{t^2}^\infty \partial_\tau\left(\partial_x h_\tau*f\right)d\tau=-\int_{t^2}^\infty \partial_xh_{\tau/2}*\dot{h}_{\tau/2}*fd\tau,
\]
and we may write
\[
h_{t^2}*f\left(g\exp\left(tv\right)\right)-h_{t^2}*f\left(g\right)=\int_0^t \partial_xh_{t^2}*f\left(g\exp\left(uv\right)\right)du=-\int_0^t \int_{t^2}^\infty \partial_xh_{\tau/2}*\dot{h}_{\tau/2}*f\left(g\exp\left(uv\right)\right)d\tau du.
\]
Observe that
\begin{align*}
&\left(\int_0^\infty \left(\int_{G}\left\|h_{t^2}*f\left(g\exp\left(tv\right)\right)-h_{t^2}*f\left(g\right)\right\|_X^pd\mu\left(g\right)\right)^{q/p}\frac{dt}{t^{1+q/\rho}}\right)^{1/q}\\
&=\left(\int_0^\infty\left(\int_{G}\left\|\int_0^t \int_{t^2}^\infty \partial_xh_{\tau/2}*\dot{h}_{\tau/2}*f\left(g\exp\left(uv\right)\right)d\tau du\right\|_X^pd\mu\left(g\right)\right)^{q/p}\frac{dt}{t^{1+q/\rho}}\right)^{1/q}\\
&\le \left(\int_0^\infty\left(\int_0^t \int_{t^2}^\infty \left(\int_{G}\left\|\partial_xh_{\tau/2}*\dot{h}_{\tau/2}*f\left(g\exp\left(uv\right)\right)\right\|_X^pd\mu\left(g\right)\right)^{1/p}d\tau du\right)^q \frac{dt}{t^{1+q/\rho}}\right)^{1/q}\\
&= \left(\int_0^\infty\left(t\int_{t^2}^\infty \left\|\partial_xh_{\tau/2}*\dot{h}_{\tau/2}*f\right\|_{L^p\left(G;X\right)}d\tau\right)^q \frac{dt}{t^{1+q/\rho}}\right)^{1/q}\\
&= \left(\int_0^\infty t^{q\frac{\rho-1}{\rho}-1} \left(\int_{t^2}^\infty \left\|\partial_xh_{\tau/2}*\dot{h}_{\tau/2}*f\right\|_{L^p\left(G;X\right)}d\tau\right)^qdt\right)^{1/q}\\
& \le \frac{2^{1-1/q}\rho}{\rho-1} \left(\int_0^\infty t^{q\frac{3\rho-1}{2\rho}-1} \left\|\partial_xh_{t/2}*\dot{h}_{t/2}*f\right\|_{L^p\left(G;X\right)}^qdt\right)^{1/q},\\
\end{align*}
where the first inequality uses the triangle inequality in $L^p\left(G;X\right)$, the second equality uses the right-invariance of the Haar measure $\mu$, and the second inequality uses the second form of Hardy's inequality \cite[Theorem 330]{hardy1952inequalities}
\begin{equation}\label{secondHardy}
\left(\int_0^\infty \left[t^\nu \int_t^\infty h\left(u\right)du\right]^qdt\right)^{1/q}\le \frac{1}{\nu+1/q}\left(\int_0^\infty x^{q\left(1+\nu\right)}h\left(x\right)^qdx\right)^{1/q},\quad \nu>-\frac 1q,~h:\left[0,\infty\right)\to \left[0,\infty\right),
\end{equation}
with $\nu=\frac{\rho-1}{2\rho}-\frac{1}{q}$. (This is the step where we use $\rho>1$.)
By Young's inequality,
\[
\left\| \partial_xh_{t/2}*\dot{h}_{t/2}*f\right\|_{L^p\left(G;X\right)}\le \left\|\partial_xh_{t/2}\right\|_{L^1\left(\mathbb{R}\right)}\left\|\dot{h}_{t/2}*f\right\|_{L^p\left(G;X\right)}~\stackrel{\mathclap{\eqref{hintegrals}-\mathrm{(iii)}}}{=} \quad\sqrt{\frac{2}{\pi t}} \left\|\dot{h}_{t/2}*f\right\|_{L^p\left(G;X\right)}.
\]
Therefore
\begin{align*}
&\left(\int_0^\infty\left( \int_{G}\left\|h_{t^2}*f\left(g\exp\left(tv\right)\right)-h_{t^2}*f\left(g\right)\right\|_X^pd\mu\left(g\right)\right)^{q/p}\frac{dt}{t^{1+q/\rho}}\right)^{1/q}\\
&\le \frac{2^{3/2-1/q+\left(2\rho-1\right)/2\rho}\rho}{\sqrt{\pi}\left(\rho-1\right)}\left( \int_0^\infty  t^{q\frac{2\rho-1}{2\rho}-1}  \left\|\dot{h}_{t}*f\right\|_{L^p\left(G;X\right)}^q dt\right)^{1/q}.
\end{align*}
This completes the proof.
\end{proof}

\subsection{Step 2: Differentiation along a pencil of curves}\label{subsec:step2} We will next bound $\dot{h}_t*f$ using $\dot{h}_t*\nabla f$. We first prove the following lemma. This is a generalization of Lemma 2.4 of \cite{lafforgue2014vertical}.

\begin{lemma}\label{lem:increment}
Suppose that $p\in \left[1,\infty\right)$ and $t\in \left(0,\infty\right)$. Then for every Banach space $\left(X,\left\|\cdot\right\|_X\right)$ and every continuously differentiable $f:G\to X$ we have
\begin{equation}\label{eq:increment}
\left\|\dot{h}_t*f-\dot{h}_{2t}*f \right\|_{L^p\left(G;X\right)}\lesssim t^{1/2\rho}\left\|\dot{h}_t*\nabla f\right\|_{L^p\left(G;\ell_2^k\left(X\right)\right)}.
\end{equation}
\end{lemma}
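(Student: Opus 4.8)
The plan is to convert the difference $\dot h_t*f-\dot h_{2t}*f$ into an average of short vertical increments of the heat-smoothed function $F\coloneqq\dot h_t*f$, and then to bound each increment by integrating the horizontal gradient along a short horizontal path, exactly as in the proof of Remark~\ref{qge2}.

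First I would use the semigroup identity $h_{2t}=h_t*h_t$ on $\mathbb{R}$: differentiating in $t$ (chain rule on the left, product rule on the right) yields $\dot h_{2t}=h_t*\dot h_t$, so that
\[
\dot h_t*f-\dot h_{2t}*f=F-h_t*F,\qquad F\coloneqq\dot h_t*f .
\]
Here $F$ is smooth because $f$ is smooth and compactly supported and $\dot h_t\in L^1(\mathbb{R})$, and the convolutions below are all taken along the one-parameter subgroup $\{v^u\}_{u\in\mathbb{R}}$, hence reduce to convolutions on $\mathbb{R}$ and are associative. Using $\int_{\mathbb{R}}h_t=1$ and the convolution convention of the paper, I would then write, for every $g\in G$,
\[
F(g)-h_t*F(g)=\int_{\mathbb{R}}h_t(u)\bigl(F(g)-F(gv^{-u})\bigr)\,du .
\]

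Next I would bound each increment by a horizontal path integral. The hypotheses of Theorem~\ref{thm:cvx} give $d_G(v^{-u},e_G)=d_G(v^{u},e_G)\le|u|^{1/\rho}$, so there is a unit-speed piecewise-smooth horizontal curve $\gamma_u$ from $e_G$ to $v^{-u}$ of length at most $|u|^{1/\rho}$; applying the fundamental theorem of calculus to $s\mapsto F(g\gamma_u(s))$ and Cauchy--Schwarz on the horizontal fibre gives $\|F(g)-F(gv^{-u})\|_X\le\int_0^{|u|^{1/\rho}}\|\nabla F(g\gamma_u(s))\|_{\ell_2^k(X)}\,ds$. At this point I would invoke the commutation identity $\nabla F=\nabla(\dot h_t*f)=\dot h_t*\nabla f$, which holds because $v$ is central, so that each left-invariant field $X_i$ commutes with convolution along $\{v^u\}$. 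Combining the two previous displays, taking the $L^p(G;X)$ norm, applying the triangle inequality (Minkowski's integral inequality) in $L^p$ for the average over $u$ and for the path integral, and using the right-invariance of $\mu$ to replace $\|\nabla F(\cdot\,\gamma_u(s))\|_{L^p(G;\ell_2^k(X))}$ by $\|\nabla F\|_{L^p(G;\ell_2^k(X))}$, collapses everything to
\[
\|F-h_t*F\|_{L^p(G;X)}\le\Bigl(\int_{\mathbb{R}}h_t(u)\,|u|^{1/\rho}\,du\Bigr)\,\|\dot h_t*\nabla f\|_{L^p(G;\ell_2^k(X))} .
\]
By \eqref{hintegrals} the prefactor equals $2\int_0^\infty u^{1/\rho}h_t(u)\,du\asymp t^{1/2\rho}$, with a constant bounded uniformly in $\rho\ge1$ (since $\Gamma(\tfrac12+\tfrac1{2\rho})\le\sqrt{\pi}$ and $2^{1/\rho}\le2$), which is exactly \eqref{eq:increment}.

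The main, and essentially the only nonroutine, point is the commutation identity $X_i(\dot h_t*f)=\dot h_t*(X_if)$ together with the Fubini and differentiation-under-the-integral justifications it requires; this is precisely where the hypothesis $v\in Z(\mathfrak{g})$ is used, since centrality is what makes $v^u$ commute with the local flow of $X_i$. The remaining ingredients — the semigroup identity, the normalization $\int h_t=1$, the sub-Riemannian distance bound on $v^u$ supplied by Theorem~\ref{thm:cvx}, and the Gaussian moment estimate \eqref{hintegrals} — are elementary, and the argument is the direct analogue of the corresponding step in Lafforgue--Naor's proof.
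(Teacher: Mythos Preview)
Your proof is correct and follows essentially the same route as the paper: both use the semigroup identity $\dot h_{2t}=h_t*\dot h_t$ to write the difference as an $h_t$-average of vertical increments of $\dot h_t*f$, bound each increment via a horizontal path of length $\le |u|^{1/\rho}$, invoke the commutation $\nabla(\dot h_t*f)=\dot h_t*\nabla f$ (this is exactly \eqref{eq:nablaConvolutionCommute}), and finish with Minkowski, right-invariance of $\mu$, and the Gaussian moment \eqref{hintegrals}. The only cosmetic differences are that the paper treats $u\ge 0$ and $u\le 0$ separately and invokes a measurable selection of geodesics, while you handle both signs at once; your ordering of Minkowski (first in $u$, then in $s$) makes the measurable-selection step unnecessary.
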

\begin{proof}
Recalling the semigroup property $\dot{h}_{2t}=h_t*\dot{h}_t$, we have
\begin{equation}\label{eq:use semigroup of root}
\dot{h}_t*f\left(g\right)-\dot{h}_{2t}*f\left(g\right)=\dot{h}_t*f\left(g\right)-h_t*\dot{h}_t*f\left(g\right)~\stackrel{\mathclap{\eqref{eq:heat-unitmass}}}{=}\int_\mathbb{R} h_t\left(u\right)\left(\dot{h}_t*f\left(g\right)-\dot{h}_t*f\left(g\exp\left(-uv\right)\right)\right)du.
\end{equation}

For $u\in \left[0,\infty\right)$, let $\gamma_u:\left[0,d_G\left(\exp\left(uv\right),e_G\right)\right]\to G$ be a measurable family of geodesics parametrized by arclength joining $e_G$ to $\exp\left(uv\right)$. (Such a measurable family exists by the Aumann measurable selection theorem \cite[Theorem 6.9.13]{bogachev2007measure}). For every $u\in \left[0,\infty\right)$ and $g\in G$,
\begin{align*}
\dot{h}_t*f\left(g\right)-\dot{h}_t*f\left(g\exp\left(-uv\right)\right)&=\dot{h}_t*f\left(g\exp\left(-uv\right)\gamma_u\left(d_G\left(\exp\left(uv\right),e_G\right)\right)\right)-\dot{h}_t*f\left(g\exp\left(-uv\right)\right)\\
&=\int_0^{d_G\left(\exp\left(uv\right),e_G\right)} \frac{d}{d\theta}\dot{h}_t*f\left(g\exp\left(-uv\right)\gamma_u\left(\theta\right)\right) d\theta.
\end{align*}
Because $\gamma_u$ is horizontal and parametrized by arclength, because convolution with $\dot{h_t}$ commutes with $\nabla$ (this is because $v$ is in the center of $G$), and because of the Cauchy--Schwarz inequality, we have\footnote{More precisely, borrowing notation from Appendix \ref{app:lipnabla}, we first bound by $\left\|\dot{h}_t*\nabla f\left(g\exp\left(-uv\right)\gamma_u\left(\theta\right)\right)\right\|_{B\left(\mathbb{R}^k;X\right)}$ and then apply \eqref{eq:elltwo-operator}.} for every $\theta\in \left[0,d_G\left(\exp\left(uv\right),e_G\right)\right]$
\begin{equation}\label{eq:nablaConvolutionCommute}
 \left\|\frac{d}{d\theta}\dot{h}_t*f\left(g\exp\left(-uv\right)\gamma_u\left(\theta\right)\right)\right\|_X\le \left\|\dot{h}_t*\nabla f\left(g\exp\left(-uv\right)\gamma_u\left(\theta\right)\right)\right\|_{\ell_2^k\left(X\right)}.
\end{equation}
We thus obtain
\begin{align*}
&\left(\int_{G}\left\|\int_0^\infty h_t\left(u\right)\left(\dot{h}_t*f\left(g\right)-\dot{h}_t*f\left(g\exp\left(-uv\right)\right)\right)du\right\|_X^pd\mu\left(g\right)\right)^{1/p}\\
&=\left(\int_{G}\left\|\int_0^\infty \int_0^{d_G\left(\exp\left(uv\right),e_G\right)} h_t\left(u\right) \frac{d}{d\theta}\left(\dot{h}_t*f\left(g\exp\left(-uv\right)\gamma_u\left(\theta\right)\right)\right) d\theta du\right\|_X^pd\mu\left(g\right)\right)^{1/p}\\
&\le \int_0^\infty \int_0^{d_G\left(\exp\left(uv\right),e_G\right)} h_t\left(u\right) \left(\int_{G}\left\| \frac{d}{d\theta}\left(\dot{h}_t*f\left(g\exp\left(-uv\right)\gamma_u\left(\theta\right)\right)\right) \right\|_X^pd\mu\left(g\right)\right)^{1/p}d\theta du\\
&\stackrel{\mathclap{\eqref{eq:nablaConvolutionCommute}}}{\le}  \int_0^\infty \int_0^{d_G\left(\exp\left(uv\right),e_G\right)} h_t\left(u\right)\left(\int_{G}\left\|\dot{h}_t*\nabla f\left(g\exp\left(-uv\right)\gamma_u\left(\theta\right)\right)\right\|_{\ell_2^k\left(X\right)}^pd\mu\left(g\right)\right)^{1/p}d\theta du\\ 
&=  \int_0^\infty \int_0^{d_G\left(\exp\left(uv\right),e_G\right)} h_t\left(u\right)d\theta du \left\| \dot{h}_t*\nabla f \right\|_{L^p\left(G;\ell_2^k\left(X\right)\right)}\\ 
&\stackrel{\mathclap{\eqref{eq:rho-ineq}}}{\le}~  \left(\int_0^\infty u^{1/\rho}h_t\left(u\right)du\right)\left\|\dot{h}_t*\nabla f\right\|_{L^p\left(G;\ell_2^k\left(X\right)\right)}\\
&\stackrel{\mathclap{\eqref{hintegrals}-\mathrm{(i)}}}{\lesssim}\quad  t^{1/2\rho} \left\|\dot{h}_t*\nabla f\right\|_{L^p\left(G;\ell_2^k\left(X\right)\right)},
\end{align*}
where the first inequality uses the triangle inequality in $L^p\left(G;X\right)$ and in the second equality we used the right-invariance of the Haar measure on $G$.

Similarly, for $u\in \left(-\infty,0\right]$,
\begin{equation*}
\dot{h}_t*f\left(g\right)-\dot{h}_t*f\left(g\exp\left(-uv\right)\right)=
-\int_0^{d_G\left(\exp\left(-uv\right),e_G\right)} \frac{d}{d\theta}\dot{h}_t*f\left(g\gamma_{-u}\left(\theta\right)\right) d\theta,
\end{equation*}
and by the same reasoning,
\[
\left(\int_{G}\left\|\int_{-\infty}^0 h_t\left(u\right)\left(\dot{h}_t*f\left(g\right)-\dot{h}_t*f\left(g\exp\left(-uv\right)\right)\right)du\right\|_X^pd\mu\left(g\right)\right)^{1/p}\lesssim  t^{1/2\rho} \left\|\dot{h}_t*\nabla f\right\|_{L^p\left(G;\ell_2^k\left(X\right)\right)}.
\]
These two estimates along with \eqref{eq:use semigroup of root} gives the stated inequality.
\end{proof}

Now we bound $\dot{h}_t*f$ using $\dot{h}_t*\nabla f$, as promised. This step is inspired by Lemma 2.5 of \cite{lafforgue2014vertical}.

\begin{lemma}\label{lem:integrated increment}
Fix $p,q\in \left[1,\infty\right)$. For every Banach space $\left(X,\left\|\cdot\right\|_X\right)$, every continuously differentiable $f:G\to X$ with $f\in L^p\left(G;X\right)$ satisfies
\begin{equation*}
\left(\int_0^\infty t^{q\frac{2\rho-1}{2\rho}-1}\left\|\dot{h}_t*f\right\|_{L^p\left(G;X\right)}^qdt\right)^{1/q}\lesssim  \left(\int_0^\infty t^{q-1}\left\|\dot{h}_t*\nabla f\right\|_{L^p\left(G;\ell_2^k\left(X\right)\right)}^qdt\right)^{1/q}.
\end{equation*}
\end{lemma}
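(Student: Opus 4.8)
The plan is to run a dyadic telescoping argument using Lemma \ref{lem:increment}. The key observation is that for any fixed $t>0$, since $f$ is smooth and compactly supported, $\dot h_s * f \to 0$ in $L^p(G;X)$ as $s\to\infty$ (by Young's inequality, $\|\dot h_s*f\|_{L^p(G;X)} \le \|\dot h_s\|_{L^1(\mathbb R)}\|f\|_{L^p(G;X)} = (C/s)\|f\|_{L^p(G;X)}\to 0$, using \eqref{hintegrals}). Hence we may write a telescoping series
\[
\dot h_t * f = -\sum_{j=0}^\infty \left(\dot h_{2^{j+1}t}*f - \dot h_{2^j t}*f\right),
\]
convergent in $L^p(G;X)$. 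By the triangle inequality in $L^p(G;X)$ and Lemma \ref{lem:increment} applied at scale $2^j t$,
\[
\left\|\dot h_t * f\right\|_{L^p(G;X)} \le \sum_{j=0}^\infty \left\|\dot h_{2^{j+1}t}*f - \dot h_{2^j t}*f\right\|_{L^p(G;X)} \lesssim \sum_{j=0}^\infty (2^j t)^{1/2\rho}\left\|\dot h_{2^j t}*\nabla f\right\|_{L^p(G;\ell_2^k(X))}.
\]

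Next I would substitute this pointwise-in-$t$ bound into the weighted $L^p$ integral $\int_0^\infty t^{p\frac{2\rho-1}{2\rho}-1}\|\dot h_t*f\|_{L^p(G;X)}^p\,dt$. To handle the infinite sum inside the $p$-th power, the clean way is to interchange: write $t^{(2\rho-1)/(2\rho)} = t\cdot t^{-1/(2\rho)}$, so that the summand carries the weight $t^{(2\rho-1)/(2\rho)} (2^j t)^{1/2\rho}\|\dot h_{2^j t}*\nabla f\| = 2^{j/2\rho} \, t \,\|\dot h_{2^j t}*\nabla f\|$; pulling a factor $2^{-j\varepsilon}$ out of each term for a small $\varepsilon>0$ with $1/(2\rho)<\varepsilon$ (say $\varepsilon = 1/(4\rho)$ or anything in $(0,1/(2\rho))$ — wait, we need $2^{j/2\rho}\cdot 2^{-j\varepsilon}$ summable in the right place), one applies Hölder (or the power-mean inequality $(\sum a_j)^p \le (\sum 2^{-j\varepsilon'})^{p-1}\sum 2^{j\varepsilon'(p-1)} a_j^p$) to move the $p$-th power inside the sum at the cost of a convergent geometric factor. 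Concretely, set $a_j(t) = 2^{j/2\rho} t \|\dot h_{2^j t}*\nabla f\|_{L^p(G;\ell_2^k(X))}$; then
\[
\left(\sum_{j\ge 0} a_j(t)\right)^p \le \left(\sum_{j\ge 0} 2^{-j\eta}\right)^{p-1}\sum_{j\ge 0} 2^{j\eta(p-1)} a_j(t)^p = C_{\eta,p}\sum_{j\ge 0} 2^{j\eta(p-1)} a_j(t)^p
\]
for any $\eta>0$. After integrating $t^{-1}$ times this against $dt$ and performing the change of variables $\tau = 2^j t$ in the $j$-th term, each term contributes $2^{-j\eta(p-1)}\cdot 2^{j\eta(p-1)}\cdot 2^{j/2\rho\cdot p}\cdot 2^{-jp/(2\rho)}\cdot(\text{fixed integral in }\tau)$ — here the substitution eats exactly the $t^p$ weight against $\int t^{p}\cdot t^{-1}$ and rescales the $2^{j/2\rho}$; a short computation shows the geometric sum in $j$ converges provided $\eta$ is chosen small enough (this is where $\rho>1$, hence $1/(2\rho)<1/2$, gives room), leaving
\[
\int_0^\infty t^{p\frac{2\rho-1}{2\rho}-1}\left\|\dot h_t*f\right\|_{L^p(G;X)}^p dt \lesssim_{G,p,\rho}\int_0^\infty \tau^{p-1}\left\|\dot h_\tau*\nabla f\right\|_{L^p(G;\ell_2^k(X))}^p d\tau,
\]
which is the claim.

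The main obstacle — really the only nonroutine point — is making the exponent bookkeeping in the dyadic sum work out, i.e.\ choosing the splitting parameter $\eta$ so that after the change of variables $\tau = 2^j t$ the residual geometric series $\sum_j 2^{c j}$ has $c<0$; this is exactly the place where the hypothesis $\rho>1$ (equivalently, nonabelianness in the nilpotent setting, as noted after Lemma \ref{lem:first step}) is used, mirroring its role in Lemma \ref{lem:first step}. Everything else is Young's inequality, the triangle inequality in $L^p(G;X)$, Fubini, and the scaling of the heat semigroup, all of which are available. One should double-check that the constant's dependence on $G$ enters only through the implicit constant of Lemma \ref{lem:increment} (which came from the comparison $d_G(v^u,e_G)\lesssim_G u^{1/\rho}$ and the measure-theoretic product structure), so that the final constant is of the form $C(G,p,\rho)$, consistent with the $\lesssim_G$ in the statement.
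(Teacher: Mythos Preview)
Your overall strategy --- telescope $\dot h_t*f$ dyadically using $\dot h_s*f\to 0$ as $s\to\infty$, apply Lemma \ref{lem:increment} term by term, then change variables $\tau=2^j t$ --- is exactly what the paper does. But the paper's execution is cleaner and your bookkeeping, as written, is wrong.

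The paper applies the triangle inequality directly in the weighted norm $\bigl(\int_0^\infty t^{p(2\rho-1)/(2\rho)-1}\|\cdot\|_{L^p(G;X)}^p\,dt\bigr)^{1/p}$ (Minkowski), so the sum over $m$ sits \emph{outside} the $p$-th power from the start. Each term is then bounded by Lemma \ref{lem:increment} and the substitution $t\mapsto 2^{m-1}t$ gives
\[
\left(\int_0^\infty t^{p\frac{2\rho-1}{2\rho}-1}\|\dot h_{2^{m-1}t}*f-\dot h_{2^m t}*f\|^p\,dt\right)^{1/p}
\lesssim 2^{-(m-1)\frac{2\rho-1}{2\rho}}\left(\int_0^\infty \tau^{p-1}\|\dot h_\tau*\nabla f\|^p\,d\tau\right)^{1/p},
\]
and one simply sums the geometric series. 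No H\"older splitting is needed.

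By contrast, you bound $\|\dot h_t*f\|$ pointwise in $t$ first, which forces the extra H\"older/power-mean step to move the $p$-th power inside the sum. That can be made to work, but your displayed factor ``$2^{-j\eta(p-1)}\cdot 2^{j\eta(p-1)}\cdot 2^{jp/2\rho}\cdot 2^{-jp/(2\rho)}$'' is wrong: those would all cancel. The substitution $\tau=2^j t$ gives $t^{p-1}\,dt=2^{-jp}\tau^{p-1}\,d\tau$, so the correct net factor is $2^{j\eta(p-1)}\cdot 2^{jp/(2\rho)}\cdot 2^{-jp}=2^{j[\eta(p-1)-p(2\rho-1)/(2\rho)]}$, summable once $\eta<\frac{p}{p-1}\cdot\frac{2\rho-1}{2\rho}$ (and trivially when $p=1$).

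Finally, your claim that ``this is exactly the place where the hypothesis $\rho>1$ is used'' is not right: both your argument and the paper's need only $\rho>1/2$ here for the geometric series to converge. The genuine use of $\rho>1$ is confined to Lemma \ref{lem:first step}, where the second Hardy inequality requires $\nu=\frac{\rho-1}{2\rho}-\frac1p>-\frac1p$.
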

\begin{proof}
By Young's inequality,
\[
\left\|\dot{h}_t*f\right\|_{L^p\left(G;X\right)}\le \left\|\dot{h}_t\right\|_{L^1\left(\mathbb{R}\right)} \left\|f\right\|_{L^p\left(G;X\right)}\stackrel{\mathclap{\eqref{hintegrals}-\mathrm{(ii)}}}{\asymp}\quad \frac{1}{ t}\left\|f\right\|_{L^p\left(G;X\right)},
\]
so $\lim_{t\to\infty} \dot{h}_t*f=0$ in $L^p\left(G;X\right)$. Therefore
\[
\dot{h}_t*f=\sum_{m=1}^\infty \left(\dot{h}_{2^{m-1}t}*f-\dot{h}_{2^mt}*f\right)  \mathrm{~in~} L^p\left(G;X\right),
\]
from which it follows that
\begin{align*}
&\left(\int_0^\infty t^{q\frac{2\rho-1}{2\rho}-1}\left\|\dot{h}_t*f\right\|_{L^p\left(G;X\right)}^qdt\right)^{1/q}\\
 &\le \sum_{m=1}^\infty \left(\int_0^\infty t^{q\frac{2\rho-1}{2\rho}-1}\left\|\dot{h}_{2^{m-1}t}*f-\dot{h}_{2^mt}*f\right\|_{L^p\left(G;X\right)}^qdt\right)^{1/q}\\
&\lesssim  \sum_{m=1}^\infty \left(\int_0^\infty t^{q\frac{2\rho-1}{2\rho}-1}\left(2^{m-1}t\right)^{q/2\rho}\left\|\dot{h}_{2^{m-1}t}*\nabla f\right\|_{L^p\left(G;\ell_2^k\left(X\right)\right)}^qdt\right)^{1/q}\\
&= \left(\sum_{m=1}^\infty \frac{1}{2^{\left(m-1\right)\left(2\rho-1\right)/2\rho}}\right)\left(\int_0^\infty t^{q-1}\left\|\dot{h}_t*\nabla f\right\|_{L^p\left(G;\ell_2^k\left(X\right)\right)}^qdt\right)^{1/q},
\end{align*}
where the second inequality uses Lemma \ref{lem:increment}.
\end{proof}

Combining Lemmas \ref{lem:first step}, \ref{lem:increment}, and \ref{lem:integrated increment}, we have, for compactly supported and continuously differentiable $f:G\to X$,
\begin{align*}
	&\left(\int_0^\infty \left(\int_{G}\left\|f\left(g\exp\left(tv\right)\right)-f\left(g\right)\right\|_X^pd\mu\left(g\right)\right)^{q/p}\frac{dt}{t^{1+q/\rho}}\right)^{1/q}\\
	&\lesssim \left(\rho+\frac{1}{\rho-1}\right)
	 \left(\int_0^\infty t^{q-1}\left\|\dot{h}_t*\nabla f\right\|_{L^p\left(G;\ell_2^k\left(X\right)\right)}^qdt\right)^{1/q}.
\end{align*}
Compared to Lemma \ref{lem:deterministic-path}, we have taken a pencil of curves from $g$ to $g \exp\left(tv\right)$ to obtain this inequality.

So far we haven't used the uniform convexity of $X$. This will come into play in the next step.

\subsection{Step 3: Littlewood--Paley--Stein theory and martingales in Banach spaces}\label{subsec:step3}
We begin by calculating some uniform convexity constants.
\begin{lemma}\label{lem:lp-unif-cvx}
	Let $q\in [2,\infty)$ and $p\in (1,q]$. For any measure space $\left(N,\sigma\right)$ and a $q$-uniformly convex Banach space $Y$, we have 
	\[
		K_q\left(L^p\left(N,\sigma;Y\right)\right)\le  \max\left\{\left(p-1\right)^{(1/q)-1},K_q\left(Y\right)\right\}
	\]
		In particular, we have for any $k\in \mathbb{N}$,
	\[
	K_q\left(\ell_2^k\left(Y\right)\right)\lesssim K_q\left(Y\right)
	\]
	and
	\begin{equation}\label{eq:lpl2-unifcvx}
	K_q\left(L^p\left(N,\sigma;\ell_2^k\left(Y\right)\right)\right)\lesssim \max\left\{\left(p-1\right)^{(1/q)-1},K_q\left(\ell_2^k\left(Y\right)\right)\right\}\lesssim \max\left\{\left(p-1\right)^{(1/q)-1},K_q\left(Y\right)\right\}.
	\end{equation}
\end{lemma}
\begin{proof}
	For $r\in \left(1,2\right]$, the $r$-uniform smoothness constant of a Banach space $Y$ is defined by
	\[
	S_r\left(Y\right)\coloneqq \inf\left\{S>0:\forall x,y\in Y~\left(\left\|x\right\|_Y^r+S^r\left\|y\right\|_Y^r\right)^{1/r}\ge \left(\frac{\left\|x+y\right\|_Y^r+\left\|x-y\right\|_Y^r}{2}\right)^{1/r}\right\}.
	\]
	By \cite[Lemma 5]{ball1994sharp}, $S_r(Y)=K_{r/(r-1)}(Y^*)$. Since uniformly convex Banach spaces are reflexive by the Milman--Pettis theorem \cite{milman1938some,pettis1939proof}, this also means that $S_r(Y^*)=K_{r/(r-1)}(Y)$. Also, by \cite[equation (4.4)]{naor2014comparison}, $S_r(L^s(N,\sigma;Y))\lesssim \max\left\{s^{1/r},S_r(Y)\right\}$ for $r\in [1,2]$ and $s\in [r,\infty)$.
	
	By these facts we have, for any measure space $\left(N,\sigma\right)$, Banach space $Y$, and $p\le q$,
	\begin{align*}
		K_q\left(L^p\left(N,\sigma;Y\right)\right)&=S_{q/\left(q-1\right)}\left(L^{p/\left(p-1\right)}\left(N,\sigma;Y^*\right)\right)\\
		&\lesssim \max\left\{\left(\frac{p}{p-1}\right)^{1-(1/q)},S_{q/\left(q-1\right)}\left(Y^*\right)\right\}= \max\left\{\left(\frac{p}{p-1}\right)^{1-(1/q)},K_q\left(Y\right)\right\}.
	\end{align*}
If $2\le p\le q$, then
\[
\left(\frac{p}{p-1}\right)^{1-(1/q)}\le 2^{1-(1/q)}\le 2,
\]
and if $1<p<2$, then
\[
\left(\frac{p}{p-1}\right)^{1-(1/q)}\le \left(\frac{2}{p-1}\right)^{1-(1/q)}\le 2\left(p-1\right)^{(1/q)-1},
\]
so coupled with $K_q(Y)\ge 1$, we obtain that
\[
\max\left\{\left(\frac{p}{p-1}\right)^{1-(1/q)},K_q\left(Y\right)\right\}\lesssim \max\left\{\left(p-1\right)^{(1/q)-1},K_q\left(Y\right)\right\}.
\]

\end{proof}

We now claim the following inequality:
\begin{lemma}\label{lem:gHL}
	For $q\in [2,\infty)$, $p\in (1,q]$, a 
	$q$-uniformly convex Banach space $\mathbb{B}$, and any $\phi\in L^p\left(\mathbb{R};\mathbb{B}\right)$, we have
	\begin{equation}\label{eq:MTX-1D}
		\left(\int_0^\infty t^{q-1}\left\|\dot{h}_t *\phi\right\|_{L^p\left(\mathbb{R};\mathbb{B}\right)}^qdt\right)^{1/q}
		\lesssim \max\left\{\left(p-1\right)^{(1/q)-1},K_q\left(\mathbb{B}\right)\right\} \left\|\phi\right\|_{L^p\left(\mathbb{R};\mathbb{B}\right)}.
	\end{equation}
\end{lemma}

Assuming Lemma \ref{lem:gHL}, we are ready to finish the proof of Theorem \ref{thm:cvx}.

\begin{proof}[Proof of Theorem \ref{thm:cvx} assuming Lemma \ref{lem:gHL}]
Recall that $S$ is a subset of $G$ with a measure $\nu$ such that the push-forward of the product measure of $\nu$ and the Lebesgue measure of $\mathbb{R}$ under the map $S\times \mathbb{R}\to G$, $\left(s,t\right)\mapsto s\exp\left(tv\right)$ is the Haar measure $\mu$ of $G$. Take $\mathbb{B}=L^p\left(S,\nu;\ell_2^k\left(X\right)\right)$ and  $\phi=\left(\nabla f\right)^v:\mathbb{R}\to \mathbb{B}$. Then, assuming \eqref{eq:MTX-1D},
\begin{align*}
	\left(\int_0^\infty t^{q-1}\left\|\dot{h}_t*\nabla f\right\|_{L^p\left(G;\ell_2^k\left(X\right)\right)}^qdt\right)^{1/q}
	&\stackrel{\mathclap{\eqref{eq:Lebesgue-Bochner}}}{=}~\left(\int_0^\infty t^{q-1}\left\|\dot{h}_t *\phi\right\|_{L^p\left(\mathbb{R};\mathbb{B}\right)}^qdt\right)^{1/q}\\
	&\stackrel{\mathclap{\eqref{eq:MTX-1D}}}{\lesssim} \max\left\{\left(p-1\right)^{(1/q)-1},K_q\left(\mathbb{B}\right)\right\} \left\|\phi\right\|_{L^p\left(\mathbb{R};\mathbb{B}\right)}\\
	&\stackrel{\mathclap{\eqref{eq:Lebesgue-Bochner},\eqref{eq:lpl2-unifcvx}}}{\lesssim} \max\left\{\left(p-1\right)^{(1/q)-1},K_q\left(X\right)\right\}\left\|\nabla f\right\|_{L^p\left(G;\ell_2^k\left(X\right)\right)},
\end{align*}
which, in conjunction with Lemmas \ref{lem:first step} and \ref{lem:integrated increment}, completes the proof of \eqref{eq:continuous main-C1} and thereby also the proof of Theorem \ref{thm:cvx}.
\end{proof}

Lemma \ref{lem:gHL} is inspired by a Littlewood--Paley--Stein inequality for the heat semigroup given in \cite[Theorem 17]{hytonen2019heat}, which is a quantitative version of\footnote{More precisely, \cite{martinez2006vector} proves the inequality for `subordinated' semigroups, such as the Poisson semigroup subordinated by the heat semigroup, and \cite{xu2020vector} proves the inequality for general symmetric diffusion semigroups. The work \cite{hytonen2019heat} has the advantage that it obtains, for the heat semigroup, the explicit constant $K_p\left(\mathbb{B}\right)$. We could have also started with the Poisson semigroup $P_t$, which gives the same constant in the inequality due to semigroup subordination. See \cite[Appendix A]{naor2022foliated} for a detailed account of this discussion.} \cite[Theorem 2.1]{martinez2006vector} and \cite[Theorem 2]{xu2020vector}. The inequality of \cite[Theorem 17]{hytonen2019heat} is stated as follows.

\begin{theorem}[{\cite[Theorem 17]{hytonen2019heat}}] Fix $q\in [2,\infty)$ and $n\in \mathbb{N}$. Suppose that $(\mathbb{B},\|\cdot\|_\mathbb{B})$ is a $q$-uniformly convex Banach space. Then for every $f\in L^q(\mathbb{R}^n;\mathbb{B})$ we have\footnote{In \cite{hytonen2019heat} the inequality obtained actually has the martingale cotype $q$ constant $m_q(\mathbb{B})$ instead of $K_q(\mathbb{B})$. We have $m_q(\mathbb{B})\le K_q(\mathbb{B})$ by \cite{pisier1975martingales}.}
	\begin{equation*}
		\left(\int_0^\infty\left\|t\partial_t H_t f\right\|_{L^q\left(\R^n;\mathbb{B}\right)}^q\frac{d t}{t}\right)^{1/q}
		\lesssim\sqrt{n}\cdot K_q(\mathbb{B})\Norm{f}{L^q(\R^n;\mathbb{B})}.
	\end{equation*}
	where $\{H_t\}_{t\ge 0}$ denotes the $\mathbb{B}$-valued heat semigroup on $\mathbb{R}^n$.
\end{theorem}


Here, we prove the following generalization.
\begin{theorem}\label{thm:LPtime2} Fix $q\in [2,\infty)$, $p\in (1,q]$ and $n\in \mathbb{N}$. Suppose that $(\mathbb{B},\|\cdot\|_\mathbb{B})$ is a $q$-uniformly convex Banach space. Then for every $f\in L^p(\mathbb{R}^n;\mathbb{B})$ we have
	\begin{equation}\label{eq:temporal heat intro}
		\left(\int_0^\infty\left\|t\partial_t H_t f\right\|_{L^p\left(\R^n;\mathbb{B}\right)}^q\frac{d t}{t}\right)^{1/q}
		\lesssim\sqrt{n}\cdot\max\left\{\left(p-1\right)^{(1/q)-1},K_q\left(\mathbb{B}\right)\right\}\Norm{f}{L^p(\R^n;\mathbb{B})}.
	\end{equation}
\end{theorem}
It is clear that Theorem \ref{thm:LPtime2} implies Lemma \ref{lem:gHL}. We now prove Theorem \ref{thm:LPtime2}.

A sequence of $\mathbb{B}$-valued random variables $\{M_k\}_{k=1}^\infty$ on a $\sigma$-finite measure space $(N,\mathscr{F},\sigma)$, with $M_k\in L^1_{\mathrm{loc}}(\sigma;\mathbb{B})$ for every $k\in \mathbb{N}$, is a \emph{martingale} if there exists an increasing sequence of $\sigma$-finite sub-$\sigma$-algebras $\mathscr{F}_1\subset \mathscr{F}_2\subset\ldots\subset \mathscr{F}$ such that $\mathbb{E}[M_{k+1}|\mathscr{F}_k]=M_k$ for every $k\in \mathbb{N}$. Here $\mathbb{E}[\,\cdot\, |\mathscr{F}_k]$ means for the conditional expectation relative to the $\sigma$-algebra $\mathscr{F}_k$.

The following is a modification of Pisier's martingale inequality \cite[Theorem 1.3]{pisier1975martingales}.
\begin{theorem}\label{thm:pisier inequality} Let $q\in [2,\infty)$ and $p\in (1,q]$. Suppose that $(\mathbb{B},\|\cdot\|_\mathbb{B})$ is a $q$-uniformly convex Banach space. Then every martingale $\{M_k\}_{k=1}^\infty\subset L^p(\sigma;\mathbb{B})$  satisfies
	\begin{equation}\label{eq:pisier's ineq}
		\left(\sum_{k=1}^\infty \|M_{k+1}-M_k\|_{L^p(\sigma;\mathbb{B})}^q\right)^{1/q}\lesssim \max\left\{\left(p-1\right)^{(1/q)-1},K_q\left(\mathbb{B}\right)\right\}\sup_{k\in \mathbb{N}} \|M_k\|_{L^p(\sigma;\mathbb{B})}.
	\end{equation}
\end{theorem}
\begin{proof}
Following \cite[Definition 10.8]{pisier2016martingales} a sequence $\{x_i\}_{i=0}^\infty$ of elements in a Banach space $X$ is called a monotone basic sequence if for any $N\in \mathbb{N}$ and $\lambda_0,\cdots,\lambda_n\in \mathbb{R}$ we have
\[
\sup_{0\le n\le N}\left\|\sum_{k=0}^n \lambda_kx_k\right\|\le\left\|\sum_{k=0}^N \lambda_kx_k\right\|.
\]
The sequence $x_0=M_0$, $x_i=M_{i+1}-M_i$, $i\ge 1$, is a monotone basic sequence in $X=L^p\left(\sigma;\mathbb{B}\right)$.

The modulus of uniform convexity $\delta_X(\cdot)$ is defined as
\[
\delta_X(\varepsilon)\coloneqq \inf\left\{1-\left\|\frac{a+b}{2}\right\|_{X}:a,b,\in X,\|a\|\le 1, \|b\|\le 1, \|a-b\|\ge \varepsilon\right\},\quad \varepsilon\in (0,2].
\]
Setting $x=\frac{a+b}{2}$ and $y=\frac{a-b}{2}$ in the definition \eqref{eq:unif-cvx-def} of uniform convexity, we have
\[
\delta_X(\varepsilon)\ge \frac{\varepsilon^q}{q2^qK_q(X)^q}.
\]

By \cite[Theorem 10.9, Corollary 10.10]{pisier2016martingales}, we have
\[
\sum_{k=1}^N \delta_{L^p(\sigma;\mathbb{B})}\left(\left\|M_{k+1}-M_k\right\|_{L^p(\sigma;\mathbb{B})}\right)\le \left\|M_{N+1}\right\|^q_{L^p(\sigma;\mathbb{B})}
\]
so that
\[
\left(\sum_{k=1}^N \left\|M_{k+1}-M_k\right\|_{L^p(\sigma;\mathbb{B})}^q\right)^{1/q}\le K_q(L^p(\sigma;\mathbb{B}))\left\|M_{N+1}\right\|_{L^p(\sigma;\mathbb{B})}\stackrel{\mathclap{\mathrm{Lemma~}\ref{lem:lp-unif-cvx}}}{\le} \max\left\{\left(p-1\right)^{(1/q)-1},K_q\left(\mathbb{B}\right)\right\}\left\|M_{N+1}\right\|_{L^p(\sigma;\mathbb{B})},
\]
whence \eqref{eq:pisier's ineq} follows.
\end{proof}

The rest of the proof of Theorem \ref{thm:LPtime2} follows the proof of \cite[Theorem 17]{hytonen2019heat}. We outline the proof here for completeness.

A symmetric diffusion semigroup \cite[page~65]{stein2016topics} on a measure space $(N,\sigma)$ is a one-parameter family of self-adjoint linear operators $\{T_t\}_{t\in [0,\infty)}$ that map real-valued measurable functions on $(N,\sigma)$ to measurable functions on $(N,\sigma)$, such that
\begin{itemize}
	\item $T_0$ is the identity operator and $T_{t+s}=T_tT_s$ for every $s,t \in [0,\infty)$,
	\item for every $t\in [0,\infty)$ and $p\in [1,\infty]$ we have  $\|T_t\|_{L^p(\sigma)\to L^p(\sigma)}\le 1$,
	\item for every $f\in L^2(\sigma)$ we have $\lim_{t\to 0} T_tf=f$ in $L^2(\sigma)$,
	\item for every nonnegative measurable $f:\mathscr{M}\to \R$ the function $T_tf$ is also nonnegative,
	\item and $T_t\1_{\mathscr{M}}=\1_{\mathscr{M}}$.
\end{itemize}
For every Banach space $\mathbb{B}$ the above semigroup $\{T_t\}_{t\in [0,\infty)}$ extends to a semigroup of contractions on $L^p(\sigma;\mathbb{B})$ for every $p\in [1,\infty]$ \cite[page~433]{martinez2006vector}. By a standard density argument, for every $p\in [1,\infty)$ and $f\in L^p(\sigma,\mathbb{B})$ the mapping $[0,\infty)\to L^p(\sigma,\mathbb{B}),$ $t\mapsto T_tf$, is continuous.

\begin{proposition}\label{prop:Rota} Let $q\in [2,\infty)$, let $(\mathbb{B},\|\cdot\|_\mathbb{B})$ be a $q$-uniformly convex Banach space, and let $p\in (1,q]$.  Let $\{T_t\}_{t\in [0,\infty)}$ be a symmetric diffusion semigroup on a measure space $(N,\sigma)$. Then for every $f\in L^q(\sigma;\mathbb{B})$ and an increasing sequence $\{t_j\}_{j\in\Z}\subset (0,\infty)$ we have
	\begin{equation}\label{eq:semigroup version UC}
		\bigg(\sum_{j\in\Z}\Norm{T_{t_j}f-T_{t_{j+1}}f}{L^p(\sigma;\mathbb{B})}^q\bigg)^{\frac{1}{q}}\le \max\left\{\left(p-1\right)^{(1/q)-1},K_q\left(\mathbb{B}\right)\right\}\Norm{f}{L^p(\sigma;\mathbb{B})}.
	\end{equation}
\end{proposition}

The following is a generalization of \cite[Proposition 22]{hytonen2019heat}.
\begin{proof}
	It suffices to prove for finite sequences  $0<t_0<t_1<\ldots<t_N$  that
	\begin{equation}\label{eq:finite semigroup version UC}
		\bigg(\sum_{j=0}^N\Norm{T_{t_j}f-T_{t_{j+1}}f}{L^p(\sigma;\mathbb{B})}^q\bigg)^{\frac{1}{q}}\le \max\left\{\left(p-1\right)^{(1/q)-1},K_q\left(\mathbb{B}\right)\right\}\Norm{f}{L^p(\sigma;\mathbb{B})}.
	\end{equation}
	Since $t\mapsto T_t f$ is a continuous mapping $[0,\infty) \to L^p(\sigma;\mathbb{B})$, we may assume by approximation that each $t_j$ is an integer multiple of some $\delta\in (0,\infty)$, i.e., that  $t_j= k_j\delta$ with $k_j\in \mathbb{N}$. Writing $Q\coloneqq T_{\delta/2}$, inequality \eqref{eq:finite semigroup version UC} can be rewritten as follows.
	\begin{equation}\label{eq:Q version}
		\left(\sum_{j=0}^N\left\|Q^{2k_j}f-Q^{2k_{j+1}}f\right\|_{L^p(\sigma;\mathbb{B})}^q\right)^{1/q}\le \max\left\{\left(p-1\right)^{(1/q)-1},K_q\left(\mathbb{B}\right)\right\}\Norm{f}{L^p(\sigma;\mathbb{B})}.
	\end{equation}
	The operator $Q$ satisfies the assumptions of Rota's representation theorem~\cite{rota1962alternierende} and so that its even powers admit the following representation:
	\begin{equation}\label{eq:Q representation}
 Q^{2k}=J^{-1}\circ E'\circ E_k\circ J	\quad \forall k\in \mathbb{N},
	\end{equation}
	where
	\begin{itemize}
		\item  $J:L^p(\sigma;\mathbb{B})\to L^p(\mathscr{S},\mathscr{F}',\nu;\mathbb{B})\subset L^p(\mathscr{S},\mathscr{F},\nu;\mathbb{B})$ is an isometric isomorphism for some $\sigma$-finite $\sigma$-algebras $\mathscr{F}'\subset\mathscr F$ of a measure space $(\mathscr{S},\mathscr{F},\nu)$,
		\item $E_k:L^p(\mathscr{S},\mathscr{F},\nu;\mathbb{B})\to L^p(\mathscr{S},\mathscr{F}_k,\nu;\mathbb{B})\subset L^p(\mathscr{S},\mathscr{F},\nu;\mathbb{B})$ is the conditional expectation (in the $\sigma$-finite setting), where $\mathscr{F}\supset\mathscr{F}_1\supset\mathscr{F}_2\supset\cdots$ is a decreasing sequence of $\sigma$-finite sub-$\sigma$-algebras, and
		\item $E':L^p(\mathscr{S},\mathscr{F},\nu;\mathbb{B})\to L^p(\mathscr{S},\mathscr{F}',\nu;\mathbb{B})\subset L^p(\mathscr{S},\mathscr{F},\nu;\mathbb{B})$ is the conditional expectation for the sub-$\sigma$-algebra $\mathscr{F}'\subset\mathscr F$.
	\end{itemize}
	(More precisely, we are using the form of Rota's representation presented by Stein~\cite[page~106]{stein2016topics}, which extends to the vector-valued setting by the explanation following \cite[Theorem 2.5]{martinez2006vector}, i.e., that the operators under consideration extend to contractions on $\mathbb{B}$-valued $L^p$-spaces.) 
	Consequently, the desired estimate~\eqref{eq:Q version} is proven as follows.
	\begin{align*}
		\left(\sum_{j=0}^{N}\left\|(Q^{2k_j}-Q^{2k_{j+1}})f\right\|_{L^p(\sigma;\mathbb{B})}^q\right)^{1/q}
		&=\left(\sum_{j=0}^{N}\left\|J^{-1}E'(E_{k_{j}}-E_{k_{j+1}}) Jf\right\|_{L^p(\sigma;\mathbb{B})}^q \right)^{1/q}\\
		&\le \left(\sum_{j=0}^{N}\left\|(E_{k_{j}}-E_{k_{j+1}}) Jf\right\|_{L^p(\mathscr{S},\mathscr{F},\nu;\mathbb{B})}^q\right)^{1/q}\\
		&\le\max\left\{\left(p-1\right)^{(1/q)-1},K_q\left(\mathbb{B}\right)\right\}\Norm{Jf}{L^p(\mathscr{S},\mathscr{F}',\nu;Y)}\\
		&=\max\left\{\left(p-1\right)^{(1/q)-1},K_q\left(\mathbb{B}\right)\right\}\Norm{f}{L^p(\sigma;\mathbb{B})},
	\end{align*}
	where the first step uses \eqref{eq:Q representation}, the second step uses the fact that $J^{-1}$ is an isometry and that $E'$ is a contraction, the third step uses Theorem \ref{thm:pisier inequality} applied to the reverse martingale $\{E_{k_j}Jf)\}_{j=0}^N$, and the final step uses the fact that $J$ is an isometry.
\end{proof}

Using Proposition \ref{prop:Rota}, we obtain the following integral estimate, which is a generalization of \cite[Lemma 24]{hytonen2019heat}.
\begin{lemma}\label{lem:Rota} Fix $q\in [2,\infty)$, $p\in (1,q]$, $\alpha\in (1,\infty)$ and a $q$-uniformly convex Banach space $(\mathbb{B},\|\cdot\|_\mathbb{B})$. Suppose that $\{T_t\}_{t\in [0,\infty)}$ is a symmetric diffusion semigroup on a measure space $(N,\sigma)$. Then
	\begin{equation*}
	 \left(\int_0^\infty\Norm{(T_t-T_{\alpha t})f}{L^p(\R^n;\mathbb{B})}^q\frac{d t}{t}\right)^{1/q}\leq(\log\alpha)^{1/q}\max\left\{\left(p-1\right)^{(1/q)-1},K_q\left(\mathbb{B}\right)\right\}\Norm{f}{L^p(\sigma;\mathbb{B})}\quad \forall f\in L^p(\sigma;\mathbb{B}).
	\end{equation*}
\end{lemma}

\begin{proof} We have
	\begin{align*}\label{eq:sum to integral}
		\left(\int_0^\infty\Norm{(T_t-T_{\alpha t})f}{L^p(\sigma;\mathbb{B})}^q\frac{d t}{t}\right)^{1/q}
		&=\left(\sum_{j\in\Z}\int_{\alpha^j}^{\alpha^{j+1}}\Norm{(T_t-T_{\alpha t})f}{L^p(\sigma;\mathbb{B})}^q\frac{d t}{t}\right)^{1/q} \\
		&=\left(\int_{1}^{\alpha}\sum_{j\in\Z}\left\|(T_{\alpha^j t}-T_{\alpha^{j+1} t})f\right\|_{L^p(\sigma;\mathbb{B})}^q\frac{d t}{t}\right)^{1/q}\\
		&\le \left(\sup_{t\in [1,\alpha]}\sum_{j\in\Z}\left\|(T_{\alpha^j t}-T_{\alpha^{j+1} t})f\right\|_{L^p(\sigma;\mathbb{B})}^q\right)^{1/q}\left(\int_{1}^{\alpha}\frac{d t}{t}\right)^{1/q}\\
		&
		\le \max\left\{\left(p-1\right)^{(1/q)-1},K_q\left(\mathbb{B}\right)\right\}\Norm{f}{L^p(\sigma;\mathbb{B})}\left(\log\alpha\right)^{1/q},
	\end{align*}
	where the last inequality uses Proposition \ref{prop:Rota} with $t_j=\alpha^j t$.
\end{proof}

The final ingredient is the following, which we state without proof.
\begin{lemma}[{\cite[Lemma 27]{hytonen2019heat}}]\label{lem:n-heat-time-deriv}
	Suppose that $p\in [1,\infty]$. Then for every $f\in L^p(\R^n;\mathbb{B})$ and $t\in [1,\infty)$ we have
	$$
	\left\|t\dot{H}_tf\right\|_{L^p(\R^n;\mathbb{B})}\lesssim \sqrt{n}\cdot \left\|f\right\|_{L^p(\R^n;\mathbb{B})}.
	$$
\end{lemma}

\begin{proof}[Proof of Theorem \ref{thm:LPtime2}] Differentiating the semigroup identity $H_{t+s}=H_tH_s$  gives $\dot{H}_{t+s}=\dot{H}_tH_s$ for all $s,t\in (0,\infty)$. Hence
	\begin{equation*}
	\dot H_t f=\sum_{k=-1}^\infty \left(\dot H_{2^{k+1} t}-\dot H_{2^{k+2}t}\right)f
		=\sum_{k=-1}^\infty \dot H_{2^k t}\left(H_{2^k t}-H_{3\cdot 2^{k}t}\right)f,\quad t\in (0,\infty).
	\end{equation*}
Thus
	\begin{align*}
		\left(\int_0^\infty\left\|t\dot H_t f\right\|_{L^p(\R^n;\mathbb{B})}^q\frac{d t}{t}\right)^{1/q}
		&\le \sum_{k=-1}^\infty \left(\int_0^\infty\left\|t\dot H_{2^k t}(H_{2^k t}-H_{3\cdot 2^{k}t}) f \right\|_{L^p(\R^n;\mathbb{B})}^q\frac{d t}{t}\right)^{1/q} \\
		&=\sum_{k=-1}^\infty \frac{1}{2^{k}}\left(\int_0^\infty\left\|s \dot H_{s}(H_{s}-H_{3s}) f\right\|_{L^p(\R^n;\mathbb{B})}^q\frac{d s}{s}\right)^{1/q}\\
		&\lesssim \left(\int_0^\infty\left\|t \dot H_{t}(H_{t}-H_{3t}) f\right\|_{L^p(\R^n;\mathbb{B})}^q\frac{d t}{t}\right)^{1/q}\\
		&\stackrel{\mathclap{\mathrm{Lemma~}\ref{lem:n-heat-time-deriv}}}{\lesssim} \sqrt{n}\left(\int_0^\infty\left\|(H_{t}-H_{3t}) f\right\|_{L^p(\R^n;\mathbb{B})}^q\frac{d t}{t}\right)^{1/q}\\
		&\stackrel{\mathclap{\mathrm{Lemma~}\ref{lem:Rota}}}{\lesssim} \sqrt{n}\cdot \max\left\{\left(p-1\right)^{(1/q)-1},K_q\left(\mathbb{B}\right)\right\}\|f\|_{L^p(\R^n;\mathbb{B})}. \qedhere
	\end{align*}
\end{proof}

\section{Proof of Theorem \ref{thm:discrete}: discretization}\label{sec:proof main}

In this section, we prove Theorem \ref{thm:discrete} by discretizing the inequality of Theorem \ref{thm:nilpotentVvsH}.

\subsection{Reducing to the nilpotent case}
For the purposes of proving Theorem \ref{thm:discrete}, we may assume that the given group is a not virtually abelian finitely generated \emph{nilpotent} group. Let $\Gamma$ be a not virtually abelian finitely generated group of polynomial growth in the statement of Theorem \ref{thm:discrete}. By \cite{gromov1981groups}, $\Gamma$ has a nilpotent subgroup of finite index, say $\Gamma'\le \Gamma$ is a finite index nilpotent subgroup. This $\Gamma'$ is not virtually abelian \cite[Corollary 1.5]{de2007isometric}. We claim that Theorem \ref{thm:discrete} for $\Gamma'$ implies Theorem \ref{thm:discrete} for $\Gamma$.

Indeed, assume Theorem \ref{thm:discrete} holds for $\Gamma'$. This gives a certain $v_{\Gamma'}\in \Gamma'$ with the prescribed properties of Theorem \ref{thm:discrete}. Set $v_\Gamma=v_{\Gamma'}$. The distance requirement $d_W\left(v^n_\Gamma,e_\Gamma\right)\asymp_G n^{1/s}$ carries over from $\Gamma'$ to $\Gamma$ because a finitely generated word metric on a group and its finite index subgroup are quasi-isometric (via the inclusion map $\Gamma'\xhookrightarrow{} \Gamma$) by the Milnor--Svarc Lemma. Also, let $S'$ and $S$ be finite symmetric generating sets for $\Gamma'$ and $\Gamma$, respectively. (Observe that changing the generating set only affects the right-hand side of \eqref{eq:desired local pq} up to constant factors depending only on the groups, so the particular choice of $S$ or $S'$ does not affect the validity of Theorem \ref{thm:discrete}.) By possibly replacing $S$ with $S\cup S'$, we may assume $S\supset S'$. Since the inclusion map $\Gamma'\xhookrightarrow{}\Gamma$ is a quasi-isometry by the Milnor-Svarc Lemma, there is an integer $C\ge 1$ such that
\[
B_n^\Gamma \cap \Gamma'\subset B^{\Gamma'}_{Cn},~ B_n^{\Gamma'}\subset B^\Gamma_{Cn},\quad \forall n\ge 1.
\]
Finally, choose left coset representatives $b_1,\cdots,b_m\in \Gamma$, giving a left coset decomposition
\[
\Gamma=\bigsqcup_{i=1}^m b_i\Gamma'.
\]
By possibly enlarging $C$, we may assume $b_1,\cdots,b_m\in B_C^\Gamma$ and
\begin{equation}\label{eq:generator-qi}
B^\Gamma_n\subset \bigsqcup_{i=1}^m b_iB^{\Gamma'}_{Cn},\quad \forall n\ge 1.
\end{equation}

Now, for any function $f:\Gamma \to X$ into a $q$-uniformly convex Banach space, we have
\begin{align*}
&\left(\sum_{k=1}^{n^\rho}\frac{1}{k^{1+q/\rho}}\left(\sum_{x\in B^\Gamma_n} \left\|f\left(xv_\Gamma^k\right)-f\left(x\right)\right\|_X^p\right)^{q/p}\right)^{1/q}\\
&\stackrel{\mathclap{\eqref{eq:generator-qi}}}{\le}~\left(\sum_{k=1}^{n^\rho}\frac{1}{k^{1+q/\rho}}\left(\sum_{i=1}^m\sum_{x'\in B^{\Gamma'}_{Cn}} \left\|f\left(b_ix'v_\Gamma^k\right)-f\left(b_ix'\right)\right\|_X^p\right)^{q/p}\right)^{1/q}\\
&\le \sum_{i=1}^m\left(\sum_{k=1}^{n^\rho}\frac{1}{k^{1+q/\rho}}\left(\sum_{x'\in B^{\Gamma'}_{Cn}} \left\|f\left(b_ix'v_\Gamma^k\right)-f\left(b_ix'\right)\right\|_X^p\right)^{q/p}\right)^{1/q}\\
&\lesssim_{\Gamma,v_\Gamma} \max\left\{\left(p-1\right)^{(1/q)-1},K_q\left(X\right)\right\} \sum_{i=1}^m
\left(\sum_{x'\in B^{\Gamma'}_{cCn}} \sum_{a'\in S'}\left\|f\left(b_ix'a'\right)-f\left(b_ix'\right)\right\|^p_X\right)^{1/p}.\\
&\lesssim_{\Gamma,v_\Gamma} \max\left\{\left(p-1\right)^{(1/q)-1},K_q\left(X\right)\right\} m
\left(\sum_{x\in B^\Gamma_{(c+1)C^2n}} \sum_{a'\in S'}\left\|f\left(xa'\right)-f\left(x\right)\right\|^p_X\right)^{1/p}.\\
&\lesssim_{\Gamma,v_\Gamma} \max\left\{\left(p-1\right)^{(1/q)-1},K_q\left(X\right)\right\} 
\left(\sum_{x\in B^\Gamma_{(c+1)C^2n}} \sum_{a\in S}\left\|f\left(xa\right)-f\left(x\right)\right\|^p_X\right)^{1/p},
\end{align*}
where the second inequality uses the triangle inequality, the third inequality uses \eqref{eq:desired local pq} for $\Gamma'$ and the functions $\Gamma'\to X$ mapping $g'\mapsto f\left(b_ig'\right)$, the fourth inequality uses $b_i\in B_C^\Gamma$ and $B_n^{\Gamma'}\subset B_{Cn}^\Gamma$, and the final inequality uses $S'\subset \Gamma'$.

Hence, we are reduced to proving Theorem \ref{thm:discrete} for the not virtually abelian finitely generated nilpotent group $\Gamma'$.
\subsection{Reducing to the torsion-free nilpotent case}

Now we will see that, for the purposes of proving Theorem \ref{thm:discrete}, we may assume we are given a not virtually abelian finitely generated \emph{torsion-free} nilpotent group. Given a not virtually abelian finitely generated nilpotent group $\Gamma'$, let
\[
T\coloneqq \left\{x\in \Gamma':\exists n\in \mathbb{Z}_{>0} \mathrm{~such~that~}x^n=e_{\Gamma'}\right\}
\]
be the torsion subgroup of $\Gamma'$. It is well-known that $T$ is a finite normal subgroup of $\Gamma'$, and that $\Gamma''\coloneqq \Gamma'/T$ is a torsion-free finitely generated nilpotent group. This is not virtually abelian \cite[Corollary 1.5]{de2007isometric}. We now claim that Theorem \ref{thm:discrete} for $\Gamma''$ implies Theorem \ref{thm:discrete} for $\Gamma'$.

Indeed, assume Theorem \ref{thm:discrete} holds for $\Gamma''$. This gives a certain $v_{\Gamma''}\in \Gamma''$ with the prescribed properties of Theorem \ref{thm:discrete}: letting $S''\subset \Gamma''$ be a finite symmetric generating set for $\Gamma''$, the inequality \eqref{eq:desired local pq} is valid for the choice of $\Gamma''$,$S''$, and $v_{\Gamma''}$. Let $v_{\Gamma'}\in \pi^{-1}\left(v_{\Gamma''}\right)$. Let $S'=T\cup \pi^{-1}\left(S''\right)$, which is a symmetric generating set of $\Gamma'$. Choose a map $s:S''\to S'$ with $\pi\circ s=\operatorname{id}_{S''}$ and $s\left(g''^{-1}\right)=s\left(g''\right)^{-1}$. Again by the Milnor--Svarc Lemma, the natural quotient map $\pi:\Gamma'\to\Gamma''$ is a quasi-isometry. Thus the distance condition $d_W\left(v^n_{\Gamma''},e_{\Gamma''}\right)\asymp_{\Gamma''} n^{1/\rho}$ carries over from $\Gamma''$ to $\Gamma'$ as the distance condition $d_W\left(v^n_{\Gamma'},e_{\Gamma'}\right)\asymp_{\Gamma'} n^{1/\rho}$. Also, we may find $C>0$ be such that $\pi\left(B_k^{\Gamma'}\right)\subset B_{Ck}^{\Gamma''}$ and $\pi^{-1}\left(B_k^{\Gamma''}\right)\subset B^{\Gamma'}_{Ck-1}$ for $k\in \mathbb{Z}_{>0}$. This entails
\begin{equation}\label{eq:proj-inclu-1}
\pi\left(B_k^{\Gamma'}\left(x'\right)\right)\subset B_{Ck}^{\Gamma''}\left(\pi\left(x'\right)\right),\quad k\in \mathbb{Z}_{>0}, ~x'\in \Gamma',
\end{equation}
and
\begin{equation}\label{eq:proj-inclu-2}
\pi^{-1}\left(B_k^{\Gamma''}\left(y''\right)\right)\subset B^{\Gamma'}_{Ck}\left(x'\right),\quad k\in \mathbb{Z}_{>0},~y''\in \Gamma'',~x'\in \pi^{-1}\left(y''\right).
\end{equation}

Now, given $f:\Gamma'\to X$, define $\left[f\right]:\Gamma''\to X$ as
\[
\left[f\right]\left(y''\right)\coloneqq \frac{1}{\left|T\right|}\sum_{x'\in \pi^{-1}\left(y''\right)}f\left(x'\right),\quad y''\in \Gamma''.
\]
Then, by the triangle inequality,
\begin{align}\label{A-1}
\begin{aligned}
&\left(\sum_{k=1}^{n^\rho}\frac{1}{k^{1+q/\rho}}\left(\sum_{x'\in B^{\Gamma'}_n} \left\|f\left(x'v_{\Gamma'}^k\right)-f\left(x'\right)\right\|_X^p\right)^{q/p}\right)^{1/q}\\
&\le \left(\sum_{k=1}^{n^\rho}\frac{1}{k^{1+q/\rho}}\left(\sum_{x'\in B^{\Gamma'}_n} \left\|\left(f\left(x'v_{\Gamma'}^k\right)-\left[f\right]\left(\pi\left(x'v_{\Gamma'}^k\right)\right)\right)-\left(f\left(x'\right)-\left[f\right]\left(\pi\left(x'\right)\right)\right)\right\|_X^p\right)^{q/p}\right)^{1/q}\\
&\quad + \left(\sum_{k=1}^{n^\rho}\frac{1}{k^{1+q/\rho}}\left(\sum_{x'\in B^{\Gamma'}_n} \left\|\left[f\right]\left(\pi\left(x'v_{\Gamma'}^k\right)\right)-\left[f\right]\left(\pi\left(x'\right)\right)\right\|_X^p\right)^{q/p}\right)^{1/q}.
\end{aligned}
\end{align}
The second term of the right-hand side is at most, by assumption that Theorem \ref{thm:discrete} holds for $\Gamma''$,
\begin{align*}
&\left(\sum_{k=1}^{n^\rho}\frac{1}{k^{1+q/\rho}}\left(\sum_{x'\in B^{\Gamma'}_n} \left\|\left[f\right]\left(\pi\left(x'v_{\Gamma'}^k\right)\right)-\left[f\right]\left(\pi\left(x'\right)\right)\right\|_X^p\right)^{q/p}\right)^{1/q}\\
&\stackrel{\mathclap{\eqref{eq:proj-inclu-1}}}{\le} \left(\sum_{k=1}^{n^\rho}\frac{1}{k^{1+q/\rho}}\left(\left|T\right|\sum_{y''\in B^{\Gamma''}_{Cn}} \left\|\left[f\right]\left(y''v_{\Gamma''}^k\right)-\left[f\right]\left(y''\right)\right\|_X^p\right)^{q/p}\right)^{1/q}\\
&\lesssim_{\Gamma} \max\left\{\left(p-1\right)^{(1/q)-1},K_q\left(X\right)\right\} 
\left(|T|\sum_{y''\in B^{\Gamma''}_{cCn}} \sum_{a''\in S''}\left\|\left[f\right]\left(y''a''\right)-\left[f\right]\left(y''\right)\right\|^p_X\right)^{1/p}\\
&=\max\left\{\left(p-1\right)^{(1/q)-1},K_q\left(X\right)\right\} 
\left(|T|\sum_{y''\in B^{\Gamma''}_{cCn}} \sum_{a''\in S''}\left\|\frac{1}{\left|T\right|}\sum_{x'\in \pi^{-1}\left(y''\right)}\left(f\left(x's\left(a''\right)\right)-f\left(x'\right)\right)\right\|^p_X\right)^{1/p}\\
&\le\max\left\{\left(p-1\right)^{(1/q)-1},K_q\left(X\right)\right\} 
\left(\sum_{y''\in B^{\Gamma''}_{cCn}} \sum_{a''\in S''}\sum_{x'\in \pi^{-1}\left(y''\right)}\left\|f\left(x's\left(a''\right)\right)-f\left(x'\right)\right\|^p_X\right)^{1/p}\\
&\stackrel{\mathclap{\eqref{eq:proj-inclu-2}}}{\le} \max\left\{\left(p-1\right)^{(1/q)-1},K_q\left(X\right)\right\} 
\left(\sum_{x'\in B^{\Gamma'}_{cC^2n}} \sum_{a'\in S'}\left\|f\left(x'a'\right)-f\left(x'\right)\right\|^p_X\right)^{1/p},
\end{align*}
where we used Theorem \ref{thm:discrete} for $\Gamma''$ in the second inequality, and Jensen's inequality in the third inequality. Thus, the second term of \eqref{A-1} is bounded by the right-hand side of \eqref{eq:desired local pq}.

It remains to show that the first term of the right-hand side of \eqref{A-1} is bounded by the right-hand side of \eqref{eq:desired local pq}. Indeed, it is bounded by
\begin{align*}
&\left(\sum_{k=1}^{n^\rho}\frac{1}{k^{1+q/\rho}}\left(\sum_{x'\in B^{\Gamma'}_n} \left\|\left(f\left(x'v_{\Gamma'}^k\right)-\left[f\right]\left(\pi\left(x'v_{\Gamma'}^k\right)\right)\right)-\left(f\left(x'\right)-\left[f\right]\left(\pi\left(x'\right)\right)\right)\right\|_X^p\right)^{q/p}\right)^{1/q}\\
&=\left(\sum_{k=1}^{n^\rho}\frac{1}{k^{1+q/\rho}}\left(\sum_{x'\in B^{\Gamma'}_n} \left\|\frac{1}{\left|T\right|}\sum_{b'\in T}\left(f\left(x'v_{\Gamma'}^k\right)-f\left(x'v_{\Gamma'}^kb'\right)-f\left(x'\right)+f\left(x'b'\right)\right)\right\|_X^p\right)^{q/p}\right)^{1/q}\\
&\le \left(\sum_{k=1}^{n^\rho}\frac{1}{k^{1+q/\rho}}\left(\frac{2^{p-1}}{\left|T\right|}\sum_{x'\in B^{\Gamma'}_n}\sum_{b'\in T} \left(\left\|f\left(x'v_{\Gamma'}^k\right)-f\left(xv_{\Gamma'}^kb'\right)\right\|_X^p+\left\|f\left(x'\right)-f\left(x'b'\right)\right\|_X^p\right)\right)^{q/p}\right)^{1/q}\\
&\le \left(\sum_{k=1}^{n^\rho}\frac{1}{k^{1+q/\rho}}\left(\frac{2^{p}}{\left|T\right|}\sum_{x'\in B^{\Gamma'}_{c'n}}\sum_{b'\in T} \left\|f\left(x'\right)-f\left(x'b'\right)\right\|_X^p\right)^{q/p}\right)^{1/q}\\
&\lesssim \rho^{1/q} \left(\sum_{x\in B^{\Gamma'}_{c'n}}\sum_{b\in T} \left\|f\left(x\right)-f\left(xb\right)\right\|_X^p\right)^{1/p}\lesssim_{\Gamma'}\left(\sum_{x\in B^{\Gamma'}_{c'n}}\sum_{b\in T} \left\|f\left(x\right)-f\left(xb\right)\right\|_X^p\right)^{1/p}.
\end{align*}
where the first inequality uses Jensen's inequality, and in the second inequality the constant $c'>0$ is a natural number depending on $\Gamma'$ such that $d_W\left(v^n_{\Gamma'},e_{\Gamma'}\right)\le \left(c'-1\right) n^{1/\rho}$ for $n\in \mathbb{Z}_{>0}$, so that $B^{\Gamma'}_nv_{\Gamma'}^k\subset B^{\Gamma'}_{c'n}$ for $n\in \mathbb{Z}_{>0}$ and $k=1,\cdots,n^\rho$.

Hence it is enough to prove Theorem \ref{thm:discrete} for a not virtually abelian finitely generated torsion-free nilpotent group $\Gamma''$.

\subsection{Reducing to the nilpotent Lie group case of Theorem \ref{thm:nilpotentVvsH}}
From now on assume $\Gamma$ is a not virtually abelian finitely generated torsion-free nilpotent group. By the Malcev embedding theorem \cite{malcev1949class}, there exists a simply connected nilpotent Lie group $G$, called the Malcev completion of $\Gamma$, such that $\Gamma$ embeds as a cocompact subgroup of $G$. Because $\Gamma$ is nonabelian, $G$ is nonabelian, so Theorem \ref{thm:nilpotentVvsH} applies to $G$.

There exist choices of $\rho$ and $v_\Gamma$ as stated. Indeed,\footnote{I thank the anonymous referee for helping to simplify this argument.} let $\rho=s\ge 2$ be the nilpotency step of $G$, i.e., the largest integer so that $\underbrace{\left[G,\left[G,\cdots,G\right]\right]}_{s \mathrm{~times}}$ is a nontrivial subgroup, and choose $v_\Gamma\in \underbrace{\left[\Gamma,\left[\Gamma,\cdots,\Gamma\right]\right]}_{s \mathrm{~times}}\setminus \left\{e_\Gamma\right\}\subset Z\left(\Gamma'\right)\setminus\left\{e_\Gamma\right\}$. Then $d_G\left(v^k,e_G\right)\asymp_G k^{1/s}$ by \cite[Proposition 2.13]{breuillard2012nilpotent}, and so by the Milnor--Svarc Lemma
\[
d_W\left(v_\Gamma^k,e_\Gamma\right)\asymp_{\Gamma,v_\Gamma} k^{1/s},\quad k\in \mathbb{N}.
\]

Again, $\left|B^\Gamma_m\right|\asymp_\Gamma m^{n_h}$ for every $m\in \mathbb{N}$ \cite[Theorem 1.1]{breuillard2012nilpotent}, where $n_h$ is given by the Bass--Guivarc'h formula
\[
n_h\coloneqq \sum_{k=1}^s k\operatorname{dim}\left(\underbrace{\left[G,\left[G,\cdots,G\right]\right]}_{k \mathrm{~times}}/\underbrace{\left[G,\left[G,\cdots,G\right]\right]}_{k+1 \mathrm{~times}}\right).
\]

The rest of this section is inspired by the argument of Section 3 of \cite{lafforgue2014vertical}.

Before we begin the proof of Theorem \ref{thm:discrete}, we prove two metric-space valued local Poincar\'e inequalities on $\Gamma$ for preparation. Lemma \ref{lem:metric bruce} is a variant of the local Poincar\'e inequality of Kleiner~\cite[Theorem~2.2]{kleiner2010new}, and the special case of Lemma \ref{lem:metric bruce} for the Heisenberg group $\mathbb{H}^3_\mathbb{Z}$ was proven in \cite[Lemma 3.2]{lafforgue2014vertical}. We formulate and extend the inequality to all finitely generated groups.

\begin{lemma}\label{lem:metric bruce} Let $\Gamma$ be a group with finite symmetric generating set $S$, the associated word distance $d_W$, and corresponding balls $B_n^\Gamma$ about the origin for $n\in \mathbb{Z}_{\ge 0}$. Let $p\in \left[1,\infty\right)$, $n\in \mathbb{Z}_{\ge 0}$, and let $\left(\mathcal{M},d_\mathcal{M}\right)$ be a metric space. Then, for every $f:\Gamma\to \mathcal{M}$,
\[
\sum_{x,y\in B^\Gamma_n} d_\mathcal{M}\left(f\left(x\right),f\left(y\right)\right)^p \le \left(2n\right)^{p}\left|B^\Gamma_{2n}\right| \sum_{x\in B^\Gamma_{3n}} \sum_{a\in S}d_\mathcal{M}\left(f\left(xa\right),f\left(x\right)\right)^p.
\]
\end{lemma}
\begin{proof}
For every $z\in B^\Gamma_{2n}$ choose $s_1\left(z\right),\ldots,s_{2n}\left(z\right)\in S\cup \left\{e_\Gamma\right\}$ such that $z=s_1\left(z\right)\cdots s_{2n}\left(z\right)$. For  $i\in \left\{1,\ldots,2n\right\}$ write  $w_i\left(z\right)=s_1\left(z\right)\cdots s_i\left(z\right)$ and set $w_0\left(z\right)=e_\Gamma$. By the triangle inequality and H\"older's inequality, for every $x,y\in B^\Gamma_n$ we have
\begin{equation*}
d_\mathcal{M}\left(f\left(x\right),f\left(y\right)\right)^p\le \left(2n\right)^{p-1}\sum_{i=0}^{2n-1} d_\mathcal{M}\left(f\left(xw_{i}\left(x^{-1}y\right)\right),f\left(xw_{i}\left(x^{-1}y\right)s_{i+1}\left(x^{-1}y\right)\right)\right)^p.
\end{equation*}
Consequently,
\begin{align*}
\sum_{x,y\in B^\Gamma_n} d_\mathcal{M}\left(f\left(x\right),f\left(y\right)\right)^p&\le \sum_{x\in B^\Gamma_n}\sum_{z\in B^\Gamma_{2n}}d_\mathcal{M}\left(f\left(x\right),f\left(xz\right)\right)^p\\
&\le \left(2n\right)^{p-1}\sum_{z\in B^\Gamma_{2n}}\sum_{i=1}^{2n-1}\sum_{x\in B^\Gamma_n}d_\mathcal{M}\left(f\left(xw_{i}\left(z\right)\right),f\left(xw_{i}\left(z\right)s_{i+1}\left(z\right)\right)\right)^p\\
&= \left(2n\right)^{p-1} \sum_{z\in B^\Gamma_{2n}}\sum_{i=0}^{2n-1}\sum_{g\in B^\Gamma_nw_i\left(z\right)} d_\mathcal{M}\left(f\left(g\right),f\left(gs_{i+1}\left(z\right)\right)\right)^p\\
&\le \left(2n\right)^{p-1} \cdot \left|B^\Gamma_{2n}\right|\cdot 2n\sum_{g\in B^\Gamma_{3n}} \sum_{a\in S}d_\mathcal{M}\left(f\left(ga\right),f\left(g\right)\right)^p.
\end{align*}
\end{proof}

The following lemma is a variant Poincar\'e inequality, now summed globally. This inequality was proven for the Heisenberg group $\mathbb{H}^3_\mathbb{Z}$ in \cite[Lemma 3.4]{lafforgue2014vertical}; we formulate and extend the inequality for all finitely generated groups. Here, a mapping $f:\Gamma\to \mathcal{M}$ is said to be finitely supported if there exists $m_0\in \mathcal{M}$ such that $\left|f^{-1}\left(\mathcal{M}\setminus\left\{m_0\right\}\right)\right|<\infty$.

\begin{lemma}\label{lem:global poincare-simple} Let $\Gamma$ be a group with finite symmetric generating set $S$, the associated word distance $d_W$, and corresponding balls $B_n^\Gamma$ about the origin for $n\in \mathbb{Z}_{\ge 0}$. Let $p\in \left[1,\infty\right)$, $n\in \mathbb{Z}_{\ge 0}$, and let $\left(\mathcal{M},d_\mathcal{M}\right)$ be a metric space. Then, for every finitely supported $f:\Gamma\to \mathcal{M}$,
\[
\sum_{x\in \Gamma}\sum_{z\in B^\Gamma_n} d_M\left(f\left(xz\right),f\left(x\right)\right)^p\le n^{p}\left|B^\Gamma_n\right| \sum_{x\in \Gamma} \sum_{a\in S}d_M\left(f\left(xa\right),f\left(x\right)\right)^p.
\]
\end{lemma}

\begin{proof}
Let $s_i$ and $w_i$ be as in the proof of Lemma \ref{lem:metric bruce}. By the triangle inequality and H\"older's inequality,
\begin{align*}
\sum_{x\in \Gamma}\sum_{z\in B^\Gamma_n}d_M\left(f\left(xz\right),f\left(x\right)\right)^p&\le n^{p-1}\sum_{x\in \Gamma}\sum_{z\in B^\Gamma_n}\sum_{i=0}^{n-1} d_M\left(f\left(xw_{i}\left(z\right)\right),f\left(xw_{i}\left(z\right)s_{i+1}\left(z\right)\right)\right)^p\\
&\le n^{p-1}\cdot \left|B^\Gamma_n\right|\cdot n\sum_{x\in \Gamma} \sum_{a\in S}d_M\left(f\left(xa\right),f\left(x\right)\right)^p.
\end{align*}
\end{proof}

With Lemmas \ref{lem:metric bruce} and \ref{lem:global poincare-simple}, we begin by discretizing the inequality of Theorem \ref{thm:nilpotentVvsH}, as in the following theorem. After this, we will localize the statement into that of Theorem \ref{thm:discrete}, which states a local version of Lemma \ref{lem:pq-global} on balls in $\Gamma$.

The following lemma is a generalization of Theorem 3.3 of \cite{lafforgue2014vertical}, whose proof is in turn a variant of the proof of Claim 7.3 in \cite{austin2013sharp}. We may state it in terms of discrete groups of polynomial growth.
\begin{lemma}\label{lem:pq-global}
Let $\Gamma$ be a not virtually abelian finitely generated torsion-free nilpotent group of polynomial growth. There exist $v_\Gamma\in \Gamma$ and $s\in \mathbb{N}$ with $s\ge 2$ such that the following is true. First, $d_W\left(v_\Gamma^n,e_\Gamma\right)\asymp_\Gamma n^{1/s}$ for $n\in \mathbb{N}$. Second, let $p\in \left(1,\infty\right)$ and $q\in \left[2,\infty\right)$ with $p\le q$. Suppose that $\left(X,\left\|\cdot\right\|_X\right)$ is a Banach space satisfying $K_q\left(X\right)<\infty$.  Then for every finitely supported $f:\Gamma\to X$ we have
\begin{align*}
\begin{aligned}
&\left(\sum_{k=1}^{\infty}\frac{1}{k^{1+q/s}}\left(\sum_{x\in \Gamma} \left\|f\left(xv_\Gamma^k\right)-f\left(x\right)\right\|_X^p\right)^{q/p}\right)^{1/q}\\
&\lesssim_{\Gamma,v_\Gamma} \max\left\{\left(p-1\right)^{(1/q)-1},K_q\left(X\right)\right\} 
\left(\sum_{x\in \Gamma} \sum_{a\in S}\left\|f\left(xa\right)-f\left(x\right)\right\|^p_X\right)^{1/p}.
\end{aligned}
\end{align*}
\end{lemma}
\begin{proof}
Take $\rho=s$ to be the nilpotency step of $\Gamma$ and take $v_\Gamma \in \underbrace{\left[\Gamma,\left[\Gamma,\cdots,\Gamma\right]\right]}_{s\mathrm{~times}}\setminus\left\{e_{\Gamma}\right\}$. 

The idea of the proof is that given a finitely supported function $f:\Gamma\to X$, we extend it to a global function $F:G\to X$ via a partition of unity, and then Theorem \ref{thm:nilpotentVvsH} for $F$ will give Lemma \ref{lem:pq-global} for $f$.

Let $G$ be the Malcev completion of $\Gamma$. Since $\Gamma$ is a co-compact lattice of $G$, there exists a compactly supported smooth function $\chi:G\to \left[0,1\right]$ with
\begin{equation}\label{eq:partition of unity}
\forall\, h\in G,\qquad \sum_{x\in \Gamma} \chi_x\left(h\right)=1,
\end{equation}
where for each $x\in \Gamma$, $\chi_x:G\to X$ is given by $\chi_x\left(h\right)=\chi\left(x^{-1}h\right)$, $h\in G$. Let $A=\operatorname{supp}\chi$; we may assume $A^{-1}=A$. Note that $\operatorname{supp}\chi_x=xA$ and
\[
\bigcup_{x\in \Gamma}xA=G.
\]
As $A$ is compact, we may fix $m\in \mathbb{N}$ for which $A^2\cap \Gamma\subseteq B^\Gamma_m$.

Let $f:\Gamma\to X$ be finitely supported. Define $F:G\to X$ by
\begin{equation}\label{eq:defF}
F\left(h\right)\coloneqq \sum_{x\in \Gamma} \chi_x\left(h\right)f\left(x\right).
\end{equation}
This is smooth and compactly supported, so Theorem \ref{thm:nilpotentVvsH} applies to $F$.
For a fixed $x\in \Gamma$ and $h\in xA$, we have
\[
\sum_{y\in \Gamma} \nabla\chi_y\left(h\right)=0,
\]
with $\nabla\chi_y\left(h\right)\neq 0$ implying $y^{-1}h\in \operatorname{supp}\nabla\chi\subset A$, which implies $y\in hA^{-1}\subset xA^2$, hence $y\in xB^\Gamma_{m}$. We thus have the bound, for $h\in xA$,
\begin{align*}
\left\|\nabla F\left(h\right)\right\|_{\ell_2^k\left(X\right)}&=\left\|\sum_{y\in xB^\Gamma_{m}}\nabla\chi_y\left(h\right)\left(f\left(y\right)-f\left(x\right)\right)\right\|_{\ell_2^k\left(X\right)}
\\&\le \left\|\nabla\chi\right\|_{L^\infty\left(G;\ell_2^k\right)}\sum_{z\in B^\Gamma_{m}}\left\|f\left(xz\right)-f\left(x\right)\right\|_X\\
&\lesssim_\Gamma \left|B^\Gamma_{m}\right|^{1-1/p}\left(\sum_{z\in B^\Gamma_{m}}\left\|f\left(xz\right)-f\left(x\right)\right\|_X^p\right)^{1/p}.
\end{align*}
By integrating over $xA$ and summing over $x\in \Gamma$,
\begin{align}\label{eq:nablaFbound}
\begin{aligned}
\left(\int_{G} \left\|\nabla F\left(h\right)\right\|_{\ell_2^k\left(X\right)}^pd\mu\left(h\right)\right)^{1/p}&\le \left(\sum_{x\in \Gamma}\int_{xA} \left\|\nabla F\left(h\right)\right\|_{\ell_2^k\left(X\right)}^pd\mu\left(h\right)\right)^{1/p}\lesssim_\Gamma \left(\sum_{x\in \Gamma} \sum_{z\in B^\Gamma_{m}}\left\|f\left(xz\right)-f\left(x\right)\right\|_X^p\right)^{1/p}\\
&\stackrel{\mathclap{\mathrm{Lemma~} \ref{lem:global poincare-simple}}}{\lesssim_\Gamma}\quad \left(\sum_{x\in \Gamma} \sum_{a\in S}\left\|f\left(xa\right)-f\left(x\right)\right\|_X^p\right)^{1/p}.
\end{aligned}
\end{align}

Since $\Gamma$ induces a covering space action on $G$, we may find a bounded open neighborhood $U\subseteq G$ of $e_G$ such that $U\cap \left(xU\right)=\emptyset$ for all $x\in \Gamma\setminus \left\{e_\Gamma\right\}$.  As $v_\Gamma^{\left[0,1\right]}\coloneqq\left\{v_\Gamma^\tau:\tau\in \left[0,1\right]\right\}$ and $U$ are bounded in $G$, we can choose $r\in \mathbb{N}$ such that $\left(Uv_\Gamma^{\left[0,1\right]}A\right)\cap \Gamma\subseteq B^\Gamma_r$. For $x\in \Gamma$, $h\in xU$, $k\in \mathbb{N}$, and $t\in \left[k,k+1\right]$, we have by \eqref{eq:partition of unity} and \eqref{eq:defF} 
\[
F\left(hv_\Gamma^t\right)-f\left(xv_\Gamma^k\right)=\sum_{w\in \Gamma} \chi_w\left(hv_\Gamma^t\right)\left(f\left(w\right)-f\left(xv_\Gamma^k\right)\right).
\]
Every $w\in \Gamma$ that satisfies $\chi_w\left(hv_\Gamma^t\right)\neq 0$ should have $hv_\Gamma^t\in wA$, and since $A=A^{-1}$ and $v_\Gamma\in Z\left(G\right)$, we have $w\in hv_\Gamma^tA\subseteq xv_\Gamma^k Uv_\Gamma^{\left[0,1\right]}A$, so that $w\in xv_\Gamma^kB^\Gamma_r$. Therefore
\[
\left\|F\left(hv_\Gamma^t\right)-f\left(xv_\Gamma^k\right)\right\|_X\lesssim_\Gamma \left(\sum_{z\in B^\Gamma_r} \left\|f\left(xv_\Gamma^kz\right)-f\left(xv_\Gamma^k\right)\right\|_X^p\right)^{1/p}.
\]
The case $k=t=0$ gives
\[
\left\|F\left(h\right)-f\left(x\right)\right\|_X\lesssim_\Gamma \left(\sum_{z\in B^\Gamma_r} \left\|f\left(xz\right)-f\left(x\right)\right\|_X^p\right)^{1/p}.
\]
Therefore by the triangle inequality,
\[
\left\|f\left(xv_\Gamma^k\right)-f\left(x\right)\right\|_X \lesssim_\Gamma \left\|F\left(hv_\Gamma^t\right)-F\left(h\right)\right\|_X+\left(\sum_{z\in B^\Gamma_r}\left(\left\|f\left(xv_\Gamma^kz\right)-f\left(xv_\Gamma^k\right)\right\|_X^p+\left\|f\left(xz\right)-f\left(x\right)\right\|_X^p\right)\right)^{1/p}.
\]
Integration over $h\in xU$ gives
\begin{align*}
\left\|f\left(xv_\Gamma^k\right)-f\left(x\right)\right\|_X\lesssim_\Gamma& \left(\int_{xU}\left\|F\left(hv_\Gamma^t\right)-F\left(h\right)\right\|_X^pd\mu\left(h\right)\right)^{1/p}\\
&+\left(\sum_{z\in B^\Gamma_r}\left(\left\|f\left(xv_\Gamma^kz\right)-f\left(xv_\Gamma^k\right)\right\|_X^p+\left\|f\left(xz\right)-f\left(x\right)\right\|_X^p\right)\right)^{1/p}.
\end{align*}
Summing over $x\in \Gamma$ and using that the sets $\left\{xU\right\}_{x\in \Gamma}$ are pairwise disjoint,
\begin{align*}
\left(\sum_{x\in \Gamma}\left\|f\left(xv_\Gamma^k\right)-f\left(x\right)\right\|_X^p\right)^{1/p}&\lesssim_\Gamma \left(\int_{G}\left\|F\left(hv_\Gamma^t\right)-F\left(h\right)\right\|_X^pd\mu\left(h\right)\right)^{1/p}\\
&\quad +\left(\sum_{x\in\Gamma}\sum_{z\in B^\Gamma_r}\left(\left\|f\left(xv_\Gamma^kz\right)-f\left(xv_\Gamma^k\right)\right\|_X^p+\left\|f\left(xz\right)-f\left(x\right)\right\|_X^p\right)\right)^{1/p}\\
&\stackrel{\mathclap{\mathrm{Lemma~}\ref{lem:global poincare-simple}}}{\lesssim_\Gamma} \quad\left(\int_{G}\left\|F\left(hv_\Gamma^t\right)-F\left(h\right)\right\|_X^pd\mu\left(h\right)\right)^{1/p}+\left(\sum_{x\in \Gamma} \sum_{a\in S}\left\|f\left(xa\right)-f\left(x\right)\right\|_X^p\right)^{1/p}.
\end{align*}
Integrating over $t\in \left[k,k+1\right]$ yields
\begin{align*}
&\nonumber\frac{1}{k^{1+q/s}}\left(\sum_{x\in \Gamma}\left\|f\left(xv_\Gamma^k\right)-f\left(x\right)\right\|_X^p\right)^{q/p}\\&\le  C^{q}\int_k^{k+1}\left(\int_{G}\left\|F\left(hv_\Gamma^t\right)-F\left(h\right)\right\|_X^pd\mu\left(h\right)\right)^{q/p}\frac{dt}{t^{1+q/s}} +\frac{C^{q}}{k^{1+q/s}}\left(\sum_{x\in \Gamma} \sum_{a\in S}\left\|f\left(xa\right)-f\left(x\right)\right\|_X^p\right)^{q/p},
\end{align*}
where $C=C_\Gamma\in \left(0,\infty\right)$ is a constant depending on $\Gamma$. Summing over $k\in \mathbb{N}$,
\begin{align}\label{eq:vertical control}
&\nonumber\left(\sum_{k=1}^\infty\frac{1}{k^{1+q/s}}\left(\sum_{x\in \Gamma}\left\|f\left(xv_\Gamma^k\right)-f\left(x\right)\right\|_X^p\right)^{q/p}\right)^{1/q}\\&\lesssim_\Gamma \left(\int_0^{\infty}\left(\int_{G}\left\|F\left(hv_\Gamma^t\right)-F\left(h\right)\right\|_X^pd\mu\left(h\right)\right)^{q/p}\frac{dt}{t^{1+q/s}}\right)^{1/q}+
\left(\sum_{x\in \Gamma} \sum_{a\in S}\left\|f\left(xa\right)-f\left(x\right)\right\|_X^p\right)^{1/p}.
\end{align}
The desired inequality follows from \eqref{eq:nablaFbound} and \eqref{eq:vertical control} along with Theorem \ref{thm:nilpotentVvsH} for the smooth and compactly supported function $F$ (although it may be the case that $d_G\left(v_\Gamma^t,e_G\right)\nleq t^{1/s}$, a simple rescaling argument gives the same inequality up to constant factors depending on $v_\Gamma$).
\end{proof}

Now we are ready to prove Theorem \ref{thm:discrete}. Of course, assume $\Gamma$ is torsion-free nonabelian finitely generated nilpotent, and choose $v_\Gamma$ and $s$ as in the proof of Lemma \ref{lem:pq-global}.

\begin{proof}[Proof of Theorem~\ref{thm:discrete}]
The argument is inspired by the proof of Theorem 3.1 of \cite{lafforgue2014vertical} and Claim 7.2 in \cite{austin2013sharp}. Choose $c\in \mathbb{Z}_{>0}$ such that
\[
d_W\left(e_\Gamma,v_\Gamma^k\right)\le c k^{1/s}.
\]

 Fix $n\in \mathbb{N}$ and an $f:\Gamma\to X$. By possibly replacing $f$ by $f-\left|B^\Gamma_{\left(c+3\right)n}\right|^{-1}\sum_{x\in B^\Gamma_{\left(c+3\right)n}}f\left(x\right)$, we may assume $\sum_{x\in B^\Gamma_{\left(c+3\right)n}}f\left(x\right)=0$. Then
\begin{align}\label{eq:use bruce}
\begin{aligned}
\left(\sum_{x\in B^\Gamma_{\left(c+3\right)n}}\left\|f\left(x\right)\right\|_X^p\right)^{1/p}&= \left(\sum_{x\in B^\Gamma_{\left(c+3\right)n}}\left\|\frac{1}{\left|B^\Gamma_{\left(c+3\right)n}\right|}\sum_{y\in B^\Gamma_{\left(c+3\right)n}}\left(f\left(x\right)-f\left(y\right)\right)\right\|_X^p\right)^{1/p}\\
&\le  \left(\frac{1}{\left|B^\Gamma_{\left(c+3\right)n}\right|}\sum_{x,y\in B^\Gamma_{\left(c+3\right)n}}\left\|f\left(x\right)-f\left(y\right)\right\|_X^p\right)^{1/p}\\
&\stackrel{\mathclap{\mathrm{Lemma~}\ref{lem:metric bruce}}}{\lesssim_\Gamma} \quad n\left(\sum_{x\in B^\Gamma_{3\left(c+3\right)n}}\sum_{a\in S}\left\|f\left(xa\right)-f\left(x\right)\right\|_X^p\right)^{1/p}.
\end{aligned}
\end{align}
Define the cutoff function $\xi:\Gamma\to \left[0,1\right]$ as
\[
\xi\left(x\right)\coloneqq \left\{\begin{array}{ll}1& x\in B^\Gamma_{\left(c+1\right)n},\\
c+2-\frac{d_W\left(x,e\right)}{n}& x\in B^\Gamma_{\left(c+2\right)n}\setminus B^\Gamma_{\left(c+1\right)n},\\
0& x\in \Gamma\setminus B^\Gamma_{\left(c+2\right)n}.
\end{array}\right.
\]
Then $\phi \coloneqq \xi f$ is finitely supported on $B^\Gamma_{\left(c+2\right)n}$ and we may apply Lemma \ref{lem:pq-global}. As $\xi$ is $\frac{1}{n}$-Lipschitz and takes values in $\left[0,1\right]$, we have for all $a\in S$ and $x\in \Gamma$ 
\begin{align}\label{eq:tildef}
\begin{aligned}
\left\|\phi\left(x\right)-\phi\left(xa\right)\right\|_X&\le \left|\xi\left(x\right)-\xi\left(xa\right)\right|\cdot\left\|f\left(x\right)\right\|_X+\left|\xi\left(xa\right)\right|\cdot \left\|f\left(x\right)-f\left(xa\right)\right\|_X\\&
\le
\frac{1}{n}\left\|f\left(x\right)\right\|_X+\left\|f\left(x\right)-f\left(xa\right)\right\|_X.
\end{aligned}
\end{align}

We have $d_W\left(e_\Gamma,v_\Gamma^k\right)\le cn$ for all $k\in \left\{1,\ldots, n^s\right\}$. Thus, for every $x\in B^\Gamma_n$ and $k\in \left\{1,\ldots, n^s\right\}$ we have $xv_\Gamma^k\in B^\Gamma_{\left(c+1\right)n}$ and thus $\phi\left(x\right)=f\left(x\right)$ and $\phi\left(xv_\Gamma^k\right)=f\left(xv_\Gamma^k\right)$, so
\begin{equation}\label{eq:pass to phi}
\left(\sum_{k=1}^{n^s}\frac{1}{k^{1+q/s}}\left(\sum_{x\in B^\Gamma_n}\left\|f\left(xv_\Gamma^k\right)-f\left(x\right)\right\|_X^p\right)^{q/p}\right)^{1/q}\le  \left(\sum_{k=1}^{\infty}\frac{1}{k^{1+q/s}}\left(\sum_{x\in \Gamma} \left\|\phi\left(xv_\Gamma^k\right)-\phi\left(x\right)\right\|_X^p\right)^{q/p}\right)^{1/q}.
\end{equation}
On the other hand,
\begin{align*}
\left(\sum_{x\in \Gamma} \sum_{a\in S}\left\|\phi\left(xa\right)-\phi\left(x\right)\right\|^p_X\right)^{1/p}&= \left(\sum_{x\in B^\Gamma_{\left(c+3\right)n}} \sum_{a\in S}\left\|\phi\left(xa\right)-\phi\left(x\right)\right\|^p_X\right)^{1/p}\\
&\stackrel{\mathclap{\eqref{eq:tildef}}}{\lesssim} \frac{\left|S\right|^{1/p}}{n}\left(\sum_{x\in B^\Gamma_{\left(c+3\right)n}} \left\|f\left(x\right)\right\|_X^p\right)^{1/p}+\left(\sum_{x\in B^\Gamma_{\left(c+3\right)n}} \sum_{a\in S}\left\|f\left(xa\right)-f\left(x\right)\right\|^p_X\right)^{1/p}\\
&\stackrel{\mathclap{\eqref{eq:use bruce}}}{\lesssim}_\Gamma \label{eq:use 21}\left(\sum_{x\in B^\Gamma_{3\left(c+3\right)n}} \sum_{a\in S}\left\|f\left(xa\right)-f\left(x\right)\right\|^p_X\right)^{1/p},
\end{align*}
where the first equality holds since $\phi$ is supported on $B^\Gamma_{\left(c+2\right)n}$. The desired inequality now follows from the above two inequalities combined with Lemma \ref{lem:pq-global} for $\phi$.
\end{proof}

\section{Proof of the nonembeddability statements Theorems \ref{mainthm:nilpotentdistortion}, \ref{mainthm:polygrowthdistortion}, \ref{thm:discrete-snowflake}, \ref{thm:precise-dist-nilp}, \ref{thm:precise-holder-nilp}, \ref{thm:net}, \ref{thm:net-snowflake} 
 and Corollaries \ref{cor:nilpotentlp}, \ref{cor:polygrowthlp}, \ref{cor:growth-function-char}}\label{sec:net}
 
In this section we prove Theorems \ref{mainthm:nilpotentdistortion}, \ref{mainthm:polygrowthdistortion}, \ref{thm:discrete-snowflake}, \ref{thm:precise-dist-nilp}, \ref{thm:precise-holder-nilp}, \ref{thm:net}, and \ref{thm:net-snowflake}, and Corollaries \ref{cor:nilpotentlp}, \ref{cor:polygrowthlp}, and \ref{cor:growth-function-char}. We will first derive a nonembeddability statement, Theorem \ref{thm:gendistortion}, in the general setting of Theorem \ref{thm:cvx}, and then derive Theorem \ref{mainthm:nilpotentdistortion} from Theorem \ref{thm:gendistortion}. 
Theorems \ref{mainthm:polygrowthdistortion}, \ref{thm:discrete-snowflake} and Corollary \ref{cor:growth-function-char} will follow from Theorem \ref{thm:discrete}, and Corollaries \ref{cor:nilpotentlp} and \ref{cor:polygrowthlp} will follow from Theorems \ref{mainthm:nilpotentdistortion} and \ref{mainthm:polygrowthdistortion}, respectively. 
We will derive Theorems \ref{thm:gen-net} and \ref{thm:gen-net-snowflake} in the generality of \ref{thm:cvx}, and derive Theorems \ref{thm:precise-dist-nilp}, \ref{thm:precise-holder-nilp}, \ref{thm:net} and \ref{thm:net-snowflake} from them.

We begin, as promised, by deriving a nonembeddability statement in the generality of Theorem \ref{thm:cvx}.

\begin{theorem}\label{thm:gendistortion}
Let the Riemannian Lie group $G$, $v\in Z\left(\mathfrak{g}\right)$, and Banach space $X$ satisfy the hypotheses of Theorem \ref{thm:cvx}. In addition, assume, recalling \eqref{eq:rho-ineq}, that
\begin{equation}\label{eq:rho-ineq-double}
c_1t^{1/\rho}\le d_G\left(\exp\left(tv\right),e_G\right)\stackrel{\eqref{eq:rho-ineq}}{\le} t^{1/\rho}, \quad \forall t\ge 1,
\end{equation}
and that the measure $\mu$ is $K$-doubling:
\[
\mu(B_{2r})\le K\mu(B_r),\quad r>0.
\]
\begin{enumerate}
    \item Then
\[
c_X\left(B_r\right)\gtrsim \frac{c_1\rho^{1/q}}{\sqrt{\dim \mathfrak{g}}\left(\rho+\frac{1}{\rho-1}\right)K^{2/q}} \cdot\frac{\left(\log r\right)^{1/q}}{K_q\left(X\right)}\quad r\ge 2.
\]
In particular, $G$ fails to bilipschitzly embed into $X$.
\item We have, for $r\ge 2$ and $1<p<\infty$,
\begin{align*}
&\frac{c_1\rho^{1/\max\left\{p,2\right\}}\sqrt{\min\left\{p,2\right\}-1}}{\sqrt{\dim \mathfrak{g}}\left(\rho+\frac{1}{\rho-1}\right)K^{2/\max\{p,2\}}} \left(\log r\right)^{1/\max\left\{p,2\right\}}\\
&\lesssim c_p\left(B_r\right)\\
&\lesssim \max\left\{\frac{\log K}{p},1\right\}\left(\log r\right)^{1/\max\left\{p,2\right\}}+ K^{4/p}c_p\left(B_1\right).
\end{align*}
\end{enumerate}

\end{theorem}

\begin{remark}\,
    \begin{enumerate}
        \item Again, the reason we assume $G$ has a Riemannian distance in Theorem \ref{thm:gendistortion} is that if $G$ were given a strictly sub-Riemannian distance, then any nonempty open subset $U$ of $G$ would fail to embed bilipschitzly into $X$ \cite{huang2020non}, since the tangent space of $G$ is a nonabelian Carnot group. That is why we require $t\ge 1$ in \eqref{eq:rho-ineq-double}: $d_G\left(\exp\left(tv\right),e_G\right)$ is linear in $t$ for small $t$ when $G$ is Riemannian. If the distance requirement \eqref{eq:rho-ineq-double} held for $0<t<\epsilon$, with $\epsilon$ very small, then running the above proof would give a lower bound of $D$ in terms of $\left(\int_0^\epsilon \frac{dt}t\right)^{1/q}=\infty$, proving local nonembeddability. However, this proof would be unnecessary, since the fact that \eqref{eq:rho-ineq-double} holds for small $t$ means that $G$ is given a strictly sub-Riemannian distance.
        \item The right-hand side of the inequality of statement (2) of Theorem \ref{thm:gendistortion} is a finite quantity, since $c_p\left(B_1\right)\lesssim_G 1$. More generally $c_Y\left(B_R\right)\lesssim_{G,r}1$ for any infinite-dimensional Banach space $Y$, since $c_{\mathbb{R}^d}\left(B_R\right)\lesssim_G 1$ for some $d\in \mathbb{Z}_{>0}$ depending only on $G$ and $R$ \cite{eriksson2018quantitative}. If $G$ were a simply connected nilpotent Lie group, the inverse of the exponential map provides such an embedding $B_R\to \mathbb{R}^d$. 
    \end{enumerate}
\end{remark}
\begin{proof}[Proof of Theorem \ref{thm:gendistortion}]\,
\begin{enumerate}
\item
When $r\le 8$, we have, by $\dim \mathfrak{g}, \rho>1$ and $K_q(X),K\ge 1$, that 
\[
c_X\left(B_r\right)\ge 1\gtrsim \left(\log 8\right)^{1/q}\gtrsim \frac{c_1\rho^{1/q}}{\sqrt{\dim \mathfrak{g}}\left(\rho+\frac{1}{\rho-1}\right)K^{2/q}}\cdot\frac{\left(\log r\right)^{1/q}}{K_q\left(X\right)}.
\]
So we may assume $r>8$.

Suppose $f:B_r\to X$ satisfied $d_G\left(x,y\right)\le \left\|f\left(x\right)-f\left(y\right)\right\|_X\le D d_G\left(x,y\right)$ for $x,y\in B_r$. Let $\xi:G\to [0,1]$ be the bump function
\[
\xi\left(g\right)\coloneqq
\begin{cases}
    1,&d_G\left(g,e_G\right)\le r/2,\\
    2-2d_G\left(g,e_G\right)/r,& r/2<d_G\left(g,e_G\right)\le r,\\
    0,&d_G\left(g,e_G\right)>r.
\end{cases}
\]
This equals $1$ on $B_{r/2}$, is supported on $B_{r}$, takes values in $\left[0,1\right]$, and is $2/r$-Lipschitz. Define $F:G\to X$ by
\[
F\coloneqq
\begin{cases}
\left(f-f\left(e_G\right)\right)\xi&\mathrm{on~}B_r,\\
0&\mathrm{on~}G\setminus B_r.
\end{cases}
\]
This is $3D$-Lipschitz, agrees with $f-f\left(e_G\right)$ on $B_{r/2}$, and is supported on $B_r$.

For $h\in B_{r/4}$ and $1\le t\le r^\rho/4^\rho$, we have $h,h\exp\left(tv\right)\in B_{r/2}$ by \eqref{eq:rho-ineq-double}, and so
\[
\left\|F\left(h\exp\left(tv\right)\right)-F\left(h\right)\right\|_X=\left\|f\left(h\exp\left(tv\right)\right)-f\left(h\right)\right\|_X\ge d_G\left(h\exp\left(tv\right),h\right)\stackrel{\eqref{eq:rho-ineq-double}}{\ge }c_1 t^{1/\rho}.
\]
Therefore
\begin{align*}
    \left(\int_1^{r^\rho/4^\rho} \int_{B_{r/4}}\left(\frac{\left\|F\left(h\exp\left(tv\right)\right)-F\left(h\right)\right\|_X}{t^{1/\rho}}\right)^qd\mu\left(h\right)\frac{dt}{t}\right)^{1/q}&\ge c_1\left(\int_1^{r^\rho/4^\rho} \int_{B_{r/4}}d\mu\left(h\right)\frac{dt}{t}\right)^{1/q}\\
    &=c_1\mu\left(B_{r/4}\right)^{1/q}\rho^{1/q}\left(\log\left(r/4\right)\right)^{1/q}.
\end{align*}
On the other hand, we have, denoting $k=\dim \mathfrak{g}$,
\[
\left\|F\right\|_{\dot{Ch}_0^{1,q}\left(G;X\right)}\stackrel{\eqref{eq:cheeger-lipschitz}}{\le} \sqrt{k}\operatorname{Lip}\left(F\right)\mu\left(\overline{\operatorname{supp}F}\right)^{1/q}\lesssim \sqrt{k}D\mu\left(B_r\right)^{1/q}.
\]

We therefore have
\begin{align*}
c_1\mu\left(B_{r/4}\right)^{1/q}\rho^{1/q}\left(\log\left(r/4\right)\right)^{1/q}&\le \left(\int_1^{r^\rho/4^\rho} \int_{B_{r/4}}\left(\frac{\left\|F\left(h\exp\left(tv\right)\right)-F\left(h\right)\right\|_X}{t^{1/\rho}}\right)^qd\mu\left(h\right)\frac{dt}{t}\right)^{1/q}\\
&\stackrel{\mathclap{\eqref{eq:continuous main}}}{\lesssim} \left(\rho+\frac{1}{\rho-1}\right) K_q\left(X\right) \left\|F\right\|_{\dot{Ch}_0^{1,q}\left(G;X\right)}\lesssim_G \sqrt{k}\left(\rho+\frac{1}{\rho-1}\right) K_q\left(X\right)\mu\left(B_r\right)^{1/q}D.
\end{align*}
This gives the stated estimate.

\item
Let $r\ge 2$. The lower bound follows from part (1). The upper bound follows from Corollary \ref{cor:doubling-loc-emb}(2) with $R=1$, noting that $G$ must be $K^4$-doubling.

\end{enumerate}
\end{proof}

Now we begin the proofs of the remaining statements. First we observe that Theorem \ref{mainthm:nilpotentdistortion} follows from Theorem \ref{thm:precise-dist-nilp}, which in turn is a special case of Theorem \ref{thm:gendistortion}.

The proof of Theorem \ref{mainthm:polygrowthdistortion} from Theorem \ref{thm:discrete} is similar to the proof of Theorem \ref{thm:gendistortion}.
\begin{proof}[Proof of Theorem \ref{mainthm:polygrowthdistortion} from Theorem \ref{thm:discrete}]
Let $c$ be as in Theorem \ref{thm:discrete}. It is enough to show that $c_X\left(B_{\left(c+1\right)n}\right)\gtrsim_\Gamma \frac{\left(\log n\right)^{1/q}}{K_q\left(X\right)}$.
Suppose that $f:\Gamma\to X$ satisfies $d_{W}\left(x,y\right)\le \left\|f\left(x\right)-f\left(y\right)\right\|_X\le D d_W\left(x,y\right)$ for all $x,y\in B^\Gamma_{\left(c+1\right)n}$. From the discussion after Theorem \ref{thm:discrete}, $v_\Gamma$ can be chosen so that
\[
d_W\left(v_\Gamma^k,e_\Gamma\right)\asymp_\Gamma k^{1/s} ,\quad k\in \mathbb{N},
\]
while we have also that $\left|B_m\right|\asymp_\Gamma m^{n_h}$ for every $m\in \mathbb{N}$ (see \cite[Theorem 1.1]{breuillard2012nilpotent}). Thus, Theorem \ref{thm:discrete}, in particular \eqref{eq:main}, applied to $f$ yields the following estimate.
\begin{align*}
&n^{n_h/q}\left(\log n\right)^{1/q}\lesssim_\Gamma
\left(\sum_{k=1}^{n^\rho} n^{n_h}\frac{k^{q/\rho}}{k^{1+q/\rho}}\right)^{1/q}\lesssim_\Gamma 
\left(\sum_{k=1}^{n^\rho}\sum_{x\in B^\Gamma_n}\frac{ \left\|f\left(xv_\Gamma^k\right)-f\left(x\right)\right\|_X^q}{k^{1+q/\rho}}\right)^{1/q}\\
&\stackrel{\mathclap{\eqref{eq:main}}}{\lesssim}_{\Gamma}
K_q\left(X\right)\left( \sum_{x\in B^\Gamma_{cn}} \sum_{a\in S}\left\|f\left(xa\right)-f\left(x\right)\right\|^q_X\right)^{1/q}
\lesssim_\Gamma K_q\left(X\right) n^{n_h/q}D,
\end{align*}

which gives Theorem \ref{mainthm:polygrowthdistortion}.
\end{proof}
Note that we don't have precise control on the dependence of the constants on $\Gamma$ in Theorem \ref{mainthm:polygrowthdistortion}, since we don't have such control in Theorem \ref{thm:discrete}.

\begin{proof}[Proof of Corollary \ref{cor:polygrowthlp}]
The lower bound of Corollary \ref{cor:polygrowthlp} follows from Theorem \ref{mainthm:polygrowthdistortion}. The upper bound follows from a version of the Assouad embedding theorem, given as Theorem \ref{thm:lp-assouad}(1) below:
\begin{equation*}
c_p\left(\Gamma,d_\Gamma^{1-\varepsilon}\right)\lesssim_{\Gamma,p} 1/\varepsilon^{1/\max\left\{p,2\right\}},\quad 0<\varepsilon<1,
\end{equation*}
Setting $\varepsilon=\frac{1}{2\log n}$, the space $\left(B^\Gamma_n,d_W^{1-1/2\log n}\right)$ is $O\left(1\right)$-bilipschitz equivalent to $\left(B^\Gamma_n,d_W\right)$, so we have the upper bound.
\end{proof}

\begin{proof}[Proof of Corollary \ref{cor:nilpotentlp}]
Let $r\ge 2$. The lower bound of Corollary \ref{cor:nilpotentlp} follows from Theorem \ref{mainthm:nilpotentdistortion}. The upper bound follows from Theorem \ref{thm:gendistortion}(2).
\end{proof}

\begin{proof}[Proof of Theorem \ref{thm:discrete-snowflake}]
By \eqref{eq:main}, any $D$-bilipschitz embedding $\left(\Gamma,d_W^{1-\varepsilon}\right) \to X$ must satisfy for all $n\in \mathbb{N}$ the following bound:
\begin{align*}
\left|B_n^\Gamma\right|^{1/q}\left(\sum_{k=1}^{n^\rho}\frac 1{k^{1+\varepsilon q/\rho}}\right)^{1/q}&\lesssim_\Gamma
\left(\sum_{k=1}^{n^\rho}\sum_{x\in B^\Gamma_n}\frac{ \left\|f\left(xv_\Gamma^k\right)-f\left(x\right)\right\|_X^q}{k^{1+q/\rho}}\right)^{1/q}\stackrel{\mathclap{\eqref{eq:main}}}{\lesssim}_{\Gamma}
K_q\left(X\right)\left( \sum_{x\in B^\Gamma_{cn}} \sum_{a\in S}\left\|f\left(xa\right)-f\left(x\right)\right\|^q_X\right)^{1/q}\\
&\lesssim_\Gamma
 K_q\left(X\right) D\left|S\right|^{1/q}\left|B_{cn}^\Gamma\right|^{1/q},
\end{align*}
which gives the stated lower bound. The second assertion, namely the estimate for $c_p\left(\Gamma, d_W^{1-\varepsilon}\right)$, follows from a version of the Assouad embedding theorem \cite{assouad1983plongements} given as Theorem \ref{thm:lp-assouad}(1) in Appendix \ref{app:assouad}:
\begin{equation*}
c_p\left(\Gamma,d_W^{1-\varepsilon}\right)\lesssim_{\Gamma,p} 1/\varepsilon^{1/\max\left\{p,2\right\}},\quad 0<\varepsilon<1.
\end{equation*}
\end{proof}

\begin{proof}[Proof of Corollary \ref{cor:growth-function-char}]
    The if direction is due to \cite[Theorem 1]{tessera2011asymptotic}. The only if direction is given by Theorem \ref{thm:discrete}, from which it follows that
\[
s \int_1^{2n} \left(\frac{\theta\left(t\right)}{t}\right)^{\max\left\{p,2\right\}}\frac{dt}{t}=\int_1^{\left(2n\right)^s} \frac{\theta\left(\tau^{1/s}\right)^{\max\left\{p,2\right\}}}{\tau^{1+\max\left\{p,2\right\}/s}}d\tau\lesssim \sum_{k=1}^{\left(2n\right)^s} \frac{\theta\left(k^{1/s}\right)^{\max\left\{p,2\right\}}}{k^{1+\max\left\{p,2\right\}/s}}\lesssim_{\Gamma,p} 1. 
\]
\end{proof}

Now we compute the distortion of nets.
\begin{theorem}\label{thm:gen-net}
Let the Lie group $G$, $v\in Z\left(\mathfrak{g}\right)$, and Banach space $X$ satisfy the hypotheses of Theorem \ref{thm:cvx}. 
Let $r_1,r_2>0$ with $r_1\ge 2r_2$.
Assume that
\begin{equation}\label{eq:rho-ineq-double-second}
c_1t^{1/\rho}\le d_G\left(\exp\left(tv\right),e_G\right)\le t^{1/\rho}, \quad \forall t\ge r_2^\rho,
\end{equation}
and again assume the Haar measure $\mu$ of $G$ is measure-doubling:
\[
\mu\left(B_{2r}\right)\le K\mu\left(B_r\right),\quad r>0.
\]

\begin{enumerate}
    \item  If $N_{r_1,r_2}$ is an $r_2$-covering of $B_{r_1}$, then
\[
c_X\left(N_{r_1,r_2}\right)\gtrsim \left(\frac{c_1\rho^{1/q}}{\sqrt{k}\left(\rho+\frac{1}{\rho-1}\right)K^{2/q}\log K}\right)\frac{\left(\log \left(r_1/r_2\right)\right)^{1/q}}{K_q\left(X\right)},
\]
where $k$ is the number of vectors $X_1,\cdots,X_k$ which satisfy the H\"ormander condition and with which we measure distances on $G$.
\item For an auxiliary parameter $r_3\in \left(0,\frac{r_1}{2}\right]$, if  $N_{r_1,r_2,r_3}$ is an $\left(r_2,r_3\right)$-net of $B_{r_1}$, then for $1<p<\infty$,
\[
\frac{c_1\rho^{1/\max\left\{p,2\right\}}\sqrt{\min\left\{p,2\right\}-1}}{\sqrt{k}\left(\rho+\frac{1}{\rho-1}\right)K^{2/\max\left\{p,2\right\}}\log K} \cdot \left(\log \left(r_1/r_2\right)\right)^{1/\max\left\{p,2\right\}}\lesssim c_p\left(N_{r_1,r_2,r_3}\right)\lesssim \max\left\{\frac{\log K}{p},1\right\}\left(\log \left(r_1/r_3\right)\right)^{1/\max\left\{p,2\right\}}.
\]
\end{enumerate}
\end{theorem}
Note that if $Z\left(\mathfrak{g}\right)\subset \operatorname{span}\left\{X_1,\cdots,X_k\right\}$, in order to satisfy \eqref{eq:rho-ineq-double-second} with $c_1$ constant we require in addition $r_1,r_2=\Omega(1)$.
\begin{proof}\,

\begin{enumerate}
\item
Let $N_{r_1,r_2}$ be an $r_2$-covering of $B_{r_1}$ and let $f:N_{r_1,r_2}\to X$ be a mapping with
\[
d_G\left(x,y\right)\le \left\|f\left(x\right)-f\left(y\right)\right\|\le D d_G\left(x,y\right),\quad x,y\in N_{r_1,r_2}.
\]
By \cite[Theorem 1.6]{lee2005extending} there is an extension $F:G\to X$ of $f$ that is $CD\log K$-Lipschitz for a universal constant $C>0$. Let $\xi:G\to [0,1]$ be the bump function
\[
\xi\left(g\right)\coloneqq
\begin{cases}
    1,&d_G\left(g,e_G\right)\le r_1,\\
    2-d_G\left(g,e_G\right)/r_1,& r_1<d_G\left(g,e_G\right)\le 2r_1,\\
    0,&d_G\left(g,e_G\right)>2r_1.
\end{cases}
\]
This equals $1$ on $B_{r_1}$, is supported on $B_{2r_1}$, takes values in $\left[0,1\right]$, and is $1/r_1$-Lipschitz. Then $\phi\coloneqq \xi (F-F\left(e_G\right))$ is $O\left(D\log K\right)$-Lipschitz, equals $F$ on $B_{r_1}$ and is supported on $B_{2r_1}$. In particular, $\phi\in Ch_0^{1,p}\left(G;X\right)$. We wish to apply Theorem \ref{thm:cvx} with $p=q$. We first estimate
\[
    \left\|\phi\right\|_{\dot{Ch}_0^{1,p}\left(G;X\right)}\stackrel{\mathclap{\eqref{eq:cheeger-lipschitz}}}{\le} \sqrt{k}\operatorname{Lip}\left(\phi\right)\mu\left(\operatorname{supp}\phi\right)^{1/q}\lesssim \sqrt{k} D\left(\log K\right) \mu\left(B_{2r_1}\right)^{1/q}.
\]
On the other hand, for $x\in B_{r_1/2}$ and $t\in \left(\left(\frac{\left(4+4CD\log K\right)r_2}{c_1}\right)^{\rho},\left(r_1/2\right)^\rho\right]$ (we may assume $\left(\frac{\left(4+4CD\log K\right)r_2}{c_1}\right)^{\rho}\le \left(r_1/2\right)^\rho$ since otherwise $D>\frac{c_1\left(r_1/r_2\right)}{16C\log K}$ and the proof is complete), we have $\phi\left(x\right)=F\left(x\right)-F\left(e_G\right)$ and $\phi\left(x\exp\left(tv\right)\right)=F\left(x\exp\left(tv\right)\right)-F\left(e_G\right)$. Choosing $n_1,n_2\in N$ such that $d_G\left(x,n_1\right)<r_2$, $d_G\left(x\exp\left(tv\right),n_2\right)<r_2$, we have
\begin{align*}
\left\|F\left(x\exp\left(tv\right)\right)-F\left(x\right)\right\|&\ge \left\|f\left(n_1\right)-f\left(n_2\right)\right\|-\left\|f\left(n_1\right)-F\left(x\right)\right\|-\left\|f\left(n_2\right)-F\left(x\exp\left(tv\right)\right)\right\|\\
&\ge\left\|f\left(n_1\right)-f\left(n_2\right)\right\|-2CDr_2\log K\ge d\left(n_1,n_2\right)-2C Dr_2\log K\\
&\ge d\left(x,x\exp\left(tv\right)\right)-d\left(x,n_1\right)-d\left(x\exp\left(tv\right),n_2\right)-2C Dr_2\log K\\
&\stackrel{\mathclap{\eqref{eq:rho-ineq-double-second}}}{\ge} \left(c_1t^{1/\rho}-2r_2\right)-2CDr_2\log K>\frac {c_1}2 t^{1/\rho},
\end{align*}
where in the last inequality we used that $t>\left(\frac{\left(4+4CD\log K\right)r_2}{c_1}\right)^{\rho}$.
Thus
\begin{align*}
&\left(\int_0^\infty \int_{G}\left(\frac{\left\|\phi\left(h\exp\left(tv\right)\right)-\phi\left(h\right)\right\|_X}{t^{1/\rho}}\right)^qd\mu\left(h\right)\frac{dt}{t}\right)^{1/q} \ge \left(\int_{\left(\frac{\left(4+4CD\log K\right)r_2}{c_1}\right)^{\rho}}^{\left(r_1/2\right)^\rho}\int_{B_{r_1/2}} \frac{c_1^q}{2^q}d\mu\left(h\right) \frac{dt}{t}\right)^{1/q}\\
&\ge \frac {c_1\rho^{1/q}}2 \left(\log\left(r_1/r_2\right)-\log\left(\frac{8+8CD\log K}{c_1}\right)\right)^{1/q}\mu\left(B_{r_1/2}\right)^{1/q}.
\end{align*}
Thus by Theorem \ref{thm:cvx} with $p=q$,
\[
\frac {c_1\rho^{1/q}}2 \left(\log\left(r_1/r_2\right)-\log\left(\frac{8+8CD\log K}{c_1}\right)\right)^{1/q}\mu\left(B_{r_1/2}\right)^{1/q}\lesssim \sqrt{k}\left(\rho+\frac{1}{\rho-1}\right)K_q\left(X\right)D\left(\log K\right) \mu\left(B_{2r_1}\right)^{1/q}.
\]
But we may also assume $\log\left(r_1/r_2\right)-\log\left(\frac{8+8CD\log K}{c_1}\right)\ge \frac12 \log \left(r_1/r_2\right)$, for otherwise $D>\frac{c_1\sqrt{r_1/r_2}}{16C\log K}$ and the proof is complete. Thus
\[
c_1\rho^{1/q}\left(\log \left(r_1/r_2\right)\right)^{1/q}\lesssim \sqrt{k}\left(\rho+\frac{1}{\rho-1}\right)K_q\left(X\right)D K^{2/q}\log K,
\]
from which it follows that
\[
D\gtrsim \left(\frac{c_1\rho^{1/q}}{\sqrt{k}\left(\rho+\frac{1}{\rho-1}\right)K^{2/q}\log K}\right)\frac{\left(\log \left(r_1/r_2\right)\right)^{1/q}}{K_q\left(X\right)}.
\]

\item 
Note that the lower bound follows from Theorem \ref{thm:gen-net}(1) and inequality \eqref{eq:unifcvxlp}. To see the upper bound, observe that by Theorem \ref{thm:lp-assouad}(1) in Appendix \ref{app:assouad}, we have
\begin{equation*}
c_p\left(G,d_G^{1-\varepsilon}\right)\lesssim_{p} 1+\frac{(1-\varepsilon)^{1/\min\left\{p,2\right\}}}{\varepsilon^{1/\max\left\{p,2\right\}}}\max\left\{\frac{\log K}{p},1\right\},\quad 0<\varepsilon<1.
\end{equation*}
Since $N_{r_1,r_2,r_3}$ is an $r_3$-packing of $B_{r_1}$, we have $r_3\le d_G\left(n_1,n_2\right)\le 2r_1$ for distinct $n_1,n_2\in N_{r_1,r_2,r_3}$, so $\left(N_{r_1,r_2,r_3},d_G^{1-1/2\log \left(r_1/r_3\right)}\right)$ is $O\left(1\right)$-bilipschitz equivalent to $\left(N_{r_1,r_2,r_3},d_G\right)$. This completes the proof of (2).
\end{enumerate}
\end{proof}

We observe that Theorem \ref{thm:net} follows from Theorem \ref{thm:gen-net}.

\begin{theorem}\label{thm:gen-net-snowflake}
Let the Lie group $G$ and $v\in Z\left(\mathfrak{g}\right)$ satisfy the hypotheses of Theorem \ref{thm:cvx}. Again assume \eqref{eq:rho-ineq-double-second}:
\[\tag{\ref{eq:rho-ineq-double-second}}
	c_1t^{1/\rho}\le d_G\left(\exp\left(tv\right),e_G\right)\le t^{1/\rho}, \quad \forall t\ge r_2^\rho,
\]
and again assume the Haar measure $\mu$ of $G$ is measure-doubling:
\[
\mu\left(B_{2r}\right)\le K\mu\left(B_r\right),\quad r>0.
\]
Recall that $k$ is the number of vectors $X_1,\cdots,X_k$ which satisfy the H\"ormander condition and with which we measure distances on $G$.

Let $r_1,r_2>0$ with $2r_1\ge r_2$. If $Z\left(\mathfrak{g}\right)\subset \operatorname{span}\left\{X_1,\cdots,X_k\right\}$, we require in addition $r_1,r_2>1$. Let $N'_{r_1,r_2}$ be an $\left(r_1,r_2\right)$-net of $G$.

\begin{enumerate}
    \item  If $X$ is a $q(\ge 2)$-uniformly convex Banach space, then
\[
c_X\left(G,d_G^{1-\varepsilon}\right) \ge c_X\left(N'_{r_1,r_2},d_G^{1-\varepsilon}\right)\gtrsim \left(\frac{c_1^{1-\varepsilon}\left(r_2/r_1\right)^{\varepsilon}\rho^{\left(1-\varepsilon\right)/q}}{k^{\left(1-\varepsilon\right)/2}\left(\rho+\frac{1}{\rho-1}\right)^{1-\varepsilon} K^{2\left(1-\varepsilon\right)/q}\log K}\right)\frac{1}{K_q\left(X\right)^{1-\varepsilon}\varepsilon^{1/q}},\quad 0<\varepsilon<1,
\]
\item For $1<p<\infty$,
\begin{align*}
&\left(\frac{c_1^{1-\varepsilon}\left(r_2/r_1\right)^{\varepsilon}\rho^{\left(1-\varepsilon\right)/\max\left\{p,2\right\}}}{k^{\left(1-\varepsilon\right)/2}\left(\rho+\frac{1}{\rho-1}\right)^{1-\varepsilon} K^{2\left(1-\varepsilon\right)/\max\left\{p,2\right\}}\log K}\right)\frac{\left(\min\left\{p,2\right\}-1\right)^{(1-\varepsilon)/2}}{\varepsilon^{1/\max\left\{p,2\right\}}}\\
& \lesssim c_p\left(N'_{r_1,r_2},d_G^{1-\varepsilon}\right)\le c_p\left(G,d_G^{1-\varepsilon}\right)\\
&\lesssim 1+\frac{(1-\varepsilon)^{1/\min\left\{p,2\right\}}}{\varepsilon^{1/\max\left\{p,2\right\}}}\max\left\{\frac{\log K}{p},1\right\}.
\end{align*}
\end{enumerate}
\end{theorem}
\begin{proof}\,
\begin{enumerate}
\item
Let $N\coloneqq N'_{r_1,r_2}$ be a $\left(r_1,r_2\right)$-net of $G$ and let $f:N\to X$ be a mapping with
\[
d_G\left(x,y\right)^{1-\varepsilon}\le \left\|f\left(x\right)-f\left(y\right)\right\|\le D d_G\left(x,y\right)^{1-\varepsilon},\quad x,y\in N.
\]
Then $f$ is $r_2^{-\varepsilon}D$-Lipschitz, so by \cite[Theorem 1.6]{lee2005extending} there is an extension $F:G\to X$ of $f$ that is $Cr_2^{-\varepsilon}D\log K$-Lipschitz for a universal constant $C>0$. Fix $R>0$ sufficiently large, let $\xi:G\to [0,1]$ be the bump function
\[
\xi\left(g\right)\coloneqq
\begin{cases}
    1,&d_G\left(g,e_G\right)\le R,\\
    2-d_G\left(g,e_G\right)/R,& R<d_G\left(g,e_G\right)\le 2R,\\
    0,&d_G\left(g,e_G\right)>2R.
\end{cases}
\]
This equals $1$ on $B_R$, is supported on $B_{2R}$, takes values in $\left[0,1\right]$, and is $1/R$-Lipschitz. Then $\phi\coloneqq \xi \left(F-F\left(e_G\right)\right)$ is $O\left(r_2^{-\varepsilon}D\log K\right)$-Lipschitz, equals $F-F\left(e_G\right)$ on $B_R$ and is supported on $B_{2R}$. In particular, $\phi\in Ch_0^{1,p}\left(G;X\right)$. In order to apply Theorem \ref{thm:cvx} with $p=q$, we note that
\begin{equation}\label{eq:phi-cheeger-ub}
    \left\|\phi\right\|_{\dot{Ch}_0^{1,p}\left(G;X\right)}\stackrel{\mathclap{\eqref{eq:cheeger-lipschitz}}}{\le} \sqrt{k}\operatorname{Lip}\left(\phi\right)\mu\left(\operatorname{supp}\phi\right)^{1/q}\lesssim \sqrt{k}r_2^{-\varepsilon}D\left(\log K\right) \mu\left(B_{2R}\right)^{1/q}.
\end{equation}

On the other hand, for $x\in B_{R/2}$ and $t\in \left(\left(8r_1r_2^{-\varepsilon}D\log K\right)^{\rho/\left(1-\varepsilon\right)}/c_1^\rho,\left(R/2\right)^\rho\right]$ (we can make this interval nonempty by taking $R>0$ sufficiently large) we have $\phi\left(x\right)=F\left(x\right)-F\left(e_G\right)$ and $\phi\left(x\exp\left(tv\right)\right)=F\left(x\exp\left(tv\right)\right)-F\left(e_G\right)$. Note also that
\begin{equation}\label{eq:t-big}
t> \left(8r_1r_2^{-\varepsilon}D\log K\right)^{\rho/\left(1-\varepsilon\right)}/c_1^\rho\ge 4^\rho r_1^\rho / c_1^\rho,
\end{equation}
the last inequality following from
\[
2^{1+2\varepsilon}\left(r_1/r_2\right)^{\varepsilon}D\log K\ge 2^{1+2\varepsilon}\left(1/2\right)^{\varepsilon}\log 2\ge  1.
\]
 Choosing $n_1,n_2\in N$ such that $d_G\left(x,n_1\right)<r_1$, $d_G\left(x\exp\left(tv\right),n_2\right)<r_1$, we have
\begin{equation}
d\left(n_1,n_2\right)\ge d\left(x,x\exp\left(tv\right)\right)-d\left(n_1,x\right)-d\left(n_2,x\exp\left(tv\right)\right)\stackrel{\mathclap{\eqref{eq:rho-ineq-double-second}}}{\ge} c_1t^{1/\rho}-2r_1\stackrel{\mathclap{\eqref{eq:t-big}}}{\ge} \frac{c_1}{2}t^{1/\rho}
\end{equation}
and
\begin{align*}
\left\|F\left(x\exp\left(tv\right)\right)-F\left(x\right)\right\|&\ge \left\|f\left(n_1\right)-f\left(n_2\right)\right\|-\left\|f\left(n_1\right)-F\left(x\right)\right\|-\left\|f\left(n_2\right)-F\left(x\exp\left(tv\right)\right)\right\|\\
&\ge \left\|f\left(n_1\right)-f\left(n_2\right)\right\|-2r_1r_2^{-\varepsilon}CD\log K\ge d\left(n_1,n_2\right)^{1-\varepsilon}-2r_1r_2^{-\varepsilon}CD\log K\\
&\ge \left(\frac{c_1}{2}t^{1/\rho}\right)^{1-\varepsilon}-2r_1r_2^{-\varepsilon}CD\log K>\frac {c_1^{1-\varepsilon}}4 t^{\left(1-\varepsilon\right)/\rho},
\end{align*}
the last inequality following from \eqref{eq:t-big}.
Thus
\begin{align*}
&\left(\int_0^\infty \int_{G}\left(\frac{\left\|\phi\left(h\exp\left(tv\right)\right)-\phi\left(h\right)\right\|_X}{t^{1/\rho}}\right)^qd\mu\left(h\right)\frac{dt}{t}\right)^{1/q} \ge \left(\int_{\left(8r_1r_2^{-\varepsilon}D\log K\right)^{\rho/\left(1-\varepsilon\right)}/c_1^\rho}^{\left(R/2\right)^\rho}\int_{B_{R/2}} \frac{c_1^{q\left(1-\varepsilon\right)}}{4^q}d\mu\left(h\right) \frac{dt}{t^{1+\varepsilon q/\rho}}\right)^{1/q}\\
&\ge \frac {c_1^{1-\varepsilon}}4 \left(\frac{\rho}{\varepsilon q}\left[\left(8r_1r_2^{-\varepsilon}D\log K\right)^{-\varepsilon q/\left(1-\varepsilon\right)}/c_1^{-\varepsilon q}-\left(R/2\right)^{-\varepsilon q}\right]\right)^{1/q}\mu\left(B_{R/2}\right)^{1/q}.
\end{align*}
Thus by Theorem \ref{thm:cvx} with $p=q$, and \eqref{eq:phi-cheeger-ub},
\[
\frac {c_1^{1-\varepsilon}}4 \left(\frac{\rho}{\varepsilon q}\left[\left(8r_1r_2^{-\varepsilon}D\log K\right)^{-\varepsilon q/\left(1-\varepsilon\right)}/c_1^{-\varepsilon q}-\left(R/2\right)^{-\varepsilon q}\right]\right)^{1/q}\mu\left(B_{R/2}\right)^{1/q}\lesssim K_q\left(X\right)\left(\rho+\frac{1}{\rho-1}\right)\sqrt{k}r_2^{-\varepsilon}D\left(\log K\right) \mu\left(B_{2R}\right)^{1/q},
\]
which by the doubling condition implies
\[
c_1^{1-\varepsilon} \left(\frac{\rho}{\varepsilon q}\left[\left(8r_1r_2^{-\varepsilon}D\log K\right)^{-\varepsilon q/\left(1-\varepsilon\right)}/c_1^{-\varepsilon q}-\left(R/2\right)^{-\varepsilon q}\right]\right)^{1/q}\lesssim K_q\left(X\right)\left(\rho+\frac{1}{\rho-1}\right)\sqrt{k}r_2^{-\varepsilon}DK^{2/q}\log K,
\]
Since $R>0$ was arbitrarily large,
\[
\frac{c_1\rho^{1/q}}{\varepsilon^{1/q} }\left(8r_1r_2^{-\varepsilon}D\log K\right)^{-\varepsilon /\left(1-\varepsilon\right)}\lesssim K_q\left(X\right)\left(\rho+\frac{1}{\rho-1}\right)\sqrt{k}r_2^{-\varepsilon}D K^{2/q}\log K.
\]
It follows that
\[
D\gtrsim \left(\frac{c_1^{1-\varepsilon}\left(r_2/r_1\right)^{\varepsilon}\rho^{\left(1-\varepsilon\right)/q}}{k^{\left(1-\varepsilon\right)/2}\left(\rho+\frac{1}{\rho-1}\right)^{1-\varepsilon} K^{2\left(1-\varepsilon\right)/q}\log K}\right)\frac{1}{K_q\left(X\right)^{1-\varepsilon}\varepsilon^{1/q}}.
\]
\item The lower bound follows from Theorem \ref{thm:gen-net-snowflake}(1) and inequality \eqref{eq:unifcvxlp}. The upper bound follows from Theorem \ref{thm:lp-assouad}(1) in Appendix \ref{app:assouad}.
\end{enumerate}
\end{proof}
Theorem \ref{thm:precise-holder-nilp} and Theorem \ref{thm:net-snowflake} follow from Theorem \ref{thm:gen-net-snowflake}.

\section{Proof of Theorem \ref{thm:amenablesublinear}: sublinear growth of cocycles}\label{sec:amenable}
Our goal in this section is to prove Theorem \ref{thm:amenablesublinear} and Remark \ref{rem:qualitativesublinear}. The proof of Theorem \ref{thm:amenablesublinear} generalizes that of \cite{austin2013sharp}.

As in the setting of Theorem \ref{thm:amenablesublinear}, let $\left(X,\left\|\cdot\right\|_X\right)$ be a $q$-uniformly convex space, and let $\Gamma$ be a finitely generated amenable group with word distance induced by a finite symmetric generating set $S$, with $v\in Z\left(\Gamma\right)$ and $\rho>1$ such that $d_W\left(v^k,e_\Gamma\right)\asymp_\Gamma k^{1/\rho}$, $k\in \mathbb{N}$. Let $f:\Gamma\to X$ be a $1$-Lipschitz function.

Theorem \ref{thm:amenablesublinear} claims that for each $\varepsilon>0$, for every sufficiently large $t\ge 3$ there is an integer $n\in \left[t,t^{1+\varepsilon}\right]$ such that
\[
\frac{\omega_f\left(n\right)}{n}\lesssim_\Gamma K_q\left(X\right)\left(\frac{\log \log n}{\log n}\right)^{1/q},
\]
while Remark \ref{rem:qualitativesublinear} claims that, for some positive integer $a>0$,
\[
\frac{\omega_f\left(d_W\left(v^{ak},e_\Gamma\right)\right)}{d_W\left(v^{ak},e_\Gamma\right)}\to 0\quad \mathrm{as}~k\to\infty.
\]

By \cite[Theorem 9.1]{naor2011lp} and by amenability of $\Gamma$, if $X$ is $q$-uniformly convex and $f:\Gamma\to X$ is $1$-Lipschitz, then there exists a Banach space $\left(Y,\left\|\cdot\right\|_Y\right)$ that is also $q$-uniformly convex with $K_q\left(Y\right)=K_q\left(X\right)$, an action $\pi$ of $\Gamma$ on $Y$ by linear isometric automorphisms, and a $1$-cocycle $F:\Gamma\to Y$ such that $\omega_F=\omega_f$, where $F:\Gamma\to Y$ is a $1$-cocycle, denoted by $F\in Z^1\left(\pi\right)$, if $F\left(xy\right)=\pi\left(x\right)F\left(y\right)+F\left(x\right)$ for all $x,y\in \Gamma$. Note that the $1$-Lipschitz requirement for $F$ is equivalent to $\left\|F\left(a\right)\right\|_Y\le 1$ for all $a\in S$. Thus, for the purposes of proving Theorem \ref{thm:amenablesublinear} and Remark \ref{rem:qualitativesublinear}, we may assume without loss of generality that $f\in Z^1\left(\pi\right)$ for some action $\pi$ of $\Gamma$ on $X$ by linear isometric automorphisms.

Remark \ref{rem:qualitativesublinear} follows from a combination of \cite[Proposition 3.5]{das2016integrable} and \cite{naor2011lp}, as follows.\footnote{I thank the anonymous referee for suggesting Remark \ref{rem:qualitativesublinear} and pointing out the reference \cite[Proposition 3.5]{das2016integrable} and suggesting to use it in conjunction with \cite{naor2011lp} to prove Remark \ref{rem:qualitativesublinear}.}

\begin{proof}[Proof of Remark \ref{rem:qualitativesublinear}]
    The reference \cite[Proposition 3.5]{das2016integrable} together with \cite[Example 3.3]{das2016integrable} states that if $\Gamma$ is a finitely generated group with a word distance, $w\in \Gamma$ is a torsion-free central element in the commutator subgroup of $\Gamma$, and $\pi$ is a action of $\Gamma$ on a reflexive Banach space $X$ by linear isometric automorphisms, then for every cocycle $f\in Z^1\left(\pi\right)$ we have
    \[
    \lim_{k\to\infty}\frac{\left\|f\left(w^k\right)\right\|_X}{d_W\left(w^k,e_\Gamma\right)}=0.
    \]
    
    Thus, it is enough to verify that some positive power of $v$ is in the commutator subgroup $\left[\Gamma,\Gamma\right] $ of $\Gamma$. Suppose not. Then the quotient map $p:\Gamma\to\Gamma/\left[\Gamma,\Gamma\right]$ sends $v$ and its powers to nontrivial elements, i.e., $p\left(v\right)$ is a torsion-free element of $\Gamma/\left[\Gamma,\Gamma\right]$, and $p$ is a 1-Lipschitz map when we give $\Gamma/\left[\Gamma,\Gamma\right]$ the left-invariant word distance $d_{W'}$ induced by $p\left(S\right)$. But because $\Gamma/\left[\Gamma,\Gamma\right]$ is finitely generated abelian, we must have $d_{W'}\left(p\left(v\right)^k,e_{\Gamma/\left[\Gamma,\Gamma\right]}\right)\asymp_\Gamma k$, which contradicts the fact that $d_W\left(v^k,e_\Gamma\right)\asymp_\Gamma k^{1/\rho}$ and that $p$ is 1-Lipschitz.
\end{proof}

We now begin the proof of the quantitative compression rate, i.e., Theorem \ref{thm:amenablesublinear}. We first recall the following property of $q$-uniformly convex spaces.

\begin{lemma}[{\cite[Lemma 3.1]{austin2013sharp}}]\label{lem:unifcvxseq}
Let $\left(X,\left\|\cdot\right\|_X\right)$ be $q$-uniformly convex. For a fixed $z\in X$ and linear operator $T:X\to X$ with $\left\|T\right\|\le 1$, define
\[
s_n\coloneqq \frac {1}{2^n} \sum_{j=0}^{2^n-1} T^jz,\quad n\ge 0.
\]
Then, for all $l\in \mathbb{N}$, we have
\begin{equation}\label{unifcvxseq}
\sum_{i=0}^\infty \frac{1}{2^l}\sum_{j=0}^{2^l-1}\left\|s_{\left(i+1\right)l}-T^{j2^{il}}s_{il}\right\|_X^q\le \left(2K_q\left(X\right)\right)^q\left\|z\right\|_X^q.
\end{equation}
\end{lemma}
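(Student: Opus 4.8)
Proof plan for Lemma \ref{lem:unifcvxseq} (the statement at the end of the excerpt).

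The plan is to reduce the telescoping sum over the scales $l, 2l, 3l, \ldots$ to a single application of the $q$-uniform convexity inequality applied scale-by-scale, exactly in the style of a martingale-difference estimate. First I would observe that $s_n$ is a Cesàro-type average and that $s_{(i+1)l}$ is itself an average of the $2^l$ "shifted" vectors $T^{j2^{il}} s_{il}$, $j = 0, \ldots, 2^l-1$; more precisely, one checks directly from the definition that
\[
s_{(i+1)l} = \frac{1}{2^l}\sum_{j=0}^{2^l-1} T^{j2^{il}} s_{il}.
\]
So for each fixed $i$, the quantity $\frac{1}{2^l}\sum_{j=0}^{2^l-1}\|s_{(i+1)l} - T^{j2^{il}} s_{il}\|_X^q$ is precisely the average squared (to the $q$) deviation of the $2^l$ vectors $v_{i,j} := T^{j2^{il}} s_{il}$ from their mean. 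The key inequality to invoke is the standard consequence of $q$-uniform convexity that for any vectors $w_0, \ldots, w_{2^l-1}$ in $X$ with mean $\bar w$,
\[
\frac{1}{2^l}\sum_{j=0}^{2^l-1}\|w_j - \bar w\|_X^q \le (2K_q(X))^q\left(\frac{1}{2^l}\sum_{j=0}^{2^l-1}\|w_j\|_X^q - \|\bar w\|_X^q\right)\quad\text{(up to the constant in \cite{austin2013sharp})},
\]
i.e.\ the "variance" is controlled by $(2K_q(X))^q$ times the drop in the average of $\|\cdot\|_X^q$ passing from the vectors to their mean. This is exactly the martingale-cotype-$q$ statement for the dyadic filtration, specialized to the filtration generated by these averaging steps; since $\|T\|\le 1$ and the $v_{i,j}$ are images of $s_{il}$ under contractions, one gets $\frac{1}{2^l}\sum_j \|v_{i,j}\|_X^q \le \|s_{il}\|_X^q$ after using $\|Ts_{il}\|_X \le \|s_{il}\|_X$ appropriately — actually one must be slightly careful and use that each $T^{j2^{il}}$ is a contraction, so $\|v_{i,j}\|_X\le\|s_{il}\|_X$, giving $\frac{1}{2^l}\sum_j\|v_{i,j}\|_X^q\le\|s_{il}\|_X^q$.

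Combining the two displays, for each $i$ I would get
\[
\frac{1}{2^l}\sum_{j=0}^{2^l-1}\left\|s_{(i+1)l} - T^{j2^{il}}s_{il}\right\|_X^q \le (2K_q(X))^q\left(\|s_{il}\|_X^q - \|s_{(i+1)l}\|_X^q\right),
\]
where the right-hand side is now a telescoping difference of the nonnegative decreasing sequence $\|s_{il}\|_X^q$. Summing over $i = 0, 1, 2, \ldots$ the right-hand side telescopes to $(2K_q(X))^q(\|s_0\|_X^q - \lim_i \|s_{il}\|_X^q) \le (2K_q(X))^q\|s_0\|_X^q = (2K_q(X))^q\|z\|_X^q$, which is exactly \eqref{unifcvxseq}.

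The main obstacle — and the step that requires the most care rather than routine manipulation — is establishing the per-scale variance inequality in the correct quantitative form with the constant $(2K_q(X))^q$, i.e.\ reconstructing the proof of \cite[Lemma 3.1]{austin2013sharp}. The cleanest route is to recognize the sequence $s_{il}$ together with the intermediate partial averages as a martingale with respect to a suitable finite filtration (refining by bisection, $2^l$ steps at a time), so that $s_{(i+1)l} - T^{j2^{il}}s_{il}$ are (conditional) martingale increments, and then apply the definition of $K_q(X)$ inductively across the $l$ bisection steps; the subtlety is that the isometric/contractive maps $T^{j2^{il}}$ must be tracked so that "norm of the average $\le$ average of the norms $\le$ norm at the previous scale" holds with no loss, which is where $\|T\|\le 1$ is used essentially. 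Once that lemma is in hand, the telescoping argument above is immediate, and Lemma \ref{lem:unifcvxseq} follows; since the excerpt cites \cite[Lemma 3.1]{austin2013sharp} verbatim, in the paper this would simply be quoted rather than reproved.
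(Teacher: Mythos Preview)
Your overall structure is exactly right: the identity $s_{(i+1)l}=\frac{1}{2^l}\sum_{j=0}^{2^l-1}T^{j2^{il}}s_{il}$ holds, the per-scale ``variance'' inequality
\[
\frac{1}{2^l}\sum_{j=0}^{2^l-1}\bigl\|s_{(i+1)l}-T^{j2^{il}}s_{il}\bigr\|_X^q \;\le\; (2K_q(X))^q\bigl(\|s_{il}\|_X^q-\|s_{(i+1)l}\|_X^q\bigr)
\]
is true, and the telescoping over $i$ finishes the proof. The paper simply cites \cite[Lemma~3.1]{austin2013sharp}, so this matches.

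The gap is in your proposed justification of the variance inequality. Iterating the two-point $q$-uniform convexity estimate across the $l$ dyadic bisection steps controls the \emph{martingale increments} $\|u^{(k)}_{j}-u^{(k+1)}_{\lfloor j/2\rfloor}\|^q$ by $K_q(X)^q$ times the telescoping drops, but to pass from these to $\|u^{(0)}_j-u^{(l)}_0\|^q$ you must use the triangle inequality across $l$ terms, which costs a factor $l^{q-1}$. So your route yields only $l^{q-1}K_q(X)^q$ per scale, and the final bound then depends on $l$, contrary to \eqref{unifcvxseq}.

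The fix is simpler than any martingale argument: apply $q$-uniform convexity \emph{once}, directly to the pair $(v_j,\bar v)$ with $v_j=T^{j2^{il}}s_{il}$ and $\bar v=s_{(i+1)l}$. Writing $x=\tfrac{v_j+\bar v}{2}$, $y=\tfrac{v_j-\bar v}{2}$ gives
\[
\Bigl\|\tfrac{v_j+\bar v}{2}\Bigr\|^q+\frac{1}{K_q(X)^q}\Bigl\|\tfrac{v_j-\bar v}{2}\Bigr\|^q\le \frac{\|v_j\|^q+\|\bar v\|^q}{2}.
\]
Average over $j$ and use Jensen's inequality $\frac{1}{2^l}\sum_j\|\tfrac{v_j+\bar v}{2}\|^q\ge\|\bar v\|^q$ (since $\frac{1}{2^l}\sum_j\tfrac{v_j+\bar v}{2}=\bar v$) to obtain
\[
\frac{1}{2^l}\sum_j\|v_j-\bar v\|^q\le \frac{(2K_q(X))^q}{2}\Bigl(\frac{1}{2^l}\sum_j\|v_j\|^q-\|\bar v\|^q\Bigr)\le \frac{(2K_q(X))^q}{2}\bigl(\|s_{il}\|^q-\|s_{(i+1)l}\|^q\bigr),
\]
the last step using $\|v_j\|=\|T^{j2^{il}}s_{il}\|\le\|s_{il}\|$. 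This is the inequality you want (in fact with an extra factor $1/2$ to spare), and no induction over $l$ is needed.
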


On the other hand, as $X$ is reflexive, $X$ is ergodic \cite[p.662]{dunford1988linear}, i.e., for every linear isometry $T:X\to X$ and $x\in X$ the sequence $\left\{\frac 1n \sum_{j=0}^{n-1} T^jx\right\}_{n=1}^\infty$ converges in norm. Thus, the operator $P:X\to X$ defined by
\[
Px\coloneqq \lim_{N\to\infty} \frac 1N \sum_{n=0}^{N-1}\pi\left(v\right)^n x,\quad x\in X,
\]
is well-defined, has norm $\le 1$, is a contraction onto the subspace $X_0$ of $\pi\left(v\right)$-invariant vectors, and is idempotent. As $v$ is a central element of $\Gamma$, $P$ commutes with $\pi\left(g\right)$ for all $g\in \Gamma$, and so the maps $Pf,\left(I-P\right)f:\Gamma\to X$ are both Lipschitz and are $1$-cocycles.

Now define linear operators $P_n:X\to X$, $n\in \mathbb{N}$, by
\[
P_n\coloneqq \frac{1}{2^n}\sum_{j=0}^{2^n-1}\pi\left(v\right)^j.
\]
Of course, $\left\|P_n\right\|\le 1$ and $P_n$ commutes with $\pi\left(g\right)$ for all $g\in \Gamma$.

The following Lemma was inspired by Lemma 4.1 of \cite{austin2013sharp}.
\begin{lemma}\label{L1}
Let $\left(X,\left\|\cdot\right\|_X\right)$ be a $q$-uniformly convex space. Then for every $l,k,m\in \mathbb{N}$ there exist integers $i\in \left[k+1,k+m\right]$ and $j\in \left[0,2^l-1\right]$ such that, for all $n\in \mathbb{N}$,
\begin{equation}
\left\|\pi\left(v^{-j2^{il}}\right)P_{\left(i+1\right)l}f\left(v^{\lfloor n^\rho\rfloor}\right)-P_{il}f\left(v^{\lfloor n^\rho\rfloor}\right)\right\|_X\lesssim_\Gamma \frac{K_q\left(X\right)n}{m^{1/q}}.
\end{equation}
\end{lemma}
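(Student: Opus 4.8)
The plan is to apply the sequence estimate of Lemma~\ref{lem:unifcvxseq} to the $\pi(v)$-orbit of the single vector $f\big(v^{\lfloor n^\rho\rfloor}\big)$, and then to extract the required pair $(i,j)$ by an averaging (pigeonhole) argument over the window $i\in[k+1,k+m]$. Fix $n\in\mathbb{N}$ (the parameter also appearing on the right-hand side). I would set $T:=\pi(v)$ and $z:=f\big(v^{\lfloor n^\rho\rfloor}\big)\in X$. Since $\pi(v)$ is a linear isometric automorphism we have $\|T\|\le 1$, so Lemma~\ref{lem:unifcvxseq} applies. The key observation is that for these choices the averages $s_N=\tfrac1{2^N}\sum_{j=0}^{2^N-1}T^jz$ from Lemma~\ref{lem:unifcvxseq} are exactly $s_N=P_Nf\big(v^{\lfloor n^\rho\rfloor}\big)$, by the definition $P_N=\tfrac1{2^N}\sum_{j=0}^{2^N-1}\pi(v)^j$, and that $T^{j2^{il}}=\pi\big(v^{j2^{il}}\big)$ because $\pi$ is a homomorphism. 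Hence for each fixed $l\in\mathbb{N}$, inequality \eqref{unifcvxseq} reads
\[
\sum_{i=0}^\infty \frac{1}{2^l}\sum_{j=0}^{2^l-1}\left\|P_{(i+1)l}f\big(v^{\lfloor n^\rho\rfloor}\big)-\pi\big(v^{j2^{il}}\big)P_{il}f\big(v^{\lfloor n^\rho\rfloor}\big)\right\|_X^q\le \big(2K_q(X)\big)^q\,\|z\|_X^q .
\]

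Next I would bound $\|z\|_X$. Writing $v^{\lfloor n^\rho\rfloor}$ as a word in $S$ of length $d_W\big(v^{\lfloor n^\rho\rfloor},e_\Gamma\big)$ and iterating the cocycle identity $f(xy)=\pi(x)f(y)+f(x)$, the triangle inequality together with $\|f(a)\|_X\le 1$ for all $a\in S$ (the content of the $1$-Lipschitz assumption for a cocycle) gives $\|f\big(v^{\lfloor n^\rho\rfloor}\big)\|_X\le d_W\big(v^{\lfloor n^\rho\rfloor},e_\Gamma\big)$. By the standing hypothesis $d_W(v^k,e_\Gamma)\asymp_\Gamma k^{1/\rho}$ this is $\lesssim_\Gamma \lfloor n^\rho\rfloor^{1/\rho}\le n$, so the right-hand side of the last display is $\lesssim_\Gamma \big(K_q(X)\,n\big)^q$.

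Finally, the pigeonhole step: the summand of the (nonnegative) $i$-sum is, after restriction to the $m$ indices $i\in\{k+1,\dots,k+m\}$, a sum of $m$ nonnegative terms whose total is $\lesssim_\Gamma \big(K_q(X)n\big)^q$; hence there is $i\in[k+1,k+m]$ with $\tfrac1{2^l}\sum_{j=0}^{2^l-1}\|\,\cdot\,\|_X^q\lesssim_\Gamma \big(K_q(X)n\big)^q/m$, and since the left side is an average over $j\in\{0,\dots,2^l-1\}$ there is some $j$ in this range with $\|P_{(i+1)l}f(v^{\lfloor n^\rho\rfloor})-\pi(v^{j2^{il}})P_{il}f(v^{\lfloor n^\rho\rfloor})\|_X^q\lesssim_\Gamma \big(K_q(X)n\big)^q/m$. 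Taking $q$-th roots and then applying the isometry $\pi\big(v^{-j2^{il}}\big)$ (which transfers the conjugation onto the first term without changing the norm of the difference) yields $\big\|\pi(v^{-j2^{il}})P_{(i+1)l}f(v^{\lfloor n^\rho\rfloor})-P_{il}f(v^{\lfloor n^\rho\rfloor})\big\|_X\lesssim_\Gamma K_q(X)n/m^{1/q}$, which is the asserted estimate.

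The argument is a direct transcription of \cite[Lemma~4.1]{austin2013sharp} to the present setting and I do not anticipate a substantial obstacle: once Lemma~\ref{lem:unifcvxseq} is available the only points needing care are the identification $s_N=P_Nf(v^{\lfloor n^\rho\rfloor})$, the bookkeeping of the exponents $(i+1)l$, $il$ and the power $j2^{il}$ of $v$ through the conjugation, and verifying that the $1$-Lipschitz condition controls $\|f(v^{\lfloor n^\rho\rfloor})\|_X$ by $d_W(v^{\lfloor n^\rho\rfloor},e_\Gamma)$. The genuine input replacing the explicit $k^{1/2}$ bound of the Heisenberg case is precisely the growth estimate $d_W(v^k,e_\Gamma)\asymp_\Gamma k^{1/\rho}$.
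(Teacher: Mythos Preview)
Your argument is clean and correct for the version of the lemma in which $n$ is fixed beforehand: apply Lemma~\ref{lem:unifcvxseq} once with $z=f(v^{\lfloor n^\rho\rfloor})$, bound $\|z\|_X\le d_W(v^{\lfloor n^\rho\rfloor},e_\Gamma)\lesssim_\Gamma n$, and pigeonhole over $i\in[k+1,k+m]$ and $j\in[0,2^l-1]$. The paper takes a different route: it applies Lemma~\ref{lem:unifcvxseq} to each generator $z=f(a)$, $a\in S$, sums over the finite set $S$, and pigeonholes to find a single pair $(i,j)$ with $\max_{a\in S}\|\pi(v^{-j2^{il}})P_{(i+1)l}f(a)-P_{il}f(a)\|_X\lesssim_\Gamma K_q(X)/m^{1/q}$. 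Only then does it expand $f(v^{\lfloor n^\rho\rfloor})=\sum_r\pi(a_1\cdots a_{r-1})f(a_r)$ via the cocycle identity and use that $P_N$ commutes with every $\pi(g)$ to propagate the generator bound, picking up the factor $b=O_\Gamma(n)$.

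The reason for the detour through generators is that it produces a pair $(i,j)$ independent of $n$: the resulting estimate holds simultaneously for all $n\in\mathbb{N}$. This uniformity is exactly what the proof of Theorem~\ref{thm:amenablesublinear} uses, since there $n$ is defined in \eqref{eq:def n} as a function of $i$; with your approach the chosen $(i,j)$ would depend on $n$, and the dependence becomes circular. So while your proof is valid for the weaker reading of the (admittedly ambiguously quantified) statement, the paper's version is the one the application needs, and your direct application of Lemma~\ref{lem:unifcvxseq} to $f(v^{\lfloor n^\rho\rfloor})$ does not yield it.
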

\begin{proof}
By Lemma \ref{lem:unifcvxseq} and the fact that $\left\|f\left(a\right)\right\|_X\le 1$, we have for each $a\in S$
\begin{align*}
\sum_{i=k+1}^{k+m} \frac{1}{2^l}\sum_{j=0}^{2^l-1}\left\|\pi\left(v^{-j2^{il}}\right)P_{\left(i+1\right)l}f\left(a\right)-P_{il}f\left(a\right)\right\|_X^q&=\sum_{i=k+1}^{k+m} \frac{1}{2^l}\sum_{j=0}^{2^l-1}\left\|P_{\left(i+1\right)l}f\left(a\right)-\pi\left(v^{j2^{il}}\right)P_{il}f\left(a\right)\right\|_X^q\\
&\le \left(2K_q\left(X\right)\right)^q.
\end{align*}
Thus
\begin{align*}
\left|S\right|\left(2K_q\left(X\right)\right)^q&\ge \sum_{i=k+1}^{k+m} \frac{1}{2^l}\sum_{j=0}^{2^l-1}\sum_{a\in S}\left\|\pi\left(v^{-j2^{il}}\right)P_{\left(i+1\right)l}f\left(a\right)-P_{il}f\left(a\right)\right\|_X^q\\
&\ge m\min_{\substack{k+1\le i\le k+m\\ 0\le j\le 2^l-1}}\sum_{a\in S}\left\|\pi\left(v^{-j2^{il}}\right)P_{\left(i+1\right)l}f\left(a\right)-P_{il}f\left(a\right)\right\|_X^q,
\end{align*}
and so there exist $i\in \left[k+1,k+m\right]$, $j\in \left[0,2^l-1\right]$ such that
\[
\max_{a\in S}\left\|\pi\left(v^{-j2^{il}}\right)P_{\left(i+1\right)l}f\left(a\right)-P_{il}f\left(a\right)\right\|_X \lesssim_\Gamma \frac{K_q\left(X\right)}{m^{1/q}}.
\]
As $v^{\lfloor n^\rho\rfloor}=a_1\cdots a_{b}$ for some $a_i\in S$, where $b=O_\Gamma\left(n\right)$, we have by the cocycle identity
\[
f\left(v^{\lfloor n^\rho\rfloor}\right)=\sum_{i=1}^b \pi\left(a_1\cdots a_{i-1}\right)f\left(a_i\right).
\]
Since $P_r$, $r\ge 0$, and $\pi\left(g\right)$, $g\in \Gamma$, commute, and $v$ and $g\in \Gamma$ commute, and $\pi\left(g\right) $ is an isometry for $g\in \Gamma$, we have
\begin{align*}
\left\|\pi\left(v^{-j2^{il}}\right)P_{\left(i+1\right)l}f\left(v^{\lfloor n^\rho\rfloor}\right)-P_{il}f\left(v^{\lfloor n^\rho\rfloor}\right)\right\|_X\le \sum_{i=1}^b \left\|\pi\left(v^{-j2^{il}}\right)P_{\left(i+1\right)l}f\left(a_i\right)-P_{il}f\left(a_i\right)\right\|_X\lesssim_\Gamma \frac{K_q\left(X\right)n}{m^{1/q}},
\end{align*}
as claimed.
\end{proof}
The next lemma generalizes Lemma 4.2 of \cite{austin2013sharp}.
\begin{lemma}\label{L2}
For every $m,n\in \mathbb{N}$,
\[
\left\|P_m f\left(v^{\lfloor n^\rho\rfloor}\right)\right\|_X\lesssim_\Gamma \frac{n^{\left(\rho^2+\rho-1\right)/\left(2\rho-1\right)}}{2^{m\left(\rho-1\right)/\left(2\rho-1\right)}}.
\]
\end{lemma}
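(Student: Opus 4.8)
The plan is to derive the bound from two elementary estimates on $\|P_m f(v^N)\|_X$, where I abbreviate $N\coloneqq\lfloor n^\rho\rfloor$, and then to interpolate between them by a short case analysis. Recall that by the reduction already in place we may take $f$ to be a $1$-Lipschitz $1$-cocycle, so $f(e_\Gamma)=0$ and, for powers of $v$, the cocycle identity reads $f(v^{j+N})=\pi(v^j)f(v^N)+f(v^j)$, i.e. $\pi(v^j)f(v^N)=f(v^{j+N})-f(v^j)$.

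\emph{Step 1: two basic estimates.} Since $P_m=2^{-m}\sum_{j=0}^{2^m-1}\pi(v)^j$ is an average of linear isometries, the first, ``trivial'', bound is
\[
\|P_m f(v^N)\|_X\le\|f(v^N)\|_X=\|f(v^N)-f(e_\Gamma)\|_X\le d_W(v^N,e_\Gamma)\lesssim_\Gamma N^{1/\rho},
\]
using the hypothesis $d_W(v^k,e_\Gamma)\asymp_\Gamma k^{1/\rho}$. For the second bound, substitute $\pi(v^j)f(v^N)=f(v^{j+N})-f(v^j)$ into the definition of $P_m$ and telescope:
\[
P_m f(v^N)=\frac{1}{2^m}\sum_{j=0}^{2^m-1}\bigl(f(v^{j+N})-f(v^j)\bigr)=\frac{1}{2^m}\Bigl(\sum_{j=N}^{2^m+N-1}f(v^j)-\sum_{j=0}^{2^m-1}f(v^j)\Bigr).
\]
When $N\le 2^m$ the overlapping terms in the two sums cancel, leaving $\frac{1}{2^m}\sum_{j=0}^{N-1}\bigl(f(v^{2^m+j})-f(v^j)\bigr)$; since each summand has norm at most $d_W(v^{2^m+j},v^j)=d_W(v^{2^m},e_\Gamma)\lesssim_\Gamma 2^{m/\rho}$ by left-invariance, this gives
\[
\|P_m f(v^N)\|_X\lesssim_\Gamma N\,2^{-m(\rho-1)/\rho},\qquad\text{valid for }N\le 2^m.
\]

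\emph{Step 2: interpolate.} Put $\alpha\coloneqq\frac{\rho^2+\rho-1}{\rho(2\rho-1)}$ and $\beta\coloneqq\frac{\rho-1}{2\rho-1}$. A direct computation gives the two identities $\alpha-\tfrac1\rho=\beta$ and $1-\alpha=\tfrac{\rho-1}{\rho}-\beta=\tfrac{(\rho-1)^2}{\rho(2\rho-1)}>0$. If $2^m\le N$, then by the first estimate and $\alpha-\tfrac1\rho=\beta$ we have $N^{1/\rho}=N^{\alpha}N^{-\beta}\le N^{\alpha}2^{-m\beta}$. If $2^m\ge N$, then by the second estimate and $\tfrac{\rho-1}{\rho}-\beta=1-\alpha$ we have $N\,2^{-m(\rho-1)/\rho}=N^{\alpha}(N/2^m)^{1-\alpha}2^{-m\beta}\le N^{\alpha}2^{-m\beta}$. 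In either case $\|P_m f(v^N)\|_X\lesssim_\Gamma N^{\alpha}2^{-m\beta}$, and since $N\le n^\rho$ this is at most $n^{\rho\alpha}2^{-m\beta}=n^{(\rho^2+\rho-1)/(2\rho-1)}\,2^{-m(\rho-1)/(2\rho-1)}$, as claimed.

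There is no genuine obstacle here; the only care needed is bookkeeping, namely handling the cancellation in the telescoped sum according to whether $N\le 2^m$ or $N>2^m$ (the second estimate is only used, and only holds as stated, in the regime $N\le 2^m$), and verifying the two algebraic identities relating $\alpha,\beta,\rho$ that make the two regimes fit the target exponents exactly. These exponents $\alpha$ and $\beta$ are in fact forced by requiring the ``interpolant'' $N^\alpha 2^{-m\beta}$ to dominate $\min\{N^{1/\rho},\,N2^{-m(\rho-1)/\rho}\}$ with the crossover at $2^m=N$, which is how one locates them in the first place.
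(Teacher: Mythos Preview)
Your proof is correct and in fact cleaner than the paper's. Both arguments split into a trivial regime and a nontrivial regime according to the relative sizes of $N=\lfloor n^\rho\rfloor$ and $2^m$, and both use the same trivial estimate $\|P_mf(v^N)\|_X\le\|f(v^N)\|_X\lesssim_\Gamma N^{1/\rho}$ in the first regime. The difference is in the second regime: the paper introduces an auxiliary integer parameter $k$, applies the cocycle identity to $f(v^{\lfloor k^\rho\rfloor\lfloor n^\rho\rfloor})$ together with the operator bound $\|P_m-\pi(v^k)P_m\|\le 2k/2^m$, obtains the two-term estimate $\|P_mf(v^{\lfloor n^\rho\rfloor})\|_X\lesssim_\Gamma n/k^{\rho-1}+n^{\rho+1}k^\rho/2^m$, and then optimizes over $k$. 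Your approach bypasses this entirely: by telescoping $P_mf(v^N)=2^{-m}\sum_{j=0}^{N-1}(f(v^{2^m+j})-f(v^j))$ when $N\le 2^m$ you get the single clean bound $N\cdot 2^{-m(\rho-1)/\rho}$ directly, after which the interpolation is purely algebraic. Your route is more elementary and avoids both the auxiliary parameter and the optimization; the paper's route, on the other hand, is the one that generalizes more readily when the averaging operator does not admit such an explicit telescoping (as in the original \cite{austin2013sharp} setting it was adapted from).
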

\begin{proof}
If $\frac{n^\rho}{2^m}\ge \frac{\rho-1}\rho$ then we have the obvious bound
\[
\left\|P_m f\left(v^{\lfloor n^\rho\rfloor}\right)\right\|_X\le \left\| f\left(v^{\lfloor n^\rho\rfloor}\right)\right\|_X\le d_W\left(e_\Gamma,v^{\lfloor n^\rho\rfloor}\right)\lesssim_\Gamma n\left(\frac{n^\rho}{2^m}\right)^{\left(\rho-1\right)/\left(2\rho-1\right)}.
\]
Thus we assume $\frac{n^\rho}{2^m}< \frac{\rho-1}\rho$.

Because
\[
P_m-\pi\left(v^k\right)P_m= \frac{1}{2^m}\sum_{j=0}^{k-1}\pi\left(v\right)^j-\frac{1}{2^m}\sum_{j=2^m}^{2^m+k-1}\pi\left(v\right)^j,
\]
we have
\[
\left\|P_m-\pi\left(v^k\right)P_m\right\|\le \frac{2k}{2^m}.
\]
By the cocycle identity,
\[
f\left(v^{\lfloor k^\rho\rfloor \lfloor n^\rho\rfloor}\right)=\sum_{j=0}^{\lfloor k^\rho\rfloor -1} \pi\left(v^{j\lfloor n^\rho\rfloor }\right)f\left(v^{\lfloor n^\rho\rfloor}\right)
\]
Because $f$ is $1$-Lipschitz, $d_W\left(e_\Gamma,v^{\lfloor k^\rho\rfloor \lfloor n^\rho\rfloor}\right)\lesssim_\Gamma kn$, and $\left\|P_m\right\|\le 1$,
\begin{align*}
kn&\gtrsim_\Gamma \left\|P_mf \left(v^{\lfloor k^\rho\rfloor \lfloor n^\rho\rfloor}\right)\right\|_X=\left\|\lfloor k^\rho\rfloor P_mf\left(v^{\lfloor n^\rho\rfloor}\right) -\sum_{j=0}^{\lfloor k^\rho\rfloor -1} \left(P_m-\pi\left(v^{j\lfloor n^\rho\rfloor }\right)P_m\right)f\left(v^{\lfloor n^\rho\rfloor}\right)\right\|_X\\
&\ge \lfloor k^\rho\rfloor\left\| P_mf\left(v^{\lfloor n^\rho\rfloor}\right)\right\|_X -\sum_{j=0}^{\lfloor k^\rho\rfloor -1} \left\|P_m-\pi\left(v^{j\lfloor n^\rho\rfloor }\right)P_m\right\|\cdot\left\|f\left(v^{\lfloor n^\rho\rfloor}\right)\right\|_X\\
&\ge \lfloor k^\rho\rfloor\left\| P_mf\left(v^{\lfloor n^\rho\rfloor}\right)\right\|_X -\sum_{j=0}^{\lfloor k^\rho\rfloor -1}\frac{2j\lfloor n^\rho\rfloor}{2^m}d_W\left(e_\Gamma,v^{\lfloor n^\rho\rfloor}\right)
\end{align*}
and rearranging terms,
\[
\left\| P_mf\left(v^{\lfloor n^\rho\rfloor}\right)\right\|_X\lesssim_\Gamma \frac{n}{k^{\rho-1}}+\frac{n^{\rho+1}k^\rho}{2^m}.
\]
Since this is true for all $k$, choosing $k=\left\lceil \left(\frac{\left(\rho-1\right)2^m}{\rho n^\rho}\right)^{1/\left(2\rho-1\right)}\right\rceil$ gives the stated bound. Indeed, by $\frac{n^\rho}{2^m}< \frac{\rho-1}\rho$ we have $k\asymp \left(\frac{\left(\rho-1\right)2^m}{\rho n^\rho}\right)^{1/\left(2\rho-1\right)}$ and plugging in gives the desired bound.
\end{proof}
The following generalizes Lemma 4.3 of \cite{austin2013sharp}.
\begin{lemma}\label{L3}
For every $m,n\in \mathbb{N}$,
\[
\left\| f\left(v^{\lfloor n^\rho\rfloor}\right)-P_m f\left(v^{\lfloor n^\rho\rfloor}\right)\right\|_X\lesssim_\Gamma 2^{m/\left(\rho+1\right)}n^{1/\left(\rho+1\right)}.
\]
\end{lemma}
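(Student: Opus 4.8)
The plan is to prove the estimate by establishing two separate a priori bounds on $\left\| f(w)-P_m f(w)\right\|_X$, where $w\coloneqq v^{\lfloor n^\rho\rfloor}$ — one that is linear in $n$ and one that grows like $2^{m/\rho}$ — and then to interpolate between them. Recall that after the reduction at the start of this section $f\in Z^1(\pi)$ is a $1$-Lipschitz $1$-cocycle, so in particular $f(e_\Gamma)=0$ and $\left\|f(z)\right\|_X\le d_W(e_\Gamma,z)$ for all $z\in\Gamma$, and that $d_W(v^k,e_\Gamma)\asymp_\Gamma k^{1/\rho}$.

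First I would record the crude bound. Since $P_m$ is an average of the isometries $\pi(v)^j$, it has operator norm at most $1$, whence
\[
\left\| f(w)-P_m f(w)\right\|_X\le \left\|f(w)\right\|_X+\left\|P_m f(w)\right\|_X\le 2\left\|f(w)\right\|_X\le 2\,d_W(e_\Gamma,v^{\lfloor n^\rho\rfloor})\lesssim_\Gamma \lfloor n^\rho\rfloor^{1/\rho}\le n.
\]
For the second bound I would exploit that $v$ is central. For each $j$, since $w v^j=v^j w$ is a power of $v$, the cocycle identity applied to this product in two ways gives $\pi(w)f(v^j)+f(w)=\pi(v^j)f(w)+f(v^j)$, i.e.\ $f(w)-\pi(v^j)f(w)=f(v^j)-\pi(w)f(v^j)$. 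Averaging over $j\in\{0,\dots,2^m-1\}$ yields
\[
f(w)-P_m f(w)=(I-\pi(w))\Bigl(\tfrac{1}{2^m}\textstyle\sum_{j=0}^{2^m-1}f(v^j)\Bigr),
\]
and since $\|I-\pi(w)\|\le 2$ and $\left\|f(v^j)\right\|_X\le d_W(e_\Gamma,v^j)\lesssim_\Gamma j^{1/\rho}$, I get $\left\| f(w)-P_m f(w)\right\|_X\lesssim_\Gamma \frac{1}{2^m}\sum_{j=1}^{2^m}j^{1/\rho}\lesssim_\Gamma 2^{m/\rho}$, using $\sum_{j\le N}j^{1/\rho}\asymp_\rho N^{1+1/\rho}$.

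Finally I would interpolate: writing $g\coloneqq f(w)-P_m f(w)$,
\[
\left\|g\right\|_X=\left\|g\right\|_X^{1/(\rho+1)}\left\|g\right\|_X^{\rho/(\rho+1)}\lesssim_\Gamma n^{1/(\rho+1)}\bigl(2^{m/\rho}\bigr)^{\rho/(\rho+1)}=2^{m/(\rho+1)}n^{1/(\rho+1)},
\]
which is the claimed inequality. There is no substantive obstacle here; the only points requiring a little care are getting the sign right in the identity $f(w)-\pi(v^j)f(w)=f(v^j)-\pi(w)f(v^j)$, which genuinely uses that $v$ lies in the center (so that $wv^j=v^jw$), and the routine summation estimate $\sum_{j\le N}j^{1/\rho}\asymp_\rho N^{1+1/\rho}$.
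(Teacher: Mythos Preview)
Your proof is correct and genuinely different from the paper's argument. You obtain two separate bounds --- the trivial estimate $\|f(w)-P_mf(w)\|_X\lesssim_\Gamma n$ from $\|P_m\|\le 1$ and the Lipschitz condition, and the estimate $\|f(w)-P_mf(w)\|_X\lesssim_\Gamma 2^{m/\rho}$ from the cocycle/centrality identity $f(w)-P_mf(w)=(I-\pi(w))\bigl(2^{-m}\sum_{j=0}^{2^m-1}f(v^j)\bigr)$ --- and then interpolate via $\|g\|=\|g\|^{1/(\rho+1)}\|g\|^{\rho/(\rho+1)}$. The paper instead works with $\tilde f=(I-P_m)f\in Z^1(\pi)$, introduces an auxiliary integer parameter $k$, builds an approximate coboundary $w=-\tfrac{1}{k}\sum_{j<k}\tilde f(v^j)$, shows $\|\tilde f(h)-(\pi(h)w-w)\|\le \tfrac{2^m}{k}d_W(h,e_\Gamma)$, obtains $\|\tilde f(v^{\lfloor n^\rho\rfloor})\|\lesssim_\Gamma k^{1/\rho}+n2^m/k$, and then optimizes over $k$. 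Your route is shorter and avoids the optimization step entirely; the paper's approach is a bit more structural (it exhibits $\tilde f$ as close to a coboundary for \emph{all} $h\in\Gamma$, not just $h=v^{\lfloor n^\rho\rfloor}$), but that extra generality is not used elsewhere. Your identity $f(w)-\pi(v^j)f(w)=(I-\pi(w))f(v^j)$ only needs that $w$ and $v^j$ commute, which is automatic here since both are powers of $v$; you do not even need the full centrality hypothesis for this lemma.
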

\begin{proof}
If we define $\tilde{f}:\Gamma\to X$ by
\[
\tilde{f}\left(h\right)\coloneqq f\left(h\right)-P_mf\left(h\right)=\left(I-P_m\right)f\left(h\right),
\]
then $\tilde{f}\in Z^1\left(\pi\right)$. Let $k\ge 1$ be an integer to be determined later. If we set
\[
w\coloneqq -\frac 1k \sum_{j=0}^{k-1}\tilde{f}\left(v^j\right),
\]
then
\begin{equation}\label{eq:w-size}
\left\|w\right\|_X\lesssim_\Gamma \frac 1k\sum_{j=0}^{k-1}j^{1/\rho}\lesssim k^{1/\rho}.
\end{equation}

For every $h\in \Gamma$ we have the following identity:
\begin{align}\label{B-1}
\begin{aligned}
-\pi\left(h\right)w+\tilde{f}\left(h\right)&=\frac 1k \sum_{j=0}^{k-1}\left(\pi\left(h\right)\tilde{f}\left(v^j\right)+\tilde{f}\left(h\right)\right)=\frac 1k \sum_{j=0}^{k-1}\tilde{f}\left(hv^j\right)=\frac 1k \sum_{j=0}^{k-1}\tilde{f}\left(v^jh\right)\\
&=\frac 1k \sum_{j=0}^{k-1}\left(\pi\left(v^j\right)\tilde{f}\left(h\right)+\tilde{f}\left(v^j\right)\right)=\frac 1k \sum_{j=0}^{k-1}\pi\left(v^j\right)\tilde{f}\left(h\right)-w.
\end{aligned}
\end{align}
But
\begin{align*}
\frac 1k \sum_{j=0}^{k-1}\pi\left(v^j\right)\tilde{f}\left(h\right)&=\frac 1k \sum_{j=0}^{k-1}\left(\pi\left(v^j\right)f\left(h\right)-\frac{1}{2^m}\sum_{i=0}^{2^m-1}\pi\left(v^{j+i}\right)f\left(h\right)\right)\\
&=\frac{1}{2^m}\sum_{i=0}^{2^m-1}\left(\frac 1k\sum_{j=0}^{k-1}\pi\left(v^j\right)-\frac 1k\sum_{j=i}^{i+k-1}\pi\left(v^j\right)\right)f\left(h\right),
\end{align*}
so that
\[
\left\|\frac 1k \sum_{j=0}^{k-1}\pi\left(v^j\right)\tilde{f}\left(h\right)\right\|_X \le \frac{d_W\left(h,e_\Gamma\right)}{2^m}\sum_{i=0}^{2^m-1}\frac{2i}{k}\le \frac{2^m}{k}d_W\left(h,e_\Gamma\right).
\]
Because of \eqref{B-1}, $\tilde{f}$ is close to a coboundary in the following sense:
\[
\left\|\tilde{f}\left(h\right)-\left(\pi\left(h\right)w-w\right)\right\|\le \frac{2^m}{k}d_W\left(h,e_\Gamma\right).
\]
Writing $v^{\lfloor n^\rho\rfloor}=a_1\cdots a_{b}$ for some $a_i\in S$, where $b=O_\Gamma\left(n\right)$, we have
\begin{align*}
\left\|\tilde{f}\left(v^{\lfloor n^\rho\rfloor}\right)\right\|_X&=\left\|\sum_{i=1}^{b}\pi\left(a_1\cdots a_{i-1}\right)\tilde{f}\left(a_i\right)\right\|_X\lesssim_\Gamma \left\|\sum_{i=1}^{b}\pi\left(a_1\cdots a_{i-1}\right)\left(\pi\left(a_i\right)w-w\right)\right\|_X+\frac{n2^m}{k}\\
&=\left\|\pi\left(v^{\lfloor n^\rho\rfloor}\right)w-w\right\|_X+\frac{n2^m}{k}\stackrel{\mathclap{\eqref{eq:w-size}}}{\lesssim}_\Gamma k^{1/\rho}+\frac{n2^m}{k}.
\end{align*}
With $k=\lceil n^{\frac{\rho}{\rho+1}}2^{\frac{m\rho}{\rho+1}}\rceil$, we have the stated bound.
\end{proof}

We now prove theorem \ref{thm:amenablesublinear}.
\begin{proof}[Proof of Theorem \ref{thm:amenablesublinear}]
Recall that by \cite[Theorem 9.1]{naor2011lp} we may assume that $f\in Z^1\left(\pi\right)$ for some action $\pi$ of $\Gamma$ on $X$ by linear isometric automorphisms.

Let $C_1<C_2$ be constants depending on $\Gamma$ such that
\[
C_1n\le d_W\left(v^{\lfloor n^\rho\rfloor},e_\Gamma\right)\le C_2 n,\quad n\in \mathbb{N}.
\]
Let $\varepsilon>0$. Take $N_\varepsilon>e^e$ sufficiently large such that for $t\ge N_\varepsilon$, if $m$ is the largest integer such that
\[
m\le \frac{\varepsilon q\rho\left(\rho-1\right)}{\rho^2+2\rho-2} \cdot \frac{\log t}{\log\log t},
\]
then
\begin{equation}\label{eq:def m}
\log\frac{2C_2}{C_1}+\frac{q\left(\rho-1\right)\log 2}{\rho^2+2\rho-2}\frac{\log t}{\log m}+\frac{m}{\rho}\log 2+\frac{m\log m\left(\rho^2+2\rho-2\right)}{q\rho\left(\rho-1\right)}<\varepsilon \log t.
\end{equation}

Given $m$, let $k$ be the smallest integer such that
\begin{equation}\label{eq:def k}
m^{\frac{\rho+1}{\rho q}+\frac{\left(\rho^2+2\rho-2\right)\left(k+1\right)}{q\rho\left(\rho-1\right)}}\ge t.
\end{equation}
By definition, we have
\begin{equation}\label{eq:def k-2}
    m^{\frac{\rho+1}{\rho q}+\frac{\left(\rho^2+2\rho-2\right)k}{q\rho\left(\rho-1\right)}}< t
\end{equation}
and by taking logarithms, we have
\[
\frac{\left(\rho^2+2\rho-2\right)k}{q\rho\left(\rho-1\right)}<\frac{\rho+1}{\rho q}+\frac{\left(\rho^2+2\rho-2\right)k}{q\rho\left(\rho-1\right)}< \log_m t,
\]
giving
\begin{equation}\label{eq:prop k}
    \frac{k}{\rho}<\frac{q\left(\rho-1\right)}{\rho^2+2\rho-2}\log_mt.
\end{equation}

Define
\begin{equation}\label{eq:def l}
\ell\coloneqq \left\lceil\frac{\rho^2+2\rho-2}{q\left(\rho-1\right)}\log_2m\right\rceil.
\end{equation}
Using Lemma \ref{L1}, we may find integers $i\in \left[k+1,k+m\right]$ and $j\in \left[0,2^\ell-1\right]$ such that for all $n\in \mathbb{N}$,
\begin{equation}\label{eq:asymptotic}
\left\|\pi\left(v^{-j2^{i\ell}}\right)P_{\left(i+1\right)\ell}f\left(v^{\lfloor n^\rho\rfloor}\right)-P_{i\ell}f\left(v^{\lfloor n^\rho\rfloor}\right)\right\|_X\lesssim_\Gamma \frac{K_q\left(X\right)n}{m^{1/q}}.
\end{equation}
Finally, define
\begin{equation}\label{eq:def n}
n\coloneqq \left\lceil\frac1{C_1} m^{\frac{\rho+1}{\rho q}}2^{\frac{i\ell}{\rho}}\right\rceil,
\end{equation}

Expanding, we have
\begin{align*}
f\left(v^{\lfloor n^\rho\rfloor}\right)=&\pi\left(v^{-j2^{i\ell}}\right)P_{\left(i+1\right)\ell}f\left(v^{\lfloor n^\rho\rfloor}\right)\\
&+\left(P_{i\ell}f\left(v^{\lfloor n^\rho\rfloor}\right) -\pi\left(v^{-j2^{i\ell}}\right)P_{\left(i+1\right)\ell}f\left(v^{\lfloor n^\rho\rfloor}\right) \right) +\left(f\left(v^{\lfloor n^\rho\rfloor}\right)-P_{i\ell}f\left(v^{\lfloor n^\rho\rfloor}\right)\right).
\end{align*}
Thus by Lemmas \ref{L2} and \ref{L3} and inequality \eqref{eq:asymptotic}, we obtain:
\begin{align}\label{eq:before m}
\begin{aligned}
\omega_f\left(d_W\left(v^{\lfloor n^\rho\rfloor},e_\Gamma\right)\right)\le \left\|f\left(v^{\lfloor n^\rho\rfloor}\right)\right\|_X
&\lesssim_\Gamma \frac{n^{\left(\rho^2+\rho-1\right)/\left(2\rho-1\right)}}{2^{\left(i+1\right)\ell\left(\rho-1\right)/\left(2\rho-1\right)}}+\frac{K_q\left(X\right)n}{m^{1/q}}+2^{i\ell/\left(\rho+1\right)}n^{1/\left(\rho+1\right)}\\
&\stackrel{\mathclap{\eqref{eq:def l}\wedge \eqref{eq:def n}}}{\lesssim}_\Gamma\quad \frac{K_q\left(X\right)n}{m^{1/q}}.
\end{aligned}
\end{align}
We compute
\begin{equation*}\label{eq:lower m}
d_W\left(v^{\lfloor n^\rho\rfloor},e_\Gamma\right)\ge C_1n ~\stackrel{\mathclap{\eqref{eq:def n}}}{\ge}~ m^{\frac{\rho+1}{\rho q}}2^{\frac{i\ell}{\rho}}\ge m^{\frac{\rho+1}{\rho q}}2^{\frac{\left(k+1\right)\ell}{\rho}}
~\stackrel{\mathclap{\eqref{eq:def l}}}{\ge}~ m^{\frac{\rho+1}{\rho q}+\frac{\left(\rho^2+2\rho-2\right)\left(k+1\right)}{q\rho\left(\rho-1\right)}}~\stackrel{\mathclap{\eqref{eq:def k}}}{\ge}~ t
\end{equation*}
and
\begin{align*}
d_W\left(v^{\lfloor n^\rho\rfloor},e_\Gamma\right)&\le C_2 n~\stackrel{\mathclap{\eqref{eq:def n}}}{\le}~ \frac{2C_2}{C_1}m^{\frac{\rho+1}{\rho q}}2^{\frac{i\ell}{\rho}}\le \frac{2C_2}{C_1}m^{\frac{\rho+1}{\rho q}}2^{\frac{\left(k+m\right)\ell}{\rho}}
~\stackrel{\mathclap{\eqref{eq:def l}}}{\le}~ \frac{2C_2}{C_1}2^{\frac{k+m}{\rho}}m^{\frac{\rho+1}{\rho q}+\frac{k\left(\rho^2+2\rho-2\right)}{q\rho\left(\rho-1\right)}} m^{\frac{m\left(\rho^2+2\rho-2\right)}{q\rho\left(\rho-1\right)}}\\
&\stackrel{\mathclap{\eqref{eq:def k-2}}}{<}~\frac{2C_2}{C_1}2^{\frac{k+m}{\rho}}tm^{\frac{m\left(\rho^2+2\rho-2\right)}{q\rho\left(\rho-1\right)}}~\stackrel{\mathclap{\eqref{eq:prop k}}}{\le}\frac{2C_2}{C_1}2^{\frac{q\left(\rho-1\right)}{\rho^2+2\rho-2}\log_m t}2^{\frac{m}{\rho}}m^{\frac{m\left(\rho^2+2\rho-2\right)}{q\rho\left(\rho-1\right)}}t~\stackrel{\mathclap{\eqref{eq:def m}}}{\le} t^{1+\varepsilon}.
\end{align*}
Therefore $t\le d_W\left(v^{\lfloor n^\rho\rfloor},e_\Gamma\right)\le t^{1+\varepsilon}$. The definition of $m$ implies $m\gtrsim_\Gamma \frac{q\varepsilon \log t}{\log\log t}$, and
thus by \eqref{eq:before m}, we have
$$
\frac{\omega_f\left(d_W\left(v^{\lfloor n^\rho\rfloor},e_\Gamma\right)\right)}{n}\lesssim_\Gamma K_q\left(X\right)\left(\frac{\log\log n}{\varepsilon\log
n}\right)^{1/q}.
$$
The proof of Theorem \ref{thm:amenablesublinear} is complete.
\end{proof}

\section{Proof of Dorronsoro's Theorem \ref{lpgenthm} for Carnot groups}\label{sec:dorronsoro}
Let $G$ be a Carnot group. Assume $G$ is nonabelian, as the case when $G$ is abelian has been proved by Dorronsoro \cite{dorronsoro1985characterization}.

We need to prove two directions. In one direction, we assume $f\in S^p_\alpha\left(G\right)$ and prove the $\lesssim$ statement of \eqref{form13}. In the other direction, we assume $f\in L^p\left(G\right)$ with the left-hand side of \eqref{form13} finite, and prove $f\in S^p_\alpha\left(G\right)$ along with the $\gtrsim$ statement of \eqref{form13}. Our proof will be based on Dorronsoro's original proof of the $G=\mathbb{R}^n$ case in \cite{dorronsoro1985characterization} and will borrow modifications necessary for the Carnot group setting, some inspired by the proof of the $G=\mathbb{H}^{2k+1}$ case in \cite{fassler2020dorronsoro}.

We will first prove Theorem \ref{lpgenthm} for $q=1$, dealing with the $\lesssim$ direction in subsection \ref{subsec:leQ=1} and the $\gtrsim$ direction in subsection \ref{subsec:geQ=1}. Since the left-hand side of \eqref{form13} is minimized when $q=1$, this will finish the proof of the $\gtrsim$ direction; for the $\lesssim$ direction, we will see in subsection \ref{subsec:q>1} that the $\lesssim$ inequality for $q=1$ implies the $\lesssim$ inequality for larger $q$.

For the $\lesssim$ statement, we will first see by an approximation argument that it is enough to look at smooth functions $f$ (subsubsection \ref{subsubsec:smooth}), for which it turns out that we may as well approximate at all scales simultaneously by Taylor polynomials (subsubsection \ref{subsubsec:taylor}). Then, for $0<\alpha<1$, it will turn out that the desired inequality has already been proven in \cite[Theorem 5]{coulhon2001sobolev} (subsubsection \ref{subsubsec:alpha<1}), and for $\alpha>1$ nonintegral we may use an induction argument on $\alpha$ (subsubsection \ref{subsubsec:nonintegral>1}). We finish off the case of $\alpha\ge 1$ integral by an interpolation argument (subsubsection \ref{subsubsec:integral}), with the extra terms arising in this process having been taken care of by a homogenization argument (subsubsection \ref{subsubsec:scaling}).

For the $\gtrsim$ statement, we are given that $f\in L^p\left(G\right)$ and the finiteness of the singular integral given as the left-hand side of \eqref{form13}, and we are to derive $f\in S^p_\alpha\left(G\right)$. We first see again that we may approximate by a fixed polynomial for all scales (which we suspect to be the Taylor polynomial in the distributional sense but we can only prove this up to first derivatives) in Proposition \ref{prop:measurableTaylor}. For $0<\alpha\le 1$ we again use characterizations of the fractional Laplacian given by \cite[Theorem 5]{coulhon2001sobolev} for $0<\alpha<1$ and \cite[Theorem 1.4]{de2021mean} for $\alpha=1$ (subsubsection \ref{subsubsec:gtrsimalphale1}). We then prove the case $\alpha>1$ by induction (subsubsection \ref{subsubsec:gtrsimalpha>1}).

For simplicity, we define the $L^1$-beta numbers
\[
\beta_{f,d}\left(B_r\left(x\right)\right)\coloneqq \beta_{f,d,1}\left(B_r\left(x\right)\right).
\]
If we define the function
\[
\mathfrak{G}_\alpha f\left(x\right)\coloneqq \left(\int_0^\infty \left(\frac{\beta_{f,\lfloor\alpha\rfloor}\left(B_r\left(x\right)\right)}{r^\alpha}\right)^2 \frac{dr}r\right)^{1/2}, \quad x\in G,
\]
then Theorem \ref{lpgenthm} states in the case $q=1$ that
\begin{equation}\label{eq:alphaIneq}
\left\|\mathfrak{G}_\alpha f\right\|_{L^p\left(G\right)}\asymp_{G,\alpha,p} \left\|\left(-\Delta_p\right)^{\alpha/2}f\right\|_{L^p\left(G\right)},\quad f\in S^p_\alpha.
\end{equation}

Before we begin the proof, we briefly remark on coordinate and multi-index notation on $G$. Recall the coordinate system $x=\exp\left(\sum_{r=1}^s\sum_{i=1}^{k_r}x_{r,i}X_{r,i}\right)$. A multi-index $\gamma= \left(\left(\gamma_{r,i}\right)_{i=1}^{k_r}\right)_{r=1}^s$, $\gamma_{r,i}\in \mathbb{Z}_{\ge 0}$, is a multi-index on $\sum_{r=1}^s k_r$ entries. We define the weighted and unweighted degrees
\[
\left|\gamma\right|\coloneqq \sum_{r=1}^s\sum_{i=1}^{k_r}r\gamma_{r,i}\in \mathbb{Z}_{\ge 0},\quad \left|\gamma\right|_0\coloneqq \sum_{r=1}^s\sum_{i=1}^{k_r}\gamma_{r,i}\in \mathbb{Z}_{\ge 0},
\]
as well as the multi-index factorial:
\[
\gamma!\coloneqq\prod_{r=1}^s\prod_{i=1}^{k_r} \gamma_{r,i}!\in \mathbb{Z}_{>0}.
\]
We also denote the differential operator and polynomial
\[
\left(\frac{\partial}{\partial x}\right)^\gamma\coloneqq\prod_{r=1}^s\prod_{i=1}^{k_r} \left(\frac{\partial}{\partial x_{r,i}}\right)^{\gamma_{r,i}},\quad x^\gamma \coloneqq\prod_{r=1}^s\prod_{i=1}^{k_r}x_{r,i}^{\gamma_{r,i}}\in \mathbb{R}.
\]
The latter should not be confused with the previous notation $x^t=\exp\left(\sum_{r=1}^s\sum_{i=1}^{k_r}tx_{r,i}X_{r,i}\right)$ for $t\in \mathbb{R}$.

One can see from the form \eqref{eq:grouplaw1}-\eqref{eq:grouplaw2} of the Baker--Campbell-Hausdorff formula for Carnot groups that we have
\[
X_{r,i}=\frac{\partial}{\partial x_{r,i}}+\sum_{r'=r+1}^s\sum_{j=1}^{k_{r'}}\left(\mathrm{homogeneous~polynomial~of~}\left\{x_{r'',i''}\right\}_{r''<r'}\mathrm{~of~weighted~degree~}r'-r\right)\frac{\partial}{\partial x_{r',j}}.
\]
It follows that $X_{r,i}$ acting on a polynomial of homogeneous weighted degree $d$ will produce either a polynomial of homogeneous weighted degree $d-r$ or the zero polynomial. Hence $X_{r,i}$ acting on a polynomial of weighted degree at most $d$ will produce a polynomial of weighted degree at most $d-r$ (the zero polynomial is defined to have degree $-\infty$). (Recall from subsection \ref{subsec:dorronsoro} that the set of polynomials of weighted degree $\le d$ is left-invariant, and since the vector fields $X_{r,i}$ are left-invariant, the choice of origin is irrelevant in this discussion.) 

We now define the weighted degree of the differential operator $X_{r,i}$ to be $r$, and that of the differential operator $X_{r_1,i_1}\cdots X_{r_n,i_n}$ to be $r_1+\cdots+r_n$. If a nonzero real-linear combination of differential operators of the form $X_{r_1,i_1}\cdots X_{r_n,i_n}$ consists of only those of weighted degree $r$, we say that the sum, which is a left-invariant differential operator, to be of homogeneous weighted degree $r$. Again, a left-invariant differential operator of homogeneous weighted degree $r$ acting on a polynomial of homogeneous weighted degree $d$ will produce either a polynomial of homogeneous weighted degree $d-r$ or the zero polynomial, and acting on a polynomial of weighted degree at most $d$ will produce a polynomial of weighted degree at most $d-r$.

\subsection{$L^1$-optimality of the $A^d_{x,r}f$'s and the $L^1\to W^{n,\infty}$ boundedness of $A^d_{x,r}$}
We begin the proof by discussing some basic properties of $A^d_{x,r}f$ and $\beta_{f,d}\left(B_r\left(x\right)\right)$.

It is clear that $A^d_{x,r}f$ is the optimal $L^2\left(B_r\left(x\right)\right)$-approximation of $f$ in ${\mathcal{A}}_{d}$. It turns out that it is also an optimal $L^1\left(B_r\left(x\right)\right)$-approximation up to constants. To see this, we first observe that for all $d\ge 0$, there exists an integral formula for the coefficients of $A^d_{x,r}f$, given by the Gram--Schmidt process. For example, for $d=1$,
\begin{equation}\label{A1}
A^1_{x,r}f\left(xy\right) = \fint_{B_r\left(x\right)}f\left(z\right)d\mu\left(z\right)+ \sum_{j=1}^{k}\frac{\int_{B_r\left(x\right)} f\left(z\right)\left(z_{1,j}-x_{1,j}\right)d\mu\left(z\right)}{\int_{B_r\left(x\right)} \left(z_{1,j}-x_{1,j}\right)^2 d\mu\left(z\right)}y_{1,j}, \quad y \in G.
\end{equation}
More generally, for each $d\ge 0$, there exists a family $\left\{p^d_\gamma\right\}_{\left|\gamma\right|\le d}$ of polynomials such that
\[
A^d_{0,1}f\left(y\right)=\sum_{\left|\gamma\right|\le d}\left(\int_{B_1}fp^d_\gamma d\mu\right)y^\gamma,\quad y\in G,
\]
and by rescaling we have for $x\in G$ and $r>0$ that
\begin{equation}\label{eq:adxr-integral}
A^d_{x,r}f\left(xy\right)=\sum_{\left|\gamma\right|\le d} r^{-n_h-\left|\gamma\right|}\left(\int_{B_r\left(x\right)}f\left(z\right)p^d_\gamma\left(\delta_{1/r}\left(x^{-1}z\right)\right)d\mu\left(z\right)\right)y^\gamma.
\end{equation}
Therefore
\begin{equation}\label{claim}
\left\|A^{d}_{x,r}f\right\|_{L^\infty\left(B_r\left(x\right)\right)}\lesssim_{G,d}\fint_{B_r\left(x\right)}\left|f\left(y\right)\right|d\mu\left(y\right).
\end{equation}
In other words, $A^d_{x,r}$ is a linear projection of $L^1\left(B_r\left(x\right)\right)$ onto $\left.\mathcal{A}_d\right|_{B_r\left(x\right)}$ which is bounded in the $L^1\to L^\infty$ norm. 

We now prove the $L^1$-optimality of the $A^d_{x,r}f$'s.
\begin{lemma}\label{L1-approx}
For $x\in G$, $r>0$, $f\in L^1_{\mathrm{loc}}\left(B_r\left(x\right)\right)$, and $d\in \mathbb{Z}_{\ge 0}$,
\[
\fint_{B_r\left(x\right)}\left|f\left(y\right)-A^d_{x,r}f\left(y\right)\right|d\mu\left(y\right) \lesssim_{G,d}\fint_{B_r\left(x\right)}\left|f\left(y\right)-A\left(y\right)\right|d\mu\left(y\right),\quad A\in \mathcal{A}_d.
\]
\end{lemma}
\begin{proof}
From \eqref{claim} and $A^d_{x,r}A=A$,
\[
\fint_{B_r\left(x\right)}\left|f-A^d_{x,r}f\right|d\mu\le \fint_{B_r\left(x\right)}\left|f-A\right|d\mu+\fint_{B_r\left(x\right)}\left|A^d_{x,r}\left(f-A\right)\right|d\mu\lesssim_{G,d} \fint_{B_r\left(x\right)}\left|f-A\right|d\mu.
\]
\end{proof}
It follows that $\beta$ possesses a weak monotonicity property.

\begin{corollary}\label{monotoneBeta} Let $x,y \in G$, $0<r<s<\infty$, $d\in\mathbb{Z}_{\ge 0}$ be such that $B_r\left(x\right)\subset B_s\left(y\right)$. Let $f\in L^1\left(B_s\left(y\right)\right)$. Then
\[
\beta_{f,d}\left(B_r\left(x\right)\right) \lesssim_{G,d} \left(\frac{s}{r}\right)^{n_h}\beta_{f,d}\left(B_s\left(y\right)\right).
\]
\end{corollary}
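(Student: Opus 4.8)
The plan is to deduce the desired inequality directly from Lemma \ref{L1-approx}. Since $B_r(x)\subset B_s(y)$, the polynomial $A^d_{y,s}f$ restricts to a member of $\mathcal{A}_d$ on $B_r(x)$, so by Lemma \ref{L1-approx} applied with this choice of approximant we have
\[
\fint_{B_r(x)}|f-A^d_{x,r}f|\lesssim_{G,d}\fint_{B_r(x)}|f-A^d_{y,s}f|.
\]
The left-hand side is exactly $\beta_{f,d}(B_r(x))$, so it remains to bound $\fint_{B_r(x)}|f-A^d_{y,s}f|$ by a multiple of $\beta_{f,d}(B_s(y))=\fint_{B_s(y)}|f-A^d_{y,s}f|$.

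First I would pass from the average over $B_r(x)$ to the average over $B_s(y)$ by enlarging the domain of integration: since $B_r(x)\subset B_s(y)$,
\[
\fint_{B_r(x)}|f-A^d_{y,s}f|=\frac{1}{\mu(B_r(x))}\int_{B_r(x)}|f-A^d_{y,s}f|\le \frac{\mu(B_s(y))}{\mu(B_r(x))}\fint_{B_s(y)}|f-A^d_{y,s}f|.
\]
Then I would invoke the Ahlfors regularity of the Haar measure on a Carnot group, namely $\mu(B_\rho(p))\asymp_G \rho^{n_h}$ for all $p\in G$ and $\rho>0$ (this follows from left-invariance of $\mu$ together with the homogeneity $\mu(\delta_\rho(E))=\rho^{n_h}\mu(E)$ and the fact that $B_\rho=\delta_\rho(B_1)$, as recorded in subsection \ref{subsec:dorronsoro}). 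Hence $\mu(B_s(y))/\mu(B_r(x))\asymp_G (s/r)^{n_h}$, and combining the three displays yields
\[
\beta_{f,d}(B_r(x))\lesssim_{G,d}\fint_{B_r(x)}|f-A^d_{y,s}f|\lesssim_G \left(\frac sr\right)^{n_h}\fint_{B_s(y)}|f-A^d_{y,s}f|=\left(\frac sr\right)^{n_h}\beta_{f,d}(B_s(y)),
\]
which is the claim.

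There is essentially no obstacle here: the only mild point to check is the measure ratio estimate, but this is immediate from the scaling structure $B_\rho=\delta_\rho(B_1)$ and left-invariance, both of which are already available. One could equivalently absorb the step ``enlarge domain then use regularity'' into a single observation that $\mu$ is doubling with the correct exponent, but writing it out as above keeps the constants transparent.
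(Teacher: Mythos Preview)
Your proof is correct and is exactly the intended argument: the paper states Corollary \ref{monotoneBeta} as an immediate consequence of Lemma \ref{L1-approx}, and your derivation---applying Lemma \ref{L1-approx} with $A=A^d_{y,s}f$, enlarging the domain of integration, and invoking the Ahlfors regularity $\mu(B_\rho)\asymp_G\rho^{n_h}$---is precisely how that deduction goes.
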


By Corollary \ref{monotoneBeta}, we have that
\[
\beta_{f,d}\left(B_r\left(x\right)\right)\lesssim_{G,d} \beta_{f,d}\left(B_s\left(x\right)\right)\lesssim_{G,d} \beta_{f,d}\left(B_{2r}\left(x\right)\right),\quad r\le s\le 2r.
\]
Thus we may approximate $\mathfrak{G}_\alpha f\left(x\right)$ using a series:
\[
\mathfrak{G}_\alpha f\left(x\right)\asymp_{G,\alpha}\left(\sum_{i=-\infty}^\infty \left[2^{-i\alpha}\beta_{f,\lfloor\alpha\rfloor}\left(B_{2^i}\left(x\right)\right)\right]^2\right)^{1/2}.
\]
We also have the following corollary.
\begin{corollary}\label{cor:betaIntConversion}
    Let $x\in G$, $r>0$, $d\in \mathbb{Z}_{\ge 0}$, $\lambda\in \mathbb{R}$, and $m,n\in \mathbb{Z}$ with $m\le n$. Let $f\in L^1\left(B_{2^{n+1}r}\left(x\right)\right)$. Then
    \[
    \sum_{i=m}^n \left(2^ir\right)^\lambda \beta_{f,d}\left(B_{2^ir}\left(x\right)\right)\lesssim_{G,d,\lambda}\int_{2^mr}^{2^{n+1}r}s^\lambda \beta_{f,d}\left(B_s\left(x\right)\right)\frac{ds}{s}.
    \]
\end{corollary}

In a different direction, while \eqref{claim} tells us that $A^d_{x,r}$ is bounded in the $L^1\to L^\infty$ norm, it is also bounded in the $L^1\to W^{n,\infty}$ norms. To prove this, we start with the following.
\begin{lemma}\label{lem:poly-elem}
    Let $p\left(t\right)$ be a real univariate polynomial of $t\in \mathbb{R}$ of degree at most $d$. Then, for $r>0$, we have
    \[
    \left|p'\left(0\right)\right|\lesssim_d r^{-1}\left\|p\right\|_{L^\infty\left[-r,r\right]}.
    \]
\end{lemma}
\begin{proof}
    By possibly replacing $p\left(t\right)$ by $p\left(rt\right)$, we may assume $r=1$, in which case the stated inequality follows from the fact that $L^\infty \left[-1,1\right]$ is a norm on the space of real polynomials of degree at most $d$, the linearity of the map $p\to p'\left(0\right)$, and the finite-dimensionality of the spaces involved.
\end{proof}
Applying this lemma to curves in the Carnot group $G$, we obtain the following.
\begin{lemma}\label{lem:diffpoly}
    Let $x\in G$, $r>0$, $d\in \mathbb{Z}_{\ge 0}$, $s'\in \left\{1,\cdots,s\right\}$, and $i'\in \left\{1,\cdots,k_{s'}\right\}$. Let $P\in \mathcal{A}^d$. Then
    \[
    \left\|X_{s',i'}P\right\|_{L^\infty\left(B_r\left(x\right)\right)}\lesssim_{G,s',d} r^{-s'}\left\|P\right\|_{L^\infty\left(B_r\left(x\right)\right)}.
    \]
\end{lemma}
\begin{proof}
    We may assume $d\ge s'$, for otherwise the left-hand side is zero.
    
    Note that the restriction map $\mathcal{A}^{d-s'}\cap L^\infty\left(B_2\right)\to \mathcal{A}^{d-s'}\cap L^\infty\left(B_1\right)$ is invertible, so there is an extension map $\mathcal{A}^{d-s'}\cap L^\infty\left(B_1\right)\to \mathcal{A}^{d-s'}\cap L^\infty\left(B_2\right)$ which is bounded by finite dimensionality. By translation and scaling, the extension map $\mathcal{A}^{d-s'}\cap L^\infty\left(B_{r/2}\left(x\right)\right)\to \mathcal{A}^{d-s'}\cap L^\infty\left(B_r\left(x\right)\right)$ is bounded with the same constant, so
    \[
    \left\|X_{s',i'}P\right\|_{L^\infty\left(B_r\left(x\right)\right)}\lesssim_{G,d-s'} \left\|X_{s',i'}P\right\|_{L^\infty\left(B_{r/2}\left(x\right)\right)}.
    \]

    Let $y\in B_{r/2}\left(x\right)$. By \eqref{eq:carnot-distance}, there is a constant $c$ depending on $G$ such that $y\cdot \exp\left(tX_{s',i'}\right)\in B_{r/2}\left(y\right)\subset B_r\left(x\right)$ whenever $\left|t\right|\le cr^{s'}$. But since $P$ is a polynomial of weighted degree at most $d$, it follows that $p\left(t\right)= P\left(y\cdot \exp\left(tX_{s',i'}\right)\right)$ is a polynomial of $t$ of degree at most $d/s'$, with $X_{s',i'}P\left(y\right)=p'\left(0\right)$. It follows now from Lemma \ref{lem:poly-elem} that
    \[
    \left|X_{s',i'}P\left(y\right)\right|=\left|p'\left(0\right)\right|\lesssim_{G,s',d} r^{-s'}\left\|p\right\|_{L^\infty\left[-cr^{s'},cr^{s'}\right]}\le r^{-s'}\left\|P\right\|_{L^\infty\left(B_r\left(x\right)\right)}.
    \]
\end{proof}

Finally, we have the following.
\begin{lemma}
    Let $x\in G$, $r>0$, and $d\in \mathbb{Z}_{\ge 0}$. Let $X$ be a left-invariant differential operator of weighted order $n$. Then for any $f\in L^1\left(B_{r}\left(x\right)\right)$,
    \begin{equation}\label{polydiff}
\left\|X A^d_{x,r}f\right\|_{L^\infty\left(B_r\left(x\right)\right)}\lesssim_{G,n, d,X}r^{-n}\fint_{B_r\left(x\right)}\left|f\right|d\mu.
\end{equation}
\end{lemma}
\begin{proof}
    By definition, $X$ is a nonzero real-linear combination of differential operators of the form
    \[
    X_{r_1,i_1}\cdots X_{r_m,i_m} \mathrm{~with~} r_1+\cdots+r_m=n,
    \]
    so it is enough to prove \eqref{polydiff} for $X=X_{r_1,i_1}\cdots X_{r_m,i_m}$ with $r_1+\cdots+r_m=n$. By repeated applications of Lemma \ref{lem:diffpoly},
    \begin{align*}
    \left\|X_{r_1,i_1}\cdots X_{r_m,i_m}A^d_{x,r}f\right\|_{L^\infty\left(B_r\left(x\right)\right)}&\lesssim_{G,n,d}r^{-r_1}\left\|X_{r_2,i_2}\cdots X_{r_m,i_m}A^d_{x,r}f\right\|_{L^\infty\left(B_r\left(x\right)\right)}\\
    &\lesssim_{G,n,d}\cdots\\
    &\lesssim_{G,n,d}r^{-r_1-\cdots-r_m}\left\|A^d_{x,r}f\right\|_{L^\infty\left(B_r\left(x\right)\right)}\\
    &\stackrel{\mathclap{\eqref{claim}}}{\lesssim_{G,d}}r^{-n}\fint_{B_r\left(x\right)}\left|f\right|d\mu.
    \end{align*}
\end{proof}

\subsection{Taylor series in Carnot groups}\label{subsec:taylor}
Recall the coordinate system on $G$ where each $y\in G$ is uniquely expressed as $y=\exp\left(\sum_{r=1}^s\sum_{i=1}^{k_r}y_{r,i}X_{r,i}\right)$, $y_{r,i}\in \mathbb{R}$. Let $f:G\to \mathbb{R}$ be smooth and let $x,y\in G$. The one-dimensional Taylor theorem tells us that for $n\in \mathbb{Z}_{\ge 0}$,
\[
f\left(xy\right)=f\left(x\right)+\sum_{j=1}^n \frac{1}{j!}\left.\frac{d^j}{dt^j}\right|_{t=0}f\left(xy^t\right)+\frac{1}{n!}\int_0^1\left(1-t\right)^n\frac{d^{n+1}}{dt^{n+1}}f\left(xy^t\right)dt,
\]
where $y^t=\exp\left(\sum_{r=1}^s\sum_{i=1}^{k_r}ty_{r,i}X_{r,i}\right)$, $t\in \mathbb{R}$, is the path of the one-parameter subgroup generated by $\sum_{r=1}^s\sum_{i=1}^{k_r}y_{r,i}X_{r,i}$; we thus have
\[
f\left(xy\right)=f\left(x\right)+\sum_{j=1}^n \frac{1}{j!}\left(\sum_{r=1}^s\sum_{i=1}^{k_r}y_{r,i}X_{r,i}\right)^jf\left(x\right)+\frac{1}{n!}\int_0^1\left(1-t\right)^n\left(\sum_{r=1}^s\sum_{i=1}^{k_r}y_{r,i}X_{r,i}\right)^{n+1}f\left(xy^t\right)dt.
\]
For the zero multi-index $0$, define $\operatorname{Sym}\left(X^0\right)$ to be the identity map on smooth functions, while for each non-zero multi-index $\gamma=\left(\gamma_{r,i}\right)_{1\le r\le s,~1\le i\le k_r}$, define the differential operator
\begin{equation}\label{eq:sym}
    \operatorname{Sym}\left(X^\gamma\right)\coloneqq\frac{\gamma!}{\left(\left|\gamma\right|_0\right)!}\sum_{\pi\in S_{\left|\gamma\right|_0}}Y_{\pi\left(1\right)}\cdots Y_{\pi\left(\left|\gamma\right|_0\right)},
\end{equation}
where $Y_1,\cdots,Y_{\left|\gamma\right|_0}$ is a sequence of the left-invariant differential operators $X_{r,i}$ where $X_{r,i}$ appears exactly $\gamma_{r,i}$ times. Then $\operatorname{Sym}\left(X^\gamma\right)$ is the `coefficient operator' of $y^\gamma$ in $\frac{\gamma!}{\left(\left|\gamma\right|_0\right)!}\left(\sum_{r=1}^s\sum_{i=1}^{k_r}y_{r,i}X_{r,i}\right)^{\left|\gamma\right|_0}$, i.e., for $j\in \mathbb{Z}_{>0}$,
\[
\frac{1}{j!}\left(\sum_{r=1}^s\sum_{i=1}^{k_r}y_{r,i}X_{r,i}\right)^j=\sum_{\substack{\gamma\mathrm{~multi-index}\\ \left|\gamma\right|_0=j}}\frac{1}{\gamma !}y^\gamma\operatorname{Sym}\left(X^\gamma\right),\quad y_{r,i}\in \mathbb{R}.
\]
With this, the above expression can be rewritten as
\begin{equation}\label{eq:Taylor_integral}
f\left(xy\right)=\sum_{\substack{\gamma\mathrm{~multi-index}\\ \left|\gamma\right|_0\le n}}\frac{1}{\gamma !}\left[\operatorname{Sym}\left(X^\gamma\right)f\left(x\right)\right]y^\gamma +\left(n+1\right)\int_0^1\left(1-t\right)^n \sum_{\substack{\gamma\mathrm{~multi-index}\\ \left|\gamma\right|_0=n+1}}\frac{1}{\gamma !}\left[\operatorname{Sym}\left(X^\gamma\right)f\left(xy^t\right)\right]y^\gamma dt.
\end{equation}
By isolating the terms with multi-index $\gamma$ such that $\left|\gamma\right|\le n$, while noting that $\left|\gamma\right|_0\le \left|\gamma\right|$, we see that there exist for each multi-index $\gamma$ with $\left|\gamma\right|\ge n+1$ and $\left|\gamma\right|_0\le n+1$ a function $q_{x,\gamma}f\in \mathcal{D}$ such that
\begin{equation}\label{eq:Taylor_local}
f\left(xy\right)=\underbrace{\sum_{\left|\gamma\right|\le n}\frac{1}{\gamma !}\left[\operatorname{Sym}\left(X^\gamma\right)f\left(x\right)\right]y^\gamma}_{\eqqcolon T^n_xf\left(xy\right)} + \underbrace{\sum_{\substack{\left|\gamma\right|\ge n+1 \\ \left|\gamma\right|_0\le n+1}}q_{x,\gamma}f\left(y\right)y^\gamma}_{=O_{G,f,n}\left(d_G\left(y,0\right)^{n+1}\right)\mathrm{~as~}y\to 0},\quad y\in G,
\end{equation}
where we denote the first sum to be the \emph{Taylor polynomial of $f$ of weighted degree $n$ at $x\in G$}, while we observe the second term to be $O_{G,f,n}\left(d_G\left(y,0\right)^{n+1}\right)$ as $y\to 0$ because $d_G\left(y,0\right)\asymp_G \sum_{r=1}^s\sum_{i=1}^{k_r}\left|y_{r,i}\right|^{1/r}$. We collect this observation into a lemma.

\begin{lemma}\label{lem:taylor_thm}
    Let $f:G\to\mathbb{R}$ be a smooth function and $n\in \mathbb{Z}_{\ge 0}$. Then, for each $x\in G$,
    \[
    f\left(xy\right)=T^n_xf\left(xy\right)+O_{G,f,n}\left(d_G\left(y,0\right)^{n+1}\right)\quad \mathrm{as~}y\to 0,~y\in G,
    \]
    where we denote
    \begin{equation}\label{eq:Taylor_def}
    T^n_xf\left(xy\right)=\sum_{\left|\gamma\right|\le n}\frac{1}{\gamma !}\left[\operatorname{Sym}\left(X^\gamma\right)f\left(x\right)\right]y^\gamma,\quad y\in G.
    \end{equation}
\end{lemma}

We observe that, as expected, the Taylor polynomial of a polynomial is itself.
\begin{lemma}\label{lem:taylor_identity}
    Let $P$ be a polynomial on $G$ of weighted degree at most $n$. Then $T^n_xP=P$ for every $x\in G$.
\end{lemma}
\begin{proof}
    This follows from setting $f=P$ in \eqref{eq:Taylor_integral}, where the second term on the right-hand side is zero because $\operatorname{Sym}\left(X^\gamma\right)P=0$ whenever $\left|\gamma\right|_0=n+1$, since $\left|\gamma\right|\ge \left|\gamma\right|_0=n+1$. Also, in the first term on the right-hand side of \eqref{eq:Taylor_integral}, any term with $\left|\gamma\right|\ge n+1$ yet $\left|\gamma\right|_0\le n$ vanishes.
\end{proof}
We also remark that the Taylor polynomial of degree $n$ is the unique polynomial approximant of weighted degree $\le n$ that offers an approximation of $O_G\left(d_G\left(y,0\right)^{n+1}\right)$. This statement is equivalent to the following.
\begin{lemma}
    Let $P$ be a polynomial of weighted degree at most $n$. If $\left|P\left(y\right)\right|=o\left(d_G\left(y,0\right)^{n}\right)$ as $y\to 0$, then $P=0$.
\end{lemma}
\begin{proof}
    Suppose $P$ were not $0$. Then, in the reduced form of $P$, there would be monomials with nonzero coefficients; let $d_0\le n$ be the smallest weighted degree among these monomials. We may write $P=P_0+Q$, where $P_0$ consists of the monomials of $P$ of weighted degree $d_0$, and $Q$ consists of those with higher weighted degree, hence $\left|Q\left(y\right)\right|=O\left(d_G\left(y,0\right)^{d_0+1}\right)$ as $y\to 0$. Since $P_0$ is not the zero polynomial, and hence a nonzero function, there is a point $y_0\in G$ with $P_0\left(y_0\right)\neq 0$. Then, since $P_0$ is of weighted degree $d_0$, we have $P_0\left(\delta_\lambda\left(y_0\right)\right)=\lambda^{d_0}P_0\left(y_0\right)$ for $\lambda\in \mathbb{R}$ while $Q\left(\delta_\lambda\left(y_0\right)\right)=O\left(\lambda^{d_0+1}\right)$ as $\lambda\to 0$, and hence $\left|P\left(\delta_\lambda\left(y_0\right)\right)\right|\asymp_{P} \lambda^{d_0}$ as $\lambda\to 0$, which together with $d_0\le n$ contradicts our assumption that $\left|P\left(\delta_\lambda\left(y_0\right)\right)\right|=o\left(\lambda^n\right)$ as $\lambda\to 0$.
\end{proof}

We now show that the operation of taking derivatives commutes with taking the Taylor polynomial.
\begin{lemma}\label{lem:taylor_unique}
    Let $f:G\to \mathbb{R}$ be a smooth function and let $d,n\in \mathbb{Z}$ with $1\le d\le n$. Let $Y$ be a left-invariant differential operator of degree $d$. Then for each $x\in G$,
    \[
    YT^n_xf =T^{n-d}_x Yf. 
    \]
    In particular, if $1\le r \le s$, $1\le i\le k_r$, and $d\ge r$, then
    \[
    X_{r,i}T^d_xf=T^{d-r}_x X_{r,i}f.
    \]
\end{lemma}
\begin{proof}
    Differentiating the expression \eqref{eq:Taylor_local}, where we view both sides as functions of $y$, and $Y$ acting with respect to the $y$ variable, we have
    \[
    Yf\left(xy\right)=YT^n_xf\left(xy\right)+\sum_{\substack{\left|\gamma\right|\ge n+1 \\ \left|\gamma\right|_0\le n+1}}Y\left(q_{x,\gamma}f\left(y\right)y^\gamma\right).
    \]
    In the second term on the right-hand side, applying Leibniz's rule several times, $Y$ differentiates either $q_{x,\gamma}f\left(y\right)$, which results in a smooth function, or $y^\gamma$, which results in a polynomial of lesser degree. This results in the second term on the right-hand side being $O\left(d_G\left(y,0\right)^{n-d+1}\right)$ as $y\to 0$, i.e.,
    \[
    Yf\left(xy\right)=YT^n_xf\left(xy\right)+O\left(d_G\left(y,0\right)^{n-d+1}\right)\quad\mathrm{as~}y\to 0.
    \]
    By Lemma \ref{lem:taylor_thm} applied to the function $Yf$ up to degree $n-d$, we have
    \[
    Yf\left(xy\right)=T^{n-d}_x Yf\left(xy\right)+O\left(d_G\left(y,0\right)^{n-d+1}\right)\quad\mathrm{as~}y\to 0.
    \]
    and subtracting gives
    \[
    0=\left(YT^n_xf\left(xy\right)-T^{n-d}_x Yf\left(xy\right)\right)+O\left(d_G\left(y,0\right)^{n-d+1}\right)\quad\mathrm{as~}y\to 0.
    \]
    Since $YT^n_xf-T^{n-d}_xYf$ is a polynomial of degree at most $n-d$, it follows from Lemma \ref{lem:taylor_unique} that $YT^n_xf=T^{n-d}_xYf$.
\end{proof}

We now give an example to illustrate the Taylor polynomial in the Heisenberg group.
\begin{example}
        For the 3-dimensional Heisenberg group $\mathbb{H}^3$, recall from Remark \ref{remark:noHori} that the left-invariant vector fields are given by
        \[
X=\frac{\partial}{\partial x}+\frac{y}{2} \frac{\partial}{\partial z},\quad Y=\frac{\partial}{\partial y}-\frac{x}{2} \frac{\partial}{\partial z},\quad Z=\frac{\partial}{\partial z}.
\]
Denote $g=\left(x_0,y_0,z_0\right)\in \mathbb{H}^3$, $h=\left(x_1,y_1,z_1\right)\in\mathbb{H}^3$. Since
\[
\operatorname{Sym}\left(X\right)=X,\quad \operatorname{Sym}\left(Y\right)=Y,
\]
we have, by \eqref{eq:Taylor_def}, for a smooth function $f:\mathbb{H}^3\to \mathbb{R}$,
\[
T^0_gf\left(gh\right)=f\left(g\right),\quad T^1_gf\left(gh\right)=f\left(g\right)+\left(\frac{\partial f}{\partial x}\left(g\right)+\frac{y_0}{2} \frac{\partial f}{\partial z}\left(g\right)\right)x_1+\left(\frac{\partial f}{\partial y}\left(g\right)-\frac{x_0}{2} \frac{\partial f}{\partial z}\left(g\right)\right)y_1.
\]
Setting $f\left(x,y,z\right)=x$ or $f\left(x,y,z\right)=y$ gives $T^1_gf=f$.
Since
\[
\operatorname{Sym}\left(X^2\right)=X^2=\frac{\partial^2}{\partial x^2}+y\frac{\partial^2}{\partial x \partial z}+\frac{y^2}{4}\frac{\partial^2}{\partial z^2},\quad \operatorname{Sym}\left(Y^2\right)=Y^2=\frac{\partial^2}{\partial y^2}-x\frac{\partial^2}{\partial y \partial z}+\frac{x^2}{4}\frac{\partial^2}{\partial z^2},
\]
and
\[
\operatorname{Sym}\left(XY\right)=\frac 12\left(XY+YX\right)=\frac{\partial^2}{\partial x\partial y}-\frac x2
\frac{\partial^2}{\partial x\partial z}+\frac y2\frac{\partial^2}{\partial y \partial z}-\frac{xy}{4}\frac{\partial^2}{\partial z^2},\quad \operatorname{Sym}\left(Z\right)=Z=\frac{\partial}{\partial z},
\]
we have, by \eqref{eq:Taylor_def}, for a smooth function $f:\mathbb{H}^3\to \mathbb{R}$,
\begin{align*}
T^2_gf\left(gh\right)=&f\left(g\right)+\left(\frac{\partial f}{\partial x}\left(g\right)+\frac{y_0}{2} \frac{\partial f}{\partial z}\left(g\right)\right)x_1+\left(\frac{\partial f}{\partial y}\left(g\right)-\frac{x_0}{2} \frac{\partial f}{\partial z}\left(g\right)\right)y_1\\
&+\frac 12 \left(\frac{\partial^2f}{\partial x^2}\left(g\right)+y_0\frac{\partial^2f}{\partial x \partial z}\left(g\right)+\frac{y_0^2}{4}\frac{\partial^2f}{\partial z^2}\left(g\right)\right)x_1^2\\
&+\left(\frac{\partial^2f}{\partial x\partial y}\left(g\right)-\frac {x_0}2
\frac{\partial^2f}{\partial x\partial z}\left(g\right)+\frac {y_0}2\frac{\partial^2f}{\partial y \partial z}\left(g\right)-\frac{x_0y_0}{4}\frac{\partial^2f}{\partial z^2}\left(g\right)\right)x_1y_1\\
&+\frac 12 \left(\frac{\partial^2f}{\partial y^2}\left(g\right)-x_0\frac{\partial^2f}{\partial y \partial z}\left(g\right)+\frac{x_0^2}{4}\frac{\partial^2f}{\partial z^2}\left(g\right)\right)y_1^2\\
&+\frac{\partial f}{\partial z}\left(g\right) z_1.
\end{align*}
One can verify that $T^2_gf=f$ when $f\left(x,y,z\right)=x^2$, $y^2$, $xy$, or $z$. As a demonstration of Lemma \ref{lem:taylor_unique}, we have
\begin{align*}
XT^2_gf\left(gh\right)=&\left(\frac{\partial f}{\partial x}\left(g\right)+\frac{y_0}{2} \frac{\partial f}{\partial z}\left(g\right)\right)\\
&+\left(\frac{\partial^2f}{\partial x^2}\left(g\right)+y_0\frac{\partial^2f}{\partial x \partial z}\left(g\right)+\frac{y_0^2}{4}\frac{\partial^2f}{\partial z^2}\left(g\right)\right)x_1\\
&+\left(\frac{\partial^2f}{\partial x\partial y}\left(g\right)-\frac {x_0}2
\frac{\partial^2f}{\partial x\partial z}\left(g\right)+\frac {y_0}2\frac{\partial^2f}{\partial y \partial z}\left(g\right)-\frac{x_0y_0}{4}\frac{\partial^2f}{\partial z^2}\left(g\right)+\frac{1}{2}\frac{\partial f}{\partial z}\left(g\right)\right)y_1\\
=&T_g^1Xf\left(gh\right).
\end{align*}
    \end{example}
We are now ready to prove Theorem \ref{lpgenthm}, the Dorronsoro theorem for Carnot groups.
\subsection{Proof of the $\lesssim$ direction, $q=1$}\label{subsec:leQ=1}

\subsubsection{Reduction to smooth compactly supported functions}\label{subsubsec:smooth}
It is enough to prove the $\lesssim$ statement of Theorem \ref{lpgenthm} for $f$ in the space $\mathcal{D}$ of smooth compactly supported functions on $G$. Indeed, suppose we had proven the $\lesssim$ statement of Theorem \ref{lpgenthm} for $f\in \mathcal{D}$. By \cite[Theorem 4.5]{folland1975subelliptic}, $\mathcal{D}$ is dense in $S^p_\alpha$ for all $1<p<\infty$ and $\alpha\ge 0$. Given $f\in S^p_\alpha$, choose a sequence $\left\{f_j\right\}_{j=1}^\infty\subset \mathcal{D}$ that converges to $f$ in  $S^p_\alpha$. Then $f_j\to f$ in $L^p\left(G\right)$, so by the contraction property \eqref{claim} we have
\begin{displaymath} \beta_{f_j,\lfloor{\alpha}\rfloor}\left(B_r\left(x\right)\right) \to \beta_{f,\lfloor{\alpha}\rfloor}\left(B_r\left(x\right)\right)\mbox{ as }j\to \infty, \quad x\in G,~ r>0. \end{displaymath}
It follows that
\[
\left\|\mathfrak{G}_\alpha f\right\|_{L^p\left(G\right)} \le \liminf_{j\to\infty} \left\|\mathfrak{G}_\alpha f_j\right\|_{L^p\left(G\right)} \lesssim_{G,\alpha,p} \liminf_{j\to\infty} \left\|\left(-\Delta_p\right)^{\alpha/2}f_j\right\|_{L^p\left(G\right)} = \left\|\left(-\Delta_p\right)^{\alpha/2}f\right\|_{L^p\left(G\right)}.
\]
where we have used Fatou's lemma twice in the first inequality, and our assumption that the $\lesssim$ statement of Theorem \ref{lpgenthm} holds for $\mathcal{D}$ in the second inequality. This completes the proof of our claim.

\subsubsection{Approximation by Taylor polynomials}\label{subsubsec:taylor}
Recall the notation for Taylor series used in subsection \ref{subsec:taylor}.
We may define the $\beta$-numbers and $\mathfrak{G}$-function using the Taylor polynomial instead:
\[
\tilde{\beta}_{f,d}\left(B_r\left(x\right)\right)\coloneqq \fint_{B_r\left(x\right)}\left|f-T^{\lfloor\alpha\rfloor}_{x}f\right|d\mu,
\]
and
\[
\tilde{\mathfrak{G}}_\alpha f\left(x\right)\coloneqq \left(\int_0^\infty \left(\frac{1}{r^\alpha}\tilde{\beta}_{f,\lfloor\alpha\rfloor}\left(B_r\left(x\right)\right)\right)^2 \frac{dr}r\right)^{1/2}, \quad x\in G.
\]
It turns out that when $\alpha$ is nonintegral, the Taylor polynomials $T^d_xf$ work as a proxy for the local best $L^2$-approximants $A^d_{x,r}f$, in the sense that $\mathfrak{G}_\alpha f$ and $\tilde{\mathfrak{G}}_\alpha f$ are pointwise equivalent up to constant multiplicative factors.
\begin{proposition}\label{prop:taylorIsEnough}
If $f\in \mathcal{D}$ and $\alpha$ is nonintegral, then $\mathfrak{G}_\alpha f\left(x\right)\asymp_{G,\alpha} \tilde{\mathfrak{G}}_\alpha f\left(x\right)$ for all $x\in G$.
\end{proposition}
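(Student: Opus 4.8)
The plan is to establish the two inequalities $\mathfrak{G}_\alpha f(x)\lesssim_{G,\alpha}\tilde{\mathfrak{G}}_\alpha f(x)$ and $\tilde{\mathfrak{G}}_\alpha f(x)\lesssim_{G,\alpha}\mathfrak{G}_\alpha f(x)$ separately, writing $d:=\lfloor\alpha\rfloor$ throughout. For the first, observe that since left translations preserve weighted degree, the function $g\mapsto T^d_x f(g)$ lies in $\mathcal{A}_d$; hence Lemma \ref{L1-approx} gives $\beta_{f,d}(B_r(x))\le C_{G,d}\,\fint_{B_r(x)}|f-T^d_x f|=C_{G,d}\,\tilde\beta_{f,d}(B_r(x))$ for every $r>0$, and integrating the square against $r^{-2\alpha}\,dr/r$ yields $\mathfrak{G}_\alpha f(x)\lesssim_{G,\alpha}\tilde{\mathfrak{G}}_\alpha f(x)$.

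For the reverse inequality I would first record the elementary scaling facts for polynomials of weighted degree $\le d$: writing $B_r(x)=x\delta_r(B_1)$ and using finite-dimensionality, every such polynomial $P$ satisfies $\|P\|_{L^\infty(B_r(x))}\asymp_{G,d}\fint_{B_r(x)}|P|$ and $\|P\|_{L^\infty(B_R(x))}\lesssim_{G,d}(R/r)^{d}\|P\|_{L^\infty(B_r(x))}$ for $R\ge r$. Combining the first with Corollary \ref{monotoneBeta} and $|B_r(x)|=r^{n_h}|B_1|$ gives, for $j\ge 0$,
\[
\big\|A^d_{x,2^{-j-1}r}f-A^d_{x,2^{-j}r}f\big\|_{L^\infty(B_r(x))}\lesssim_{G,d}2^{(j+1)d}\,\beta_{f,d}(B_{2^{-j}r}(x)),
\]
since the left-hand polynomial has $L^1$-average over $B_{2^{-j-1}r}(x)$ at most $\beta_{f,d}(B_{2^{-j}r}(x))+\beta_{f,d}(B_{2^{-j-1}r}(x))\lesssim_{G,d}\beta_{f,d}(B_{2^{-j}r}(x))$, and passing from $B_{2^{-j-1}r}(x)$ to $B_r(x)$ costs the factor $2^{(j+1)d}$. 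Next, because $f\in\mathcal{D}$, the Taylor expansion of subsubsection \ref{subsubsec:taylor} gives $\tilde\beta_{f,d}(B_\rho(x))=o(\rho^d)$ as $\rho\to 0$ (in fact $O_f(\rho^{d+1})$), so $\fint_{B_{2^{-j}r}(x)}|A^d_{x,2^{-j}r}f-T^d_x f|\le\beta_{f,d}(B_{2^{-j}r}(x))+\tilde\beta_{f,d}(B_{2^{-j}r}(x))=o((2^{-j}r)^d)$; by the norm equivalence, every coefficient of the weighted-degree-$\le d$ polynomial $A^d_{x,2^{-j}r}f-T^d_x f$ tends to $0$, i.e. $A^d_{x,2^{-j}r}f\to T^d_x f$ coefficientwise. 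Telescoping $A^d_{x,r}f-T^d_x f=\sum_{j\ge 0}\big(A^d_{x,2^{-j}r}f-A^d_{x,2^{-j-1}r}f\big)$ and inserting the displayed step bound yields
\[
\tilde\beta_{f,d}(B_r(x))\le\beta_{f,d}(B_r(x))+\fint_{B_r(x)}\big|A^d_{x,r}f-T^d_x f\big|\lesssim_{G,d}\sum_{j\ge 0}2^{jd}\,\beta_{f,d}(B_{2^{-j}r}(x)).
\]

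Finally I would discretise both Littlewood--Paley functions dyadically: the weak monotonicity $\tilde\beta_{f,d}(B_r(x))\lesssim_{G}(s/r)^{n_h}\tilde\beta_{f,d}(B_s(x))$ when $B_r(x)\subset B_s(x)$ holds verbatim as in Corollary \ref{monotoneBeta} (indeed more simply, the approximating polynomial being fixed), so $\tilde{\mathfrak{G}}_\alpha f(x)^2\asymp_{G}\sum_{i\in\mathbb{Z}}\big[2^{-i\alpha}\tilde\beta_{f,d}(B_{2^i}(x))\big]^2$ and likewise for $\mathfrak{G}_\alpha f(x)$. Substituting the previous display, setting $l=i-j$ and $a_l:=2^{-l\alpha}\beta_{f,d}(B_{2^l}(x))$, one obtains
\[
\tilde{\mathfrak{G}}_\alpha f(x)^2\lesssim_{G,d}\sum_{i\in\mathbb{Z}}2^{-2i(\alpha-d)}\Big(\sum_{l\le i}2^{l(\alpha-d)}a_l\Big)^2 .
\]
This is exactly where the hypothesis enters: $\alpha\notin\mathbb{Z}$ means $\varepsilon:=\alpha-d=\alpha-\lfloor\alpha\rfloor>0$, so the one-sided discrete Hardy inequality (Cauchy--Schwarz against the geometric weight $2^{l\varepsilon}$ in the inner sum, then summing a geometric series in $i$) bounds the right-hand side by $C_\varepsilon\sum_{l}a_l^2\asymp_G\mathfrak{G}_\alpha f(x)^2$, giving $\tilde{\mathfrak{G}}_\alpha f(x)\lesssim_{G,\alpha}\mathfrak{G}_\alpha f(x)$ and finishing the proof. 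The main technical burden is the chain of polynomial scaling estimates together with the identification $\lim_{j\to\infty}A^d_{x,2^{-j}r}f=T^d_x f$; the role of non-integrality is precisely to keep this Hardy summation convergent, and for integral $\alpha$ the weight $2^{l\varepsilon}$ degenerates and the comparison genuinely fails.
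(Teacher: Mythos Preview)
Your proof is correct and follows essentially the same route as the paper: both directions use Lemma \ref{L1-approx} for $\mathfrak{G}_\alpha\lesssim\tilde{\mathfrak{G}}_\alpha$, and for the reverse both telescope $A^d_{x,r}f-T^d_xf$ over dyadic scales (using smoothness of $f$ to identify the limit) and close with a Hardy-type inequality that converges precisely when $\alpha-d>0$. The only cosmetic difference is that the paper tracks each coefficient $f_\gamma(x,r)-f_\gamma(x)$ separately (getting a bound with weight $u^{-|\gamma|}$ and applying the continuous Hardy inequality once per degree $0\le i\le d$), whereas you bound the whole polynomial in $L^\infty(B_r(x))$ at the worst weight $2^{jd}$ and apply a single discrete Hardy/Young inequality; since the binding constraint in the paper's version is also the top-degree term $i=d$, the two packagings are equivalent.
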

\begin{proof}
That $\mathfrak{G}_\alpha f\left(x\right)\lesssim_{G,\alpha} \tilde{\mathfrak{G}}_\alpha f\left(x\right)$ follows from Lemma \ref{L1-approx}.
Denote $d=\lfloor\alpha\rfloor$ and
\[
A^d_{x,r}f\left(xy\right)=\sum_{\left|\gamma\right|\le d} f_\gamma\left(x,r\right)y^\gamma,
\]
where the coefficients are given by Lemma \ref{lem:taylor_identity}:
\[
f_\gamma \left(x,r\right)=\frac{1}{\gamma!}\operatorname{Sym}\left(X^\gamma\right)A^d_{x,r}f\left(x\right).
\]
Also denote
\[
T^d_{x}f\left(xy\right)=\sum_{\left|\gamma\right|\le d} f_\gamma \left(x\right)y^\gamma,
\]
then, in the case $f\in \mathcal{D}$, $f_\gamma \left(x,r\right)\to f_\gamma \left(x\right)$ as $r\to 0$ by Taylor expansion. Indeed, recalling that $f\left(xy\right)=T^d_{x}f\left(xy\right)+O_f\left(d_G\left(y,0\right)^{d+1}\right)$,
\[
\left\|A^d_{x,r}f-T^d_xf\right\|_{L^\infty\left(B_r\left(x\right)\right)}=\left\|A^d_{x,r}\left[f-T^d_xf\right]\right\|_{L^\infty\left(B_r\left(x\right)\right)}\stackrel{\mathclap{\eqref{claim}}}\lesssim_{G,d} \fint _{B_r\left(x\right)}\left|f-T^d_xf\right|d\mu=O_{G,d,f}\left(r^{d+1}\right) \quad \mathrm{as ~}r\to 0.
\]
By the above expansions for $A^d_{x,r}f$ and $T^d_xf$, we have
\[
\left\|\sum_{\left|\gamma\right|\le d}\left(f_\gamma\left(x,r\right)-f_\gamma\left(x\right)\right)r^{\left|\gamma\right|}y^\gamma\right\|_{L^\infty\left(B_1\right)}=\left\|\sum_{\left|\gamma\right|\le d}\left(f_\gamma\left(x,r\right)-f_\gamma\left(x\right)\right)y^\gamma\right\|_{L^\infty\left(B_r\right)}=O_{G,d,f}\left(r^{d+1}\right).
\]
But because the space $\mathcal{A}^d$ of polynomials of weighted degree $\le d$ normed by $L^\infty\left(B_1\right)$ is finite dimensional, all norms are equivalent, and hence
\[
\max_{\left|\gamma\right|\le d}r^{\left|\gamma\right|}\left| f_\gamma\left(x,r\right)-f_\gamma\left(x\right)\right|=O_{G,d,f}\left( r^{d+1}\right).
\]
This completes the proof of the convergence $f_\gamma\left(x,r\right)\to f_\gamma\left(x\right)$ as $r\to 0$.

With this, we provide a different bound on $\left|f_\gamma\left(x,r\right)-f_\gamma\left(x\right)\right|$ using the $\beta$ numbers. For each $x\in G$,
\begin{align}\label{r-approx}
\begin{aligned}
    \left|f_\gamma \left(x,r\right)-f_\gamma\left(x\right)\right|&\le \sum_{i=-\infty}^0\left|f_\gamma \left(x,2^ir\right)-f_\gamma\left(x,2^{i-1}r\right)\right|\\
    &= \frac{1}{\gamma!} \sum_{i=-\infty}^0\left|\operatorname{Sym}\left(X^\gamma\right)\left(A^d_{x,2^ir}f-A^d_{x,2^{i-1}r}f\right)\left(x\right)\right|\\
    &= \frac{1}{\gamma!} \sum_{i=-\infty}^0\left|\operatorname{Sym}\left(X^\gamma\right)A^d_{x,2^{i-1}r}\left(A^d_{x,2^ir}f-f\right)\left(x\right)\right|\\
    &\stackrel{\mathclap{\eqref{polydiff}}}{\lesssim}_{G,\left|\gamma\right|}  \sum_{i=-\infty}^0 2^{-i\left|\gamma\right|}r^{-\left|\gamma\right|}\fint_{B_{2^{i-1}r}\left(x\right)}\left|f-A^d_{x,2^ir}f\right|d\mu\\
    &\lesssim_{G}  \sum_{i=-\infty}^0 2^{-i\left|\gamma\right|}r^{-\left|\gamma\right|}\beta_{f,d}\left(B_{2^ir}\left(x\right)\right)\\
    &\stackrel{\mathclap{\mathrm{Corollary}~\ref{cor:betaIntConversion}}}{\lesssim}_{G,d} \int_0^{2r} \beta_{f,d}\left(B_u\left(x\right)\right)u^{-\left|\gamma\right|}\frac{du}{u}.
\end{aligned}
\end{align}
(With the Cauchy--Schwarz inequality, we may prove that $\left|f_\gamma \left(x,r\right)-f_\gamma\left(x\right)\right|\lesssim_{G,\alpha} r^{\alpha-\left|\gamma\right|}\mathfrak{G}_\alpha f\left(x\right)$ for $\left|\gamma\right|<\alpha$; see the proof of Proposition \ref{prop:measurableTaylor}(1) for the computation. This is weaker than the $O_{G,d,f}\left(r^{d+1-\left|\gamma\right|}\right)$ bound we get using Taylor approximation. This is because the Taylor approximation argument leverages on the fact that $f$ has $\left(d+1\right)$ and higher derivatives, whereas the $\mathfrak{G}_\alpha f$ (in principle) only measures the ``$\alpha$-th derivative'' of $f$ at $x$; however, the latter approach is more natural since we have to work with $\mathfrak{G}_\alpha f$.)

Now we bound
\begin{align*}
\tilde{\beta}_{f,d}\left(B_r\left(x\right)\right)&=\fint_{B_r\left(x\right)}\left|f-T^d_{x}f\right|\\
&\le \fint_{B_r\left(x\right)}\left|f-A^d_{x,r}f\right|+\fint_{B_r\left(x\right)}\left|T^d_{x}f-A^d_{x,r}f\right|\\
&\le \beta_{f,d}\left(B_r\left(x\right)\right)+C\sum_{\left|\gamma\right|\le d}\left|f_\gamma\left(x,r\right)-f_\gamma \left(x\right)\right|r^{\left|\gamma\right|}\\
&\le \beta_{f,d}\left(B_r\left(x\right)\right)+C\sum_{i=0}^d r^i \int_0^{2r} \beta_{f,d}\left(B_u\left(x\right)\right)u^{-i}\frac{du}{u},
\end{align*}
where $C=C_{G,d}$ is a constant depending on $G$ and $d$.
By plugging into the definition of $\tilde{\mathfrak{G}}$ and using the triangle inequality,
\begin{align*}
    \tilde{\mathfrak{G}}_\alpha f\left(x\right)\le \mathfrak{G}_\alpha f\left(x\right)+C\sum_{i=0}^d \left(\int_0^\infty \left[r^{i-\alpha} \int_0^{2r} \beta_{f,d}\left(B_u\left(x\right)\right)u^{-i}\frac{du}{u}\right]^{2}\frac{dr}{r}\right)^{1/2}.
\end{align*}
But by Hardy's inequality \eqref{Hardy}, we have
\begin{align*}
    \tilde{\mathfrak{G}}_\alpha f\left(x\right)&\le \mathfrak{G}_\alpha f\left(x\right)+C\sum_{i=0}^d \frac{1}{\alpha-i}\left(\int_0^\infty \left[\frac{\beta_{f,d}\left(B_r\left(x\right)\right)}{r^\alpha}\right]^2 \frac{dr}{r}\right)^{1/2}\\
    &\lesssim_{G,\alpha} \mathfrak{G}_\alpha f\left(x\right).
\end{align*}
This is where we use the fact that $\alpha$ is nonintegral.
\end{proof}

\subsubsection{Inhomogeneous estimates to homogeneous estimates}\label{subsubsec:scaling}

We provide one more reduction in the proof of the $\lesssim$ direction of Theorem \ref{lpgenthm}. It is enough to prove the weaker estimate
\[
\left\|\mathfrak{G}_{\alpha}f\right\|_{L^p\left(G\right)} \lesssim_{G,p,\alpha} \left\|f\right\|_{p,\alpha} = \left\|f\right\|_{L^p\left(G\right)} + \left\|\left(-\Delta_p\right)^{\alpha/2}f\right\|_{L^p\left(G\right)},\quad f\in \mathcal{D}.
\]
(In fact, the inequality Dorronsoro proves in \cite[Theorem 2]{dorronsoro1985characterization} is
\[
\left\|f\right\|_{L^p\left(G\right)}+\left\|\mathfrak{G}_{\alpha}f\right\|_{L^p\left(G\right)} \asymp_{\alpha,p} \left\|f\right\|_{p,\alpha}
\]
when $G=\mathbb{R}^n$.) This is because a dimensional analysis reveals that the above inequality is inhomogeneous, so by exploiting scaling properties we may remove the inhomogeneous term $\left\|f\right\|_{L^p\left(G\right)}$.
\begin{lemma}
For $1<p<\infty$ and $\alpha>0$, if we have the estimate,
\begin{equation}\label{inhom} \left\|\mathfrak{G}_\alpha f\right\|_{L^p\left(G\right)} \le C_{G,p,\alpha} \left\|f\right\|_{p,\alpha},\quad f \in \mathcal{D},
\end{equation}
then with the same constant $C_{G,p,\alpha}$,
\begin{equation}\label{hom} \left\|\mathfrak{G}_\alpha f\right\|_{L^p\left(G\right)} \le C_{G,p,\alpha}\left\|\left(-\Delta_p\right)^{\alpha/2} f\right\|_{L^p\left(G\right)},\quad f\in \mathcal{D}.
\end{equation}
\end{lemma}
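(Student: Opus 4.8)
The plan is to exploit the exact homogeneity of all three quantities in \eqref{inhom} under the dilations $\delta_\lambda$ and then let $\lambda\to\infty$ to kill the inhomogeneous term $\|f\|_{L^p(G)}$.

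First I would record how each quantity transforms when $f\in\mathcal{D}$ is replaced by $f\circ\delta_\lambda$, which again lies in $\mathcal{D}$ because $\delta_\lambda$ is a diffeomorphism of $G$ carrying compact sets to compact sets. Since $\mu$ is the push-forward of Lebesgue measure under $\exp$, one has $\mu(\delta_\lambda E)=\lambda^{n_h}\mu(E)$, so $\|f\circ\delta_\lambda\|_{L^p(G)}=\lambda^{-n_h/p}\|f\|_{L^p(G)}$. Each $X_i$ is homogeneous of weighted degree $1$, hence $\Delta(g\circ\delta_\lambda)=\lambda^2(\Delta g)\circ\delta_\lambda$ and, by uniqueness of heat flow, $H_t(f\circ\delta_\lambda)=(H_{\lambda^2 t}f)\circ\delta_\lambda$; inserting these into the defining integral of $(-\Delta_p)^{\alpha/2}$ and changing variables $\tau=\lambda^2 t$ gives $(-\Delta_p)^{\alpha/2}(f\circ\delta_\lambda)=\lambda^{\alpha}\big((-\Delta_p)^{\alpha/2}f\big)\circ\delta_\lambda$, so that $\|(-\Delta_p)^{\alpha/2}(f\circ\delta_\lambda)\|_{L^p(G)}=\lambda^{\alpha-n_h/p}\|(-\Delta_p)^{\alpha/2}f\|_{L^p(G)}$. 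For the left-hand side, the point is that $\delta_\lambda$ maps $B_r(x)$ onto $B_{\lambda r}(\delta_\lambda x)$ and maps $\mathcal{A}_{\lfloor\alpha\rfloor}$ bijectively onto itself, since $A\circ\delta_{1/\lambda}$ is a polynomial of weighted degree $\le\lfloor\alpha\rfloor$ whenever $A$ is; combined with the change of variables $z=\delta_\lambda y$ and the orthogonality defining $A^{\lfloor\alpha\rfloor}_{x,r}$, this gives $A^{\lfloor\alpha\rfloor}_{x,r}(f\circ\delta_\lambda)=\big(A^{\lfloor\alpha\rfloor}_{\delta_\lambda x,\lambda r}f\big)\circ\delta_\lambda$ and hence $\beta_{f\circ\delta_\lambda,\lfloor\alpha\rfloor}(B_r(x))=\beta_{f,\lfloor\alpha\rfloor}(B_{\lambda r}(\delta_\lambda x))$. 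Substituting $u=\lambda r$ in the definition of $\mathfrak{G}_\alpha$ yields $\mathfrak{G}_\alpha(f\circ\delta_\lambda)(x)=\lambda^{\alpha}(\mathfrak{G}_\alpha f)(\delta_\lambda x)$, so $\|\mathfrak{G}_\alpha(f\circ\delta_\lambda)\|_{L^p(G)}=\lambda^{\alpha-n_h/p}\|\mathfrak{G}_\alpha f\|_{L^p(G)}$.

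With these scalings in hand I would apply \eqref{inhom} to $f\circ\delta_\lambda$ and substitute to get
\[
\lambda^{\alpha-n_h/p}\|\mathfrak{G}_\alpha f\|_{L^p(G)}\le C_{G,p,\alpha}\Big(\lambda^{-n_h/p}\|f\|_{L^p(G)}+\lambda^{\alpha-n_h/p}\|(-\Delta_p)^{\alpha/2}f\|_{L^p(G)}\Big).
\]
Dividing by $\lambda^{\alpha-n_h/p}$ leaves $\|\mathfrak{G}_\alpha f\|_{L^p(G)}\le C_{G,p,\alpha}\big(\lambda^{-\alpha}\|f\|_{L^p(G)}+\|(-\Delta_p)^{\alpha/2}f\|_{L^p(G)}\big)$, and letting $\lambda\to\infty$, which is exactly where the hypothesis $\alpha>0$ is used, produces \eqref{hom} with the same constant $C_{G,p,\alpha}$. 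The one step needing genuine care is the exact identity $(-\Delta_p)^{\alpha/2}(f\circ\delta_\lambda)=\lambda^{\alpha}\big((-\Delta_p)^{\alpha/2}f\big)\circ\delta_\lambda$, extracted from the integral representation of $(-\Delta_p)^{\alpha}$ and the semigroup scaling $H_t(f\circ\delta_\lambda)=(H_{\lambda^2 t}f)\circ\delta_\lambda$; the remaining manipulations are routine changes of variables.
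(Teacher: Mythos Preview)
Your proof is correct and follows exactly the paper's approach: apply the inhomogeneous estimate to the dilate $f\circ\delta_\lambda$, use the scaling identities $\|f\circ\delta_\lambda\|_{L^p}=\lambda^{-n_h/p}\|f\|_{L^p}$, $\|\mathfrak{G}_\alpha(f\circ\delta_\lambda)\|_{L^p}=\lambda^{\alpha-n_h/p}\|\mathfrak{G}_\alpha f\|_{L^p}$, and $\|(-\Delta_p)^{\alpha/2}(f\circ\delta_\lambda)\|_{L^p}=\lambda^{\alpha-n_h/p}\|(-\Delta_p)^{\alpha/2}f\|_{L^p}$, then let $\lambda\to\infty$. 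The paper states these scalings without derivation (referring to F\"assler--Orponen for the fractional Laplacian scaling), whereas you supply the details; your identification of the heat-semigroup identity $H_t(f\circ\delta_\lambda)=(H_{\lambda^2 t}f)\circ\delta_\lambda$ as the delicate point is apt.
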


\begin{proof}
Given $f \in \mathcal{D}$, set $f_s\coloneqq f \circ \delta_s \in \mathcal{D}$, for $s>0$, where $\delta_s$ is the Carnot group dilation. It is easy to see that
\[
\begin{cases}
\left\|f_s\right\|_{L^p\left(G\right)} = s^{-n_h/p} \left\|f\right\|_{L^p\left(G\right)},\\
\left\|\mathfrak{G}_\alpha f_s\right\|_{L^p\left(G\right)} =  s^{\alpha-n_h/p}  \left\|\mathfrak{G}_\alpha f\right\|_{L^p\left(G\right)},
\end{cases}\quad s>0.
\]
Also, one may verify
\[
\left\|\left(-\Delta_p\right)^{\alpha/2} f_s\right\|_{L^p\left(G\right)} = s^{\alpha-n_h/p}  \left\|\left(-\Delta_p\right)^{\alpha/2} f\right\|_{L^p\left(G\right)},\quad s>0,
\]
using the definition of $\left(-\Delta_p\right)^{\alpha/2}$; the proof is the same as that of \cite[Lemma 2.6]{fassler2020dorronsoro}.

We now obtain \eqref{hom} for $f$ from \eqref{inhom} for $f_s$ by taking $s\to\infty$.
\end{proof}

\subsubsection{The case $0<\alpha<1$}\label{subsubsec:alpha<1}
We now begin proving the $\lesssim$ direction of Theorem \ref{lpgenthm}.

Fix $1<p<\infty$ and $0<\alpha<1$. Note that we have
\[
\tilde{\mathfrak{G}}_\alpha f\left(x\right)= \left(\int_0^\infty \left[\frac{1}{r^\alpha}\fint_{B_r}\left|f\left(xy\right)-f\left(x\right)\right|d\mu\left(y\right)\right]^2 \frac{dr}{r}\right)^{1/2},\quad f\in \mathcal{D},~x\in G,
\]
and by Proposition \ref{prop:taylorIsEnough}
\[
\mathfrak{G}_\alpha f\left(x\right)\asymp_{G,\alpha}\tilde{\mathfrak{G}}_{\alpha} f\left(x\right),\quad  x\in G.
\]
But by \cite[Theorem 5]{coulhon2001sobolev},
\[
\left\|\tilde{\mathfrak{G}}_\alpha f\right\|_{L^p\left(G\right)}\asymp_{G,\alpha,p} \left\|\left(-\Delta_p\right)^{\alpha/2}f\right\|_{L^p\left(G\right)}, \quad f\in \mathcal{D}.
\]
Therefore $\lesssim$ of Theorem \ref{lpgenthm} follows in this case. Note that we haven't proven both directions of the inequalities of Theorem \ref{lpgenthm} yet due to the restriction $f\in \mathcal{D}$.

\subsubsection{The case $\alpha$ nonintegral, $\alpha>1$}\label{subsubsec:nonintegral>1}

We will reduce to the case $0<\alpha<1$.

For induction, we need the following result. Although we are currently working with $f\in \mathcal{D}$, we state the proposition below for $f\in S_\alpha^p\left(G\right)$, because we will need it later again in the proof of the $\gtrsim$ direction of Dorronsoro's theorem.
\begin{proposition}[{\cite[Theorem 4.10]{folland1975subelliptic}}]\label{prop:inductOnDelta}
Let $1<p<\infty$ and $\alpha> 1$, and let $f\in L^p\left(G\right)$. Then $f\in S^p_\alpha\left(G\right)$ if and only if $f\in S^p_{\alpha-1}\left(G\right)$ and the distributional derivatives $X_if\in S^p_{\alpha-1}\left(G\right)$ for $i=1,\cdots,k$, in which case
\begin{equation}\label{eq:inductOnDelta}
\sum_{j=1}^{k} \left\|\left(-\Delta_p\right)^{\left(\alpha-1\right)/2}X_{j}f\right\|_{L^p\left(G\right)} \asymp_{G,\alpha,p} \left\|\left(-\Delta_p\right)^{\alpha/2}f\right\|_{L^p\left(G\right)}, \quad f\in S_\alpha^p\left(G\right).
\end{equation}
\end{proposition}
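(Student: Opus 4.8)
Since this is \cite[Theorem 4.10]{folland1975subelliptic}, the plan is to recall how it is proved, as it drives the induction on $\alpha$ in the next subsections. The whole statement — both the norm equivalence \eqref{eq:inductOnDelta} and the membership claim — reduces to the $L^p$-boundedness of the first-order Riesz transforms $R_j\coloneqq X_j(-\Delta_p)^{-1/2}$, $j=1,\dots,k$, i.e.\ to $\|R_jg\|_{L^p(G)}\lesssim_{G,p}\|g\|_{L^p(G)}$ for $1<p<\infty$; this is in \cite{folland1975subelliptic}, and also follows from the two-sided bound $\|(-\Delta_p)^{1/2}f\|_{L^p(G)}\asymp_{G,p}\|\nabla f\|_{L^p(G;\ell_2^k)}$ quoted earlier from \cite[(52)]{coulhon2001sobolev} by applying it to $f=(-\Delta_p)^{-1/2}g$. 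First I would reduce to $f\in\mathcal D$, the smooth compactly supported functions, which by \cite[Theorem 4.5]{folland1975subelliptic} form a dense common core for all the operators $(-\Delta_p)^\beta$; on $\mathcal D$ every composition of fractional powers and vector fields below is an absolutely convergent integral of the type defining $(-\Delta_p)^\beta$, and one extends the final estimates by density. The one structural fact to record is the commutation $X_j(-\Delta_p)^\beta=(-\Delta_p)^\beta X_j$ on $\mathcal D$, valid because $X_j$ and $\Delta$ are both left-invariant, so $X_j$ commutes with each heat operator $H_t$.

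With this in place, both inequalities of \eqref{eq:inductOnDelta} are two-line factorizations. For $\lesssim$, I would write for $f\in\mathcal D$
\[
(-\Delta_p)^{(\alpha-1)/2}X_jf=X_j(-\Delta_p)^{(\alpha-1)/2}f=X_j(-\Delta_p)^{-1/2}(-\Delta_p)^{\alpha/2}f=R_j\bigl[(-\Delta_p)^{\alpha/2}f\bigr],
\]
so that $\|(-\Delta_p)^{(\alpha-1)/2}X_jf\|_{L^p(G)}\lesssim_{G,p}\|(-\Delta_p)^{\alpha/2}f\|_{L^p(G)}$, and then sum over $j$. For $\gtrsim$, I would use $-\Delta=-\sum_{j=1}^kX_j^2$ to factor $(-\Delta_p)^{1/2}=(-\Delta_p)^{-1/2}(-\Delta_p)=-\sum_jR_jX_j$, whence
\[
(-\Delta_p)^{\alpha/2}f=(-\Delta_p)^{(\alpha-1)/2}(-\Delta_p)^{1/2}f=-\sum_{j=1}^kR_j(-\Delta_p)^{(\alpha-1)/2}X_jf,
\]
and the boundedness of the $R_j$ gives $\|(-\Delta_p)^{\alpha/2}f\|_{L^p(G)}\lesssim_{G,p}\sum_j\|(-\Delta_p)^{(\alpha-1)/2}X_jf\|_{L^p(G)}$. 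Extending by density yields \eqref{eq:inductOnDelta} for all $f\in S^p_\alpha(G)$.

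For the membership equivalence I would read off which quantities are finite, using the elementary nesting $S^p_\beta(G)\subseteq S^p_\gamma(G)$ for $\beta\ge\gamma\ge 0$ — itself a consequence of $\|(-\Delta_p)^{\gamma/2}g\|_{L^p(G)}\lesssim_{G,\beta,\gamma,p}\|g\|_{L^p(G)}+\|(-\Delta_p)^{\beta/2}g\|_{L^p(G)}$, proved by splitting the defining integral for $(-\Delta_p)^{\gamma/2}$ at $t=1$ and using $\|H_tg\|_{L^p(G)}\le\|g\|_{L^p(G)}$. If $f\in S^p_\alpha(G)$, then $f\in S^p_{\alpha-1}(G)$ by nesting, and $X_jf=R_j(-\Delta_p)^{1/2}f\in L^p(G)$ (legitimate since $\alpha>1$), while the first display gives $(-\Delta_p)^{(\alpha-1)/2}X_jf\in L^p(G)$; hence $X_jf\in S^p_{\alpha-1}(G)$. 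Conversely, if $f\in S^p_{\alpha-1}(G)$ with all $X_jf\in S^p_{\alpha-1}(G)$, the second display writes $(-\Delta_p)^{\alpha/2}f$ as a finite sum of $L^p(G)$ functions, so $f\in S^p_\alpha(G)$. The step I expect to be the real obstacle is the first one: because the $(-\Delta_p)^\beta$ live on $L^p(G)$ and not on $L^2(G)$, the spectral theorem is unavailable, so every formal identity — the commutation with $X_j$, the factorization of $(-\Delta_p)^{1/2}$, the composition law $(-\Delta_p)^{-1/2}(-\Delta_p)^{\alpha/2}=(-\Delta_p)^{(\alpha-1)/2}$ — has to be justified by working on the common core $\mathcal D$ through the subordinated heat semigroup and only then passing to the $L^p$-limit; this, together with the $L^p$-theory of the first-order Riesz transforms on Carnot groups, is precisely the content of \cite[\S\S 3--4]{folland1975subelliptic}.
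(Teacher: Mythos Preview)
Your argument is correct and gives a direct derivation of the homogeneous estimate \eqref{eq:inductOnDelta} from the $L^p$-boundedness of the first-order Riesz transforms $R_j=X_j(-\Delta_p)^{-1/2}$, together with the commutation of $X_j$ with the heat semigroup and the composition law for fractional powers on the core $\mathcal D$. The paper takes a different, shorter route: it does not unpack the proof at all, but quotes Folland's \emph{inhomogeneous} equivalence
\[
\|f\|_{p,\alpha-1}+\sum_{j=1}^{k}\|X_jf\|_{p,\alpha-1}\ \asymp_{G,\alpha,p}\ \|f\|_{p,\alpha}
\]
verbatim as a black box, and then applies the dilation trick of subsubsection \ref{subsubsec:scaling} (replace $f$ by $f\circ\delta_s$ and send $s\to\infty$) to kill the lower-order $\|f\|_{L^p}$ terms on both sides, leaving exactly \eqref{eq:inductOnDelta}. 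Your route has the merit of exposing the mechanism --- in particular it shows that the implied constants come only from the Riesz-transform bounds and hence depend on $G$ and $p$ but not on $\alpha$ --- while the paper's route is quicker and reuses the same homogenization device already set up for the Dorronsoro inequality itself. Either way, the technical justifications you flag at the end (composition of fractional powers, density of $\mathcal D$) are precisely what Folland's \S\S3--4 supply, so both approaches ultimately rest on the same foundation.
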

\begin{remark}
To be precise, Theorem 4.10 of \cite{folland1975subelliptic} states the inhomogeneous estimate
\begin{align*}
&\left\|f\right\|_{L^p\left(G\right)}+\left\|\left(-\Delta_p\right)^{\left(\alpha-1\right)/2}f\right\|_{L^p\left(G\right)}+\sum_{j=1}^{k} \left\|X_jf\right\|_{L^p\left(G\right)}+\left\|\left(-\Delta_p\right)^{\left(\alpha-1\right)/2}X_{j}f\right\|_{L^p\left(G\right)} \\
&\asymp_{G,\alpha,p} \left\|f\right\|_{L^p\left(G\right)}+\left\|\left(-\Delta_p\right)^{\alpha/2}f\right\|_{L^p\left(G\right)}, \quad f\in S_\alpha^p\left(G\right).
\end{align*}
But now \eqref{eq:inductOnDelta} easily follows using the homogenization argument of subsubsection \ref{subsubsec:scaling}.
\end{remark}

The following was inspired by and generalizes \cite[Theorem 5]{dorronsoro1985characterization} and \cite[Proposition 4.2]{fassler2020dorronsoro}.

\begin{proposition}\label{prop:g_alpha_induct}
Let $1 < p < \infty$ and $ \alpha>0 $. Then,
\[
\left\|\mathfrak{G}_{\alpha + 1}f\right\|_{L^p\left(G\right)} \lesssim_{G,\alpha} \sum_{j=1}^k \left\|\mathfrak{G}_{\alpha}\left(X_jf\right)\right\|_{L^p\left(G\right)}, \quad f \in \mathcal{D}. 
\]
\end{proposition}
\begin{proof}
Let $d=\lfloor\alpha\rfloor$. Define a function $\tilde{A}^{d+1}_{x,r}f \in {\mathcal{A}}_{d+1}$ for $x\in G$ and $r>0$ as
\[
\tilde{A}^{d+1}_{x,r}f\left(xy\right)\coloneqq T^{d+1}_xf\left(xy\right)-\fint_{B_r\left(x\right)} T^{d+1}_xf\left(z\right)d\mu\left(z\right)+\fint_{B_r\left(x\right)} f\left(z\right)d\mu\left(z\right).
\]
We choose $C\ge 1$ depending on $G$ such that the weak 1-Poincar\'e inequality due to Jerison \cite{jerison1986poincare} holds:
\[
\fint_{B_s\left(y\right)}\left|g-\fint_{B_s\left(y\right)} g\left(z\right)d\mu\left(z\right)\right|\lesssim_G s\fint_{B_{Cs}\left(y\right)} \left|\nabla g\right|,\quad y\in G, ~s>0,~g\in \mathcal{D}.
\]
As $\fint_{B_r\left(x\right)} \left(f\left(z\right)-\tilde{A}^{d+1}_{x,r}f\left(z\right)\right)d\mu\left(z\right)=0$, we have
\begin{align*}
    \beta_{f,d +1}\left(B_r\left(x\right)\right)~&\stackrel{\mathclap{\mathrm{Lemma }~\ref{L1-approx}}}{\lesssim}_{G,d} \fint_{B_r\left(x\right)}\left|f-\tilde{A}^{d+1}_{x,r}f\right|d\mu \quad\stackrel{\mathclap{\mathrm{weak~1-Poincar\acute{e}}}}{\lesssim}_G \quad r\fint_{B_{Cr}\left(0\right)}\left|\nabla f\left(xy\right)-\nabla T^{d+1}_xf\left(xy\right)\right|d\mu\left(y\right)\\
    &\asymp_G r\sum_{j=1}^k \fint_{B_{Cr}\left(0\right)}\left|X_jf\left(xy\right)-X_jT^{d+1}_xf\left(xy\right)\right|d\mu\left(y\right)\\
&\stackrel{\mathclap{\mathrm{Lemma~}\ref{lem:taylor_unique}}}{=}\quad r\sum_{j=1}^k \fint_{B_{Cr}\left(0\right)}\left|X_jf\left(xy\right)-T^d_xX_jf\left(xy\right)\right|d\mu\left(y\right)\\
&=r\sum_{j=1}^k \tilde{\beta}_{X_jf,d}\left(B_{Cr}\left(x\right)\right).
\end{align*}
Now, by definition,
\begin{align*}
\mathfrak{G}_{\alpha+1}f\left(x\right)&=\left(\int_0^\infty \left[\frac{1}{r^{\alpha+1}}\beta_{f,d+1}\left(B_r\left(x\right)\right)\right]^2\frac{dr}{r}\right)^{1/2}\lesssim_{G,d} \sum_{j=1}^k\left(\int_0^\infty \left[\frac{1}{r^\alpha}\tilde{\beta}_{X_jf,d}\left(B_{Cr}\left(x\right)\right)\right]^2\frac{dr}{r}\right)^{1/2}\\
&= C^\alpha \sum_{j=1}^k\tilde{\mathfrak{G}}_\alpha \left(X_jf\right)\left(x\right),\quad x\in G.
\end{align*}
By Proposition \ref{prop:taylorIsEnough}, the proof is complete.
\end{proof}

If we suppose by induction that the $\lesssim$ statement of Theorem \ref{lpgenthm} holds for all nonintegral $\alpha<d$ given a fixed $d\in \mathbb{Z}_{>0}$, then from the above two Propositions, we have that for $d<\alpha<d+1$ and $f\in \mathcal{D}$,
\[
\left\|\mathfrak{G}_\alpha f\right\|_{L^p\left(G\right)} \stackrel{\mathclap{\mathrm{Proposition~}\ref{prop:g_alpha_induct}}}{\lesssim}_{G,\alpha} ~\sum_{j=1}^k \left\|\mathfrak{G}_{\alpha-1} \left(X_jf\right)\right\|_{L^p\left(G\right)}\stackrel{\mathclap{\mathrm{Induction}}}{\lesssim}_{G,\alpha,p}\sum_{j=1}^k \left\|\left(-\Delta_p\right)^{\left(\alpha-1\right)/2}X_jf\right\|_{L^p\left(G\right)} \stackrel{\mathclap{\mathrm{Proposition~}\ref{prop:inductOnDelta}}}{\asymp}_{G,\alpha,p} \left\|\left(-\Delta_p\right)^{\alpha/2}f\right\|_{L^p\left(G\right)}.
\]
This completes the proof of the $\lesssim$ statement of Theorem \ref{lpgenthm} for $q=1$ and $\alpha$ nonintegral.


\subsubsection{Interpolation and the case $\alpha$ integral}\label{subsubsec:integral}

The case $\alpha=d\in \mathbb{Z}$ is proven by complex interpolation. This subsubsection follows Section 5 of \cite{fassler2020dorronsoro} closely.

For $1<p<\infty$ and $0<\alpha_0<\alpha_1<\infty$, the pair $\left(S^p_{\alpha_0},S^p_{\alpha_1}\right)$ of Banach spaces is an interpolation pair in the sense of Calder\'on, as they embed continuously in the space $\mathcal{S}'\left(G\right)$ of tempered distributions on $G$. Thus, we may define the complex interpolation space $\left[S^p_{\alpha_0},S^p_{\alpha_1}\right]_\theta$, $\theta\in \left(0,1\right)$.

The following lemma asserts that we have a continuous embedding
\[
S^p_{\left(1-\theta\right)\alpha_0+\theta\alpha_1}\hookrightarrow \left[S^p_{\alpha_0},S^p_{\alpha_1}\right]_\theta.
\]

\begin{lemma}[{\cite[Lemma 5.1]{fassler2020dorronsoro}}]\label{lem:interpolation}
Let $1<p<\infty$, $0<\alpha_0<\alpha_1<\infty$, and $\theta \in \left(0,1\right)$. Then
\[
\left\|f\right\|_{\left[S^p_{\alpha_0},S^p_{\alpha_1}\right]_\theta}\lesssim_{G,\alpha_0,\alpha_1,p} \left\|f\right\|_{p,\left(1-\theta\right)\alpha_0+\theta\alpha_1}.
\]
\end{lemma}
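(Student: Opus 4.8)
\textbf{Proof plan for Lemma \ref{lem:interpolation}.}

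The statement is a standard ``reiteration/embedding'' fact for complex interpolation of the inhomogeneous Sobolev spaces $S^p_\alpha = \operatorname{Dom}((-\Delta_p)^{\alpha/2})$ on $G$, and the plan is to mimic the classical Euclidean argument (as in Dorronsoro \cite{dorronsoro1985characterization}, adapted in \cite[Lemma 5.1]{fassler2020dorronsoro}) using the holomorphic functional calculus of $1+(-\Delta_p)$. Set $\alpha_\theta = (1-\theta)\alpha_0 + \theta\alpha_1$ and $L = 1 + (-\Delta_p)$; by Folland's work \cite{folland1975subelliptic}, $L$ has a bounded holomorphic functional calculus on $L^p(G)$ (in particular, the imaginary powers $L^{iy}$ are bounded on $L^p(G)$ with norm growing at most polynomially in $1+|y|$, by the Mihlin--H\"ormander-type multiplier theorems for the sub-Laplacian), and $S^p_\alpha$ is (up to equivalence of norms) $\operatorname{Dom}(L^{\alpha/2})$ with $\|f\|_{p,\alpha}\asymp \|L^{\alpha/2}f\|_{L^p(G)}$, since $\|f\|_{L^p}+\|(-\Delta_p)^{\alpha/2}f\|_{L^p}\asymp \|(1+(-\Delta_p))^{\alpha/2}f\|_{L^p}$.

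First I would fix $f$ with $\|f\|_{p,\alpha_\theta}<\infty$, so $g := L^{\alpha_\theta/2}f\in L^p(G)$, and by density we may take $g$ smooth and compactly supported (or just in a dense class), as it suffices to prove the estimate with a uniform constant. The standard construction is to define, for $z$ in the closed strip $\overline{\mathbb{S}} = \{0\le \operatorname{Re} z\le 1\}$, the analytic family
\[
F(z) := e^{(z-\theta)^2}\, L^{\tfrac{\alpha_\theta - (1-z)\alpha_0 - z\alpha_1}{2}} f = e^{(z-\theta)^2}\, L^{\tfrac{(\theta-z)(\alpha_1-\alpha_0)}{2}} g,
\]
where the Gaussian factor $e^{(z-\theta)^2}$ is inserted to force decay as $|\operatorname{Im} z|\to\infty$. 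Then $F(\theta) = f$, and for $z = j + iy$ with $j\in\{0,1\}$ one has $F(j+iy) = e^{(j-\theta+iy)^2} L^{\alpha_j/2 \cdot(\text{real})} \cdot L^{iy(\theta - j)(\alpha_1-\alpha_0)/2} g$; more precisely $L^{\alpha_j/2}F(j+iy) = e^{(j-\theta+iy)^2} L^{i y (\theta-j)(\alpha_1-\alpha_0)/2} g$, so that
\[
\|F(j+iy)\|_{p,\alpha_j} \asymp \|L^{\alpha_j/2}F(j+iy)\|_{L^p(G)} \le e^{(j-\theta)^2 - y^2}\,\big\|L^{i y(\theta-j)(\alpha_1-\alpha_0)/2}\big\|_{L^p\to L^p}\,\|g\|_{L^p(G)}.
\]
Using the polynomial bound on the norms of the imaginary powers, the right-hand side is dominated by $C\, e^{-y^2}(1+|y|)^N \|g\|_{L^p(G)}$, which is uniformly bounded in $y$, so $\sup_{\operatorname{Re} z = j}\|F(z)\|_{p,\alpha_j}\lesssim_{G,\alpha_0,\alpha_1,p}\|g\|_{L^p(G)} = \|f\|_{p,\alpha_\theta}$. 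One must also check that $z\mapsto F(z)$ is continuous and bounded on $\overline{\mathbb S}$ with values in $S^p_{\alpha_0}+S^p_{\alpha_1}$ and holomorphic in the interior — this follows from the holomorphic functional calculus applied to $g$ in a dense class, together with the Gaussian decay. By the definition of the complex interpolation norm, $\|f\|_{[S^p_{\alpha_0},S^p_{\alpha_1}]_\theta}\le \max_{j}\sup_y\|F(j+iy)\|_{p,\alpha_j}\lesssim_{G,\alpha_0,\alpha_1,p}\|f\|_{p,\alpha_\theta}$, which is the claim.

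The main obstacle is obtaining the polynomial-in-$|y|$ operator norm bound for the imaginary powers $L^{iy}$ on $L^p(G)$: this is where one genuinely uses that $G$ is a Carnot group rather than $\mathbb{R}^n$. It is available from the spectral multiplier theory for sub-Laplacians on stratified groups (the relevant statement being that $m(L)$ is bounded on $L^p$, $1<p<\infty$, for $m$ satisfying a Mihlin--H\"ormander condition of sufficiently high order depending on the homogeneous dimension $n_h$, applied to $m(\lambda) = (1+\lambda)^{iy}$ whose derivatives obey $|m^{(k)}(\lambda)|\lesssim_k (1+|y|)^k(1+\lambda)^{-k}$), and this is precisely the content invoked in \cite{folland1975subelliptic} in establishing the functional-calculus properties of $(1+(-\Delta_p))$. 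A secondary routine point is verifying the embedding $S^p_\alpha\hookrightarrow\mathcal{S}'(G)$ so that $(S^p_{\alpha_0},S^p_{\alpha_1})$ is a legitimate Calder\'on interpolation couple; this is immediate since $\mathcal D$ is dense in each $S^p_\alpha$ and $S^p_\alpha$ embeds continuously in $L^p(G)\subset\mathcal S'(G)$. Everything else — the Gaussian regularization trick, the three-lines estimate packaged into the definition of the complex method, and the equivalence $\|f\|_{p,\alpha}\asymp\|(1+(-\Delta_p))^{\alpha/2}f\|_{L^p}$ — is standard and can be cited from \cite{fassler2020dorronsoro,folland1975subelliptic}.
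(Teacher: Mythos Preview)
Your proposal is correct and follows exactly the approach the paper defers to: the paper simply remarks that the proof is identical to that of \cite[Lemma 5.1]{fassler2020dorronsoro}, which is precisely the imaginary-powers-of-$L=1+(-\Delta_p)$ construction with Gaussian regularization that you describe. One harmless slip: in your formula for $L^{\alpha_j/2}F(j+iy)$ the imaginary exponent should be $-iy(\alpha_1-\alpha_0)/2$ (independent of $j$), but this does not affect the argument.
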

The proof is the same as that in \cite[Lemma 5.1]{fassler2020dorronsoro}, since they only use that $G$ is a graded Lie group.

Next, we define, for $\alpha>0$, the Banach space $H_\alpha$ of functions $F:B_1\times\left(0,\infty\right)\to\mathbb{R}$ with norm
\[
\left\|F\right\|_{H_\alpha}\coloneqq \left(\int_0^\infty \left[\frac{1}{r^\alpha}\fint_{B_1}\left|F\left(y,r\right)\right|d\mu\left(y\right)\right]^2\frac{dr}{r}\right)^{1/2}<\infty.
\]
Then, for $1<p<\infty$, we may define the function space $L^p\left(G,H_\alpha\right)$. Now, for $0<\alpha_0<\alpha_1<\infty$,
\[
\left(L^p\left(G,H_{\alpha_0}\right),L^p\left(G,H_{\alpha_1}\right)\right)
\]
is a compatible couple, as both embed continuously into $L_\mathrm{loc}^1\left(G\times B_1\times \left(0,\infty\right)\right)$. As in \cite{dorronsoro1985characterization,fassler2020dorronsoro}, we apply the results of \cite[p. 107, 121]{bergh2012interpolation} so that for $\theta\in \left(0,1\right)$,
\[
\left[L^p\left(G,H_{\alpha_0}\right),L^p\left(G,H_{\alpha_1}\right)\right]_\theta = L^p\left(G,\left[H_{\alpha_0},H_{\alpha_1}\right]_\theta\right)=L^p\left(G,H_{\left(1-\theta\right)\alpha_0+\theta\alpha_1}\right).
\]

Now, for $d-1<\alpha_0<d<\alpha_1<d+1$ (recall $d=\alpha$) and $\theta\in \left(0,1\right)$, consider the linear map $T:S^p_{\alpha_0}+S^p_{\alpha_1}\to L^p\left(G,H_{\alpha_0}\right)+L^p\left(G,H_{\alpha_1}\right)$ given by
\[
f\mapsto Tf\left(x,r,y\right)=f\left(x\delta_r\left(y\right)\right)-A^{d}_{x,r}f\left(x\delta_r\left(y\right)\right).
\]
By the $\lesssim$ direction of Theorem \ref{lpgenthm} for $q=1$ and $\alpha\in \left(d-1,d\right)\cup \left(d,d+1\right)$ (which is now proven), we have
\[
\left\|Tf\right\|_{L^p\left(G,H_{\alpha}\right)}\stackrel{\mathclap{\mathrm{Lemma~}\ref{L1-approx}}}{\lesssim}_{G,d} \left\|\mathfrak{G}_{\alpha} f\right\|_{L^p\left(G\right)}\lesssim_{G,\alpha_0,\alpha_1,p} \left\|f\right\|_{p,\alpha},\quad f\in S^p_{\alpha},~\alpha=\alpha_0,\alpha_1,
\]
i.e., $T:S^p_{\alpha_0}\to L^p\left(G,H_{\alpha_0}\right)$ and $T:S^p_{\alpha_1}\to L^p\left(G,H_{\alpha_1}\right)$ are bounded (we remark that the first inequality need not be an equality, as $d$ may not equal $\lfloor\alpha\rfloor$, so that it is necessary to use Lemma \ref{L1-approx}). By complex interpolation,
\[
T:\left[S^p_{\alpha_0},S^p_{\alpha_1}\right]_\theta \to \left[L^p\left(G,H_{\alpha_0}\right),L^p\left(G,H_{\alpha_1}\right)\right]_\theta=L^p\left(G,H_{\left(1-\theta\right)\alpha_0+\theta\alpha_1}\right)
\]
is bounded.

Now set $\alpha_0=d-1/2$, $\alpha_1=d+1/2$, and $\theta=1/2$. Then
\[
\left\|\mathfrak{G}_d f\right\|_{L^p\left(G\right)}=\left\|Tf\right\|_{L^p\left(G,H_d\right)}\lesssim_{G,d} \left\|f\right\|_{\left[S^p_{d-1/2},S^p_{d+1/2}\right]_{1/2}}\stackrel{\mathclap{\mathrm{Lemma~}\ref{lem:interpolation}}}{\lesssim}_{G,d,p}\left\|f\right\|_{p,d},\quad f\in \mathcal{D}.
\]
This completes the proof of the $\lesssim$ direction of Theorem \ref{lpgenthm} for $\alpha=d\in \mathbb{Z}$.

\subsection{Proof of the $\gtrsim$ direction}\label{subsec:geQ=1}
Now, given an $f\in L^p\left(G\right)$ and $\alpha>0$ with $\mathfrak{G}_\alpha f\in L^p\left(G\right)$, we need to prove that $f\in S^p_\alpha\left(G\right)$ and that the $\gtrsim$ direction of \eqref{form13} holds. By Jensen's inequality 
\[
\beta_{f,\lfloor \alpha \rfloor}\left(B_r\left(x\right)\right)\le \beta_{f,\lfloor \alpha \rfloor,q}\left(B_r\left(x\right)\right),
\]
the $\gtrsim$ direction of \eqref{form13} for $q>1$ would follow from the case $q=1$. We will assume $q=1$ and prove the $\gtrsim$ direction of \eqref{form13} in this subsection.

We first prove the following preparatory result.
\begin{proposition}\label{prop:measurableTaylor}
Let $f\in L^p\left(G\right)$ and $\alpha>0$ with $\mathfrak{G}_\alpha f\in L^p\left(G\right)$. Define the scalars $f_\gamma\left(x,r\right)$ by
\[
A^{\lfloor\alpha\rfloor}_{x,r}f\left(xy\right)=\sum_{\left|\gamma\right|\le \lfloor \alpha \rfloor} f_\gamma\left(x,r\right)y^\gamma,
\]
or equivalently, by \eqref{eq:Taylor_def},
\begin{equation}\label{eq:taylor_f_gamma}
    f_\gamma\left(x,r\right)=\frac{1}{\gamma!}\operatorname{Sym}\left(X^\gamma\right)A^{\lfloor\alpha\rfloor}_{x,r}f\left(x\right),\quad \left|\gamma\right|\le \lfloor\alpha\rfloor.
\end{equation}
Then

\begin{enumerate}
    \item For each multi-index $\gamma$ with $\left|\gamma\right|<\alpha$, we have
\[
\left|f_\gamma \left(x,r\right)-f_\gamma\left(x,s\right)\right|\lesssim_{G,\alpha}r^{\alpha-\left|\gamma\right|}\mathfrak{G}_\alpha f\left(x\right),\quad x\in G,~0<s<r,
\]
and
\[
\left|f_\gamma\left(x,r\right)\right|\lesssim_{G,\alpha} \mathfrak{G}_\alpha f\left(x\right)+Mf\left(x\right),\quad x\in G,~0<r<1,
\]
where $M$ denotes the Hardy--Littlewood maximal operator. In particular, $f_\gamma\left(\cdot,r\right)\in L^p\left(G\right)$ with
\[
\left\|f_\gamma\left(\cdot,r\right)\right\|_{L^p\left(G\right)}\lesssim_{G,\alpha} \left\|\mathfrak{G}_\alpha f\right\|_{L^p\left(G\right)}+\left\|f\right\|_{L^p\left(G\right)}.
\]
    \item For each multi-index $\gamma$ with $\left|\gamma\right|<\alpha$, there exists $f_\gamma\in L^p\left(G\right)$ such that $f_\gamma\left(\cdot,r\right)\to f_\gamma\left(\cdot\right)$ as $r\to 0$, satisfying
    \[
    \left|f_\gamma \left(x,r\right)-f_\gamma\left(x\right)\right|\lesssim_{G,\left|\gamma\right|} \int_{0}^{2r} \beta_{f,\lfloor\alpha\rfloor}\left(B_u\left(x\right)\right)u^{-\left|\gamma\right|}\frac{du}{u}\lesssim_{G,\alpha}r^{\alpha-\left|\gamma\right|}\mathfrak{G}_\alpha f\left(x\right), \quad r>0,~\mathrm{a.e.~}x\in G.
    \]
    \item If $\alpha$ is nonintegral, then
    \[
\left(\int_0^\infty \left(\frac{1}{r^\alpha}\fint_{B_r}\left|f\left(xy\right)-\sum_{\left|\gamma\right|\le \lfloor \alpha \rfloor} f_\gamma\left(x\right)y^\gamma\right|d\mu\left(y\right)\right)^2 \frac{dr}r\right)^{1/2}\asymp_{G,\alpha} \mathfrak{G}_\alpha f\left(x\right), \quad \mathrm{a.e.~}x\in G.
\]
    \item If $\alpha$ is integral and $\left|\gamma\right|=\alpha$, then $\left|f_\gamma\left(x,r\right)\right|\lesssim_{G,\alpha} \log \left(2/r\right)\mathfrak{G}_\alpha f\left(x\right)+Mf\left(x\right)$, $0<r<1$. Thus, $f_\gamma\left(\cdot,r\right)\in L^p\left(G\right)$ with $\left\|f_\gamma\left(\cdot,r\right)\right\|_{L^p\left(G\right)}\lesssim_{G,\alpha} \log\left(2/r\right)\left\|\mathfrak{G}_\alpha f\right\|_{L^p\left(G\right)}+\left\|f\right\|_{L^p\left(G\right)}$.
    \item If $0<\alpha<1$, then $f_0\left(x\right)=f\left(x\right)$ for a.e.~$x\in G$.
\end{enumerate}

\end{proposition}
\begin{proof}Denote $d=\lfloor\alpha\rfloor$.
\begin{enumerate}
    \item Repeating the computation \eqref{r-approx} made in the proof of Proposition \ref{prop:taylorIsEnough} and denoting by $n\ge 0$ the largest nonnegative integer such that $2^ns<r$,
\begin{align*}
    \left|f_\gamma \left(x,r\right)-f_\gamma\left(x,s\right)\right|&\le \sum_{i=1}^n\left|f_\gamma \left(x,2^is\right)-f_\gamma\left(x,2^{i-1}s\right)\right|+\left|f_\gamma \left(x,r\right)-f_\gamma\left(x,2^ns\right)\right|\\
    &= \frac{1}{\gamma!} \sum_{i=1}^n\left|\operatorname{Sym}\left(X^\gamma\right)\left(A^d_{x,2^is}f-A^d_{x,2^{i-1}s}f\right)\left(x\right)\right|+\frac{1}{\gamma!} \left|\operatorname{Sym}\left(X^\gamma\right)\left(A^d_{x,r}f-A^d_{x,2^{n}s}f\right)\left(x\right)\right|\\
    &= \frac{1}{\gamma!} \sum_{i=1}^n\left|\operatorname{Sym}\left(X^\gamma\right)A^d_{x,2^{i-1}s}\left(A^d_{x,2^is}f-f\right)\left(x\right)\right|+\frac{1}{\gamma!} \left|\operatorname{Sym}\left(X^\gamma\right)A^d_{x,2^{n}s}\left(A^d_{x,r}f-f\right)\left(x\right)\right|\\
    &\stackrel{\mathclap{\eqref{polydiff}}}{\lesssim}_{G,\left|\gamma\right|}  \sum_{i=1}^n 2^{-i\left|\gamma\right|}s^{-\left|\gamma\right|}\fint_{B_{2^{i-1}s}\left(x\right)}\left|f-A^d_{x,2^is}f\right|d\mu+2^{-n\left|\gamma\right|}s^{-\left|\gamma\right|}\fint_{B_{2^{n}s}\left(x\right)}\left|f-A^d_{x,r}f\right|d\mu\\
    &\lesssim_{G}  \sum_{i=1}^n 2^{-i\left|\gamma\right|}s^{-\left|\gamma\right|}\beta_{f,d}\left(B_{2^is}\left(x\right)\right)+2^{-n\left|\gamma\right|}s^{-\left|\gamma\right|}\beta_{f,d}\left(B_{r}\left(x\right)\right)\\
    &\stackrel{\mathclap{\mathrm{Corollaries}~\ref{monotoneBeta},\ref{cor:betaIntConversion}}}{\lesssim}_{G,d}\quad \int_s^{2r} \beta_{f,d}\left(B_u\left(x\right)\right)u^{-\left|\gamma\right|}\frac{du}{u}.
\end{align*}
i.e.,
\[
\left|f_\gamma \left(x,r\right)-f_\gamma\left(x,s\right)\right|\lesssim_{G,d} \int_{s}^{2r} \beta_{f,d}\left(B_u\left(x\right)\right)u^{-\left|\gamma\right|}\frac{du}{u},\quad x\in G,~0<s<r.
\]
In particular, by Cauchy--Schwarz,
\begin{align*}
\left|f_\gamma \left(x,r\right)-f_\gamma\left(x,s\right)\right|&\lesssim_{G,d} \left(\int_{s}^{2r}\left[\frac{\beta_{f,d}\left(B_u\left(x\right)\right)}{u^\alpha}\right]^2\frac{du}{u}\right)^{1/2}\left(\int_{s}^{2r}\left(u^{\alpha-\left|\gamma\right|}\right)^2\frac{du}{u}\right)^{1/2}\\
&\lesssim_\alpha r^{\alpha-\left|\gamma\right|}\mathfrak{G}_\alpha f\left(x\right),\quad x\in G,~0<s<r.
\end{align*}
Also, for $0<r<1$, $\left|f_\gamma\left(x,r\right)\right|\le \left|f_\gamma\left(x,r\right)-f_\gamma\left(x,1\right)\right|+\left|f_\gamma\left(x,1\right)\right|\lesssim_{G,\alpha} \mathfrak{G}_\alpha f\left(x\right)+Mf\left(x\right)$, where the latter inequality used the fact that $f_\gamma\left(x,1\right)$ is given as the integral of $f$ multiplied by a polynomial determined by $\gamma$ and $\lfloor\alpha\rfloor$, namely \eqref{eq:adxr-integral}. In particular, this implies that $f_\gamma\left(\cdot,r\right)\in L^p$, and that
\[
\left\|f_\gamma\left(\cdot,r\right)\right\|_{L^p\left(G\right)}\lesssim_{G,\alpha} \left\|\mathfrak{G}_\alpha f\right\|_{L^p\left(G\right)}+\left\|f\right\|_{L^p\left(G\right)}.
\]
    \item By (1) and the fact that $\mathfrak{G}_\alpha f\in L^p\left(G\right)$, $\left\{f_\gamma\left(\cdot,r\right)\right\}_{r>0} $ is a Cauchy sequence in $L^p$ as $r\to 0$. Thus, there exists $f_\gamma\left(\cdot\right)\in L^p\left(G\right)$ so that $f_\gamma \left(\cdot,r\right)\to f_\gamma \left(\cdot\right)$ in $L^p\left(G\right)$ as $r\to 0$. In the proof of (1), we obtained
    \[
    \left|f_\gamma \left(x,r\right)-f_\gamma\left(x,s\right)\right|\lesssim_{G,\left|\gamma\right|} \int_{s}^{2r} \beta_{f,d}\left(B_u\left(x\right)\right)u^{-\left|\gamma\right|}\frac{du}{u}\lesssim_{G,\alpha} r^{\alpha-\left|\gamma\right|}\mathfrak{G}_\alpha f\left(x\right),\quad x\in G,~0<s<r.
    \]
    Taking the limit $s\to 0$ along a sequence such that $f_\gamma\left(x,s\right)\to f_\gamma\left(x\right)$ for a.e.~$x\in G$, we have
    \[
    \left|f_\gamma \left(x,r\right)-f_\gamma\left(x\right)\right|\lesssim_{G,\left|\gamma\right|} \int_{0}^{2r} \beta_{f,d}\left(B_u\left(x\right)\right)u^{-\left|\gamma\right|}\frac{du}{u}\lesssim_{G,\alpha} r^{\alpha-\left|\gamma\right|}\mathfrak{G}_\alpha f\left(x\right),\quad \mathrm{a.e.}~x\in G,~r>0.
    \]
    
    \item The $\gtrsim$ statement follows from Lemma \ref{L1-approx}. For the $\lesssim$ statement, we bound
\begin{align*}
\fint_{B_r}\left|f\left(xy\right)-\sum_{\left|\gamma\right|\le \lfloor \alpha \rfloor} f_\gamma\left(x\right)y^\gamma\right|d\mu&\le \fint_{B_r\left(x\right)}\left|f-A^{\lfloor \alpha \rfloor}_{x,r}f\right|d\mu+\fint_{B_r}\left|\sum_{\left|\gamma\right|\le \lfloor \alpha \rfloor} f_\gamma\left(x\right)y^\gamma-A^{\lfloor \alpha \rfloor}_{x,r}f\left(xy\right)\right|d\mu\left(y\right)\\
&\lesssim_{G,\alpha} \beta_{f,\lfloor \alpha \rfloor}\left(B_r\left(x\right)\right)+\sum_{\left|\gamma\right|\le \lfloor \alpha \rfloor}\left|f_\gamma\left(x,r\right)-f_\gamma \left(x\right)\right|r^{\left|\gamma\right|}\\
&\lesssim_{G,\alpha} \beta_{f,\lfloor \alpha \rfloor}\left(B_r\left(x\right)\right)+\sum_{i=0}^{\lfloor \alpha \rfloor} r^i \int_0^{2r} \beta_{f,\lfloor \alpha \rfloor}\left(B_u\left(x\right)\right)u^{-i}\frac{du}{u},
\end{align*}
where we used (2) in the last inequality. By plugging into the definition of $\mathfrak{G}$ and using the triangle inequality,
\begin{align*}
    &\left(\int_0^\infty \left(\frac{1}{r^\alpha}\fint_{B_r}\left|f\left(xy\right)-\sum_{\left|\gamma\right|\le \lfloor \alpha \rfloor} f_\gamma\left(x\right)y^\gamma\right|d\mu\left(y\right)\right)^2 \frac{dr}r\right)^{1/2}\\
    &\lesssim_{G,\alpha} \mathfrak{G}_\alpha f\left(x\right)+\sum_{i=0}^{\lfloor \alpha \rfloor} \left(\int_0^\infty \left[r^{i-\alpha} \int_0^{2r} \beta_{f,\lfloor \alpha \rfloor}\left(B_u\left(x\right)\right)u^{-i}\frac{du}{u}\right]^{2}\frac{dr}{r}\right)^{1/2}\\
    &\lesssim_{G,\alpha} \mathfrak{G}_\alpha f\left(x\right)+\sum_{i=0}^{\lfloor \alpha \rfloor} \frac{1}{\alpha-i}\left(\int_0^\infty \left[\frac{\beta_{f,\lfloor \alpha \rfloor}\left(B_r\left(x\right)\right)}{r^\alpha}\right]^2 \frac{dr}{r}\right)^{1/2}\\
    &\lesssim_\alpha \mathfrak{G}_\alpha f\left(x\right).
\end{align*}
where in the second inequality we used Hardy's inequality \eqref{Hardy} with $\nu=\alpha-i+\frac 12$ and $p=2$ along with the fact that $\alpha$ is nonintegral.

\item By (1),
\[
\left|f_\gamma \left(x,r\right)-f_\gamma\left(x,1\right)\right|\lesssim_{G,\left|\gamma\right|} \int_{r}^{2} \beta_{f,d}\left(B_u\left(x\right)\right)u^{-\left|\gamma\right|}\frac{du}{u},\quad x\in G, ~0<r<1.
\]
By Cauchy--Schwarz, when $\alpha$ is an integer and $\left|\gamma\right|=\alpha$,
\[
\left|f_\gamma \left(x,r\right)-f_\gamma\left(x,1\right)\right|\lesssim_{G,\left|\gamma\right|} \left(\int_{r}^{2}\left[\frac{\beta_{f,d}\left(B_u\left(x\right)\right)}{u^\alpha}\right]^2\frac{du}{u}\right)^{1/2}\left(\int_{r}^{2}\frac{du}{u}\right)^{1/2}\lesssim_\alpha \log\left(2/r\right)\mathfrak{G}_\alpha f\left(x\right),
\]
and therefore $\left|f_\gamma\left(x,r\right)\right|\lesssim_{G,\alpha}\log\left(2/r\right)\mathfrak{G}_\alpha f\left(x\right)+Mf\left(x\right)$. It follows that $f_\gamma\left(\cdot,r\right)\in L^p\left(G\right)$ with
\[
\left\|f_\gamma\left(\cdot,r\right)\right\|_{L^p\left(G\right)}\lesssim_{G,\alpha} \log\left(2/r\right)\left\|\mathfrak{G}_\alpha f\right\|_{L^p\left(G\right)}+\left\|f\right\|_{L^p\left(G\right)}.
\]
\item If $0<\alpha<1$, then $f_0\left(x,r\right)=\fint_{B_r\left(x\right)}f$. Taking $r\to 0$, the claim follows by the Lebesgue differentiation theorem for doubling metric measure spaces \cite[(3.4.10)]{heinonen2015sobolev}.
\end{enumerate}

\end{proof}
\begin{remark}
It seems likely that for $\left|\gamma\right|<\alpha$ we have $f_\gamma(x)=\frac{1}{\gamma!}\operatorname{Sym}\left(X^\gamma\right)f(x)$ for a.e.~$x\in G$, with the latter derivative existing in the distributional sense. This is evident when $0<\alpha<1$ and $\gamma=0$ as statement (5) above, and we will prove this holds when $\alpha>1$ and $\left|\gamma\right|=1$ in Lemma \ref{lem:weakFirstDeriv}. We will not prove this statement in full generality since we do not need it in this paper.
\end{remark}

\subsubsection{The case $0<\alpha\le 1$}\label{subsubsec:gtrsimalphale1}
The case $0<\alpha<1$ is a repetition and strengthening of subsubsection \ref{subsubsec:alpha<1}. Recalling
\[
\tilde{\mathfrak{G}}_\alpha f\left(x\right)= \left(\int_0^\infty \left[\frac{1}{r^\alpha}\fint_{B_r}\left|f\left(xy\right)-f\left(x\right)\right|d\mu\left(y\right)\right]^2 \frac{dr}{r}\right)^{1/2},\quad x\in G,
\]
by Proposition \ref{prop:measurableTaylor} (3) and (5) we have
\[
\mathfrak{G}_\alpha f\left(x\right)\asymp_{G,\alpha}\tilde{\mathfrak{G}}_\alpha f\left(x\right),\quad \mathrm{for~a.e.~} x\in G.
\]
But by \cite[Theorem 5]{coulhon2001sobolev},
\[
\left\|\tilde{\mathfrak{G}}_\alpha f\right\|_{L^p\left(G\right)}\asymp_{G,p} \left\|\left(-\Delta_p\right)^{\alpha/2}f\right\|_{L^p\left(G\right)}.
\]

For $\alpha=1$, it is known from \cite[Theorem 1.4]{de2021mean} that for $0<\alpha<2$, if we define
\[
S_\alpha f\left(x\right)=\left(\int_0^\infty \left[r^{-\alpha}\fint _{B_r}\left|\Delta_y^{2,\mathrm{sym}}f\left(x\right)\right|d\mu\left(y\right)\right]^2\frac{dr}{r}\right)^{1/2},
\]
where we denote the symmetric difference
\[
\Delta_y^{2,\mathrm{sym}}f\left(x\right)=f\left(xy\right)+f\left(xy^{-1}\right)-2f\left(x\right),
\]
then
\[
\left\|f\right\|_{p,\alpha}\asymp_{G,p,\alpha} \left\|f\right\|_{L^p\left(G\right)}+\left\|S_\alpha f\right\|_{L^p\left(G\right)}.
\]
By the scaling argument of subsubsection \ref{subsubsec:scaling}, we obtain
\[
\left\|\left(-\Delta_p\right)^{\alpha/2}f\right\|_{L^p\left(G\right)}\asymp_{G,p,\alpha} \left\|S_\alpha f\right\|_{L^p\left(G\right)}.
\]

Recall that balls are defined as open balls. Given a ball $B_r\left(z_0\right)$ for $z_0\in G$ and $r>0$ and a point $y\in B_r\left(z_0\right)$, we claim that we may find a sequence $\left\{z_n\right\}_{n=1}^\infty$ such that $B_{2^{-n-1}r}\left(z_{n+1}\right)\subset B_{2^{-n}r}\left(z_{n}\right)$ for all $n\in \mathbb{Z}_{\ge 0}$, and $z_n=y$ for all sufficiently large $n$. Indeed, if $d_G\left(y,z_0\right)\le \frac r2$, simply take $z_n=y$ for $n\ge 1$. If $d_G\left(y,z_0\right)>\frac r2$, then take a piecewise smooth path $\gamma:\left[0,\lambda\right]\to G$ of unit speed from $z_0$ to $y$ of length $t\in\left(\frac r2,\frac{3d_G\left(y,z_0\right)+r}{4}\right)$, and take $z_1=\gamma\left(\frac r2\right)$. Then $d_G\left(z_1,z_0\right)\le \frac r2$ so that $B_{r/2}\left(z_1\right)\subset B_r\left(z_0\right)$, while $d_G\left(y,z_1\right)< \frac{3d_G\left(y,z_0\right)-r}{4}<\frac r2$ so that $y\in B_{r/2}\left(z_1\right)$ and
\[
\frac 32 \frac{r-d_G\left(y,z_0\right)}{r}< \frac{r/2-d_G\left(y,z_1\right)}{r/2}.
\]
We may inductively continue this construction: if $d_G\left(y,z_1\right)\le \frac r4$, take $z_n=y$ for $n\ge 2$; if $d_G\left(y,z_1\right)\in \left(\frac r4,\frac r2\right)$, we may choose by the above argument some $z_2\in B_{r/2}\left(z_1\right)$ such that $B_{r/4}\left(z_2\right)\subset B_{r/2}\left(z_1\right)$, $y\in B_{r/4}\left(z_2\right)$, and
\[
\frac 32 \frac{r/2-d_G\left(y,z_1\right)}{r/2}< \frac{r/4-d_G\left(y,z_2\right)}{r/4},
\]
and so on for $z_3,z_4,\cdots$. This process must terminate, i.e., we must have $d_G\left(y,z_m\right)\le 2^{-m-1}r$ for some $m$, so that $z_n=y$ for $n\ge m$. This is because otherwise, we would have $d_G\left(y,z_m\right)>2^{-m-1}r$ for all $m$, which would imply
\[
\left(\frac 32\right)^m \frac{r-d_G\left(y,z_0\right)}{r}\le \frac{2^{-m}r-d_G\left(y,z_m\right)}{2^{-m}r}<\frac 12\quad \forall m\ge 0,
\]
contradicting the strict positivity of $\frac{r-d_G\left(y,z_0\right)}{r}$. This completes the proof of the claim.

Let $f$ be a locally integrable function, $d\in \mathbb{N}$, $x=z_0\in G$ and $r>0$. For any $y\in B_r\left(z_0\right)$, we have, by the above claim, a nested sequence of balls $\left\{B_{2^{-n}r}\left(z_n\right)\right\}_{n=0}^\infty$ eventually centered around $y$, i.e., $B_{2^{-n-1}r}\left(z_{n+1}\right)\subset B_{2^{-n}r}\left(z_{n}\right)$ for all $n\in \mathbb{Z}_{\ge 0}$ and $z_n=y$ for all sufficiently large $n$. We have
\begin{align*}
\fint_{B_{2^{-k}r}\left(z_k\right)}\left|f-A_{z_0,r}^df\right|d\mu&\le \fint_{B_{2^{-k}r}\left(z_k\right)}\left|f-A_{z_k,2^{-k}r}^d f\right|d\mu+\sum_{i=0}^{k-1}\fint_{B_{2^{-k}r}\left(z_k\right)}\left|A_{z_i,2^{-i}r}^df-A_{z_{i+1},2^{-i-1}r}^df\right|d\mu\\
&\le \fint_{B_{2^{-k}r}\left(z_k\right)}\left|f-A_{z_k,2^{-k}r}^d f\right|d\mu+\sum_{i=0}^{k-1}\left\|A_{z_{i+1},2^{-i-1}r}^d\left(f-A_{z_i,2^{-i}r}^df\right)\right\|_{L^\infty\left(B_{2^{-i-1}r}\left(z_{i+1}\right)\right)}\\
&\stackrel{\mathclap{\eqref{claim}}}{\lesssim}_{G,d}\fint_{B_{2^{-k}r}\left(z_k\right)}\left|f-A_{z_k,2^{-k}r}^d f\right|d\mu+\sum_{i=0}^{k-1}\fint_{B_{2^{-i-1}r}\left(z_{i+1}\right)}\left|f-A_{z_i,2^{-i}r}^df\right|d\mu\\
&\lesssim_G \sum_{i=0}^k \fint_{B_{2^{-i}r}\left(z_i\right)}\left|f-A^d_{z_i,2^{-i}r}f\right|d\mu=\sum_{i=0}^k \beta_{f,d}\left(B_{2^{-i}r}\left(z_i\right)\right).
\end{align*}
But since $y\in B_{2^{-i}r}\left(z_i\right)$, we have $B_{2^{-i}r}\left(z_i\right)\subset B_{2^{-i+1}r}\left(y\right)$, so
\[
\sum_{i=0}^k \beta_{f,d}\left(B_{2^{-i}r}\left(z_i\right)\right)\quad\stackrel{\mathclap{\mathrm{Corollary~}\ref{monotoneBeta}}}{\lesssim}_{G,d}~\sum_{i=0}^k \beta_{f,d}\left(B_{2^{-i+1}r}\left(y\right)\right)\quad\stackrel{\mathclap{\mathrm{Corollary~}\ref{cor:betaIntConversion}}}{\lesssim}_{G,d} \int_0^{4r}\beta_{f,d}\left(B_s\left(y\right)\right)\frac{ds}{s}
\]
so, for a.e.~$y\in B_r\left(x\right)$,
\begin{equation}\label{eq:localize-beta}
\left|f\left(y\right)-A_{x,r}^df\left(y\right)\right|=\lim_{k\to\infty}\fint_{B_{2^{-k}r}\left(z_k\right)}\left|f-A_{z_0,r}^df\right|d\mu\lesssim_{G,d} \int_0^{4r}\beta_{f,d}\left(B_s\left(y\right)\right)\frac{ds}{s},
\end{equation}
where the first equality uses Lebesgue's differentiation theorem for doubling metric measure spaces and $x=z_0$.

As the operator $\Delta_h^{2,\mathrm{sym}}$ annihilates polynomials of weighted degree $1$, we obtain for a.e.~$x\in G$
\begin{align*}
&\fint_{B_r}\left|\Delta_h^{2,\mathrm{sym}}f\left(x\right)\right|d\mu\left(h\right)=\fint_{B_r}\left|\Delta_h^{2,\mathrm{sym}}\left(f-A_{x,2r}^1f\right)\left(x\right)\right|d\mu\left(h\right)\\
&=\fint_{B_r}\left|\left(f-A_{x,2r}^1f\right)\left(xh\right)+\left(f-A_{x,2r}^1f\right)\left(xh^{-1}\right)-2\left(f-A_{x,2r}^1f\right)\left(x\right)\right|d\mu\left(h\right)\\
&\stackrel{\mathclap{\eqref{eq:localize-beta}}}{\lesssim}_{G,d}\fint_{B_r}\left(\int_0^{8r}\beta_{f,1}\left(B_s\left(xh\right)\right)\frac{ds}{s}+\int_0^{8r}\beta_{f,1}\left(B_s\left(xh^{-1}\right)\right)\frac{ds}{s}+2\int_0^{8r}\beta_{f,1}\left(B_s\left(x\right)\right)\frac{ds}{s}\right)d\mu\left(h\right)\\
&\lesssim \int_0^{8r}M\beta_{f,1}\left(B_s\left(\cdot\right)\right)\left(x\right)\frac{ds}{s},
\end{align*}
where $M\beta_{f,1}\left(B_s\left(\cdot\right)\right)$ denotes the maximal function of the function $y\mapsto M\beta_{f,1}\left(B_s\left(y\right)\right)$.
We obtain
\begin{align*}
S_1 f\left(x\right)&=\left(\int_0^\infty \left[r^{-1}\fint _{B_r}\left|\Delta_h^{2,\mathrm{sym}}f\left(x\right)\right|d\mu\left(h\right)\right]^2\frac{dr}{r}\right)^{1/2}\\
&=\left(\int_0^\infty \left[r^{-1}\int_0^{8r}M\beta_{f,1}\left(B_s\left(\cdot\right)\right)\left(x\right)\frac{ds}{s}\right]^2\frac{dr}{r}\right)^{1/2}\\
&\le \left(\int_0^\infty \left[\frac{M\beta_{f,1}\left(B_r\left(\cdot\right)\right)\left(x\right)}r\right]^2\frac{dr}{r}\right)^{1/2}
\end{align*}
where in the inequality, we used Hardy's inequality \eqref{Hardy} with $\nu=\frac 32$ and $p=2$.
Next, we have
\begin{align*}
&\left(\int_G\left(\int_0^\infty \left[\frac{M\beta_{f,1}\left(B_r\left(\cdot\right)\right)\left(x\right)}r\right]^2\frac{dr}{r}\right)^{p/2}d\mu\left(x\right)\right)^{1/p}\\
&\lesssim_G \max\left\{p,\frac{1}{p-1}\right\} \left(\int_G\left(\int_0^\infty \left[\frac{\beta_{f,1}\left(B_r\left(\cdot\right)\right)\left(x\right)}r\right]^2\frac{dr}{r}\right)^{p/2}d\mu\left(x\right)\right)^{1/p}.
\end{align*}
Indeed, by Corollaries \ref{monotoneBeta} and \ref{cor:betaIntConversion}, this is an instance of the Fefferman--Stein vector-valued maximal function inequality
\begin{equation}\label{FS}
\left( \int_G \left[\sum_{j \in \mathbb{Z}} M\left(g_j
\right)^u \right]^{v/u}  d\mu \right)^{1/v} \lesssim_G \max\left\{1,\frac{1}{u-1}\right\}\max\left\{v,\frac{1}{v-1}\right\}\left( \int_G \left[\sum_{j \in \mathbb{Z}} g_j^u \right]^{v/u}  d\mu \right)^{1/v}
\end{equation}
on doubling metric measure spaces for measurable functions $\left(g_j:G\to \mathbb{R}\right)_{j \in \mathbb{Z}}$  and $u,v \in \left(1,\infty\right)$, due to \cite[Theorem 1.2]{grafakos2009vector}. Indeed, we apply \eqref{FS} to the functions
\[
g_j\left(x\right) = 2^j\beta_{f,1}\left(B_{2^{-j}}\left(x\right)\right)
\]
and exponents $u=2 > 1$ and $v=p>1$, noting by Corollary \ref{monotoneBeta} that
\[
M\left(r^{-1}\beta_{f,1}\left(B_r\left(\cdot\right)\right)\right) \lesssim_G M\left(g_j\right), \quad r \in \left[2^{-j-1},2^{-j}\right],
\]
and
\[
g_j\lesssim_G r^{-1}\beta_{f,1}\left(B_r\left(\cdot\right)\right), \quad r \in\left[2^{-j},2^{-j+1}\right],
\]

Thus, we have $\left\|S_1f\right\|_{L^p\left(G\right)}\lesssim_{G,p} \left\|\mathfrak{G}_1f\right\|_{L^p\left(G\right)}$, and thus $\left\|\left(-\Delta_p\right)^{1/2}f\right\|_{L^p\left(G\right)}\lesssim_{G,p} \left\|\mathfrak{G}_1f\right\|_{L^p\left(G\right)}$, as desired.

\subsubsection{The case $\alpha>1$}\label{subsubsec:gtrsimalpha>1}
We use induction on $\alpha$.

The following lemma generalizes \cite[Lemma 1]{dorronsoro1985characterization}.
\begin{lemma}\label{lem:inductOnG}
Let $f\in L^p\left(G\right)$ such that $\mathfrak{G}_\alpha f\in L^p\left(G\right)$, and let $\gamma$ be a multi-index with $\left|\gamma\right|<\alpha$. With $f_\gamma$ as in Proposition \ref{prop:measurableTaylor}, $\left\|\mathfrak{G}_{\alpha-\left|\gamma\right|}f_\gamma\right\|_{L^p\left(G\right)}\lesssim_{G,\alpha} \left\|\mathfrak{G}_\alpha f\right\|_{L^p\left(G\right)}$.
\end{lemma}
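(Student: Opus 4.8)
The plan is to express $f_\gamma$, which is only defined as an $L^p$-limit $f_\gamma(\cdot,r)\to f_\gamma(\cdot)$ of the coefficient functions of $A^{\lfloor\alpha\rfloor}_{x,r}f$, and then to \emph{estimate the $\beta$-numbers of $f_\gamma$ in terms of the $\beta$-numbers of $f$}. The key point is that $\beta_{f_\gamma,\lfloor\alpha-|\gamma|\rfloor}(B_r(x))$ measures how well $f_\gamma$ is approximated on $B_r(x)$ by a polynomial of weighted degree $\le\lfloor\alpha-|\gamma|\rfloor=\lfloor\alpha\rfloor-|\gamma|$, and the natural candidate approximant is built from $A^{\lfloor\alpha\rfloor}_{x,r}f$: if we write $A^{\lfloor\alpha\rfloor}_{x,r}f(xy)=\sum_{|\delta|\le\lfloor\alpha\rfloor}f_\delta(x,r)y^\delta$, then the coefficient $f_\gamma(x,r)$ should be recoverable, up to the operator $\operatorname{Sym}(X^\gamma)$ and a factor $1/\gamma!$, as a polynomial differential operator of weighted degree $|\gamma|$ applied to $A^{\lfloor\alpha\rfloor}_{x,r}f$ evaluated at $x$; more relevantly, the ``tail'' polynomial $y\mapsto \sum_{|\gamma|\le|\delta|\le\lfloor\alpha\rfloor}\frac{1}{\delta!}\operatorname{Sym}(X^\delta)(\cdots)$ obtained by differentiating $A^{\lfloor\alpha\rfloor}_{x,r}f$ in the direction of $\gamma$ (in the spirit of $X_iA^d = $ polynomial of degree $d-1$, and more generally of the Taylor identity $YT^n_xf=T^{n-d}_xYf$ proven in subsubsection \ref{subsubsec:taylor}) is a polynomial of weighted degree $\le\lfloor\alpha\rfloor-|\gamma|$ approximating $f_\gamma$ on $B_r(x)$. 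Following Dorronsoro's Lemma 1 \cite{dorronsoro1985characterization}, I would define, for fixed $x$ and $r$, the candidate polynomial $P^\gamma_{x,r}$ as this ``$\gamma$-shifted'' piece of $A^{\lfloor\alpha\rfloor}_{x,r}f$, and show
\[
\fint_{B_r(x)}\bigl|f_\gamma(y)-P^\gamma_{x,r}(y)\bigr|\,dy\lesssim_{G,\alpha}\sum_{i=0}^{\infty}2^{-i(\alpha-|\gamma|)}\Bigl(\text{suitable maximal average of }\beta_{f,\lfloor\alpha\rfloor}(B_{2^{-i}r}(\cdot))\text{ near }x\Bigr),
\]
using \eqref{claim}, \eqref{polydiff}, Corollary \ref{monotoneBeta}, and the telescoping/triangle-inequality estimate of Proposition \ref{prop:measurableTaylor}(1)--(2) which already controls $|f_\gamma(x,r)-f_\gamma(x)|$ by $\int_0^{2r}\beta_{f,\lfloor\alpha\rfloor}(B_u(x))u^{-|\gamma|}\,du/u$.

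Concretely, the steps in order: (i) write $f_\gamma-P^\gamma_{x,r}$ on $B_r(x)$ as the sum of $(f_\gamma-f_\gamma(\cdot,r))$ and $(f_\gamma(\cdot,r)-P^\gamma_{x,r})$, bounding the first term pointwise a.e.\ by the integral in Proposition \ref{prop:measurableTaylor}(2) and the second term by a telescoping sum over dyadic scales of differences of $A^{\lfloor\alpha\rfloor}$-polynomials, each controlled via \eqref{claim} and \eqref{polydiff} by $2^{-i|\gamma|}r^{-|\gamma|}\beta_{f,\lfloor\alpha\rfloor}(B_{2^{-i}r}(x'))$ for $x'\in B_r(x)$; (ii) after integrating over $y\in B_r(x)$ and using Corollary \ref{monotoneBeta} to absorb the shift from $x$ to $y$, conclude $\beta_{f_\gamma,\lfloor\alpha\rfloor-|\gamma|}(B_r(x))\lesssim_{G,\alpha} r^{|\gamma|}\int_0^{2r}u^{-|\gamma|}\,M\bigl[\beta_{f,\lfloor\alpha\rfloor}(B_u(\cdot))\bigr](x)\,\frac{du}{u}$ plus lower-order pieces, where $M$ is the Hardy--Littlewood maximal operator (introduced precisely to move the balls $B_u(y)$ around $x$); (iii) plug into the definition of $\mathfrak{G}_{\alpha-|\gamma|}f_\gamma$, apply Hardy's inequality \eqref{Hardy} (with $\nu=\alpha-|\gamma|+\tfrac12$, $p=2$, which is admissible since $|\gamma|<\alpha$) to turn $r^{|\gamma|}\int_0^{2r}u^{-|\gamma|}(\cdots)\,du/u$ into a multiple of $\bigl(\int_0^\infty[r^{-\alpha}M\beta_{f,\lfloor\alpha\rfloor}(B_r(\cdot))(x)]^2\,dr/r\bigr)^{1/2}$; (iv) take $L^p(G)$ norms and apply the Fefferman--Stein vector-valued maximal inequality \eqref{FS} (as already done in subsubsection \ref{subsubsec:gtrsimalpha>1}'s predecessor, i.e.\ at the end of subsubsection \ref{subsubsec:gtrsimalphale1}, with $u=2$, $v=p$, applied to $g_j(x)=\beta_{f,\lfloor\alpha\rfloor}(B_{2^{-j}}(x))$) to remove the maximal operator, obtaining $\|\mathfrak{G}_{\alpha-|\gamma|}f_\gamma\|_{L^p(G)}\lesssim_{G,\alpha}\|\mathfrak{G}_\alpha f\|_{L^p(G)}$.

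The main obstacle I expect is step (i)--(ii): making precise the relationship between the coefficient functions $f_\gamma(\cdot,r)$ of $A^{\lfloor\alpha\rfloor}_{x,r}f$ at nearby base points and the ``$\gamma$-shift'' of the single polynomial $A^{\lfloor\alpha\rfloor}_{x,r}f$, since the map $y\mapsto A^{\lfloor\alpha\rfloor}_{x,r}f(xy)$ is written in coordinates around $x$, so comparing it with $A^{\lfloor\alpha\rfloor}_{x',r'}f$ for $x'\in B_r(x)$ requires using left-invariance of $\mathcal{A}_d$ together with the Baker--Campbell--Hausdorff expansion (the coordinate-change formula $x^2_{r,i}=x^0_{r,i}+x^1_{r,i}+(\text{lower-weight homogeneous terms})$ recalled in subsection \ref{subsec:dorronsoro}); one must check that the ``error polynomials'' produced by this coordinate change on $B_r(x)$ have $L^\infty(B_r(x))$-norm controlled by $r^{|\gamma|}$ times the relevant $\beta$-number, which is exactly where \eqref{polydiff} and the finite-dimensionality/equivalence-of-norms argument from the proof of Proposition \ref{prop:taylorIsEnough} enter. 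Everything else is a routine assembly of \eqref{Hardy}, Corollary \ref{monotoneBeta}, and \eqref{FS}, exactly as in Dorronsoro's original argument and in F\"assler--Orponen.
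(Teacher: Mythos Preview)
Your proposal is correct and follows essentially the same route as the paper: decompose $f_\gamma - (\text{candidate polynomial})$ into $(f_\gamma - f_\gamma(\cdot,r))$ plus the remainder, bound the first term via Proposition~\ref{prop:measurableTaylor}(2), pass through the maximal function, then apply Hardy's inequality and Fefferman--Stein. Two remarks. First, your worry about BCH/coordinate changes in step~(i)--(ii) is unnecessary: the paper takes the candidate approximant to be $\frac{1}{\gamma!}\operatorname{Sym}(X^\gamma)A^d_{x,2r}f$, and since $f_\gamma(y,r)=\frac{1}{\gamma!}\operatorname{Sym}(X^\gamma)A^d_{y,r}f(y)$, the second piece becomes $\frac{1}{\gamma!}\operatorname{Sym}(X^\gamma)(A^d_{y,r}f - A^d_{x,2r}f)(y)$, which is handled in a \emph{single} step by \eqref{polydiff} and \eqref{claim} (using $B_r(y)\subset B_{2r}(x)$) to give $\lesssim r^{-|\gamma|}\beta_{f,d}(B_{2r}(x))$; no telescoping or explicit coordinate manipulation is needed. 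Second, the $r^{|\gamma|}$ prefactor in your step~(ii) is a slip (it should not be there, and indeed your Hardy exponent in step~(iii) is consistent with its absence).
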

\begin{proof}

Set $d=\lfloor \alpha\rfloor$. Fix $x\in G$ and $r>0$. For $y\in B_r\left(x\right)$,
\begin{align*}
\left|f_\gamma\left(y\right)-\frac{1}{\gamma!}\operatorname{Sym}\left(X^\gamma\right) A^d_{x,2r}f\left(y\right)\right|&\le \left|f_\gamma\left(y\right)-f_\gamma \left(y,r\right)\right|+\frac{1}{\gamma!}\left|\operatorname{Sym}\left(X^\gamma\right) \left(A^d_{y,r}f-A^d_{x,2r}f\right)\left(y\right)\right| \\
&\stackrel{\mathclap{\mathrm{Proposition~}\ref{prop:measurableTaylor}}}{\lesssim}_{G,\alpha} \int_0^{2r} \beta_{f,d}\left(B_s\left(y\right)\right)s^{-\left|\gamma\right|-1}ds+\left|\operatorname{Sym}\left(X^\gamma\right) A^d_{y,r}\left(f-A^d_{x,2r}f\right)\left(y\right)\right|\\
&\stackrel{\mathclap{\eqref{polydiff}}}{\lesssim}_{G,\alpha} \int_0^{2r} \beta_{f,d}\left(B_s\left(y\right)\right)s^{-\left|\gamma\right|-1}ds+r^{-\left|\gamma\right|}\fint_{B_y\left(r\right)}\left|f-A^d_{x,2r}f\right|d\mu\\
&\lesssim_{G,\alpha} \int_0^{2r} \beta_{f,d}\left(B_s\left(y\right)\right)s^{-\left|\gamma\right|-1}ds+r^{-\left|\gamma\right|}\beta_{f,d}\left(B_{2r}\left(x\right)\right).
\end{align*}
Thus, by Lemma \ref{L1-approx},
\begin{align*}
\beta_{f_\gamma,d-\left|\gamma\right|}\left(B_r\left(x\right)\right)&\lesssim_{G,\alpha} \fint_{B_{r}\left(x\right)}\left(\int_0^{2r} \beta_{f,d}\left(B_s\left(y\right)\right)s^{-\left|\gamma\right|-1}ds+r^{-\left|\gamma\right|}\beta_{f,d}\left(B_{2r}\left(x\right)\right)\right)d\mu\left(y\right)\\
&\le \int_0^{2r} M\left(\beta_{f,d}\left(B_s\left(\cdot\right)\right)\right)\left(x\right)s^{-\left|\gamma\right|-1}ds+r^{-\left|\gamma\right|}\beta_{f,d}\left(B_{2r}\left(x\right)\right).
\end{align*}
By Hardy's inequality \eqref{Hardy} with $\nu=\alpha-\left|\gamma\right|+\frac 12$ and $p=2$,
\[
\mathfrak{G}_{\alpha-\left|\gamma\right|}f_\gamma\left(x\right)\lesssim_{G,\alpha} \mathfrak{G}_\alpha f\left(x\right)+\left(\int_0^\infty \left[r^{-\alpha}M\beta_{f,d}\left(B_r\left(\cdot\right)\right)\left(x\right)\right]^2\frac{dr}{r}\right)^{1/2},
\]
and the lemma follows by Corollary \ref{cor:betaIntConversion} and the aforementioned Fefferman--Stein vector-valued maximal operator inequality \eqref{FS} with $u=v=2>1$ and $g_j\left(x\right)=2^{\alpha j}\beta_{f,d}\left(B_{2^{-j+1}}\left(x\right)\right)$.
\end{proof}

The following lemma is inspired by \cite[Lemma 2]{dorronsoro1985characterization}.
\begin{lemma}\label{lem:weakFirstDeriv}
Let $f\in L^p\left(G\right)$ with $\mathfrak{G}_\alpha f\in L^p\left(G\right)$, $\alpha>1$. Then the weak partials $X_{i}f$ exist and coincide with $f_{\left(1,i\right)}$ a.e., where $\left(1,i\right)$ stands for the multi-index $\gamma$ with $\gamma_{r,j}=\delta_{\left(r,j\right),\left(1,i\right)}$.
\end{lemma}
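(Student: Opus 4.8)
The plan is to establish, following the strategy of \cite[Lemma~2]{dorronsoro1985characterization}, that $f_{(1,i)}=X_if$ in the distributional sense, i.e.\ that $\int_G f_{(1,i)}\varphi\,d\mu=-\int_G f\,(X_i\varphi)\,d\mu$ for every $\varphi\in C_c^\infty(G)$; since $G$ is nilpotent, hence unimodular, the left-invariant field $X_i$ is divergence-free, so $-\int_G f\,(X_i\varphi)=\int_G (X_if)\varphi$ for smooth $f$, and the displayed identity is the correct weak formulation. By Proposition~\ref{prop:measurableTaylor}(1)--(2) the coefficient functions $f_{(1,i)}(\cdot,r)$ lie in $L^p(G)$ and converge to $f_{(1,i)}$ in $L^p$ as $r\to0$. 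I would fix a mollifier $\phi\in C_c^\infty(G)$ with $\int_G\phi=1$ and $\operatorname{supp}\phi\subset B_C$, set $\phi_r(x)=r^{-n_h}\phi(\delta_{1/r}x)$ and $f_r\coloneqq f*\phi_r$; then $f_r$ is smooth, $f_r\to f$ in $L^p$, and $X_if_r=f*(X_i\phi_r)$ by left-invariance. So it suffices to prove $X_if_r\to f_{(1,i)}$ in $L^p(G)$: granting this, $\int_G f_{(1,i)}\varphi=\lim_r\int_G (X_if_r)\varphi=-\lim_r\int_G f_r\,(X_i\varphi)=-\int_G f\,(X_i\varphi)$.

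The core computation: unwinding the convolution and using that $X_i$ has dilation weight $1$, one gets $X_if_r(x)=r^{-1}\int_{B_C} f(x\,\delta_r(w))\,(X_i\phi)(w^{-1})\,dw$. I would then replace $f(x\delta_r w)$ by $A^{d}_{x,Cr}f(x\delta_r w)=\sum_{|\gamma|\le d}f_\gamma(x,Cr)\,r^{|\gamma|}\,w^\gamma$, where $d=\lfloor\alpha\rfloor$; after the change of variables $z=x\delta_r w$ and the definition of the beta-numbers, the resulting error is bounded by a constant (depending on $\phi$ and $G$) times $r^{-1}\beta_{f,d}(B_{Cr}(x))$, which Corollary~\ref{monotoneBeta} upgrades to $\lesssim_{G,\alpha} r^{\alpha-1}\mathfrak{G}_\alpha f(x)$, hence tends to $0$ in $L^p$ because $\alpha>1$ and $\mathfrak{G}_\alpha f\in L^p$. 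The polynomial part becomes $r^{-1}\sum_{|\gamma|\le d}f_\gamma(x,Cr)\,r^{|\gamma|}\,m_\gamma$ with $m_\gamma=\int_G w^\gamma(X_i\phi)(w^{-1})\,dw$. One computes $m_0=0$ (divergence-freeness of $X_i$) and $m_{(1,j)}=\delta_{ij}$ (inversion negates the weight-one coordinates and $X_i(x_{1,j})=\delta_{ij}$, then integrate by parts), so the total contribution of $|\gamma|\le1$ is exactly $f_{(1,i)}(x,Cr)$.

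The hard part --- more bookkeeping than conceptual --- is showing the higher-order terms $r^{|\gamma|-1}f_\gamma(x,Cr)\,m_\gamma$ with $2\le|\gamma|\le d$ vanish in $L^p$ as $r\to0$. Here I would invoke the a priori pointwise bounds of Proposition~\ref{prop:measurableTaylor}(1) and (4): $|f_\gamma(x,r)|\lesssim_{G,\alpha}\mathfrak{G}_\alpha f(x)+Mf(x)$ when $|\gamma|<\alpha$, with an extra factor $\log(2/r)$ when $|\gamma|=\alpha$ (necessarily an integer $\ge2$ in this range). Since $\alpha>1$ forces $|\gamma|-1\ge1$, the prefactor $r^{|\gamma|-1}(\log(1/r))^{O(1)}$ tends to $0$ and stays bounded for small $r$, so dominated convergence (using that $Mf\in L^p$ since $p>1$, and $\mathfrak{G}_\alpha f\in L^p$) finishes this step. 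Collecting the pieces gives $\|X_if_r-f_{(1,i)}(\cdot,Cr)\|_{L^p}\to0$, and combining with the convergence $f_{(1,i)}(\cdot,Cr)\to f_{(1,i)}$ of Proposition~\ref{prop:measurableTaylor}(2) yields $X_if_r\to f_{(1,i)}$ in $L^p$; the weak partial $X_if$ then exists and coincides with the $L^p$ function $f_{(1,i)}$ a.e. The only genuinely delicate point is that the whole argument is tied to $\alpha$ being strictly larger than $1$, the order of the derivative being extracted, which is exactly the hypothesis.
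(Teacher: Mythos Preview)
Your proof is correct, and it takes a genuinely different route from the paper's argument. The paper proves directly that the difference quotients $\bigl(f(xe_{1,i}^r)-f(x)\bigr)/r$ converge to $f_{(1,i)}$ in $L^p$: it inserts and subtracts $f_0(xe_{1,i}^r,2r)$ and $f_0(x,2r)$, bounds the outer pieces by $r^{\alpha-1}\mathfrak{G}_\alpha f$ via Proposition~\ref{prop:measurableTaylor}(2), and then expands $A^{\lfloor\alpha\rfloor}_{x,2r}f$ along the one-parameter subgroup $t\mapsto xe_{1,i}^t$ to handle the middle piece, again invoking Proposition~\ref{prop:measurableTaylor}(1),(4) for the higher-order coefficients. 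You instead mollify, pass $X_i$ onto the mollifier, substitute the polynomial approximant $A^d_{x,Cr}f$ inside the integral, and compute the first two moments $m_0=0$, $m_{(1,j)}=\delta_{ij}$ explicitly. Both arguments hinge on the same core estimates (Proposition~\ref{prop:measurableTaylor} and the bound $\beta_{f,d}(B_{Cr}(x))\lesssim r^\alpha\mathfrak{G}_\alpha f(x)$, which is indeed the content of Corollary~\ref{monotoneBeta} combined with the definition of $\mathfrak{G}_\alpha$). Your approach is slightly more distribution-theoretic in flavor and avoids the pointwise identity $f_0=f$ a.e.\ that the paper's difference-quotient route implicitly uses; on the other hand it requires the (short but nontrivial) moment computation for $X_i\phi$ in exponential coordinates. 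Neither approach is clearly superior; yours is arguably cleaner in that the convergence $X_if_r\to f_{(1,i)}$ packages everything at once.
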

\begin{proof}
Denote $e_{r,i}=\exp\left(x_{r,i}\right)$ for $r=1,\cdots,s$ and $i=1,\cdots,k_r$.
It is enough to show that
\[
\left(f\left(x e_{1,i}^r\right)-f\left(x\right)\right)/r\to f_{\left(1,i\right)} \mathrm{~in~}L^p\left(G\right) \mathrm{~as~}r\to 0.
\]
First,
\begin{align*}
    &\left|\frac{f\left(x e_{1,i}^r\right)-f\left(x\right)}{r}-f_{\left(1,i\right)}\left(x\right)\right|\\
    &\le \frac{\left|f\left(x e_{1,i}^r\right)-f_0\left(xe_{1,i}^r,2r\right)\right|}{r}+\frac{\left|f\left(x\right)-f_0\left(x,2r\right)\right|}{r}+\left|\frac{f_0\left(xe_{1,i}^r,2r\right)-f_0\left(x,2r\right)}{r}-f_{\left(1,i\right)}\left(x\right)\right|\\
    &\stackrel{\mathclap{\mathrm{Proposition}~\ref{prop:measurableTaylor}(2)}}{\lesssim}_{G,\alpha} r^{\alpha-1}\left(\mathfrak{G}_\alpha f\left(xe_{1,i}^r\right)+\mathfrak{G}_\alpha f\left(x\right)\right)+\left|\frac{f_0\left(xe_{1,i}^r,2r\right)-f_0\left(x,2r\right)}{r}-f_{\left(1,i\right)}\left(x\right)\right|.
\end{align*}

Next, since $f_0\left(y,r\right)=A_{y,r}^{\lfloor\alpha\rfloor}f\left(y\right)$,
\begin{align*}
    f_0\left(xe_{1,i}^r,2r\right)-f_0\left(x,2r\right)&=A^{\lfloor\alpha\rfloor}_{xe_{1,i}^r,2r}f\left(xe_{1,i}^r\right)-A^{\lfloor\alpha\rfloor}_{x,2r}f\left(xe_{1,i}^r\right)+A^{\lfloor\alpha\rfloor}_{x,2r}f\left(xe_{1,i}^r\right)-A^{\lfloor\alpha\rfloor}_{x,2r}f\left(x\right)\\
    &=A^{\lfloor\alpha\rfloor}_{xe_{1,i}^r,2r}f\left(xe_{1,i}^r\right)-A^{\lfloor\alpha\rfloor}_{x,2r}f\left(xe_{1,i}^r\right)+\sum_{j=1}^{\lfloor\alpha\rfloor}f_{j\left(1,i\right)}\left(x,2r\right)r^j/j!.
\end{align*}
Thus
\begin{align*}
    &\left|\frac{f_0\left(xe_{1,i}^r,2r\right)-f_0\left(x,2r\right)}{r}-f_{\left(1,i\right)}\left(x\right)\right|\\
    &\le \left|\frac{f_0\left(xe_{1,i}^r,2r\right)-f_0\left(x,2r\right)}{r}-f_{\left(1,i\right)}\left(x,2r\right)\right|+\left|f_{\left(1,i\right)}\left(x,2r\right)-f_{\left(1,i\right)}\left(x\right)\right|\\
    & \stackrel{\mathclap{\mathrm{Proposition}~\ref{prop:measurableTaylor}(2)}}{\lesssim}_{G,\alpha}  \quad\frac{\left|A^{\lfloor\alpha\rfloor}_{xe_{1,i}^r,2r}f\left(xe_{1,i}^r\right)-A^{\lfloor\alpha\rfloor}_{x,2r}f\left(xe_{1,i}^r\right)\right|}{r}+\sum_{j=2}^{\lfloor\alpha\rfloor}\left|f_{j\left(1,i\right)}\left(x,2r\right)\right|r^{j-1}+r^{\alpha-1}\mathfrak{G}_\alpha f\left(x\right)\\
    &\lesssim_{G,\alpha} r^{-1}\fint_{B_{r}\left(xe_{1,i}^r\right)}\left|A^{\lfloor\alpha\rfloor}_{xe_{1,i}^r,2r}f\left(y\right)-A^{\lfloor\alpha\rfloor}_{x,2r}f\left(y\right)\right|d\mu\left(y\right)+\sum_{j=2}^{\lfloor\alpha\rfloor}\left|f_{j\left(1,i\right)}\left(x,2r\right)\right|r^{j-1}+r^{\alpha-1}\mathfrak{G}_\alpha f\left(x\right)\\
    &\le r^{-1}\fint_{B_{r}\left(xe_{1,i}^r\right)}\left|f-A^{\lfloor\alpha\rfloor}_{xe_{1,i}^r,2r}f\left(y\right)\right|d\mu\left(y\right)+r^{-1}\fint_{B_{r}\left(xe_{1,i}^r\right)}\left|f-A^{\lfloor\alpha\rfloor}_{x,2r}f\left(y\right)\right|d\mu\left(y\right)\\
    &\quad+\sum_{j=2}^{\lfloor\alpha\rfloor}\left|f_{j\left(1,i\right)}\left(x,2r\right)\right|r^{j-1}+r^{\alpha-1}\mathfrak{G}_\alpha f\left(x\right)\\
    &\lesssim_G r^{-1}\beta_{f,\lfloor\alpha\rfloor}\left(B_{2r}\left(xe_{1,i}^r\right)\right)+r^{-1}\beta_{f,\lfloor\alpha\rfloor}\left(B_{2r}\left(x\right)\right)+\sum_{j=2}^{\lfloor\alpha\rfloor}\left|f_{j\left(1,i\right)}\left(x,2r\right)\right|r^{j-1}+r^{\alpha-1}\mathfrak{G}_\alpha f\left(x\right),
\end{align*}
where in the third inequality we used the fact that the $L^1\left(B_1\right)$ and $L^\infty\left(B_1\right)$ norms on the space of polynomials of weighted degree $\le \lfloor\alpha\rfloor$ are equivalent. Taking $L^p$ norms,
\begin{align*}
\left\|\frac{f\left(\cdot e_{1,i}^r\right)-f\left(\cdot\right)}{r}-f_{\left(1,i\right)}\left(\cdot\right)\right\|_{L^p\left(G\right)} &\lesssim_{G,\alpha} r^{-1}\left\|\beta_{f,\lfloor\alpha\rfloor}\left(B_{2r}\left(\cdot\right)\right)\right\|_{L^p\left(G\right)}+\sum_{j=2}^{\lfloor\alpha\rfloor}r^{j-1}\left\|f_{j\left(1,i\right)}\left(\cdot,2r\right)\right\|_{L^p\left(G\right)}\\
&\qquad+r^{\alpha-1}\left\|\mathfrak{G}_\alpha f\right\|_{L^p\left(G\right)}\\
&\stackrel{\mathclap{\mathrm{Corollary~}\ref{cor:betaIntConversion}}}{\lesssim}_{G,\alpha} r^{\alpha-1}\left\|\mathfrak{G}_\alpha f\right\|_{L^p\left(G\right)}+\sum_{j=2}^{\lfloor\alpha\rfloor}r^{j-1}\left\|f_{j\left(1,i\right)}\left(\cdot,2r\right)\right\|_{L^p\left(G\right)}.
\end{align*}
The first term converges to $0$ as $r\to 0$ since $\alpha>1$. The second term also converges to $0$ as $r\to 0$: indeed, if $j<\lfloor \alpha \rfloor$, then by Proposition \ref{prop:measurableTaylor}(1), 
\[
\left\|f_{j\left(1,i\right)}\left(\cdot,2r\right)\right\|_{L^p\left(G\right)}\lesssim_{G,\alpha} \left\|\mathfrak{G}_\alpha f\right\|_{L^p\left(G\right)}+\left\|f\right\|_{L^p\left(G\right)},
\]
and if $j=\lfloor\alpha\rfloor=\alpha$, then by Proposition \ref{prop:measurableTaylor}(4), 
\[
\left\|f_{j\left(1,i\right)}\left(\cdot,2r\right)\right\|_{L^p\left(G\right)}\lesssim_{G,\alpha} \left(\log r^{-1}\right)\left\|\mathfrak{G}_\alpha f\right\|_{L^p\left(G\right)}+\left\|f\right\|_{L^p\left(G\right)}.
\]
Thus $\left\|\frac{f\left(\cdot e_{1,i}^r\right)-f\left(\cdot\right)}{r}-f_{\left(1,i\right)}\left(\cdot\right)\right\|_{L^p\left(G\right)}\to 0$ as $r\to 0$.
\end{proof}

The following lemma is an analog of \cite[Lemma 3]{dorronsoro1985characterization}.
\begin{lemma}\label{lem:lemma3}
Let $f\in L^p\left(G\right)$ with $\mathfrak{G}_\alpha f\in L^p\left(G\right)$, $\alpha>1$. Then $\mathfrak{G}_{\alpha-1}f\in L^p\left(G\right)$ with
\[
\left\|\mathfrak{G}_{\alpha-1}f\right\|_{L^p\left(G\right)}\lesssim_{G,\alpha} \left\|\mathfrak{G}_\alpha f\right\|_{L^p\left(G\right)}+\left\|f\right\|_{L^p\left(G\right)}.
\]
\end{lemma}
\begin{proof}
Write
\[
A^{\lfloor\alpha\rfloor}_{x,r}f\left(xy\right)=\underbrace{\sum_{\left|\gamma\right|<\lfloor\alpha\rfloor}f_\gamma\left(x,r\right)y^\gamma}_{\eqqcolon A^{<\lfloor\alpha\rfloor}_{x,r}f\left(xy\right)}+\underbrace{\sum_{\left|\gamma\right|=\lfloor\alpha\rfloor}f_\gamma\left(x,r\right)y^\gamma}_{\eqqcolon A^{=\lfloor\alpha\rfloor}_{x,r}f\left(xy\right)}.
\]
Then by Lemma \ref{L1-approx},
\begin{equation}\label{eq:betaOneStepDown}
\beta_{f,\lfloor\alpha\rfloor-1}\left(B_r\left(x\right)\right)\lesssim_{G,\alpha}\fint_{B_r\left(x\right)} \left|f-A^{<\lfloor\alpha\rfloor}_{x,r}f\right|d\mu \le \beta_{f,\lfloor\alpha\rfloor}\left(B_r\left(x\right)\right)+\left\|A^{=\lfloor\alpha\rfloor}_{x,r}f\right\|_{L^\infty\left(B_r\left(x\right)\right)}.
\end{equation}
For $i\in \mathbb{Z}_{\ge 0}$ and $\left|\gamma\right|=\lfloor\alpha\rfloor$,
\[
\left|f_\gamma\left(x,2^ir\right)\right|~\stackrel{\mathclap{\eqref{eq:taylor_f_gamma}}}{\le}~ \left|\operatorname{Sym}\left(X^\gamma\right)A^{\lfloor\alpha\rfloor}_{x,2^ir}f\right|\stackrel{\mathclap{\eqref{polydiff}}}{\lesssim}_{G,\alpha} \left(2^ir\right)^{-\lfloor\alpha\rfloor}\fint_{B_{2^ir}\left(x\right)}\left|f\right|d\mu \lesssim_G \left(2^ir\right)^{-\lfloor\alpha\rfloor-n_h/p}\left\|f\right\|_{L^p\left(G\right)}\to 0\quad \mathrm{as~} i\to\infty,
\]
and so we may make a telescoping estimate as follows:
\begin{align*}
\left|f_\gamma\left(x,r\right)\right|&\le \sum_{i=0}^\infty \left|f_\gamma\left(x,2^ir\right)-f_\gamma\left(x,2^{i+1}r\right)\right|~\stackrel{\mathclap{\eqref{eq:taylor_f_gamma}}}{\le}~\sum_{i=0}^\infty \left|\operatorname{Sym}\left(X^\gamma\right)\left(A^{\lfloor\alpha\rfloor}_{x,2^ir}f-A^{\lfloor\alpha\rfloor}_{x,2^{i+1}r}f\right)\right| \\
&=\sum_{i=0}^\infty \left|\operatorname{Sym}\left(X^\gamma\right)\left(A^{\lfloor\alpha\rfloor}_{x,2^ir}\left(f-A^{\lfloor\alpha\rfloor}_{x,2^{i+1}r}f\right)\right)\right|\\
&\stackrel{\mathclap{\eqref{polydiff}}}{\lesssim}_{G,\alpha}\sum_{i=0}^\infty  \left(2^ir\right)^{-\lfloor\alpha\rfloor}\fint_{B_{2^ir}\left(x\right)}\left|f-A^{\lfloor\alpha\rfloor}_{x,2^{i+1}r}f\right|d\mu \lesssim_{G,\alpha} \sum_{i=0}^\infty  \left(2^ir\right)^{-\lfloor\alpha\rfloor} \beta_{f,\lfloor\alpha\rfloor}\left(B_{2^{i+1}r}\left(x\right)\right).\\
&\stackrel{\mathclap{\mathrm{Corollary~}\ref{cor:betaIntConversion}}}{\lesssim}_{G,\alpha}  \int_r^\infty \beta_{f,\lfloor\alpha\rfloor}\left(B_\rho\left(x\right)\right)\rho^{-\lfloor\alpha\rfloor-1}d\rho.
\end{align*}
We obtain that for $y\in B_r\left(x\right)$, 
\[
\left|A^{=\lfloor\alpha\rfloor}_{x,r}f\left(xy\right)\right|\le \sum_{\left|\gamma\right|=\lfloor\alpha\rfloor}\left|f_\gamma\left(x,r\right)y^\gamma\right|\lesssim_{G,\alpha} r^{\lfloor\alpha\rfloor}\int_r^\infty \beta_{f,\lfloor\alpha\rfloor}\left(B_\rho\left(x\right)\right)\rho^{-\lfloor\alpha\rfloor-1}d\rho.
\]
Substituting this into \eqref{eq:betaOneStepDown}, we have
\[
\beta_{f,\lfloor\alpha\rfloor-1}\left(B_r\left(x\right)\right)\lesssim_{G,\alpha}\beta_{f,\lfloor\alpha\rfloor}\left(B_r\left(x\right)\right)+r^{\lfloor\alpha\rfloor}\int_r^\infty \beta_{f,\lfloor\alpha\rfloor}\left(B_\rho\left(x\right)\right)\rho^{-\lfloor\alpha\rfloor-1}d\rho,
\]
and
\begin{align*}
\mathfrak{G}_{\alpha-1}f\left(x\right)&=\left(\int_0^\infty\left[\frac{\beta_{f,\lfloor\alpha\rfloor-1}\left(B_r\left(x\right)\right)}{r^{\alpha-1}}\right]^2 \frac{dr}{r}\right)^{1/2}\\
&\lesssim_{G,\alpha} \left(\int_0^\infty\left[\frac{\beta_{f,\lfloor\alpha\rfloor}\left(B_r\left(x\right)\right)}{r^{\alpha-1}}\right]^2 \frac{dr}{r}\right)^{1/2}+\left(\int_0^\infty \left[r^{\lfloor\alpha\rfloor-\alpha+1/2}\int_r^\infty \beta_{f,\lfloor\alpha\rfloor}\left(B_\rho\left(x\right)\right)\rho^{-\lfloor\alpha\rfloor-1}d\rho\right]^2 dr\right)^{1/2}\\
&\lesssim \left(\int_0^\infty\left[\frac{\beta_{f,\lfloor\alpha\rfloor}\left(B_r\left(x\right)\right)}{r^{\alpha-1}}\right]^2 \frac{dr}{r}\right)^{1/2}\\
&\le \left(\int_0^1\left[\frac{\beta_{f,\lfloor\alpha\rfloor}\left(B_r\left(x\right)\right)}{r^{\alpha-1}}\right]^2 \frac{dr}{r}\right)^{1/2}+\left(\int_1^\infty\left[\frac{\beta_{f,\lfloor\alpha\rfloor}\left(B_r\left(x\right)\right)}{r^{\alpha-1}}\right]^2 \frac{dr}{r}\right)^{1/2}\\
&\stackrel{\mathclap{\eqref{claim}}}{\lesssim}_{G,\alpha} \left(\int_0^1\left[\frac{\beta_{f,\lfloor\alpha\rfloor}\left(B_r\left(x\right)\right)}{r^\alpha}\right]^2 \frac{dr}{r}\right)^{1/2}+\left(\int_1^\infty\left[\frac{Mf\left(x\right)}{r^{\alpha-1}}\right]^2 \frac{dr}{r}\right)^{1/2}\\
&\lesssim_\alpha \mathfrak{G}_\alpha f\left(x\right)+Mf\left(x\right),
\end{align*}
where the second inequality uses the second form of Hardy's inequality \eqref{secondHardy} with $p=2$ and $\nu=\lfloor\alpha\rfloor-\alpha+1/2$. This completes the proof.
\end{proof}

To complete the induction, suppose the $\gtrsim$ direction of Theorem \ref{lpgenthm} holds for $\alpha-1$. If $f\in L^p\left(G\right)$ with $\mathfrak{G}_\alpha f\in L^p\left(G\right)$, by Lemma \ref{lem:lemma3}, $\mathfrak{G}_{\alpha-1}f\in L^p\left(G\right)$, so that $f\in S^p_{\alpha-1}$ by induction. On the other hand, by Proposition \ref{prop:measurableTaylor}(2) and Lemmas \ref{lem:inductOnG} and \ref{lem:weakFirstDeriv}, for $i=1,\cdots,k$, $X_if\in L^p\left(G\right)$ with $\mathfrak{G}_{\alpha-1}\left(X_if\right)\in L^p\left(G\right)$, so again by induction $X_if\in S^p_{\alpha-1}\left(G\right)$. Now, by Proposition \ref{prop:inductOnDelta}, $f\in S^p_\alpha\left(G\right)$. Tracking the norms involved in this argument, we have that
\begin{align*}
&\left\|\mathfrak{G}_\alpha f\right\|_{L^p\left(G\right)}\stackrel{\mathclap{\mathrm{Lemma~}\ref{lem:inductOnG}}}{\gtrsim}_{G,\alpha}\quad\sum_{i=1}^k \left\|\mathfrak{G}_{\alpha-1}f_{\left(1,i\right)}\right\|_{L^p\left(G\right)}\quad\stackrel{\mathclap{\mathrm{Lemma~}\ref{lem:weakFirstDeriv}}}{=}\qquad\sum_{i=1}^k \left\|\mathfrak{G}_{\alpha-1}\left(X_if\right)\right\|_{L^p\left(G\right)}\\
&\stackrel{\mathclap{\mathrm{induction}}}{\gtrsim}_{G.\alpha,p}\sum_{i=1}^k \left\|\left(-\Delta_p\right)^{\left(\alpha-1\right)/2}X_if\right\|_{L^p\left(G\right)}\stackrel{\mathclap{\mathrm{Proposition~}\ref{prop:inductOnDelta}}}{\asymp}_{G,\alpha,p} \quad\left\|\left(-\Delta_p\right)^{\alpha/2}f\right\|_{L^p\left(G\right)}.
\end{align*}
This completes the $\gtrsim$ direction of Theorem \ref{lpgenthm}.

\subsection{The case of $L^q$ $\beta$-numbers for $q>1$}\label{subsec:q>1}
It remains to prove the $\lesssim$ direction of \eqref{form13} for $q>1$.

Since the $\lesssim$ direction of \eqref{form13} has been proven for $q=1$, it is enough to show that the left-hand side of \eqref{form13} for $q>1$ satisfying \eqref{eq:dorronsoro-condition} is upper bounded up to constants by the left-hand side of \eqref{form13} when $q=1$, i.e., it is enough to prove for $f\in L^1_{\mathrm{loc}}\left(G\right)$
\begin{align}\label{eq:lq-l1}
\begin{aligned}
    &\left(\int_G \left(\int_0^\infty \left[\frac 1{r^\alpha} \beta_{f,\lfloor \alpha \rfloor,q}\left(B_r\left(x\right)\right) \right]^2 \frac{dr}{r} \right)^{p/2} d\mu\left(x\right) \right)^{1/p} \\
    &\lesssim_{G,\alpha,p,q}\left(\int_G \left(\int_0^\infty \left[\frac 1{r^\alpha} \beta_{f,\lfloor \alpha \rfloor}\left(B_r\left(x\right)\right) \right]^2 \frac{dr}{r} \right)^{p/2} d\mu\left(x\right) \right)^{1/p}
\end{aligned}
\end{align}
for $q$ in the range \eqref{eq:dorronsoro-condition}.

We remark that the proof of \eqref{eq:lq-l1} in the case of $G=\mathbb{R}^n$ and $\mathbb{H}^3$ can be respectively found in \cite[Section 5]{dorronsoro1985characterization} and \cite[Section 6]{fassler2020dorronsoro}, from which this subsection was inspired.

Denote $d=\lfloor\alpha\rfloor$. Recalling \eqref{eq:localize-beta}, we have that for $x\in G$ and $r>0$,
\[
\left|f\left(y\right) - A^d_{x,r}f\left(y\right)\right| \lesssim_{G,d} \int_0^{4r} \beta_{f,d}\left(B_s\left(y\right)\right)  \frac{ds}{s}\quad \mathrm{for~a.e.~}y\in B_r\left(x\right).
\]
Since, for $s>0$ and $y\in G$, we have
\[
\beta_{f,d}\left(B_s\left(y\right)\right)\quad\stackrel{\mathclap{\mathrm{Corollary~}\ref{monotoneBeta}}}\lesssim_{G,d}~ \beta_{f,d}\left(B_{2s}\left(z\right)\right),\quad z\in B_s\left(y\right),
\]
we obtain
\begin{equation}\label{form9}
\left|f\left(y\right) - A^d_{x,r}f\left(y\right)\right| \lesssim_{G,d} \int_0^{4r} \fint_{B_s\left(y\right)} \beta_{f,d}\left(B_{2s}\left(z\right)\right)  d\mu\left(z\right)  \frac{ds}{s},\quad \mbox{a.e. }y\in B_r\left(x\right).
\end{equation}

We now separate \eqref{eq:dorronsoro-condition} into two cases:
\[
\begin{cases}
1\le q<\frac{\min\left\{p,2\right\}n_h}{n_h-\alpha\min\left\{p,2\right\}},& \mathrm{if~}\alpha\le \frac{n_h}{\min\left\{p,2\right\}},\\
    q\le \infty,& \mathrm{if~}\alpha> \frac{n_h}{\min\left\{p,2\right\}}.
\end{cases}
\]
\begin{itemize}
    \item[Case 1.]Let
    \[
p>1, \quad 0<\alpha\le \frac{n_h}{\min\left\{p,2\right\}}, \quad\mathrm{and}~ 1 \le q < \frac{\min\left\{p,2\right\}n_h}{n_h - \alpha\min\left\{p,2\right\}}.
\]
By \eqref{eq:dorronsoro-condition}, we have
\[
\frac 1q>\frac{1}{\min\left\{p,2\right\}}-\frac{\alpha}{n_h}.
\]
By Jensen's inequality, $\beta_{f,d,q_1}\left(B_r\left(x\right)\right)\le \beta_{f,d,q_2}\left(B_r\left(x\right)\right)$ for $q_1<q_2$, so we may restrict to a range where $q$ is large:
\[
1-\frac{\alpha}{n_h}>\frac 1q>\frac{1}{\min\left\{p,2\right\}}-\frac{\alpha}{n_h}.
\]
Choose $0<\eta<\alpha$ sufficiently close to $\alpha$ such that
\[
1-\frac{\eta}{n_h}>1-\frac{\alpha}{n_h}>\frac 1q>\frac{1}{\min\left\{p,2\right\}}-\frac{\eta}{n_h}>\frac{1}{\min\left\{p,2\right\}}-\frac{\alpha}{n_h},
\]
which shows us that there is a choice of $1<w<\min\left\{p,2\right\}$ such that
\[
\frac 1q=\frac 1w-\frac \eta {n_h}.
\]
Note that $w<\min\left\{p,2\right\}\le \frac {n_h}\alpha$.

Summarising, we may choose some $1 < w < \min\left\{p,2\right\}$ and $0 < \eta < \min\left\{n_h/w,\alpha\right\}$ such that
\begin{equation}\label{form10}
q = \frac{w{n_h}}{{n_h} - \eta w}.
\end{equation}
Given this choice of $q,w$, and $\eta$, \cite[Theorem 4.1]{heikkinen2013fractional} tells us that in ${n_h}$-regular metric measure spaces, the following fractional Hardy--Littlewood maximal function $M_\eta$ is a bounded operator $L^{w}\left(G\right) \to L^{q}\left(G\right)$:
\[
M_{\eta}g\left(y\right) := \sup_{s > 0} \left\{ s^{\eta} \fint_{B_s\left(y\right)} \left|g\left(z\right)\right|  d\mu\left(z\right) \right\}.
\]
It follows that
\begin{align*}
r^{n_h/q}\beta_{f,d,q}\left(B_r\left(x\right)\right) &\stackrel{\mathclap{\eqref{form9}}}{\lesssim}_{G,\alpha} \left(\int_{B_r\left(x\right)}\left[\int_0^{4r} \fint_{B_s\left(y\right)} \beta_{f,d}\left(B_{2s}\left(z\right)\right)  d\mu\left(z\right)  \frac{ds}{s}\right]^q d\mu\left(y\right)\right)^{1/q}\\
&\le \int_0^{4r}\left(\int_{B_r\left(x\right)}\left[ \fint_{B_s\left(y\right)} \beta_{f,d}\left(B_{2s}\left(z\right)\right)  d\mu\left(z\right)  \right]^q d\mu\left(y\right)\right)^{1/q}\frac{ds}{s}\\
& \le \int_0^{4r} s^{-\eta} \left( \int_{B_r\left(x\right)} M_\eta\left(\beta_{f,d}\left(B_{2s}\left(\cdot\right)\right)\chi_{B_{5r}\left(x\right)}\left(\cdot\right)\right)\left(y\right)^q  d\mu\left(y\right) \right)^{1/q}  \frac{ds}{s}\\
& \lesssim_{G,w,q} \int_0^{4r} s^{-\eta} \left( \int_{B_{5r}\left(x\right)} \beta_{f,d}\left(B_{2s}\left(z\right)\right)^w  d\mu\left(z\right) \right)^{1/w}  \frac{ds}{s}\\
& \lesssim_G \int_0^{4r} s^{-\eta} r^{n_h/w} \left[M\left(\beta_{f,d}\left(B_{2s}\left(\cdot\right)\right)^w\right)\left(x\right)\right]^{1/w}  \frac{ds}{s},
\end{align*}
where in the second inequality we used Minkowski's inequality and in the fourth inequality we used the boundedness of $M_\eta:L^w\left(G\right) \to L^q\left(G\right)$. By \eqref{form10}, we have
\[
\beta_{f,d,q}\left(B_r\left(x\right)\right) \lesssim_{G,\alpha,p,q} r^\eta \int_0^{4r} s^{-\eta - 1} \left[M\left(\beta_{f,d}\left(B_{2s}\left(\cdot\right)\right)^w\right)\left(x\right)\right]^{1/w}  ds.
\]
Next, noting that $\alpha>\eta$ and using Hardy's inequality \eqref{Hardy} with $\nu=\alpha-\eta+1/2$, we obtain
\begin{align*}
&\left( \int_0^{\infty} \left[\frac{1}{r^\alpha}\beta_{f,d,q}\left(B_r\left(x\right)\right)\right]^2 \frac{dr}{r} \right)^{1/2} \\
& \lesssim_{G,\alpha,p,q} \left(\int_0^{\infty} r^{2\eta - 2\alpha-1} \left[ \int_{0}^{4r} s^{-\eta-1} \left[M\left(\beta_{f,d}\left(B_{2s}\left(\cdot\right)\right)^w\right)\left(x\right)\right]^{1/w}  ds \right]^2  dr \right)^{1/2} \\
& \lesssim_{\alpha,q} \left(\int_0^\infty \left[M\left(r^{-w\alpha}\beta_{f,d}\left(B_{2r}\left(\cdot\right)\right)^w\right)\left(x\right)\right]^{2/w} \frac{dr}{r} \right)^{1/2}.
\end{align*}
We conclude that \eqref{eq:lq-l1} holds:
\begin{align*}
&\left( \int_G \left( \int_0^\infty \left[\frac 1{r^\alpha} \beta_{f,d,q}\left(B_r\left(x\right)\right)\right]^2  \frac{dr}{r} \right)^{p/2}  d\mu\left(x\right) \right)^{1/p} \\
&\lesssim_{G,\alpha,p,q} \left(\int_G\left(\int_0^\infty \left[M\left(r^{-w\alpha}\beta_{f,d}\left(B_{2r}\left(\cdot\right)\right)^w\right)\left(x\right)\right]^{2/w} \frac{dr}{r} \right)^{p/2}d\mu\left(x\right)\right)^{1/p}\\
&\lesssim_{G,\alpha,p,q} \left(\int_G \left( \int_0^\infty \left[\frac{1}{r^\alpha} \beta_{f,d}\left(B_r\left(x\right)\right) \right]^{2}  \frac{dr}{r} \right)^{p/2}  d\mu\left(x\right) \right)^{1/p},
\end{align*}
where in the second inequality we applied the Fefferman--Stein vector-valued maximal function inequality \eqref{FS} with functions
\[
g_{j}\left(x\right) = 2^{-\alpha w j}\beta_{f,d}\left(B_{2^j}\left(x\right)\right)^w
\]
and exponents $u = 2/w > 1$ and $v = p/w > 1$, along with Corollary \ref{cor:betaIntConversion}.

\item[Case 2.]Let
\[
p>1, \quad \alpha>\frac{n_h}{\min\left\{p,2\right\}}, \quad\mathrm{and}~ q \le \infty.
\]
By Jensen's inequality, we may suppose $q=\infty$. Fix $0<\eta<\alpha$ and $1<q'< \min\left\{p,2\right\}$, with $\eta$ close to $\alpha$ and $q'$ close to $\min\left\{p,2\right\}$, such that
\[
\alpha>\eta>\frac{n_h}{q'}>\frac{n_h}{\min\left\{p,2\right\}}.
\]
By \eqref{form9}, for $x\in G$, $r>0$, and a.e.~$y\in B_r\left(x\right)$,
\begin{align*}
&\left|f\left(y\right) - A^d_{x,r}f\left(y\right)\right|\\ &\stackrel{\mathclap{\eqref{form9}}}\lesssim_{G,d} \int_0^{4r} \fint_{B_s\left(y\right)} \beta_{f,d}\left(B_{2s}\left(z\right)\right)  d\mu\left(z\right)  \frac{ds}{s}\le \int_0^{4r} \left(\fint_{B_s\left(y\right)} \left(\beta_{f,d}\left(B_{2s}\left(z\right)\right)\right)^{q'}d\mu\left(z\right)\right)^{1/q'}   \frac{ds}{s}\\
&\asymp_G\int_0^{4r} s^{\eta-n_h/q'}s^{-\eta}\left(\int_{B_s\left(y\right)} \left(\beta_{f,d}\left(B_{2s}\left(z\right)\right)\right)^{q'}d\mu\left(z\right)\right)^{1/q'}   \frac{ds}{s}\\
&\le r^{\eta-n_h/q'}\int_0^{4r} s^{-\eta}\left(\int_{B_{5r}\left(x\right)} \left(\beta_{f,d}\left(B_{2s}\left(z\right)\right)\right)^{q'}d\mu\left(z\right)\right)^{1/q'}   \frac{ds}{s}\\
&\asymp_G r^{\eta}\int_0^{4r}s^{-\eta}\left(\fint_{B_{5r}\left(x\right)} \left(\beta_{f,d}\left(B_{2s}\left(z\right)\right)\right)^{q'}d\mu\left(z\right)\right)^{1/q'}   \frac{ds}{s}\\
&\le r^{\eta}\int_0^{4r}s^{-\eta}\left(M\left(\beta_{f,d}\left(B_{2s}\left(\cdot\right)\right)^{q'}\right)\left(x\right)\right)^{1/q'}   \frac{ds}{s},
\end{align*}
where in the second inequality we used Jensen's inequality and in the inequality on the fourth line we used that $B_s\left(y\right)\subset B_{5r}\left(x\right)$. We deduce for $x\in G$ and $r>0$
\[
\beta_{f,d,\infty}\left(B_r\left(x\right)\right)\lesssim_{G,\alpha}r^{\eta}\int_0^{4r}s^{-\eta}\left(M\left(\beta_{f,d}\left(B_{2s}\left(\cdot\right)\right)^{q'}\right)\left(x\right)\right)^{1/q'}   \frac{ds}{s}.
\]

We proceed as in Case 1: noting that $\alpha>\eta$ and using Hardy's inequality \eqref{Hardy} with $\nu=\alpha-\eta+1/2$, we obtain
\begin{align*}
&\left( \int_0^{\infty} \left[\frac{1}{r^\alpha}\beta_{f,d,\infty}\left(B_r\left(x\right)\right)\right]^2 \frac{dr}{r} \right)^{1/2} \\
& \lesssim_{G,\alpha} \left(\int_0^{\infty} r^{2\eta - 2\alpha-1} \left[ \int_{0}^{4r} s^{-\eta-1} \left[M\left(\beta_{f,d}\left(B_{2s}\left(\cdot\right)\right)^{q'}\right)\left(x\right)\right]^{1/q'}  ds \right]^2  dr \right)^{1/2} \\
& \lesssim_{G,\alpha,p} \left(\int_0^\infty \left[M\left(r^{-q'\alpha}\beta_{f,d}\left(B_{2r}\left(\cdot\right)\right)^{q'}\right)\left(x\right)\right]^{2/q'} \frac{dr}{r} \right)^{1/2}.
\end{align*}
We conclude that \eqref{eq:lq-l1} holds:
\begin{align*}
&\left( \int_G \left( \int_0^\infty \left[\frac 1{r^\alpha} \beta_{f,d,\infty}\left(B_r\left(x\right)\right)\right]^2  \frac{dr}{r} \right)^{p/2}  d\mu\left(x\right) \right)^{1/p} \\&\lesssim_{G,\alpha,p} \left(\int_G\left(\int_0^\infty \left[M\left(r^{-q'\alpha}\beta_{f,d}\left(B_{2r}\left(\cdot\right)\right)^{q'}\right)\left(x\right)\right]^{2/q'} \frac{dr}{r} \right)^{p/2}d\mu\left(x\right)\right)^{1/p}\\
&\lesssim_{G,\alpha,p} \left(\int_G \left( \int_0^\infty \left[\frac{1}{r^\alpha} \beta_{f,d}\left(B_r\left(x\right)\right) \right]^{2}  \frac{dr}{r} \right)^{p/2}  d\mu\left(x\right) \right)^{1/p},
\end{align*}
where in the second inequality we applied the Fefferman--Stein vector-valued maximal function inequality \eqref{FS} with functions
\[
g_{j}\left(x\right) = 2^{-q'\alpha  j}\beta_{f,d}\left(B_{2^j}\left(x\right)\right)^{q'}
\]
and exponents $u = 2/q' > 1$ and $v = p/q' > 1$, along with Corollary \ref{cor:betaIntConversion}.
\end{itemize}
This completes the proof of Theorem \ref{lpgenthm}.

\section{Derivation of Theorem \ref{VvsH} from the Dorronsoro Theorem \ref{lpgenthm}}\label{sec:VvsH}
In this section, we prove Theorem \ref{VvsH}, which proves Theorem \ref{thm:lp} since it is a special case. See Section 7 of \cite{fassler2020dorronsoro} for a simple first-order version of the inequality of Theorem \ref{VvsH} in the special case of $G=\mathbb{H}^3$.

We may find a collection $\mathcal{B}_{r}$ of balls $B$ of radius $r$ whose union covers $G$, and such that the concentric balls $\hat{B}$ of radius $\left(n+1\right)r$ have bounded overlap: take their centers from a maximally $r$-separated subset of $G$,\footnote{I thank the anonymous referee for simplifying this argument.} i.e., among the subsets $N\subset G$ such that $d_G\left(x,y\right)\ge r$ for all distinct $x,y\in N$, take the one that is maximal with respect to inclusion. Then
\begin{align*}
&\int_G \left(\frac{1}{r^\alpha}\left|\sum_{j=0}^n \left(-1\right)^j \binom{n}{j}f\left(x\left(\delta_{r}\left(v\right)\right)^j\right)\right| \right)^p d\mu\left(x\right) \\
& \le \sum_{B \in {\mathcal{B}}_{r}}  \int_B \left(\frac{1}{r^\alpha}\left|\sum_{j=0}^n \left(-1\right)^j \binom{n}{j}f\left(x\left(\delta_{r}\left(v\right)\right)^j\right)\right| \right)^p d\mu\left(x\right)\\
& \lesssim_{n,p} \sum_{j=0}^n  \binom{n}{j}^p\sum_{B \in {\mathcal{B}}_{r}}  \int_B \left(\frac{1}{r^\alpha}\left|f\left(x\left(\delta_{r}\left(v\right)\right)^j\right)-A^{\lfloor\alpha\rfloor}_{\hat{B}}f\left(x\left(\delta_{r}\left(v\right)\right)^j\right)\right| \right)^p d\mu\left(x\right)\\
&\qquad\qquad +\sum_{B\in \mathcal{B}_{r}}\int_B \left(\frac{1}{r^\alpha}\left|\sum_{j=0}^n \left(-1\right)^j \binom{n}{j}A^{\lfloor\alpha\rfloor}_{\hat{B}}f\left(x\left(\delta_{r}\left(v\right)\right)^j\right)\right| \right)^p d\mu\left(x\right)\\
&\lesssim_{n,p} \sum_{B \in {\mathcal{B}}_{r}} \int_{\hat{B}} \left(\frac{1}{r^\alpha}\left|f\left(x\right)-A^{\lfloor\alpha\rfloor}_{\hat{B}}f\left(x\right)\right| \right)^{p} d\mu\left(x\right)+0\\
&= \sum_{B \in {\mathcal{B}}_{r}} \left[ \frac{1}{r^\alpha} \beta_{f,\lfloor\alpha\rfloor ,p}\left(\hat{B}\right)\right]^{p}\left|\hat{B}\right| \lesssim_{G,\alpha} \int_{G}  \left[\frac{1}{r^\alpha} \cdot \beta_{f,\lfloor\alpha\rfloor,p}\left(B_{2\left(n+1\right)r}\left(x\right)\right)\right]^{p} d\mu\left(x\right).
\end{align*}
We have used in the penultimate inequality the fact that since $A^{\lfloor\alpha\rfloor}_{\hat{B}}f$ is a polynomial of weighted degree at most $\lfloor\alpha\rfloor$, and since $\delta_r\left(v\right)\in \exp\left(V_{\lfloor\alpha/n\rfloor+1}\oplus\cdots\oplus V_s\right)$, $A^{\lfloor\alpha\rfloor}_{\hat{B}}f\left(x\left(\delta_{r}\left(v\right)\right)^j\right)$ is a polynomial in $j$ of degree at most $n-1$, and hence 
\[
\Delta_{\delta_r\left(v\right)}^n A^{\lfloor\alpha\rfloor}_{\hat{B}}f\left(x\right)=\sum_{j=0}^n \left(-1\right)^{n-j} \binom{n}{j}A^{\lfloor\alpha\rfloor}_{\hat{B}}f\left(x\left(\delta_{r}\left(v\right)\right)^j\right)=0.
\]
In the last inequality we used Corollary \ref{monotoneBeta}. Now, we may apply Minkowski's integral inequality to obtain
\begin{align*}
& \left(\int_0^\infty \left[ \int_G \left(\frac{1}{r^\alpha}\left|\sum_{j=0}^n \left(-1\right)^j \binom{n}{j}f\left(x\left(\delta_{r}\left(v\right)\right)^j\right)\right| \right)^p d\mu\left(x\right) \right]^{2/p}  \frac{dr}{r} \right)^{1/2}\\
& \lesssim_{G,\alpha,n,p} \left(\int_0^\infty \left[ \int_G \left[\frac{1}{r^\alpha}\beta_{f,\lfloor\alpha\rfloor, p}\left(B_{2\left(n+1\right)r}\left(x\right)\right)\right]^p  d\mu\left(x\right) \right]^{2/p}  \frac{dr}{r} \right)^{1/2}\\
& \le \left( \int_G \left[ \int_0^\infty \left[\frac{1}{r^\alpha} \beta_{f,\lfloor\alpha\rfloor,p}\left(B_{2\left(n+1\right)r}\left(x\right)\right)\right]^2  \frac{dr}{r} \right]^{p/2}  d\mu\left(x\right) \right)^{1/p}.
\end{align*}
Finally, Dorronsoro's Theorem \ref{lpgenthm} is applicable for $1<p\le 2$, $\alpha>0$ and $q=p$, as \eqref{eq:dorronsoro-condition} is satisfied. Thus
\[
\left( \int_G \left[ \int_0^\infty \left[\frac{1}{r^\alpha} \beta_{f,\lfloor\alpha\rfloor,p}\left(B_{2\left(n+1\right)r}\left(x\right)\right)\right]^2  \frac{dr}{r} \right]^{p/2}  d\mu\left(x\right) \right)^{1/p}\lesssim_{G,\alpha,n,p} \left\|\left(-\Delta_p\right)^{\alpha/2} f\right\|_{L^p\left(G\right)}.
\]
Inequality \eqref{eq:VvsH} follows, and the proof of Theorem \ref{VvsH} is complete.

\bibliographystyle{myalpha}
\bibliography{main}

\newcommand{\etalchar}[1]{$^{#1}$}
\begin{thebibliography}{EBGLD{\etalchar{+}}23}

\bibitem[AK06]{albiac2006topics}
F.~Albiac and N.~J. Kalton.
\newblock {\em Topics in {B}anach space theory}, volume 233.
\newblock Springer, 2006.

\bibitem[Amb01]{ambrosio2001some}
L.~Ambrosio.
\newblock Some fine properties of sets of finite perimeter in {A}hlfors regular
  metric measure spaces.
\newblock {\em Advances in Mathematics}, 159(1):51--67, 2001.

\bibitem[ANT13]{austin2013sharp}
T.~Austin, A.~Naor, and R.~Tessera.
\newblock Sharp quantitative nonembeddability of the {H}eisenberg group into
  superreflexive {B}anach spaces.
\newblock {\em Groups, Geometry, and Dynamics}, 7(3):497--522, 2013.

\bibitem[Ass83]{assouad1983plongements}
P.~Assouad.
\newblock Plongements lipschitziens dans $\mathbb{R}^n$.
\newblock {\em Bulletin de la Soci{\'e}t{\'e} Math{\'e}matique de France},
  111:429--448, 1983.

\bibitem[Azz16]{azzam2016bi}
J.~Azzam.
\newblock Bi-{L}ipschitz parts of quasisymmetric mappings.
\newblock {\em Revista Matem{\'a}tica Iberoamericana}, 32(2):589--648, 2016.

\bibitem[BCL94]{ball1994sharp}
K.~Ball, E.~A. Carlen, and E.~H. Lieb.
\newblock Sharp uniform convexity and smoothness inequalities for trace norms.
\newblock {\em Inventiones mathematicae}, 115(1):463--482, 1994.

\bibitem[BL12]{bergh2012interpolation}
J.~Bergh and J.~L{\"o}fstr{\"o}m.
\newblock {\em Interpolation spaces: an introduction}, volume 223.
\newblock Springer Science \& Business Media, 2012.

\bibitem[BLD12]{breuillard2012nilpotent}
E.~Breuillard and E.~Le~Donne.
\newblock Nilpotent groups, asymptotic cones and subfinsler geometry.
\newblock {\em Preprint}, 2012.

\bibitem[Bog65]{bogdanowicz1965fubini}
W.~M. Bogdanowicz.
\newblock Fubini theorems for generalized {L}ebesgue-{B}ochner-{S}tieltjes
  integral.
\newblock {\em Proceedings of the Japan Academy}, 41(10):979--983, 1965.

\bibitem[Bog07]{bogachev2007measure}
V.~I. Bogachev.
\newblock {\em Measure theory}, volume~2.
\newblock Springer, 2007.

\bibitem[Bre14]{breuillard2014geometry}
E.~Breuillard.
\newblock Geometry of locally compact groups of polynomial growth and shape of
  large balls.
\newblock {\em Groups, Geometry, and Dynamics}, 8(3):669--732, 2014.

\bibitem[Che99]{cheeger1999differentiability}
J.~Cheeger.
\newblock Differentiability of lipschitz functions on metric measure spaces.
\newblock {\em Geometric \& Functional Analysis GAFA}, 9:428--517, 1999.

\bibitem[Cho40]{chow1940systeme}
W.-L. Chow.
\newblock {\"U}ber systeme von liearren partiellen differentialgleichungen
  erster ordnung.
\newblock {\em Mathematische Annalen}, 117(1):98--105, 1940.

\bibitem[CJPRdA21]{caamano2021sobolev}
I.~Caama{\~n}o, J.~{\'A}. Jaramillo, {\'A}.~Prieto, and A.~Ruiz~de Alarc{\'o}n.
\newblock Sobolev spaces of vector-valued functions.
\newblock {\em Revista de la Real Academia de Ciencias Exactas, F{\'\i}sicas y
  Naturales. Serie A. Matem{\'a}ticas}, 115:1--14, 2021.

\bibitem[CK06]{cheeger2006differentiability}
J.~Cheeger and B.~Kleiner.
\newblock On the differentiability of {L}ipschitz maps from metric measure
  spaces to {B}anach spaces.
\newblock In {\em Inspired By SS Chern: A Memorial Volume in Honor of A Great
  Mathematician}, pages 129--152. World Scientific, 2006.

\bibitem[CK10]{cheeger2010differentiating}
J.~Cheeger and B.~Kleiner.
\newblock Differentiating maps into {${L}^1$}, and the geometry of {BV}
  functions.
\newblock {\em Annals of mathematics}, pages 1347--1385, 2010.

\bibitem[CKLD{\etalchar{+}}24]{cowling2024homogeneous}
M.~G. Cowling, V.~Kivioja, E.~Le~Donne, S.~N. Golo, and A.~Ottazzi.
\newblock From homogeneous metric spaces to lie groups.
\newblock {\em Comptes Rendus. Math{\'e}matique}, 362(G9):943--1014, 2024.

\bibitem[CLY22]{chousionis2022strong}
V.~Chousionis, S.~Li, and R.~Young.
\newblock The strong geometric lemma for intrinsic {L}ipschitz graphs in
  {H}eisenberg groups.
\newblock {\em Journal f{\"u}r die reine und angewandte Mathematik (Crelles
  Journal)}, 2022(784):251--274, 2022.

\bibitem[Cor12]{cornulier2012quasi}
Y.~Cornulier.
\newblock On the quasi-isometric classification of locally compact groups.
\newblock {\em arXiv preprint arXiv:1212.2229}, 2012.

\bibitem[CRTN01]{coulhon2001sobolev}
T.~Coulhon, E.~Russ, and V.~Tardivel-Nachef.
\newblock Sobolev algebras on {L}ie groups and {R}iemannian manifolds.
\newblock {\em American Journal of Mathematics}, 123(2):283--342, 2001.

\bibitem[DCTV07]{de2007isometric}
Y.~De~Cornulier, R.~Tessera, and A.~Valette.
\newblock Isometric group actions on {H}ilbert spaces: growth of cocycles.
\newblock {\em GAFA Geometric And Functional Analysis}, 17(3):770--792, 2007.

\bibitem[Dix57]{dixmier1957representations}
J.~Dixmier.
\newblock Sur les repr{\'e}sentations unitaires des groupes de {L}ie
  nilpotents. ii.
\newblock {\em Bulletin de la Soci{\'e}t{\'e} math{\'e}matique de France},
  85:325--388, 1957.

\bibitem[Dix59]{dixmier1959representations}
J.~Dixmier.
\newblock Sur les repr{\'e}sentations unitaires des groupes de {L}ie
  nilpotents. i.
\newblock {\em American Journal of Mathematics}, 81(1):160--170, 1959.

\bibitem[DNM21]{de2021mean}
P.~De~N{\'a}poli and R.~D. Mart{\'\i}n.
\newblock Mean value inequalities and characterizations of {S}obolev spaces on
  graded groups.
\newblock {\em arXiv preprint arXiv:2109.05064}, 2021.

\bibitem[Dor85]{dorronsoro1985characterization}
J.~R. Dorronsoro.
\newblock A characterization of potential spaces.
\newblock {\em Proceedings of the American Mathematical Society}, 95(1):21--31,
  1985.

\bibitem[DS88]{dunford1988linear}
N.~Dunford and J.~T. Schwartz.
\newblock {\em Linear operators, part 1: general theory}, volume~10.
\newblock John Wiley \& Sons, 1988.

\bibitem[DT16]{das2016integrable}
K.~Das and R.~Tessera.
\newblock Integrable measure equivalence and the central extension of surface
  groups.
\newblock {\em Groups, Geometry, and Dynamics}, 10(3):965--983, 2016.

\bibitem[EB18]{eriksson2018quantitative}
S.~Eriksson-Bique.
\newblock Quantitative bi-lipschitz embeddings of bounded-curvature manifolds
  and orbifolds.
\newblock {\em Geometry \& Topology}, 22(4):1961--2026, 2018.

\bibitem[EBGLD{\etalchar{+}}23]{eriksson2023nilpotent}
S.~Eriksson-Bique, C.~Gartland, E.~Le~Donne, L.~Naples, and S.~Nicolussi~Golo.
\newblock Nilpotent groups and {B}i-{L}ipschitz embeddings into ${L}^1$.
\newblock {\em International Mathematics Research Notices},
  2023(12):10759--10797, 2023.

\bibitem[Fig76]{figiel1976moduli}
T.~Figiel.
\newblock On the moduli of convexity and smoothness.
\newblock {\em Studia Mathematica}, 56(2):121--155, 1976.

\bibitem[FO20a]{fassler2020dorronsoro}
K.~F{\"a}ssler and T.~Orponen.
\newblock Dorronsoro's theorem in {H}eisenberg groups.
\newblock {\em Bulletin of the London Mathematical Society}, 52(3):472--488,
  2020.

\bibitem[FO20b]{fassler2020vertical}
K.~F{\"a}ssler and T.~Orponen.
\newblock Vertical versus horizontal {S}obolev spaces.
\newblock {\em Journal of Functional Analysis}, 279(2):108517, 2020.

\bibitem[Fol75]{folland1975subelliptic}
G.~B. Folland.
\newblock Subelliptic estimates and function spaces on nilpotent {L}ie groups.
\newblock {\em Arkiv f{\"o}r matematik}, 13(1):161--207, 1975.

\bibitem[Fr{\'e}06]{frechet1906quelques}
M.~M. Fr{\'e}chet.
\newblock Sur quelques points du calcul fonctionnel.
\newblock {\em Rendiconti del Circolo Matematico di Palermo (1884-1940)},
  22(1):1--72, 1906.

\bibitem[Gar20]{gartland2020estimates}
C.~Gartland.
\newblock Estimates on the markov convexity of carnot groups and quantitative
  nonembeddability.
\newblock {\em Journal of Functional Analysis}, 279(8):108697, 2020.

\bibitem[GIS{\etalchar{+}}89]{godsil1989note}
C.~D. Godsil, W.~Imrich, N.~Seifter, M.~E. Watkins, and W.~Woess.
\newblock A note on bounded automorphisms of infinite graphs.
\newblock {\em Graphs and Combinatorics}, 5(1):333--338, 1989.

\bibitem[GLY09]{grafakos2009vector}
L.~Grafakos, L.~Liu, and D.~Yang.
\newblock Vector-valued singular integrals and maximal functions on spaces of
  homogeneous type.
\newblock {\em Mathematica Scandinavica}, pages 296--310, 2009.

\bibitem[Gro81]{gromov1981groups}
M.~Gromov.
\newblock Groups of polynomial growth and expanding maps (with an appendix by
  {J}acques {T}its).
\newblock {\em Publications Math{\'e}matiques de l'IH{\'E}S}, 53:53--78, 1981.

\bibitem[Han56]{hanner1956uniform}
O.~Hanner.
\newblock On the uniform convexity of $l^p$ and $\ell^p$.
\newblock {\em Arkiv f{\"o}r Matematik}, 3(3):239--244, 1956.

\bibitem[HK00]{hajlasz2000sobolev}
P.~Haj{\l}asz and P.~Koskela.
\newblock {\em Sobolev met poincar{\'e}}, volume 688.
\newblock American Mathematical Soc., 2000.

\bibitem[HKST15]{heinonen2015sobolev}
J.~Heinonen, P.~Koskela, N.~Shanmugalingam, and J.~T. Tyson.
\newblock {\em Sobolev spaces on metric measure spaces}.
\newblock Number~27 in New Mathematical Monographs. Cambridge University Press,
  2015.

\bibitem[HLNT13]{heikkinen2013fractional}
T.~Heikkinen, J.~Lehrb{\"a}ck, J.~Nuutinen, and H.~Tuominen.
\newblock Fractional maximal functions in metric measure spaces.
\newblock {\em Analysis and Geometry in Metric Spaces}, 1(2013):147--162, 2013.

\bibitem[HLP{\etalchar{+}}52]{hardy1952inequalities}
G.~H. Hardy, J.~E. Littlewood, G.~P{\'o}lya, G.~P{\'o}lya, et~al.
\newblock {\em Inequalities}.
\newblock Cambridge university press, 1952.

\bibitem[HN19]{hytonen2019heat}
T.~Hyt{\"o}nen and A.~Naor.
\newblock Heat flow and quantitative differentiation.
\newblock {\em Journal of the European Mathematical Society},
  21(11):3415--3466, 2019.

\bibitem[HS20]{huang2020non}
Y.~Huang and S.~Sun.
\newblock Non-embedding theorems of nilpotent {L}ie groups and sub-{R}iemannian
  manifolds.
\newblock {\em Frontiers of Mathematics in China}, 15(1):91--114, 2020.

\bibitem[Jea14]{jean2014control}
F.~Jean.
\newblock {\em Control of nonholonomic systems: from sub-{R}iemannian geometry
  to motion planning}.
\newblock Springer, 2014.

\bibitem[Jer86]{jerison1986poincare}
D.~Jerison.
\newblock The {P}oincar{\'e} inequality for vector fields satisfying
  {H}{\"o}rmander's condition.
\newblock {\em Duke Mathematical Journal}, 53(2):503--523, 1986.

\bibitem[Joh48]{john1948extremum}
F.~John.
\newblock Extremum problems with inequalities as subsidiary conditions,
  {S}tudies and {E}ssays {P}resented to {R}. {C}ourant on his 60th {B}irthday,
  {J}anuary 8, 1948, 1948.

\bibitem[Kal04]{kalton2004spaces}
N.~J. Kalton.
\newblock Spaces of {L}ipschitz and {H}{\"o}lder functions and their
  applications.
\newblock {\em Collect. Math}, 55(2):171--217, 2004.

\bibitem[Kir62]{kirillov1962unitary}
A.~A. Kirillov.
\newblock Unitary representations of nilpotent {L}ie groups.
\newblock {\em Russian mathematical surveys}, 17(4):53--104, 1962.

\bibitem[Kle10]{kleiner2010new}
B.~Kleiner.
\newblock A new proof of {G}romov's theorem on groups of polynomial growth.
\newblock {\em Journal of the American Mathematical Society}, 23(3):815--829,
  2010.

\bibitem[Kre15]{kreuter2015sobolev}
M.~Kreuter.
\newblock Sobolev spaces of vector-valued functions.
\newblock {\em Ulm University Faculty of Mathematics and Economies}, 2015.

\bibitem[Kur35]{kuratowski1935quelques}
C.~Kuratowski.
\newblock Quelques probl{\`e}mes concernant les espaces m{\'e}triques
  non-s{\'e}parables.
\newblock {\em Fundamenta Mathematicae}, 25(1):534--545, 1935.

\bibitem[LD17]{le2017primer}
E.~Le~Donne.
\newblock A primer on {C}arnot groups: homogenous groups,
  {C}arnot-{C}arath{\'e}odory spaces, and regularity of their isometries.
\newblock {\em Analysis and Geometry in Metric Spaces}, 5(1):116--137, 2017.

\bibitem[Li14]{li2014coarse}
S.~Li.
\newblock Coarse differentiation and quantitative nonembeddability for {C}arnot
  groups.
\newblock {\em Journal of Functional Analysis}, 266(7):4616--4704, 2014.

\bibitem[LMN05]{lee2005metric}
J.~R. Lee, M.~Mendel, and A.~Naor.
\newblock Metric structures in ${L}^1$: dimension, snowflakes, and average
  distortion.
\newblock {\em European Journal of Combinatorics}, 26(8):1180--1190, 2005.

\bibitem[LN05]{lee2005extending}
J.~R. Lee and A.~Naor.
\newblock Extending {L}ipschitz functions via random metric partitions.
\newblock {\em Inventiones mathematicae}, 160(1):59--95, 2005.

\bibitem[LN06]{lee2006lp}
J.~R. Lee and A.~Naor.
\newblock ${L}^p$ metrics on the {H}eisenberg group and the {G}oemans-{L}inial
  conjecture.
\newblock In {\em 2006 47th Annual IEEE Symposium on Foundations of Computer
  Science (FOCS'06)}, pages 99--108. IEEE, 2006.

\bibitem[LN14]{lafforgue2014vertical}
V.~Lafforgue and A.~Naor.
\newblock Vertical versus horizontal {P}oincar{\'e} inequalities on the
  {H}eisenberg group.
\newblock {\em Israel Journal of Mathematics}, 203(1):309--339, 2014.

\bibitem[Los87]{losert1987structure}
V.~Losert.
\newblock On the structure of groups with polynomial growth.
\newblock {\em Mathematische Zeitschrift}, 195:109--117, 1987.

\bibitem[Mal49]{malcev1949class}
A.~Malcev.
\newblock On a class of homogeneous spaces, izv.
\newblock {\em Akad. Nauk. Armyan. SSSR Ser. Mat}, 13:201--212, 1949.

\bibitem[Mil38]{milman1938some}
D.~P. Milman.
\newblock On some criteria for the regularity of spaces of the type (b).
\newblock In {\em Dokl. Akad. Nauk SSSR}, volume~20, pages 243--246, 1938.

\bibitem[MTX06]{martinez2006vector}
T.~Mart{\'\i}nez, J.~L. Torrea, and Q.~Xu.
\newblock Vector-valued {L}ittlewood--{P}aley--{S}tein theory for semigroups.
\newblock {\em Advances in Mathematics}, 203(2):430--475, 2006.

\bibitem[Nao14]{naor2014comparison}
A.~Naor.
\newblock Comparison of metric spectral gaps.
\newblock {\em Analysis and Geometry in Metric Spaces}, 2(1), 2014.

\bibitem[NN12]{naor2012assouad}
A.~Naor and O.~Neiman.
\newblock Assouad’s theorem with dimension independent of the snowflaking.
\newblock {\em Revista Matematica Iberoamericana}, 28(4):1123--1142, 2012.

\bibitem[NP11]{naor2011lp}
A.~Naor and Y.~Peres.
\newblock ${L}^p$ compression, traveling salesmen, and stable walks.
\newblock {\em Duke Mathematical Journal}, 157(1):53--108, 2011.

\bibitem[NY18]{naor2018vertical}
A.~Naor and R.~Young.
\newblock Vertical perimeter versus horizontal perimeter.
\newblock {\em Annals of Mathematics}, 188(1):171--279, 2018.

\bibitem[NY22]{naor2022foliated}
A.~Naor and R.~Young.
\newblock Foliated corona decompositions.
\newblock {\em Acta Mathematica}, 229(1):55--200, 2022.

\bibitem[Ost13]{ostrovskii2013metric}
M.~I. Ostrovskii.
\newblock Metric embeddings.
\newblock In {\em Metric Embeddings}. de Gruyter, 2013.

\bibitem[Pan83]{pansu1983croissance}
P.~Pansu.
\newblock Croissance des boules et des g{\'e}od{\'e}siques ferm{\'e}es dans les
  nilvari{\'e}t{\'e}s.
\newblock {\em Ergodic Theory and Dynamical Systems}, 3(3):415--445, 1983.

\bibitem[Pan89]{pansu1989metriques}
P.~Pansu.
\newblock M{\'e}triques de {C}arnot-{C}arath{\'e}odory et quasiisom{\'e}tries
  des espaces sym{\'e}triques de rang un.
\newblock {\em Annals of Mathematics}, pages 1--60, 1989.

\bibitem[Pet39]{pettis1939proof}
B.~J. Pettis.
\newblock A proof that every uniformly convex space is reflexive.
\newblock 1939.

\bibitem[Pis75]{pisier1975martingales}
G.~Pisier.
\newblock Martingales with values in uniformly convex spaces.
\newblock {\em Israel Journal of Mathematics}, 20(3):326--350, 1975.

\bibitem[Pis16]{pisier2016martingales}
G.~Pisier.
\newblock {\em Martingales in Banach spaces}, volume 155.
\newblock Cambridge University Press, 2016.

\bibitem[Ras38]{rashevskii1938connecting}
P.~Rashevskii.
\newblock About connecting two points of complete non-holonomic space by
  admissible curve (in {R}ussian).
\newblock {\em Uch. Zapiski Ped. Inst. Libknexta}, 2:83--94, 1938.

\bibitem[Res97]{reshetnyak1997sobolev}
Y.~G. Reshetnyak.
\newblock Sobolev classes of functions with values in a metric space.
\newblock {\em Sibirskii Matematicheskii Zhurnal}, 38(3):657--675, 1997.

\bibitem[Rot62]{rota1962alternierende}
G.-C. Rota.
\newblock An “alternierende verfahren” for general positive operators.
\newblock {\em Bulletin of the American Mathematical Society}, 68(2):95--102,
  1962.

\bibitem[Sau06]{sauer2006homological}
R.~Sauer.
\newblock Homological invariants and quasi-isometry.
\newblock {\em Geometric \& Functional Analysis GAFA}, 16(2):476--515, 2006.

\bibitem[Sem96]{semmes1996nonexistence}
S.~Semmes.
\newblock On the nonexistence of bilipschitz parameterizations and geometric
  problems about ${A}_\infty$-weights.
\newblock {\em Revista Matem{\'a}tica Iberoamericana}, 12(2):337--410, 1996.

\bibitem[Sha04]{shalom2004harmonic}
Y.~Shalom.
\newblock Harmonic analysis, cohomology, and the large-scale geometry of
  amenable groups.
\newblock {\em Acta Mathematica}, 192(2):119--185, 2004.

\bibitem[Ste16]{stein2016topics}
E.~M. Stein.
\newblock Topics in harmonic analysis related to the {L}ittlewood-{P}aley
  theory ({AM}-63).
\newblock 2016.

\bibitem[Tes11]{tessera2011asymptotic}
R.~Tessera.
\newblock Asymptotic isoperimetry on groups and uniform embeddings into
  {B}anach spaces.
\newblock {\em Commentarii Mathematici Helvetici}, 86(3):499--535, 2011.

\bibitem[TJ74]{tomczak1974moduli}
N.~Tomczak-Jaegermann.
\newblock The moduli of smoothness and convexity and the {R}ademacher averages
  of the trace classes ${S}_p$ $(1\le p<\infty)$.
\newblock {\em Studia Mathematica}, 50(2):163--182, 1974.

\bibitem[Tro85]{trofimov1985graphs}
V.~I. Trofimov.
\newblock Graphs with polynomial growth.
\newblock {\em Mathematics of the USSR-Sbornik}, 51(2):405, 1985.

\bibitem[Xu20]{xu2020vector}
Q.~Xu.
\newblock Vector-valued {L}ittlewood-{P}aley-{S}tein theory for semigroups
  {II}.
\newblock {\em International Mathematics Research Notices},
  2020(21):7769--7791, 2020.

\end{thebibliography}

\appendix

\section{Compactly supported Lipschitz functions belong to $Ch_0^{1,p}\left(G;X\right)$}\label{app:lipnabla}
In this section, we will prove the claims made in the paragraph containing the equations \eqref{ineq:lip-nabla} and \eqref{eq:cheeger-lipschitz}.

We start by proving \eqref{ineq:lip-nabla}: if $f:G\to X$ is continuously differentiable, then
\[\tag{\ref{ineq:lip-nabla}}
    \frac{1}{\sqrt{k}}\left\|\nabla f(x)\right\|_{\ell_2^k\left(X\right)}\le \operatorname{lip}_x\left(f\right)\le \left\|\nabla f(x)\right\|_{\ell_2^k\left(X\right)},\quad x\in G.
\]

We begin by showing that, when $G$ is a sub-Riemannian Lie group and $X$ is a Banach space, for a continuously differentiable map $f:G\to X$ we have
\begin{equation}\label{eq:lip-nabla}
    \operatorname{lip}_x\left(f\right)=\left\|\nabla f(x)\right\|_{B\left(\mathbb{R}^k; X\right)},\quad x\in G,
\end{equation}
where $B\left(\mathbb{R}^k; X\right)$ denotes the space of real-linear operators from $\mathbb{R}^k$ to $X$ with the operator norm, i.e., for $\left(v_1,\cdots,v_k\right)\in X^k$, the norm $\left\|\left(v_1,\cdots,v_k\right)\right\|_{B\left(\mathbb{R}^k; X\right)}$ denotes the operator norm of the linear mapping $\mathbb{R}^k\to X$, $\left(a_1,\cdots,a_k\right)\mapsto \sum_{i=1}^k a_iv_i$, or equivalently
\[
\left\|\left(v_1,\cdots,v_k\right)\right\|_{B\left(\mathbb{R}^k; X\right)}=\sup_{\left(a_1,\cdots,a_k\right)\in \mathbb{S}^{k-1}}\left\|\sum_{i=1}^k a_iv_i\right\|_X,\quad v_1,\cdots,v_k\in X.
\]
\begin{proof}[Proof of \eqref{eq:lip-nabla}]
    Given $g_0,g_1\in B_r(x)$ where $r>0$, for $\varepsilon\in (0,r)$ we may find a piecewise smooth path $\gamma:\left[0,1\right]\to G$ that has endpoints $\gamma\left(0\right)=g_0$ and $\gamma\left(1\right)=g_1$, is horizontal, i.e., $\gamma'\left(t\right)\in \operatorname{span}\left\{\left(X_1\right)_{\gamma\left(t\right)},\cdots,\left(X_k\right)_{\gamma\left(t\right)}\right\}$ at the points $t\in \left(0,1\right)$ of differentiability, and has total length $\int_0^1 \left|\gamma'\left(t\right)\right|dt<d_G\left(g_0,g_1\right)+\varepsilon$. As $\gamma$ has length less than $2r+\varepsilon<3r$, the image of $\gamma$ is contained in $B_{3r}\left(x\right)$. Then writing
\[
\gamma'\left(t\right)=\sum_{i=1}^k \gamma'_i(t)\left(X_i\right)_{\gamma\left(t\right)},\quad \left|\gamma'\left(t\right)\right|=\sqrt{\sum_{i=1}^k \left(\gamma'_i\left(t\right)\right)^2},
\]
we have
\begin{align*}
\left\|f\left(g_1\right)-f\left(g_0\right)\right\|_X&\le \int_0^1 \left\|\frac{df\left(\gamma\left(t\right)\right)}{dt}\right\|_Xdt=\int_0^1 \left\|\sum_{i=1}^k \gamma_i'\left(t\right)\left(X_if\left(\gamma\left(t\right)\right)\right)\right\|_Xdt\\
&\le \int_0^1 \left\|\nabla f\left(\gamma\left(t\right)\right)\right\|_{B\left(\mathbb{R}^k; X\right)}\left|\gamma'\left(t\right)\right|dt\le \left\|\nabla f\right\|_{L^\infty\left(B_{3r}\left(x\right);B\left(\mathbb{R}^k; X\right)\right)} \int_0^1\left|\gamma'\left(t\right)\right|dt\\
&\le \left\|\nabla f\right\|_{L^\infty\left(B_{3r}\left(x\right);B\left(\mathbb{R}^k; X\right)\right)} \left(d_G\left(g_0,g_1\right)+\varepsilon\right),
\end{align*}
where in the first inequality we used the triangle inequality, and in the second inequality we used the definition of the operator norm on $B\left(\mathbb{R}^k; X\right)$. As $\varepsilon$ was arbitrary, we have
\[
\left\|f\left(g_1\right)-f\left(g_0\right)\right\|_X\le \left\|\nabla f\right\|_{L^\infty\left(B_{3r}\left(x\right);B\left(\mathbb{R}^k; X\right)\right)} d_G\left(g_0,g_1\right),\quad g_0,g_1\in B_r\left(x\right).
\]
Thus
\[
\operatorname{lip}_x\left(f\right)=\limsup_{r\to 0}\sup_{g_0,g_1\in B_r\left(x\right),g_0\neq g_1}\frac{\left\|f\left(g_1\right)-f\left(g_0\right)\right\|_X}{d_G\left(g_0,g_1\right)}\le \limsup_{r\to 0}\left\|\nabla f\right\|_{L^\infty\left(B_{3r}\left(x\right);B\left(\mathbb{R}^k; X\right)\right)}\le \left\|\nabla f(x)\right\|_{B\left(\mathbb{R}^k; X\right)},
\]
where in the second inequality we used the continuity of derivatives of $f$.

In the other direction, for each $x\in G$, choose $\left(a_1,\cdots,a_k\right)\in \mathbb{S}^{k-1}$ so that
\[
\left\|\nabla f(x)\right\|_{B\left(\mathbb{R}^k; X\right)}=\left\|\sum_{i=1}^ka_iX_if\left(x\right)\right\|_X.
\]
For $t>0$, we have by definition of differentiability that
\[
f\left(x\exp\left(t\sum_{i=1}^k a_iX_i\right)\right)-f\left(x\right)=t\sum_{i=1}^k a_iX_if\left(x\right)+o\left(t\right)\quad \mathrm{as~}t\to 0,
\]
while
\begin{align*}
d_G\left(x\exp\left(t\sum_{i=1}^k a_iX_i\right),x\right)&=d_G\left(\exp\left(t\sum_{i=1}^ka_iX_i\right),e_G\right)\le \int_0^t \left|\frac{d}{ds}\exp\left(s\sum_{i=1}^k a_iX_i\right)\right|ds\\
&=\int_0^t \left|\left(\sum_{i=1}^k a_iX_i\right)_{\exp\left(s\sum_{i=1}^k a_iX_i\right)}\right|ds=t,
\end{align*}
so that
\begin{align*}
\operatorname{lip}_x\left(f\right)&\ge \lim_{t\to 0}\frac{\left\|f\left(x\exp\left(t\sum_{i=1}^k a_iX_i\right)\right)-f\left(x\right)\right\|_X}{d_G\left(x\exp\left(t\sum_{i=1}^k a_iX_i\right),x\right)}\ge \frac{\left\|t\sum_{i=1}^k a_iX_if\left(x\right)+o\left(t\right)\right\|_X}{t}\\
&=\left\|\sum_{i=1}^k a_iX_if\left(x\right)\right\|_X= \left\|\nabla f\left(x\right)\right\|_{B\left(\mathbb{R}^k; X\right)}.
\end{align*}
This completes the proof of \eqref{eq:lip-nabla}.
\end{proof}

The norm $\left\|\left(v_1,\cdots,v_k\right)\right\|_{B\left(\mathbb{R}^k; X\right)}$ satisfies
\begin{equation}\label{eq:elltwo-operator}
    \frac{1}{\sqrt{k}}\left\|\left(v_1,\cdots,v_k\right)\right\|_{\ell_2^k\left(X\right)}\le \left\|\left(v_1,\cdots,v_k\right)\right\|_{B\left(\mathbb{R}^k; X\right)}\le \left\|\left(v_1,\cdots,v_k\right)\right\|_{\ell_2^k\left(X\right)},\quad v_1,\cdots,v_k\in X,
\end{equation}
where
\[
\left\|\left(v_1,\cdots,v_k\right)\right\|_{\ell_2^k\left(X\right)}\coloneqq \sqrt{\sum_{i=1}^k \left\|v_i\right\|_X^2},\quad v_1,\cdots,v_k\in X.
\]

\begin{proof}[Proof of \eqref{eq:elltwo-operator}]

The upper bound follows from the triangle inequality and the Cauchy--Schwarz inequality: for all $\left(a_1,\cdots,a_k\right)\in \mathbb{S}^{k-1}$,
\[
\left\|\sum_{i=1}^k a_iv_i\right\|_X\le \sum_{i=1}^k \left|a_i\right|\left\|v_i\right\|_X\le \left\|\left(v_1,\cdots,v_k\right)\right\|_{\ell_2^k\left(X\right)}.
\]
For the lower bound, choose $i=\operatorname{argmax}_{j=1,\cdots,k}\left\|v_j\right\|_X$. We have
\[
\left\|\left(v_1,\cdots,v_k\right)\right\|_{B\left(\mathbb{R}^k; X\right)}\ge \left\|v_j\right\|_X\ge \frac{1}{\sqrt{k}}\left\|\left(v_1,\cdots,v_k\right)\right\|_{\ell_2^k\left(X\right)}.
\]
\end{proof}
The inequality \eqref{ineq:lip-nabla} follows from \eqref{eq:lip-nabla} and \eqref{eq:elltwo-operator}.

We now prove that compactly supported Lipschitz functions belong to $Ch_0^{1,p}\left(G;X\right)$. Let $f:G\to X$ be a compactly supported Lipschitz function. For each $n\in \mathbb{Z}_{>0}$, let $\eta_n:G\to \left[0,\infty\right)$ be a nonnegative smooth function supported in $B_{1/n}$ with unit mass:
\begin{equation}\label{eq:etan-unitmass}
    \int_G \eta_n\left(h\right)d\mu\left(h\right)=1.
\end{equation}
Consider the $G$-convolutions $f_n\coloneqq f*_G\eta_n:G\to X$, i.e.,
\begin{equation}\label{eq:fn-convolution}
\left(f*_G\eta_n\right)\left(g\right)\coloneqq\int_Gf\left(gh^{-1}\right)\eta_n\left(h\right)d\mu\left(h\right)=\int_Gf\left(h\right)\eta_n\left(h^{-1}g\right)d\mu\left(h\right).
\end{equation}
It is clear that $f_n\in C_c^\infty\left(G;X\right)$. From the density of compactly supported continuous functions in $L^{p}\left(G;X\right)$ and Young's convolution inequality, we have $f_n\to f$ in $L^p\left(G\right)$ as $n\to \infty$.

However, for each $h\in G$,
\begin{align}\label{eq:approx-lip}
\begin{aligned}
\frac{1}{\sqrt{k}}\left\|\nabla f_n\left(h\right)\right\|_{\ell_2^k(X)}&\stackrel{\mathclap{\eqref{eq:elltwo-operator}}}{\le}~ \left\|\nabla f_n\left(h\right)\right\|_{B\left(\mathbb{R}^k;X\right)}\\
&\stackrel{\mathclap{\eqref{eq:lip-nabla}}}{=}~\, ~\operatorname{lip}_h\left(f_n\right)\le \sup_{x,y\in B_{1/n}\left(h\right),~x\neq y}\frac{\left\|f_n\left(x\right)-f_n\left(y\right)\right\|_X}{d_G\left(x,y\right)}\\
&\stackrel{\mathclap{\eqref{eq:fn-convolution}}}{\le} \sup_{x,y\in B_{1/n}\left(h\right),~x\neq y}\frac{\int_{B_{1/n}}\left\|f\left(x\tilde{h}^{-1}\right)-f\left(y\tilde{h}^{-1}\right)\right\|_X\eta_n\left(\tilde{h}\right)d\mu\left(\tilde{h}\right)}{d_G\left(x,y\right)}\\
&\le \operatorname{Lip}\left(f\right)\sup_{x,y\in B_{1/n}\left(h\right),~x\neq y}\int_{B_{1/n}}\eta_n\left(\tilde{h}\right)d\mu\left(\tilde{h}\right)\\
&=\operatorname{Lip}\left(f\right).
\end{aligned}
\end{align}
so that
\[
\left\|\nabla f_n\right\|_{L^p\left(G;\ell_2^k\left(X\right)\right)}\le \sqrt{k}\operatorname{Lip}\left(f\right)\mu\left(\operatorname{supp}f_n\right)^{1/p}\le \sqrt{k}\operatorname{Lip}\left(f\right)\mu\left(\operatorname{Nbhd}_{1/n}\left(\operatorname{supp}f\right)\right)^{1/p},
\]
where $\operatorname{Nbhd}_\varepsilon\left(\cdot\right)$ denotes the $\epsilon$-neighborhood of a set. Since $\operatorname{supp}f$ is compact, we have
\[
\left\|f\right\|_{\dot{Ch}^{1,p}_0\left(G;X\right)}\le\liminf_{n\to\infty} \left\|\nabla f_n\right\|_{L^p\left(G;\ell_2^k\left(X\right)\right)}\le \sqrt{k}
 \operatorname{Lip}\left(f\right)\mu\left(\operatorname{supp}\left(f\right)\right)^{1/p}<\infty,
\]
proving \eqref{eq:cheeger-lipschitz} and showing that $f\in Ch_0^{1,p}\left(G;X\right)$.
\section{Distance formula on nilpotent Lie groups}
\label{app:doubling}
Let $G$ be a simply connected nilpotent Lie group. We will describe a distance formula up to constants.

For the local geometry, if we denote by $V$ the subspace of $\mathfrak{g}$ that generates $\mathfrak{g}$ by brackets and through which we measure distances, then, for $2\le r\le s$, we choose $X^{\mathrm{loc}}_{r,1},\cdots, X^{\mathrm{loc}}_{r,k^{\mathrm{loc}}_r}\in \underbrace{\left[V,\left[V,\cdots,V\right]\right]}_{r~\mathrm{times}}$ so that
\[
\left\{X_{r',i}^{\mathrm{loc}}\right\}_{1\le r'\le r,~1\le i\le k_{r'}^{\mathrm{loc}}} \mathrm{~is~a~basis~of~}V+\left[V,V\right]+\underbrace{\left[V,\left[V,\cdots,V\right]\right]}_{r~\mathrm{times}}
\]
(it is possible that $k_r^{\mathrm{loc}}=0$ for large enough $r\le s$). Extend these vectors to left-invariant vector fields over $G$. This defines polynomial functions $\left\{x^{\mathrm{loc}}_{r,i}\right\}_{1\le r\le s,1\le i\le k_r^{\mathrm{loc}}}$.

One can parametrize $G$ by $\mathbb{R}^n$, by first identifying $G$ with $\mathfrak{g}$ via the exponential map $\exp$, and then identifying $\mathfrak{g}$ with $\mathbb{R}^n$ via the basis $\left\{X^{\mathrm{loc}}_{r,i}\right\}_{1\le r\le s,1\le i\le k_r^{\mathrm{loc}}}$. Then, by \cite[Lemma 2.1]{jean2014control}, we have
\[
d_G\left(\exp\left(\sum_{r=1}^s\sum_{i=1}^{k^{\mathrm{loc}}_r}x^{\mathrm{loc}}_{r,i} X^{\mathrm{loc}}_{r,i}\right),e_G\right)\asymp_G \sum_{r=1}^s\sum_{i=1}^{k^{\mathrm{loc}}_r}\left|x^{\mathrm{loc}}_{r,i}\right|^{1/r},\quad \mathrm{if~}\forall r,i~\left|x^{\mathrm{loc}}_{r,i}\right|\le 1.
\]

For the global geometry, for $1\le r\le s$, we choose $X^{\mathrm{glob}}_{r,1},\cdots, X^{\mathrm{glob}}_{r,k^{\mathrm{glob}}_r}\in \underbrace{\left[\mathfrak{g},\left[\mathfrak{g},\cdots,\mathfrak{g}\right]\right]}_{r~\mathrm{times}}$ so that
\[
\left\{X_{r',i}^{\mathrm{glob}}\right\}_{r\le r'\le s,~1\le i\le k_{r'}^{\mathrm{glob}}}\mathrm{~is~a~basis~of~}\underbrace{\left[\mathfrak{g},\left[\mathfrak{g},\cdots,\mathfrak{g}\right]\right]}_{r~\mathrm{times}}.
\]
(This time, we cannot have $k_r^{\mathrm{glob}}=0$ for any $r\le s$.) Extend these vectors to left-invariant vector fields over $G$.

One can parametrize $G$ by $\mathbb{R}^n$, by first identifying $G$ with $\mathfrak{g}$ via the exponential map $\exp$, and then identifying $\mathfrak{g}$ with $\mathbb{R}^n$ via the basis $\left\{X^{\mathrm{glob}}_{r,i}\right\}_{1\le r\le s,1\le i\le k_r^{\mathrm{glob}}}$. Through this basis, we may define polynomial functions $\left\{x^{\mathrm{glob}}_{r,i}\right\}_{1\le r\le s,1\le i\le k_r^{\mathrm{glob}}}$. By \cite[Proposition 2.13]{breuillard2012nilpotent} and \cite{pansu1983croissance}, we have the distance formula
\[
d_G\left(\exp\left(\sum_{r=1}^s\sum_{i=1}^{k^{\mathrm{glob}}_r}x^{\mathrm{glob}}_{r,i} X^{\mathrm{glob}}_{r,i}\right),e_G\right)\asymp_G \sum_{r=1}^s\sum_{i=1}^{k^{\mathrm{glob}}_r}\left|x^{\mathrm{glob}}_{r,i}\right|^{1/r},\quad \mathrm{if~}\exists r,i~\left|x^{\mathrm{glob}}_{r,i}\right|> 1.
\]

In particular, denoting the bi-invariant Haar measure by $\mu$, which is the push-forward of the Lebesgue measure on $\mathfrak{g}$ by $\exp$, we have
\[
\mu\left(B_r\right)\asymp_G
\begin{cases}
r^{n^{\mathrm{loc}}_h},\quad r\le 1,\\
r^{n^{\mathrm{glob}}_h},\quad r>1,
\end{cases}
\]
where
\[
n^{\mathrm{loc}}_h=\sum_{r=1}^s rk_r^{\mathrm{loc}}=\dim\mathfrak{g}+\sum_{r=1}^{s-1}\left(\dim\mathfrak{g}-\dim\left(V+\left[V,V\right]+\underbrace{\left[V,\left[V,\cdots,V\right]\right]}_{r~\mathrm{times}}\right)\right)
\]
and
\[
n^{\mathrm{glob}}_h=\sum_{r=1}^s rk_r^{\mathrm{glob}}=\sum_{r=1}^s\dim\underbrace{\left[\mathfrak{g},\left[\mathfrak{g},\cdots,\mathfrak{g}\right]\right]}_{r~\mathrm{times}}.
\]
(As
\[
\mathfrak{g}=\left(V+\left[V,V\right]+\underbrace{\left[V,\left[V,\cdots,V\right]\right]}_{r~\mathrm{times}}\right)+\underbrace{\left[\mathfrak{g},\left[\mathfrak{g},\cdots,\mathfrak{g}\right]\right]}_{r+1~\mathrm{times}},\quad r=1,\cdots,s-1,
\]
we have $n_h^{\mathrm{loc}}\le n_h^{\mathrm{glob}}$.) We can see from the above formula that $\mu$ is a doubling measure, or, more strongly, that $\mu\left(B_{2r}\right)\asymp_G \mu\left(B_r\right)$ for all $r \ge 0$.

We remark that it is known \cite[Theorem 1.1]{breuillard2014geometry} that the limit $\lim_{r\to\infty}\mu\left(B_r\right)/r^{n_h^{\mathrm{glob}}}$ exists and is a positive real number.

\section{An $L^p$-valued Assouad embedding theorem}\label{app:assouad}
In this subsection, we prove the following theorem.\footnote{The previous version of this paper stated without proof an Assouad embedding theorem with $L^p$-targets, namely that, for a $K$-doubling metric space $\left(\mathcal{M},d\right)$ and $1\le p<\infty$, one has $c_p\left(\mathcal{M},d^{1-\varepsilon}\right)\lesssim_K \varepsilon^{-1/\max\{p,2\}}$ for $\varepsilon\in \left(0,\frac 12\right)$, since the author believed it to be common knowledge. However, the author could not find a definitive source for it. I thank the anonymous referee for pointing this out, for suggesting to prove an Assouad embedding theorem with $L^p$-targets, and for suggesting to apply the stochastic decompositions of \cite{lee2005extending}. The results of Theorem \ref{thm:lp-assouad} might be well-known among experts.}
\begin{theorem}\label{thm:lp-assouad}
    Let $\left(\mathcal{M},d\right)$ be a $K\left(\ge 2\right)$-doubling metric space, let $\varepsilon\in \left(0,1\right)$ and let $p\in \left[1,\infty\right)$. The following statements hold.
    \begin{enumerate}
    \item The snowflaked space $\left(\mathcal{M},d^{1-\varepsilon}\right)$ embeds with $O\left(1+\frac{(1-\varepsilon)^{1/\min\left\{p,2\right\}}}{\varepsilon^{1/\max\left\{p,2\right\}}}\max\left\{\frac{\log K}{p},1\right\}\right)$-bilipschitz distortion into $L^p[0,1]$.
    \item For each $R>0$, we have a map $\psi:\mathcal{M}\to L^p[0,1]$ that is
    \begin{itemize}
    \item $O\left(R^{-\varepsilon}\varepsilon^{-1/\max\left\{p,2\right\}}\max\left\{\frac{\log K}{p},1\right\}\right)$-Lipschitz as a map $(\mathcal{M},d)\to L^p[0,1]$,
    \item $O\left(\left(1-\varepsilon\right)^{-1/\max\left\{p,2\right\}}+\varepsilon^{-1/\max\left\{p,2\right\}}\max\left\{\frac{\log K}{p},1\right\}\right)$-Lipschitz as a map $(\mathcal{M},d^{1-\varepsilon})\to L^p[0,1]$,
    \item and satisfies
\begin{equation}\label{eq:large-scale-holder-lower-bound}
\left\|\psi\left(x\right)-\psi\left(y\right)\right\|_{L^p[0,1]}\gtrsim d\left(x,y\right)^{1-\varepsilon} \quad \mathrm{whenever}~d\left(x,y\right)>R.
\end{equation}
\end{itemize}
    \end{enumerate}
\end{theorem}

\begin{remark}\,
\begin{enumerate}
    \item We did not treat the case $p=\infty$ since there is an isometric embedding $\left(\mathcal{M},d^{1-\varepsilon}\right)\to \ell_\infty$ by Fr\'echet \cite{frechet1906quelques} and Kuratowski \cite{kuratowski1935quelques}. We remark that in the limit as $p\to\infty$ in Theorem \ref{thm:lp-assouad}(1), the distortion approaches a constant factor $O\left(1\right)$ independent of $\varepsilon$ and $K$, so Theorem \ref{thm:lp-assouad} is asymptotically $O(1)$-optimal in the large $p$ regime.
    \item One corollary of Theorem \ref{thm:lp-assouad} is that the $(1-\varepsilon)$-snowflake of a $K$-doubling metric space embeds $O(1)$-bilipschitzly into $L^p[0,1]$ whenever the quantity
    \[
    \frac{(1-\varepsilon)^{1/\min\left\{p,2\right\}}}{\varepsilon^{1/\max\left\{p,2\right\}}}\max\left\{\frac{\log K}{p},1\right\}
    \]
    is $O(1)$. This is achieved, for example, when $1/\varepsilon=O(1)$ and $\frac{\log K}{p}=O(1)$, or when $p=O(1)$ and $1-\varepsilon\le 1/4\log^2K$.
\end{enumerate}
        
\end{remark}

Before we begin the proof of Theorem \ref{thm:lp-assouad}, we recall the following definition.
\begin{definition}[{\cite[Definition 3.2, 3.3]{lee2005extending}}]
    Let $\Delta,\beta>0$, $\gamma\in (0,1]$ and let $\mathcal{N}$ be a metric space. A $\left(\beta,\gamma\right)$-padded $\Delta$-bounded finitely supported stochastic decomposition of $\mathcal{M}$ with respect to itself is a 4-tuple $\left(\Omega,\mu,I,\left\{\Gamma^i\left(\cdot\right)\right\}_{i\in I}\right)$, consisting of a probability space $\left(\Omega,\mu\right)$, a countable index set $I$,  and a partition $\left\{\Gamma^i\left(\omega\right)\right\}_{i\in I}$ of $\mathcal{M}$ into Borel subsets, for each element $\omega\in \Omega$, such that
\begin{itemize}
    \item for every $\omega\in \Omega$ and $i\in I$, $\operatorname{diam}\left(\Gamma^i\left(\omega\right)\right)\le \Delta$,
    \item for every $x\in \mathcal{M}$,
    \begin{equation}\label{eq:padded}
    \mu\left(\omega:d\left(x,\mathcal{M}\setminus\Gamma^x\left(\omega\right)\right)\ge \beta\Delta\right)\ge \gamma,
    \end{equation}
    where $\Gamma^x\left(\omega\right)$ denotes the partition element of $\left\{\Gamma^i(\omega)\right\}_{i\in I}$ that contains $x$.
\end{itemize}
\end{definition}

By \cite[Lemma 2.2]{naor2012assouad}, if $\mathcal{M}$ is $K$-doubling, then for each $\Delta>0$ and $\beta\in \left(0,\frac{1}{64}\right]$, $\mathcal{M}$ admits a $\left(\beta,K^{-64\beta}\right)$-padded $\Delta$-bounded finitely supported stochastic decomposition of $\mathcal{M}$ with respect to itself.

\begin{proof}[Proof of Theorem \ref{thm:lp-assouad}]
Let $\mathcal{M}$ be a nonempty $K$-doubling metric space. Since $\mathcal{M}$ is separable, since a continuous image of a separable space is separable, and since any separable subspace of an $L^p\left(\sigma\right)$ space is isometric to a subspace of $L^p\left[0,1\right]$ \cite[Fact 1.20]{ostrovskii2013metric}, it is enough to construct mappings into some $L^p(\sigma)$ space with the desired properties.

\begin{enumerate}
    \item 
    We begin by observing that a separable Hilbert space embeds isometrically into $L^p[0,1]$ for $1\le p<\infty$ \cite[Proposition 6.4.13]{albiac2006topics}, and that
\[
\min\left\{\left(\frac{1-\varepsilon}{\varepsilon}\right)^{1/p},\left(\frac{1-\varepsilon}{\varepsilon}\right)^{1/2}\right\}\asymp \frac{(1-\varepsilon)^{1/\min\left\{p,2\right\}}}{\varepsilon^{1/\max\left\{p,2\right\}}}.
\]
So, it is enough to construct an embedding with distortion $1+\frac{\left(1-\varepsilon\right)^{1/p}}{\varepsilon^{1/p}}\max\left\{\frac{\log K}{p},1\right\}$.

Now we modify the construction of the proof of \cite[Theorem 5.1]{lee2005metric}. Fix $\varepsilon\in \left(0,1\right)$, $p\ge 1$, and $\beta\in \left(0,\frac{1}{64}\right]$. For each $n\in \mathbb{Z}$, setting $\Delta=2^n\log K$, we have a $\left(\beta,K^{-64\beta}\right)$-padded $2^n\log K$-bounded finitely supported stochastic decomposition $\left(\Omega_n,\mu_n,I_n,\left\{\Gamma_n^i\left(\cdot\right)\right\}_{i\in I_n}\right)$ of $\mathcal{M}$ with respect to itself. By the Kolmogorov extension theorem, there is a probability space $\left(\Omega_n',\mu_n'\right)$ such that $\Omega_n'=\left\{-1,1\right\}^{I_n}$ and for $\omega'\in \Omega_n'$, the variables $\left\{\omega'\left(i\right)\right\}_{i\in I_n}$ are independent, $\{-1,1\}$-valued, mean-zero Bernoulli random variables. Define $\phi_n:\mathcal{M}\to L^p\left(\Omega_n\times \Omega_n'\right)$
 as\[
\phi_n\left(x\right)\left(\omega,\omega'\right)\coloneqq \omega'\left(\Gamma^x_{n}\left(\omega\right)\right)\min\left\{d\left(x,\mathcal{M}\setminus \Gamma^x_n\left(\omega\right)\right),\beta 2^n\log K\right\},\quad x\in \mathcal{M},
\]
where $\omega'\left(\Gamma^x_n\left(\omega\right)\right)$ means $\omega'\left(i\right)$ for $i\in I_n$ such that $\Gamma_n^i\left(\omega\right)=\Gamma^x_n\left(\omega\right)$.

For $x\in \mathcal{M}$, $\omega\in \Omega_n$, $\omega'\in \Omega_n'$, we have
\[
\left|\omega'\left(\Gamma^x_n\left(\omega\right)\right)\min\left\{d\left(x,\mathcal{M}\setminus \Gamma^x_n\left(\omega\right)\right),\beta 2^n\log K\right\}\right|\le \beta 2^n\log K,
\]
so 
\begin{equation}\label{eq:assouad-linfty}
\left\|\phi_n\left(x\right)\right\|_{L^p\left(\Omega_n\times\Omega_n'\right)}\le \beta 2^n\log K.
\end{equation}
For $x,y\in \mathcal{M}$, $\omega\in \Omega_n$, $\omega'\in \Omega_n'$, if $\Gamma^x_n\left(\omega\right)=\Gamma^y_n\left(\omega\right)$, then
\[
\left|\phi_n\left(x\right)\left(\omega,\omega'\right)-\phi_n\left(y\right)\left(\omega,\omega'\right)\right|\le \left|d\left(x,\mathcal{M}\setminus \Gamma^x_n\left(\omega\right)\right)-d\left(y,\mathcal{M}\setminus \Gamma^x_n\left(\omega\right)\right)\right|\le d\left(x,y\right),
\]
while if $\Gamma^x_n\left(\omega\right)\neq \Gamma^y_n\left(\omega\right)$, then
\[
\left|\phi_n\left(x\right)\left(\omega,\omega'\right)-\phi_n\left(y\right)\left(\omega,\omega'\right)\right|\le d\left(x,\mathcal{M}\setminus \Gamma^x_n\left(\omega\right)\right)+d\left(y,\mathcal{M}\setminus \Gamma^y_n\left(\omega\right)\right)\le 2d\left(x,y\right),
\]
so that
\begin{equation}\label{eq:assouad-lipschitz}
\left\|\phi_n\left(x\right)-\phi_n\left(y\right)\right\|_{L^p\left(\Omega_n\times \Omega_n'\right)}\le 2d\left(x,y\right).
\end{equation}
For $x,y\in \mathcal{M}$ such that $d\left(x,y\right)>2^n\log K$, we must have $\Gamma^x_n\left(\omega\right)\neq \Gamma^y_n\left(\omega\right)$ for $\omega\in \Omega_n$, so
\begin{align*}
&\left\|\phi_n\left(x\right)\left(\omega,\cdot\right)-\phi_n\left(y\right)\left(\omega,\cdot\right)\right\|^p_{L^p\left(\Omega_n'\right)}\\
&=\frac 12\left|\min\left\{d\left(x,\mathcal{M}\setminus \Gamma^x_n\left(\omega\right)\right),\beta 2^n\log K\right\}-\min\left\{d\left(y,\mathcal{M}\setminus \Gamma^y_n\left(\omega\right)\right),\beta 2^n\log K\right\}\right|^p\\
&\quad +\frac 12\left|\min\left\{d\left(x,\mathcal{M}\setminus \Gamma^x_n\left(\omega\right)\right),\beta 2^n\log K\right\}+\min\left\{d\left(y,\mathcal{M}\setminus \Gamma^y_n\left(\omega\right)\right),\beta 2^n\log K\right\}\right|^p\\
&\stackrel{\mathclap{\eqref{eq:unifcvxlp}}}{\ge} \min\left\{d\left(x,\mathcal{M}\setminus \Gamma^x_n\left(\omega\right)\right),\beta 2^n\log K\right\}^p+\min\left\{d\left(y,\mathcal{M}\setminus \Gamma^y_n\left(\omega\right)\right),\beta 2^n\log K\right\}^p.
\end{align*}
and
\begin{align}\label{eq:assouad-lowerbound}
\begin{aligned}
    &\left\|\phi_n\left(x\right)-\phi_n\left(y\right)\right\|_{L^p\left(\Omega_n\times \Omega_n'\right)}^p\\
    &=\mathbb{E}_{\omega\sim\mu_n}\left\|\phi_n\left(x\right)\left(\omega,\cdot\right)-\phi_n\left(y\right)\left(\omega,\cdot\right)\right\|^p_{L^p\left(\Omega_n'\right)}\\
    &=\mathbb{E}_{\omega\sim\mu_n}\left(\min\left\{d\left(x,\mathcal{M}\setminus \Gamma^x_n\left(\omega\right)\right),\beta 2^n\log K\right\}^p+\min\left\{d\left(y,\mathcal{M}\setminus \Gamma^y_n\left(\omega\right)\right),\beta 2^n\log K\right\}^p\right)\\
    &\ge \left(\beta 2^n\log K\right)^p\left(\mu_n\left(\omega:d\left(x,\mathcal{M}\setminus \Gamma^x_n\left(\omega\right)\right)\ge \beta 2^n\log K\right)+\mu_n\left(\omega:d\left(y,\mathcal{M}\setminus \Gamma^y_n\left(\omega\right)\right)\ge\beta 2^n\log K\right)\right)\\
    &\stackrel{\mathclap{\eqref{eq:padded}}}{\ge}~  2\left(\beta 2^n\log K\right)^p K^{-64\beta}.
\end{aligned}
\end{align}

We now compute for distinct $x,y\in \mathcal{M}$ that, choosing $m\in \mathbb{Z}$ with $2^m\log K <d\left(x,y\right)\le 2^{m+1}\log K$, then
\begin{align}
\begin{aligned}\label{eq:weierstrass-sum}
    &\sum_{n\in \mathbb{Z}}2^{-pn\varepsilon}\left\|\phi_n\left(x\right)-\phi_n\left(y\right)\right\|_{L^p\left(\Omega_n\times \Omega_n'\right)}^p\\
    &\le \sum_{n<m}2^{-pn\varepsilon}\left\|\phi_n\left(x\right)-\phi_n\left(y\right)\right\|_{L^p\left(\Omega_n\times \Omega_n'\right)}^p+\sum_{n\ge m}2^{-pn\varepsilon}\left\|\phi_n\left(x\right)-\phi_n\left(y\right)\right\|_{L^p\left(\Omega_n\times \Omega_n'\right)}^p\\
    &\stackrel{\mathclap{\eqref{eq:assouad-linfty},\eqref{eq:assouad-lipschitz}}}{\le}\quad \sum_{n<m}2^{-pn\varepsilon}\left(\beta 2^{n+1}\log K\right)^p+\sum_{n\ge m}2^{-pn\varepsilon}\cdot \left(2d\left(x,y\right)\right)^p\\
    &=\frac{2^{p\varepsilon}\beta^p \left(\log K\right)^p2^{m p\left(1-\varepsilon\right)}}{1-2^{-p\left(1-\varepsilon\right)}}+\frac{2^{p} d\left(x,y\right)^p 2^{-pm\varepsilon}}{1-2^{-p\varepsilon}}\\
    &< \frac{2^{p\varepsilon}\beta^p \left(\log K\right)^{p\varepsilon}d\left(x,y\right)^{p\left(1-\varepsilon\right)}}{1-2^{-p\left(1-\varepsilon\right)}}+\frac{2^{p\left(1+\varepsilon\right)}\left(\log K\right)^{p\varepsilon} d\left(x,y\right)^{p\left(1-\varepsilon\right)}}{1-2^{-p\varepsilon}}\\
    &\lesssim 2^{p\left(1+\varepsilon\right)}\left(\log K\right)^{p\varepsilon} \left(\beta^p\max\left\{\frac{1}{p\left(1-\varepsilon\right)},1\right\}+\max\left\{\frac{1}{p\varepsilon},1\right\}\right)d\left(x,y\right)^{p\left(1-\varepsilon\right)}.
    \end{aligned}
\end{align}

Now, fixing a point $x_0\in \mathcal{M}$, define
\[
\Phi:\mathcal{M}\to \bigoplus_{n\in \mathbb{Z}} L^p\left(\Omega_n\times \Omega_n'\right)=L^p\left(\bigsqcup_{n\in \mathbb{Z}}\Omega_n\times \Omega_n'\right),
\]
where $\bigoplus_{n\in \mathbb{Z}}$ denotes an $L^p$-sum, as
\[
\Phi\coloneqq\bigoplus_{n\in \mathbb{Z}}2^{-n\varepsilon}\left(\phi_n-\phi_n\left(x_0\right)\right).
\]
Setting $y=x_0$ in \eqref{eq:weierstrass-sum} tells us that $\Phi$ is well-defined, i.e., $\left\|\Phi\left(x\right)\right\|_{L^p\left(\bigsqcup_{n\in \mathbb{Z}}\Omega_n\times \Omega_n'\right)}<\infty$. Also, it follows from \eqref{eq:weierstrass-sum} that
\[
\left\|\Phi\left(x\right)-\Phi\left(y\right)\right\|_{L^p\left(\bigsqcup_{n\in \mathbb{Z}}\Omega_n\times \Omega_n'\right)}\lesssim  \left(\log K\right)^{\varepsilon} \left(\frac{\beta}{\left(1-\varepsilon\right)^{1/p}}+\frac{1}{\varepsilon^{1/p}}\right)d\left(x,y\right)^{1-\varepsilon}.
\]
For distinct $x,y\in \mathcal{M}$ and $ m\in \mathbb{Z}$ with $2^{m}\log K<d\left(x,y\right)\le 2^{\left(m+1\right)}\log K$, we have
\begin{align*}
    \left\|\Phi\left(x\right)-\Phi\left(y\right)\right\|^p_{L^p\left(\bigsqcup_{n\in \mathbb{Z}}\Omega_n\times \Omega_n'\right)}&\ge \sum_{n\le m}2^{-pn\varepsilon}\left\|\phi_n\left(x\right)-\phi_n\left(y\right)\right\|^p_{L^p\left(\Omega_n\times\Omega_n'\right)}\\
    &\stackrel{\mathclap{\eqref{eq:assouad-lowerbound}}}{\ge}~\sum_{n\le m}2^{pn\left(1-\varepsilon\right)}\beta^p\left(\log K\right)^p K^{-64\beta}\\
    &=\frac{2^{pm\left(1-\varepsilon\right)}}{1-2^{-p\left(1-\varepsilon\right)}}\beta^p\left(\log K\right)^p K^{-64\beta}\\
    &\gtrsim 2^{-p\left(1-\varepsilon\right)}\max\left\{\frac{1}{p\left(1-\varepsilon\right)},1\right\}\beta^p\left(\log K\right)^{p\varepsilon}K^{-64\beta}d\left(x,y\right)^{p\left(1-\varepsilon\right)}
\end{align*}
and so
\[
\left\|\Phi\left(x\right)-\Phi\left(y\right)\right\|_{L^p\left(\bigsqcup_{n\in \mathbb{Z}}\Omega_n\times \Omega_n'\right)}\gtrsim \frac{\beta}{\left(1-\varepsilon\right)^{1/p}} \left(\log K\right)^{\varepsilon}K^{-64\beta/p}d\left(x,y\right)^{1-\varepsilon}.
\]

All in all, we have for $x,y\in \mathcal{M}$ distinct,
\begin{align*}
\frac{\beta}{\left(1-\varepsilon\right)^{1/p}} \left(\log K\right)^{\varepsilon}K^{-64\beta/p}&\lesssim\frac{\left\|\Phi\left(x\right)-\Phi\left(y\right)\right\|_{L^p\left(\bigsqcup_{n\in \mathbb{Z}}\Omega_n\times \Omega_n'\right)}}{d\left(x,y\right)^{1-\varepsilon}}\\
&\lesssim  \left(\log K\right)^{\varepsilon} \left(\frac{\beta}{\left(1-\varepsilon\right)^{1/p}}+\frac{1}{\varepsilon^{1/p}}\right),
\end{align*}
and the $\left(1-\varepsilon\right)$-bi-H\"older distortion of the map $\Phi$ is
\[
\frac{\frac{\beta}{\left(1-\varepsilon\right)^{1/p}}+\frac{1}{\varepsilon^{1/p}}}{\frac{\beta}{\left(1-\varepsilon\right)^{1/p}} }K^{64\beta/p}\asymp \left(1+\beta^{-1}\frac{\left(1-\varepsilon\right)^{1/p}}{\varepsilon^{1/p}}\right)K^{64\beta/p}.
\]
With the choice of $\beta=\min\left\{\frac{p}{64\log K},\frac{1}{64}\right\}$, we have
\[
\left(1+\beta^{-1}\frac{\left(1-\varepsilon\right)^{1/p}}{\varepsilon^{1/p}}\right)K^{64\beta/p}\asymp 1+\frac{\left(1-\varepsilon\right)^{1/p}}{\varepsilon^{1/p}}\max\left\{\frac{\log K}{p},1\right\}.
\]
This gives statement (1).

\item
We begin by observing that a separable Hilbert space embeds isometrically into $L^p[0,1]$ for $1\le p<\infty$ \cite[Proposition 6.4.13]{albiac2006topics}.
So, it is enough to prove the case $p\in \left[2,\infty\right)$ of statement (2).

We now define 
\[
\Phi_{\ge 0}:\mathcal{M}\to \bigoplus_{n_0\le n\in \mathbb{Z}} L^p\left(\Omega_n\times \Omega_n'\right)=L^p\left(\bigsqcup_{n_0\le n\in \mathbb{Z}}(\Omega_n\times \Omega_n')\right),
\]
as
\[
\Phi_{\ge n_0}\coloneqq\bigoplus_{n_0\le n\in \mathbb{Z}}2^{-n\varepsilon}\left(\phi_n-\phi_n\left(x_0\right)\right),
\]
Setting $y=x_0$ in \eqref{eq:weierstrass-sum} tells us that $\Phi_{\ge 0}$ is well-defined, i.e., $\left\|\Phi_{\ge 0}\left(x\right)\right\|_{L^p\left(\bigsqcup_{0\le n\in \mathbb{Z}}\Omega_n\times \Omega_n'\right)}<\infty$. We have, for $x,y\in \mathcal{M}$,
\begin{align*}
\left\|\Phi_{\ge 0}\left(x\right)-\Phi_{\ge 0}\left(y\right)\right\|_{L^p\left(\bigsqcup_{0\le n\in \mathbb{Z}}\Omega_n\times \Omega_n'\right)}&\le \left\|\Phi\left(x\right)-\Phi\left(y\right)\right\|_{L^p\left(\bigsqcup_{0\le n\in \mathbb{Z}}\Omega_n\times \Omega_n'\right)}\\
&\stackrel{\mathclap{\eqref{eq:weierstrass-sum}}}{\lesssim}  \left(\log K\right)^{\varepsilon} \left(\beta\left(1-\varepsilon\right)^{-1/p}+\varepsilon^{-1/p}\right) d\left(x,y\right)^{1-\varepsilon}.
\end{align*}

Repeating the computation of \eqref{eq:weierstrass-sum}, we have
\begin{align*}
    \left\|\Phi_{\ge 0}\left(x\right)-\Phi_{\ge 0}\left(y\right)\right\|^p_{L^p\left(\bigsqcup_{0\le n\in \mathbb{Z}}\Omega_n\times \Omega_n'\right)}&\le \sum_{0\le n\in \mathbb{Z}}2^{-pn\varepsilon}\left\|\phi_n\left(x\right)-\phi_n\left(y\right)\right\|_{L^p\left(\Omega_n\times \Omega_n'\right)}^p\\
    &\stackrel{\mathclap{\eqref{eq:assouad-lipschitz}}}{\le}~ \sum_{n\ge 0}2^{-pn\varepsilon}\cdot \left(2d\left(x,y\right)\right)^p\\
    &=\frac{2^p d\left(x,y\right)^p }{1-2^{-p\varepsilon}}\\
    &\lesssim 2^{p} d\left(x,y\right)^{p}\max\left\{\frac{1}{p\varepsilon},1\right\},
\end{align*}
and
\[
\left\|\Phi_{\ge 0}\left(x\right)-\Phi_{\ge 0}\left(y\right)\right\|_{L^p\left(\bigsqcup_{0\le n\in \mathbb{Z}}\Omega_n\times \Omega_n'\right)}\lesssim  \varepsilon^{-1/p}d\left(x,y\right)
\]
i.e., $\Phi_{\ge 0}$ is $O\left(\varepsilon^{-1/p}\right)$-Lipschitz.

For distinct $x,y\in \mathcal{M}$ with $d\left(x,y\right)>\log K$, taking $ m\in \mathbb{Z}_{\ge 0}$ with $2^{m}\log K<d\left(x,y\right)\le 2^{\left(m+1\right)}\log K$, we have
\begin{align*}
    \left\|\Phi_{\ge 0}\left(x\right)-\Phi_{\ge 0}\left(y\right)\right\|_{L^p\left(\bigsqcup_{0\le n\in \mathbb{Z}}\Omega_n\times \Omega_n'\right)}&\ge 2^{-m\varepsilon}\left\|\phi_m\left(x\right)-\phi_m\left(y\right)\right\|_{L^p\left(\Omega_m\times\Omega_m'\right)}\\
    &\stackrel{\mathclap{\eqref{eq:assouad-lowerbound}}}{\ge}~\beta 2^{m\left(1-\varepsilon\right)}K^{-64\beta/p}\log K\\
    &\gtrsim \beta\left(\log K\right)^{\varepsilon}K^{-64\beta/p}d\left(x,y\right)^{1-\varepsilon}.
\end{align*}

All in all, for any $K$-doubling metric space $(\mathcal{M},d)$, we have produced a map $\Phi_{\ge 0}:\mathcal{M}\to L^p[0,1]$ that is $O\left(\varepsilon^{-1/p}\right)$-Lipschitz as a map $(\mathcal{M},d)\to L^p[0,1]$, $O\left(\left(\log K\right)^{\varepsilon} \left(\beta\left(1-\varepsilon\right)^{-1/p}+\varepsilon^{-1/p}\right) \right)$-Lipschitz as a map $(\mathcal{M},d^{1-\varepsilon})\to L^p[0,1]$, and such that
\[
\left\|\Phi_{\ge 0}\left(x\right)-\Phi_{\ge 0}\left(y\right)\right\|_{L^p}\gtrsim \beta\left(\log K\right)^{\varepsilon}K^{-64\beta/p}d\left(x,y\right)^{1-\varepsilon} \quad \mathrm{whenever}~d\left(x,y\right)> \log K.
\]
For $R>0$, replacing $(\mathcal{M},d)$ by $(\mathcal{M},\left(\log K\right)R^{-1}d)$ and setting $\psi=\frac{R^{1-\varepsilon}}{\beta \log K}\Phi_{\ge 0}$, we have a map $\psi:\mathcal{M}\to L^p[0,1]$ that is $O\left(R^{-\varepsilon}\beta^{-1}\varepsilon^{-1/p}\right)$-Lipschitz as a map $(\mathcal{M},d)\to L^p[0,1]$, $O\left(\left(1-\varepsilon\right)^{-1/p}+\beta^{-1}\varepsilon^{-1/p}\right)$-Lipschitz as a map $(\mathcal{M},d^{1-\varepsilon})\to L^p[0,1]$, and satisfies
\[
\left\|\psi\left(x\right)-\psi\left(y\right)\right\|_{L^p}\gtrsim K^{-64\beta/p}d\left(x,y\right)^{1-\varepsilon} \quad \mathrm{whenever}~d\left(x,y\right)>R.
\]
With the choice of $\beta=\min\left\{\frac{p}{64\log K},\frac{1}{64}\right\}$, the proof of statement (2) is complete.
\end{enumerate}
\end{proof}

The reason for formulating the statement of Theorem \ref{thm:lp-assouad}(2) is that sometimes we know better local embeddability properties of the space (i.e., embeddings of small balls), besides the embeddability properties that follow from being doubling. This situation occurs in the proof of Corollary \ref{cor:nilpotentlp} and Theorems \ref{thm:precise-dist-nilp} and \ref{thm:gendistortion}, when we are computing an upper bound for the $L^p$-distortion of balls of large radius in a Riemannian Lie group. Suppose we are able to ``paste'' these local embeddings into a mapping defined on the entire space, that satisfies the lower Lipschitz or lower H\"older bounds at small scales. In this case, taking an $L^p$-direct sum of the mapping guaranteed by Theorem \ref{thm:lp-assouad}(2), which satisfies the H\"older lower bound \eqref{eq:large-scale-holder-lower-bound} for large scales, and an embedding that satisfies a better Lipschitz or H\"older lower bound for small scales, can give an embedding with stronger properties than guaranteed by Theorem \ref{thm:lp-assouad}(1). (Of course, the mappings under consideration should all satisfy upper Lipschitz or H\"older bounds.)

The question remains how one might paste local embeddings, i.e., maps from a small ball, to create global mappings with local embeddability properties. One such result is presented below, which is used in the proof of Theorem \ref{thm:gendistortion} in conjunction with Theorem \ref{thm:lp-assouad}(2).
\begin{theorem}
    Let $\left(\mathcal{M},d\right)$ be a separable metric space, let $\left(X,\|\cdot\|_X\right)$ be a Banach space, and let $p\in \left[1,\infty\right)$. Suppose for some $R>0$, $\Delta>2R$, and $\gamma\in \left(0,1\right]$, that $\mathcal{M}$ admits a $\left(\frac{2R}{\Delta},\gamma\right)$-padded $\Delta$-bounded finitely supported stochastic decomposition of $\mathcal{M}$ with respect to itself.
Suppose there exists $D\ge 1$ such that
    \[
    c_X\left(B_{\Delta}\left(x\right)\right)\le D,\quad \forall x\in \mathcal{M}.
    \]
    Then, there is a $4D \frac{\Delta}{R}$-Lipschitz map $\psi:\mathcal{M}\to L^p\left(\Omega,\mu;X\right)$, for some probability space $\left(\Omega,\mu\right)$, such that
    \[
\left\|\psi\left(x\right)-\psi\left(y\right)\right\|_{L^p\left(\Omega,\mu;X\right)}\ge \gamma^{1/p} d\left(x,y\right) \quad \mathrm{whenever}~d\left(x,y\right)\le R.
\]
\end{theorem}
\begin{proof}

Let $\left(\Omega,\mu,I,\left\{\Gamma^i\left(\cdot\right)\right\}_{i\in I}\right)$ be a $\left(\frac{2R}{\Delta},\gamma\right)$-padded $\Delta$-bounded finitely supported stochastic decomposition of $\mathcal{M}$ with respect to itself, i.e., a probability space $\left(\Omega,\mu\right)$, a countable index set $I$,  and a partition $\left\{\Gamma^i\left(\omega\right)\right\}_{i\in I}$ of $\mathcal{M}$ into Borel subsets, for each element $\omega\in \Omega$, such that
\begin{itemize}
    \item for every $\omega\in \Omega$ and $i\in I$, $\operatorname{diam}\left(\Gamma^i\left(\omega\right)\right)\le \Delta$,
    \item for every $x\in \mathcal{M}$,
    \begin{equation}\label{eq:padded-spec}
    \mu\left(\omega:d\left(x,\mathcal{M}\setminus\Gamma^x\left(\omega\right)\right)\ge 2R\right)\ge \gamma,
    \end{equation}
    where $\Gamma^x\left(\omega\right)$ denotes the partition element of $\left\{\Gamma^i(\omega)\right\}_{i\in I}$ that contains $x$.
\end{itemize}
For each $\omega\in \Omega$ and $i\in I$, since $\operatorname{diam}\left(\Gamma^i\left(\omega\right)\right)\le \Delta$, we have $\Gamma^i\left(\omega\right)\subset B_\Delta\left(z\right)$ for some $z\in \mathcal{M}$, and since $c_X\left(B_{\Delta}\left(z\right)\right)\le D$, there is a mapping $f^{\omega,i}:\Gamma^i\left(\omega\right)\to X$ such that
\[
d\left(y_1,y_2\right)\le \left\|f^{\omega,i}\left(y_1\right)-f^{\omega,i}\left(y_2\right)\right\|_X \le 2D d\left(y_1,y_2\right),\quad y_1,y_2\in \Gamma^i\left(\omega\right).
\]
Possibly replacing $f^{\omega,i}$ by $f^{\omega,i}-f^{\omega,i}\left(z\right)$, we may assume $f^{\omega,i}\left(z\right)=0$. Since $f^{\omega,i}$ is $2D$-Lipschitz, it is clear that $\left\|f^{\omega,i}\left(x\right)\right\|_X\le 2D\Delta$ for every $x\in \Gamma^i\left(x\right)$.

Now define the map $\psi:\mathcal{M}\to L^p\left(\Omega,\mu;X\right)$ by setting
\[
\psi\left(x\right)\left(\omega\right)\coloneqq \min\left\{R^{-1}d\left(x,\mathcal{M}\setminus \Gamma^i\left(\omega\right)\right),1\right\}f^{\omega,i}\left(x\right),\quad x\in \mathcal{M},~\omega\in \Omega,
\]
where $i$ is the element of $I$ such that $x\in \Gamma^i\left(\omega\right)$. To see well-definedness, note that $\left\|\psi\left(x\right)\left(\omega\right)\right\|_X\le \left\|f^{\omega,i}\left(x\right)\right\|_X\le 2D\Delta$, hence  $\left\|\psi\left(x\right)\right\|_{L^p\left(\Omega,\mu;X\right)}\le 2D\Delta$. For $x,y\in \mathcal{M}$ and $\omega\in \Omega$, let $i,j\in I$ be such that $x\in \Gamma^i\left(\omega\right)$ and $y\in \Gamma^j\left(\omega\right)$. If $i\neq j$, then $y\notin \Gamma^i\left(\omega\right)$, so that
\[
\left\|\psi\left(x\right)\left(\omega\right)\right\|_X\le R^{-1}d\left(x,\mathcal{M}\setminus \Gamma^i\left(\omega\right)\right) \left\|f^{\omega,i}\left(x\right)\right\|_X\le d\left(x,y\right)\cdot 2D\frac{\Delta}{R},
\]
and likewise $\left\|\psi\left(y\right)\left(\omega\right)\right\|_X\le 2D\frac{\Delta}{R} d\left(x,y\right)$, so that $\left\|\psi\left(x\right)\left(\omega\right)-\psi\left(y\right)\left(\omega\right)\right\|_X\le 4D\frac{\Delta}{R} d\left(x,y\right)$. On the other hand, if $i=j$, then
\begin{align*}
\left|\min\left\{R^{-1}d\left(x,\mathcal{M}\setminus \Gamma^i\left(\omega\right)\right),1\right\}-\min\left\{R^{-1}d\left(y,\mathcal{M}\setminus \Gamma^i\left(\omega\right)\right),1\right\}\right|&\le R^{-1}\left|d\left(x,\mathcal{M}\setminus \Gamma^i\left(\omega\right)\right)-d\left(y,\mathcal{M}\setminus \Gamma^i\left(\omega\right)\right)\right|\\
&\le R^{-1}d\left(x,y\right),
\end{align*}
so that
\begin{align*}
\left\|\psi\left(x\right)\left(\omega\right)-\psi\left(y\right)\left(\omega\right)\right\|_X\le &\left\|\left(\min\left\{R^{-1}d\left(x,\mathcal{M}\setminus \Gamma^i\left(\omega\right)\right),1\right\}-\min\left\{R^{-1}d\left(y,\mathcal{M}\setminus \Gamma^i\left(\omega\right)\right),1\right\}\right)f^{\omega,i}\left(x\right)\right\|_X\\
&+\left\|\min\left\{R^{-1}d\left(x,\mathcal{M}\setminus \Gamma^i\left(\omega\right)\right),1\right\}\left(f^{\omega,i}\left(x\right)-f^{\omega,i}\left(y\right)\right)\right\|\\
&\le 2D\frac{\Delta} d\left(x,y\right)+D d\left(x,y\right)\le 2.5D\frac{\Delta}{R} d\left(x,y\right),
\end{align*}
where in the last inequality we used $\Delta>2R$. Thus we have
\begin{align*}
\left\|\psi\left(x\right)-\psi\left(y\right)\right\|_{L^p\left(\Omega,\mu;X\right)}^p&=\mathbb{E}_\omega \left(\left\|\psi\left(x\right)\left(\omega\right)-\psi\left(y\right)\left(\omega\right)\right\|_X^p\mathbbm{1}_{i=j}\right)+\mathbb{E}_\omega \left(\left\|\psi\left(x\right)\left(\omega\right)-\psi\left(y\right)\left(\omega\right)\right\|_X^p\mathbbm{1}_{i\neq j}\right)\\
&\le \left(4D\frac{\Delta}{R} d\left(x,y\right)\right)^p\mu\left\{\omega:i=j\right\}+\left(2.5D\frac{\Delta}{R} d\left(x,y\right)\right)^p\mu\left\{\omega:i\neq j\right\}\\
&\le \left(4D\frac{\Delta}{R} d\left(x,y\right)\right)^p,
\end{align*}
i.e.,
\[
\left\|\psi\left(x\right)-\psi\left(y\right)\right\|_{L^p\left(\Omega,\mu;X\right)}\le 4D\frac{\Delta}{R} d\left(x,y\right).
\]

On the other hand, if $d\left(x,y\right)\le R$, then on the event that $d\left(x,\mathcal{M}\setminus\Gamma^i\left(\omega\right)\right)\ge 2R$, which by \eqref{eq:padded-spec} has probability
\[
\mu\left(\omega:d\left(x,\mathcal{M}\setminus\Gamma^i\left(\omega\right)\right)\ge 2R\right)\ge \gamma.
\]
we have $i=j$ and
\[
d\left(y,\mathcal{M}\setminus\Gamma^i\left(\omega\right)\right)\ge d\left(x,\mathcal{M}\setminus\Gamma^i\left(\omega\right)\right)-d\left(x,y\right)\ge 2R-R=R,
\]
so that
\begin{align*}
\left\|\psi\left(x\right)\left(\omega\right)-\psi\left(y\right)\left(\omega\right)\right\|_X&\ge \left\|\min\left\{R^{-1}d\left(x,\mathcal{M}\setminus \Gamma^i\left(\omega\right)\right),1\right\}f^{\omega,i}\left(x\right)-\min\left\{R^{-1}d\left(y,\mathcal{M}\setminus \Gamma^i\left(\omega\right)\right),1\right\}f^{\omega,i}\left(y\right)\right\|\\
&=\left\|f^{\omega,i}\left(x\right)-f^{\omega,i}\left(y\right)\right\|\ge  d\left(x,y\right).
\end{align*}
Thus
\begin{align*}
\left\|\psi\left(x\right)-\psi\left(y\right)\right\|_{L^p\left(\Omega,\mu;X\right)}^p&\ge\mathbb{E}_\omega \left(\left\|\psi\left(x\right)\left(\omega\right)-\psi\left(y\right)\left(\omega\right)\right\|_X^p\mathbbm{1}_{d\left(x,\mathcal{M}\setminus\Gamma^i\left(\omega\right)\right)\ge 2R}\right)\\
&\ge d\left(x,y\right)^p\mu\left(\omega:d\left(x,\mathcal{M}\setminus\Gamma^i\left(\omega\right)\right)\ge 2R\right)\\
&\ge d\left(x,y\right)^p\gamma.
\end{align*}
This completes the proof.
\end{proof}

Since a $K(\ge 2)$-doubling metric space $\mathcal{M}$ admits a $\left(\beta,K^{-64\beta}\right)$-padded $\Delta$-bounded finitely supported stochastic decomposition for each $\Delta>0$ and $\beta\in \left(0,\frac{1}{64}\right]$ by \cite[Lemma 2.2]{naor2012assouad}, with the choice of $\beta=\frac 1{64}$, $\Delta=128R$, and $\gamma=\frac 1K$, we obtain the following corollary.
\begin{corollary}\label{cor:doubling-loc-ext}
    Let $\left(\mathcal{M},d\right)$ be a $K(\ge 2)$-doubling metric space, let $\left(X,\|\cdot\|_X\right)$ be a Banach space, and let $p\in \left[1,\infty\right)$.
Suppose there exists $D\ge 1$ such that
    \[
    c_X\left(B_{128R}\left(x\right)\right)\le D,\quad \forall x\in \mathcal{M}.
    \]
    Then, there is an $O(D)$-Lipschitz map $\psi:\mathcal{M}\to L^p\left(\Omega,\mu;X\right)$, for some probability space $\left(\Omega,\mu\right)$, such that
    \[
\left\|\psi\left(x\right)-\psi\left(y\right)\right\|_{L^p\left(\Omega,\mu;X\right)}\ge K^{-1/p} d\left(x,y\right) \quad \mathrm{whenever}~d\left(x,y\right)\le R.
\]
\end{corollary}
\begin{corollary}\label{cor:doubling-loc-emb}
There is a universal constant $C>0$ with the following property. Given a $K(\ge 2)$-doubling metric space $\left(\mathcal{M},d\right)$, an exponent $p\in \left[1,\infty\right)$, and a radius $R>0$, the following is true.
\begin{enumerate}
\item For each $r\ge 2R$, there is a $C\log\left(r/R\right)^{1/\max\left\{p,2\right\}}\max\left\{\frac{\log K}{p},1\right\}+CK^{1/p}\sup_{ z\in \mathcal{M}}c_p\left(B_{R}\left(z\right)\right)$-Lipschitz map $\psi:\mathcal{M}\to L^p[0,1]$ such that
\[
\left\|\psi\left(x\right)-\psi\left(y\right)\right\|_{L^p[0,1]}\ge d\left(x,y\right) \quad \mathrm{whenever}~d\left(x,y\right)\le 2r.
\]
\item We have
    \[
\sup_{ z\in \mathcal{M}}c_p\left(B_{r}\left(z\right)\right)\le    C\log\left(r/R\right)^{1/\max\left\{p,2\right\}}\max\left\{\frac{\log K}{p},1\right\}+CK^{1/p}\sup_{ z\in \mathcal{M}}c_p\left(B_{R}\left(z\right)\right),\quad r\ge 2R.
    \]
\end{enumerate}
\end{corollary}
\begin{proof}\,
\begin{enumerate}
\item
Taking $\varepsilon=\frac{1}{2\log\left(r/R\right)}$, and $\frac{R}{128}$ instead of $R$, in Theorem \ref{thm:lp-assouad}(2), we have a map $\psi_1:\mathcal{M}\to L^p[0,1]$ that is
    \begin{itemize}
    \item $O\left(R^{-\varepsilon}\log\left(r/R\right)^{1/\max\left\{p,2\right\}}\max\left\{\frac{\log K}{p},1\right\}\right)$-Lipschitz as a map $(\mathcal{M},d)\to L^p[0,1]$,
    \item and satisfies
\begin{equation}
\left\|\psi_1\left(x\right)-\psi_1\left(y\right)\right\|_{L^p[0,1]}\gtrsim R^{-\varepsilon}d\left(x,y\right) \quad \mathrm{whenever}~\frac{R}{128}<d\left(x,y\right)\le 2r.
\end{equation}
\end{itemize}
By Corollary \ref{cor:doubling-loc-ext}, we have a map $\psi_2:\mathcal{M}\to L^p[0,1]$ that is
    \begin{itemize}
    \item $O\left(\sup_{ y\in \mathcal{M}}c_p\left(B_{R}\left(y\right)\right)\right)$-Lipschitz as a map $(\mathcal{M},d)\to L^p[0,1]$,
    \item and satisfies
\begin{equation}
\left\|\psi_2\left(x\right)-\psi_2\left(y\right)\right\|_{L^p[0,1]}\gtrsim K^{-1/p}d\left(x,y\right) \quad \mathrm{whenever}~d\left(x,y\right)\le \frac{R}{128}.
\end{equation}
\end{itemize}
Take $\psi=R^\varepsilon \psi_1\oplus K^{1/p}\psi_2$.
\item For each $z\in \mathcal{M}$, the map $\left.\psi\right|_{B_r\left(z\right)}:B_r\left(z\right)\to L^p[0,1]$ gives the desired embedding.
\end{enumerate}
\end{proof}

\end{document}